\renewcommand{\email}[2][]{%
  \ifx\emails\@empty\relax\else{\g@addto@macro\emails{,\space}}\fi%
  \@ifnotempty{#1}{\g@addto@macro\emails{\textrm{(#1)}\space}}%
  \g@addto@macro\emails{#2}%
}
\newcommand{\sslash}{\mathbin{/\mkern-6mu/}}
\newcommand{\Specm}{\operatorname{Specm}}
\newcommand{\Image}{\operatorname{Im}}
\newcommand{\Map}{\operatorname{Map}}
\newcommand{\SL}{\operatorname{SL}}
\newcommand{\GL}{\operatorname{GL}}
\newcommand{\End}{\operatorname{End}}
\newcommand{\SO}{\operatorname{SO}}
\newcommand{\Hom}{\operatorname{Hom}}
\newcommand{\quotient}[2]{{\raisebox{.2em}{$#1$}\left/\raisebox{-.2em}{$#2$}\right.}}
\newcommand{\Rel}{\operatorname{Rel}}
\newcommand{\stated}{stated }
\newcommand{\Stated}{Stated }
\newcommand{\id}{id}
\DeclareMathOperator{\Ima}{Im}
\def\restriction#1#2{\mathchoice
              {\setbox1\hbox{${\displaystyle #1}_{\scriptstyle #2}$}
              \restrictionaux{#1}{#2}}
              {\setbox1\hbox{${\textstyle #1}_{\scriptstyle #2}$}
              \restrictionaux{#1}{#2}}
              {\setbox1\hbox{${\scriptstyle #1}_{\scriptscriptstyle #2}$}
              \restrictionaux{#1}{#2}}
              {\setbox1\hbox{${\scriptscriptstyle #1}_{\scriptscriptstyle #2}$}
              \restrictionaux{#1}{#2}}}
\def\restrictionaux#1#2{{#1\,\smash{\vrule height .8\ht1 depth .85\dp1}}_{\,#2}}
\begin{document}

\theoremstyle{plain}
\newtheorem{theorem}{Theorem}[section]
\newtheorem{main_theorem}[theorem]{Main Theorem}
\newtheorem{proposition}[theorem]{Proposition}
\newtheorem{corollary}[theorem]{Corollary}
\newtheorem{corollaire}[theorem]{Corollaire}
\newtheorem{lemma}[theorem]{Lemma}
\theoremstyle{definition}
\newtheorem{definition}[theorem]{Definition}
\newtheorem{Theorem-Definition}[theorem]{Theorem-Definition}
\theoremstyle{remark}
\newtheorem{remark}{Remark}
\newtheorem{example}{Example}
\newtheorem{notations}{Notations}

\sloppy

\title{Triangular decomposition of character varieties}

\author{Julien Korinman}
\address{ Institut Montpelli\'erain Alexander Grothendieck - UMR 5149 Universit\'e de Montpellier. Place Eug\'ene Bataillon, 34090 Montpellier France}
\email{julien.korinman@gmail.com}
\subjclass{$14$D$20$.}
\keywords{Character varieties, stated skein algebras, TQFTs.}

\date{}


\begin{abstract} 
A marked surface is a compact oriented surface equipped with some pairwise disjoint arcs embedded in its boundary. In this paper, we extend the notion of character varieties to marked surfaces, in such a way that they have a nice behaviour for the operation of gluing two boundary arcs together.  These \stated character varieties are affine Poisson varieties which coincide with the Culler-Shalen character varieties when the surface is unmarked and are closely related to the Fock-Rosly and Alekseev-Kosmann-Malkin-Meinrenken constructions in the marked case. These Poisson varieties are the classical moduli spaces underlying  stated skein algebras and share similar properties. In particular, \stated character varieties admit triangular decompositions,  associated to  triangulations of the surface. We identify the Zariski tangent spaces of these varieties with some twisted groupoid cohomological groups and provide a generalization of Goldman's formula for the Poisson bracket of curve functions in terms of intersection form in homology. 
\end{abstract}

\maketitle

\tableofcontents

\section{Introduction}
\par \textbf{Moduli space of  $G$ flat structures and character varieties}
\vspace{3mm}
\par This paper deals with a generalization of the moduli of $G$-flat structures over a surface. Though our construction is completely algebraic, we start with a geometric (gauge theoretic) description that will help the reader to get a better intuition. 
Let us briefly review previous constructions, we refer to  \cite{LabourieCharVar, MarcheCours09, Sikora} for details. Consider a manifold $X$ and a Lie group $G$ and denote by $\mathcal{M}_G(X)$ the moduli space of  flat $G$ structures on $X$, \textit{i.e.} the set of isomorphism classes of pairs $(P,\nabla)$, where $P$ is a  $G$ principal bundle over $X$ and $\nabla$ a flat connection. Such moduli spaces arise as the phase spaces (solutions of the Euler-Lagrange equations) of $2$-dimensional Yang-Mills theory (\cite{AB, Wi1, Moore_2DYM}) and $3$-dimensional Chern-Simons theory \cite{Wi2, ASW}. Fixing a gauge, we identify the space of flat structures  with the spaces of $1$-forms $A\in \Omega^1(X, \mathfrak{g})$ such that $FA:=dA +\frac{1}{2}[A\wedge A] = 0$. Write $\Omega_F^1(X,\mathfrak{g}) \subset \Omega^1(X,\mathfrak{g})$ the subset of forms $A$ with $FA=0$. The group of automorphisms of $P$ identifies with the (gauge) group $\mathcal{G}$ of smooth maps $g: X\to G$ and the action writes $A^g=g^{-1}Ag +g^{-1}dg$. This gives a bijection 
$$ \mathcal{M}_G(X) \cong \quotient{\Omega_F^1(X, \mathfrak{g})}{\mathcal{G}}.$$
In order to endow $\mathcal{M}_G(X)$ with a geometric structure, Atiyah and Bott imposed in \cite{AB} some Sobolev regularity on both the $1$-forms $A$ and the gauge group elements $g$. This permits to endow the space $ \Omega^1(X, \mathfrak{g})$ with a structure of Banach space and $\Omega^1_F(X, \mathfrak{g})$ becomes a Banach sub-manifold. However, the action of the gauge group $\mathcal{G}$ is not principal so the quotient $\quotient{\Omega_F^1(X, \mathfrak{g})}{\mathcal{G}}$ does not inherits a manifold structure, rather a structure of stratified space. Instead, one can consider the subset $\Omega_F^1(X, \mathfrak{g})^0 \subset \Omega_F^1(X, \mathfrak{g})$ of principal orbits and define 
$$ \mathcal{M}^0_G(X) := \quotient{\Omega_F^1(X, \mathfrak{g})^0}{\mathcal{G}}\subset \mathcal{M}_G(X), $$
which becomes a Banach manifold.  For $A\in \Omega_F^1(X, \mathfrak{g})^0$, consider the (twisted) cochain complex  $(\mathrm{C}^{\bullet}(X, \mathfrak{g}), d_A)$, where $d_A \alpha = d \alpha + [A\wedge \alpha]$ ($FA=0$ implies $d_A^2=0$). A gauge group element $g\in \mathcal{G}$ induces an isomorphism between the complexes associated to $A$ and $A^g$, so its cohomology only depends on the gauge class $[A]$ up to canonical isomorphism. 
The tangent space over a point $A\in \Omega_F^1(X, \mathfrak{g})^0$ is naturally identified with the space $\mathrm{Z}^1_A(X, \mathfrak{g}) \cong T_A\Omega_F^1(X, \mathfrak{g})^0$ of cochains of this complex and the tangent space of $[A] \in  \mathcal{M}_G(X)$ is identified with the first cohomology group $\mathrm{H}^1_{[A]}(X, \mathfrak{g})\cong T_{[A]}\mathcal{M}_G(X)$.
In the particular case where $X= \Sigma$ is a closed oriented Riemann surface, Atiyah and Bott defined in \cite{AB} a symplectic form on $ \mathcal{M}^0_G(X)$ via the formula
$$ \omega^{AB}_{[A]} ([\alpha], [\beta]) = \int_{\Sigma} \left( \alpha \wedge \beta \right), \quad \alpha, \beta \in \mathrm{H}^1_A(\Sigma, \mathfrak{g}), $$
where $\left( \cdot, \cdot \right) : \mathfrak{g}^{\otimes 2} \to \mathbb{C}$ is a fixed Ad-invariant non-degenerate symmetric pairing on $\mathfrak{g}$ (for instance the Killing-form when $\mathfrak{g}$ is semi-simple).

\vspace{2mm}
\par Another way to give some geometric structure on the moduli space of $G$-flat structures is via character varieties. Suppose $X$ connected and fix an arbitrary basepoint $v_0\in X$. The holonomy of a flat connection along a closed curve $\gamma \subset X$ only depends, up to conjugacy, on the isotopy class of the curve. The so-called Riemann-Hilbert correspondence asserts that the holonomy map induces a bijection
$$ \mathrm{Hol} : \mathcal{M}_G(X) \cong \quotient{\Hom(\pi_1(X,v_0), G)}{G}, $$
with the space of conjugacy classes of representations $\rho : \pi_1(X,v_0)\to G$. When the group $G$  is algebraic reductive over $\mathbb{C}$ and $\pi_1(X, v_0)$ finitely generated, the space of representations $\mathcal{R}_G(X):=\Hom(\pi_1(X,v_0), G)$ has a natural structure of smooth affine scheme over $\mathbb{C}$ (possibly unreduced). Again the action of $G$ on  $\mathcal{R}_G(X)$ is not principal, so taking the naive quotient does only lead to a stratified space, however when $G$ is reductive, one can consider the algebraic quotient (sometimes called GIT quotient for Geometric Invariant Theory) named \textit{character variety}:
$$ \mathcal{X}_G(X):= \mathcal{R}_G(X) \sslash G.$$
The algebra of regular functions of $ \mathcal{X}_G(X)$ is defined as the subalgebra of $\mathbb{C}[\mathcal{R}_G(X)]$ of coinvariant vectors for the $\mathbb{C}[G]$ coaction. It is finitely generated but might have non trivial nilradical, so the character variety is not necessarily a variety; this latter feature does not occurs when $X$ is a surface and $G=\SL_n$ for instance. As a GIT quotient, the character variety is an affine (possibly singular) scheme which were proved to be normal in \cite[Theorem $11.1$]{Simpson_RepVarProjVar2} when $X$ is a closed surface of genus $g\geq 2$ and in \cite{Whang_RelCharVar} when $X$ is an open surface and $G=\SL_2$. The (set of closed points of the) character variety is very similar to the moduli space $\mathcal{M}_G(X)$: one has a surjective map $\mathcal{M}_G(X) \to  \mathcal{X}_G(X)$, which induces a bijection $\mathcal{M}_G(X)^0 \cong  \mathcal{X}^0_G(X)$, where $\mathcal{X}^0_G(X)$ denotes the subset of smooth points; so character variety can be thought as a good algebraic analogue for $\mathcal{M}_G(X)$. Character varieties where first introduced by Culler and Shalen in \cite{CullerShalenCharVar}, in a manner totally unrelated to gauge theory, where they played an essential role in the search of incompressible surfaces inside $3$-manifolds (see also \cite{FriedlKitayama_RepVarEssentialSurfaces} for a recent higher rank generalization). For $\gamma \subset X$ a simple closed curve and $f\in \mathbb{C}[G]^G$ a conjugacy invariant regular function on $G$, one can associate a so-called \textit{curve function} $f_{\gamma} \in \mathbb{C}[\mathcal{X}_G(X)]$ which sends a conjugacy class $[\rho]$ to $f(\rho(\gamma))$. Culler-Shalen original definition of character variety consists in defining $\mathbb{C}[\mathcal{X}_G(X)]$ as the subalgebra of $\mathbb{C}[\mathcal{R}_G(X)]$ generated by curve functions associated to the trace. When $G=\SL_2(\mathbb{C})$, by a theorem of Procesi, their definition coincides with the definition by GIT quotient, though they do not coincide for general $G$ (see \cite{LawtonSikora_VarChar}). In this paper, we only consider character varieties defined as GIT quotients.

  Let $\rho : \pi_1(X, v_0)\to G$ be a representation and denote by $\widetilde{X}$ a universal cover of $X$ with basepoint $\widetilde{v_0}$ over $v_0$. Consider the twisted complex of cochains $\mathrm{C}^{\bullet}(X, \mathrm{Ad}_{\rho}):= \Hom_{\mathbb{Z}[\pi_1(X)]}(\mathrm{C}_{\bullet}(\widetilde{X}, \mathbb{Z}), \mathfrak{g})$, where the fundamental group acts on the singular chains of $\widetilde{X}$ via deck transformations and on the Lie algebra $\mathfrak{g}$ via $\mathrm{Ad}\circ\rho$. An element $g\in G$ induces an isomorphism between the complexes  $\mathrm{C}^{\bullet}(X, \mathrm{Ad}_{\rho})$ and $\mathrm{C}^{\bullet}(X, \mathrm{Ad}_{g\rho g^{-1}})$ so its cohomology $\mathrm{H}^{\bullet}(X,\mathrm{Ad}_{\rho})$ only depends on the class $[\rho] \in \mathcal{X}_G(X)$ up to canonical isomorphism. A representation $\rho$ is said \textit{good} if it is irreducible and its stabilizer subgroup is the center of $G$.
  When $\rho$ is good, the Zariski tangent space of the character variety at $[\rho]$ identifies with
  $$ T_{[\rho]} \mathcal{X}_G(X)\cong \mathrm{H}^1(X,\mathrm{Ad}_{\rho}).$$
  When $X$ is a closed Riemannian manifold and $[\rho]$ a smooth point with $\mathrm{Hol}(A)=[\rho]$, using the integration pairing and Poincar\'e duality, one has a canonical isomorphism between $\mathrm{H}^1_{A}(X, \mathfrak{g})$ and $\mathrm{H}^1(X,\mathrm{Ad}_{\rho})$. Goldman noticed in \cite{Goldman_Symplectic} that under this isomorphism, and when $X=\Sigma$ is a closed Riemannian surface, the Atiyah-Bott symplectic pairing has a natural interpretation using the cup product in twisted cohomology, namely it writes as the composition
  $$\omega^{Goldman} : \mathrm{H}^1(\Sigma,\mathrm{Ad}_{\rho})\times \mathrm{H}^1(\Sigma,\mathrm{Ad}_{\rho})  \xrightarrow{\cup} \mathrm{H}^2(\Sigma,\mathrm{Ad}_{\rho}\otimes \mathrm{Ad}_{\rho}) \xrightarrow{\left( \cdot, \cdot \right)}  \mathrm{H}^2(\Sigma,\mathbb{C}) \xrightarrow{\int_{\Sigma}} \mathbb{C}.$$
  Based on prior formulas of Wolpert in Teichm\"uller spaces, Goldman deduced in \cite{Goldman86} an explicit formula for the Poisson bracket of two curve functions. 
  
  \begin{definition} A reductive complex algebraic Lie group $G$ will be called \textit{standard} if the algebra of regular functions of the character varieties associated to any finite type surface is generated by curve functions.
  \end{definition}
   When $G$ is standard, Goldman's formula in \cite{Goldman86} implies that the Atiyah-Bott-Goldman symplectic structure induces a Poisson structure on character varieties of closed surfaces (\textit{i.e.} implies that the Poisson bracket of two regular functions is a regular function).

  Using an argument of Procesi \cite{Procesi87}, it is proved in \cite{BrumfieldHilden, Procesi87, FlorentinoLawton, Sikora_generating_set} that the groups $(\mathbb{C}^*)^N$, $\GL_N(\mathbb{C})$, $\SL_N(\mathbb{C})$, $\mathrm{Sp}_N(\mathbb{C})$, $\mathrm{O}_N(\mathbb{C})$ and $\SO_{2N+1}(\mathbb{C})$ are standard. However, as A.Sikora pointed to us, the result in \cite{Sikora_CharVarSOn} suggests that $\SO_{2N}(\mathbb{C})$ should not be standard in our sense.

  More precisely, when $G$ is abelian this is trivial. When $G=\SL_2(\mathbb{C})$, the fact that $\mathbb{C}[\mathcal{X}_{\SL_2}(\Sigma)]$is generated by curve functions $\tau_{\gamma}$, where $\tau$ is the trace function, was proved by Brumfield and Hilden in \cite{BrumfieldHilden} using  Procesi's theorem in \cite{Procesi87}. Theorem $3$ in \cite{Sikora_generating_set} implies the same result for $G=\SL_N(\mathbb{C})$. Remark $4$ in \cite{Sikora_generating_set} implies that when $G=\GL_N(\mathbb{C})$, a generating set is given by curve functions of the form $\tau_{\gamma}$ and $\det^{-1}_{\gamma}$. When $G$ is orthogonal, symplectic or odd special orthogonal, one has an embedding $i: G \hookrightarrow \GL_N(\mathbb{C})$ defining a $G$-invariant function $f:= \tau \circ i$.  The fact that $\mathbb{C}[\mathcal{X}_G(\Sigma)]$ is generated by curve functions $f_{\gamma}$ was proved by Florentino and Lawton in Theorem $A.1$ of the three first arXiv versions of \cite{FlorentinoLawton} and appears in \cite[Theorem $5$]{Sikora_generating_set}.

 \vspace{2mm}
\par For a connected compact oriented surface $\Sigma$ with non-trivial boundary, Fock and Rosly defined in \cite{FockRosly} a Poisson structure on the smooth locus $\mathcal{M}^0_G(\Sigma)$ (see \cite{Audin_Survey_FR} for a survey and see Appendix \ref{sec_FR} for a detailed comparison with our present work). By choosing a conjugacy class $c_{\partial}\in \quotient{G}{G}$ for each boundary component $\partial \in \pi_0(\partial \Sigma)$ and considering the submanifold $\mathcal{R}_G(\Sigma,c) \subset \mathcal{R}_G(\Sigma)$ of representations $\rho: \pi_1(\Sigma)\to G$ sending a peripheral curve parallel to $\partial$ to an element of $c_{\partial}$, we get a manifold $\mathcal{M}_G(\Sigma, c) \subset \mathcal{M}_G(\Sigma)$ which is a symplectic leaf of the Fock-Rosly Poisson structure. At the same time and independently to the work in \cite{FockRosly}, Guruprasad, Huebschmann, Jeffrey and Weinstein defined in \cite{GHJW_ModSpacesParBd} a symplectic structure on $\mathcal{M}_G(\Sigma, c)$ by identifying a tangent space $T_{[\rho]} \mathcal{M}_G(\Sigma, c) $ with some subspace $\mathrm{H}^1_{par} \subset \mathrm{H}^1(\Sigma, \mathrm{Ad}_{\rho})$ of so-called parabolic cohomology classes and then defining a symplectic pairing on $\mathrm{H}^1_{par}$ by a non-trivial generalization of Goldman symplectic pairing.

 Eventually Roche and Szenes proved in \cite{RocheSzenes}  that Goldman's formula for curve functions defines an algebraic Poisson structure on $\mathcal{X}_{G}(\Sigma)$ (see also  \cite[Theorem $15$]{Lawton_PoissonGeomSL3} where the proof is done in the case $G=\SL_n$ but, as detailed in \cite[Comment $18$]{Lawton_PoissonGeomSL3}, can be generalized to arbitrary $G$).
  
 \vspace{2mm}
 \par Modern approaches to give to the moduli space of flat structures a geometric structure is via moduli stack, for which one can consider derived symplectic structures (see \cite{Toen_DerivedGeomQuantization} and reference therein for recent developments towards quantization), or D-modules \cite{GanGinzburg_AlmostComVar_Dmodules_CherednikAlg}. Eventually, related moduli spaces that we will not consider here are the so-called wild character varieties where we impose to the $1$-forms $A$ to have some degeneracy condition at some fixed points of the surface (\cite{Boalch_SymplecticWildCharVar}).
  
\vspace{3mm}
\par \textbf{Moduli spaces for marked surfaces}

\begin{definition} A \textit{marked surface} $\mathbf{\Sigma}=(\Sigma, \mathcal{A})$ is a compact oriented surface $\Sigma$ (possibly with boundary) with a finite set $\mathcal{A}=\{a_i\}_i$ of orientation-preserving immersions $a_i : [0,1] \hookrightarrow \partial \Sigma$, \textit{named boundary arcs}, whose restrictions to $(0,1)$ are embeddings and whose interiors are pairwise disjoint. 

 An \textit{embedding} $f:(\Sigma, \mathcal{A}) \to (\Sigma', \mathcal{A}')$ of marked surfaces is a orientation-preserving proper embedding $f:\Sigma \to \Sigma'$ so that for each boundary arc $a \in \mathcal{A}$ there exists $a' \in \mathcal{A}$ such that $f\circ a$ is the restriction of $a'$ to some subinterval of $[0,1]$. 
 Marked surfaces with embeddings form a category $\mathrm{MS}$ with monoidal structure given by disjoint union.  
\end{definition}

  By abuse of notations, we will often denote by the same letter the embedding $a_i$ and its image $a_i((0,1)) \subset \partial \Sigma$ and both call them boundary arcs. We will also abusively identify $\mathcal{A}$ with the disjoint union $\bigsqcup_i a_i((0,1)) \subset \partial \Sigma$ of open intervals. The main interest in considering marked surfaces is that they have a natural gluing operation. Let $\mathbf{\Sigma}=(\Sigma, \mathcal{A})$ be a marked surface and $a,b\in \mathcal{A}$ two boundary arcs. Set $\Sigma_{a\#b} := \quotient{\Sigma}{a(t) \sim b(1-t)}$ and $\mathcal{A}_{a\#b}:= \mathcal{A}\setminus a\cup b$. The marked surface $\mathbf{\Sigma}_{a\#b}=(\Sigma_{a\#b}, \mathcal{A}_{a\#b})$ is said obtained from $\mathbf{\Sigma}$ by gluing $a$ and $b$. 
     \vspace{2mm}
   \par
   
    Character varieties admit deformation quantizations named skein algebras. More precisely, in the case $G=\SL_2(\mathbb{C})$, the Kauffman-bracket skein algebra $\mathcal{S}_q(\Sigma)$ is non-commutative (unital associative algebra) depending on a deformation parameter $q$. Setting $q=\exp(\hbar)$, where $\hbar$ is a formal parameter, the reduction modulo $\hbar$ of  $\mathcal{S}_q(\Sigma)$ (\textit{i.e.} the commutative algebra $\mathcal{S}_{+1}(\Sigma)$) is isomorphic to the algebra regular functions of $\mathcal{X}_{\SL_2}(\Sigma)$ (\cite{Bullock, PS00, ChaMa}). Write $\star_{\hbar}$ the product in $\mathcal{S}_q(\Sigma)$. A Poisson structure on $\mathcal{S}_{+1}(\Sigma)$ is defined by the standard formula 
    $$f\star_{\hbar} g-g\star_{\hbar} f \equiv \hbar \{ f,g\} \pmod{\hbar^2}.$$
A theorem of Turaev \cite{Turaev91} shows that this Poisson structure coincides with the Atiyah-Bott-Goldman Poisson structure on $\mathcal{X}_{\SL_2}(\Sigma)$.

 A recent construction of Bonahon-Wong \cite{BonahonWongqTrace} and L\^e \cite{LeStatedSkein} extends Kauffman-bracket skein algebras to marked surfaces under the name stated skein algebras. The motivation behind this generalization is the good behaviour for the gluing operation which permits to define triangular decompositions of skein algebras. The original motivation for the present paper was to discover what is the (Poisson) moduli space behind stated skein algebras; this goal is fully achieved in the joint paper \cite{KojuQuesneyClassicalShadows} in collaboration with Quesney based on the present work (see also \cite{CostantinoLe19, KojuPresentationSSkein} for two alternative independent proofs which does not consider the Poisson structure). As a result, we will obtain triangular decompositions of character varieties: a geometric tool whose interest goes beyond the study of quantization. This relationship permitted to deduce from the geometric study made in the present paper some classification theorems about the representations of stated skein algebras at roots of unity in \cite{KojuMCGRepQT, KojuKaruo_RepRSSkein}.

  \vspace{2mm}
   \par We first sketch our construction in the gauge theoretical context, where the idea is very simple;  this will help the reader to get some geometric intuition. Fix a marked surface $\mathbf{\Sigma}=(\Sigma, \mathcal{A})$ and consider the subset
   $$ \Omega_F^1(\mathbf{\Sigma}, \mathfrak{g}) := \{ A \in \Omega^1(\Sigma, \mathfrak{g}) | FA=0, A_{| \mathcal{A}} = 0\} \subset  \Omega_F^1(\Sigma, \mathfrak{g}) $$
   of flat $1$-forms whose restrictions to the interior of the boundary arcs vanish and the subset
   $$ \mathcal{G}_{\mathbf{\Sigma}} = \{ g : \Sigma \to G | g_{|\mathcal{A}} = e \} \subset \mathcal{G}_{\Sigma} $$ 
   of smooth maps whose restrictions to the interior of the boundary arcs are constant with value the neutral element $e\in G$. Define the relative moduli space
   $$ \mathcal{M}_G(\mathbf{\Sigma}) := \quotient{ \Omega_F^1(\mathbf{\Sigma}, \mathfrak{g}) }{ \mathcal{G}_{\mathbf{\Sigma}}}.$$
  For unmarked surfaces (where $\mathcal{A}=\emptyset$) one recover our previous definition. To understand the role of the marking, consider the case where  $G$ is an abelian group. The classical moduli space identifies with the cohomology group $\mathcal{X}_G(\Sigma)\cong \mathrm{H}^1(\Sigma; G)$ whereas the relative moduli space identifies with the relative cohomology group $\mathcal{X}_G(\Sigma, \mathcal{A}) \cong \mathrm{H}^1(\Sigma, \mathcal{A}; G)$.
   In addition to the obvious isomorphism $\mathcal{M}_G(\mathbf{\Sigma}_1\bigsqcup \mathbf{\Sigma}_2) \cong  \mathcal{M}_G(\mathbf{\Sigma}_1) \times  \mathcal{M}_G(\mathbf{\Sigma}_2)$, one has a gluing map $$\pi_{a\#b} : \mathcal{M}_G(\mathbf{\Sigma}) \to \mathcal{M}_G(\mathbf{\Sigma}_{a\#b})$$ induced by the projection $\Sigma\to \Sigma_{a\#b}$. Let $c$ denote the common image of $a$ and $b$ in $\Sigma_{a\#b}$ by this projection. Since $c$ is contractile (it is an open arc), a $1$-form in  $\Omega_F^1(\mathbf{\Sigma}_{a\#b}, \mathfrak{g})$  is always gauge equivalent to a $1$-form whose restriction to $c$ vanishes. This implies that the gluing map $\pi_{a\#b}$ is surjective. Note that if $a,b,c,d$ are four distinct boundary arcs, then $\pi_{a\#b}\circ \pi_{c\#d}= \pi_{c\#d}\circ \pi_{a\#b}$.

   Here is an interesting consequence. Call \textit{triangle} and denote by $\mathbb{T}$ the marked surface made of a disc with three boundary arcs. A marked surface is \textit{triangulable} if it can be obtained from a disjoint union of triangles by gluing some pairs of boundary arcs. A \textit{triangulation} $\Delta$ is the data of these disjoint union of triangles, named the faces and whose set is $F(\Delta)$,  together with the pairs of glued arcs. The images in $\Sigma$ of these boundary arcs are called edges and their set is denoted by $\mathcal{E}(\Delta)$. Composing the gluing morphisms together, for any triangulate marked surface $(\mathbf{\Sigma}, \Delta)$, one gets a surjective morphism
   $$ \pi^{\Delta} : \mathcal{M}_G(\mathbf{\Sigma}) \twoheadrightarrow \prod_{\mathbb{T} \in F(\Delta)} \mathcal{M}_G(\mathbb{T}).$$
   We can actually characterize the kernel of this map. Call \textit{bigon}, and denote by $\mathbb{B}$, the marked surface made of a disc with two boundary arcs, say $a_L$ and $a_R$. For $g\in G$, consider a $1$-form $A(g)\in \Omega^1_F(\mathbb{B}, \mathfrak{g})$ on the disc whose holonomy along an arc joining $a_L$ to $a_R$ is $g$. This gives a bijection $G \cong \mathcal{M}_G(\mathbb{B})$ sending $g$ to $[A(g)]$. Now consider a marked surface $\mathbf{\Sigma}$ and two boundary arcs $a$ and $b$. Given $A \in \Omega_F^1(\mathbf{\Sigma}, \mathfrak{g})$ and $g\in G$, one can consider the $1$-form $A(g)\cup A$ on $\mathbb{B} \cup \Sigma$. When gluing the disc to $\Sigma$ by identifying $a_R$ with $a$, one get a surface which retracts to $\Sigma$. Using the retraction, one obtains from $A(g)\cup A$ a $1$-form $g\cdot A \in \Omega^1_F(\mathbf{\Sigma}, \mathfrak{g})$ and it is clear that we get a left group action $$\nabla_a^L : G\times \mathcal{M}_G(\mathbf{\Sigma}) \to \mathcal{M}_G(\mathbf{\Sigma}) $$ sending $(g, [A])$ to $[g\cdot A]=: g\cdot [A]$. In the same manner, by gluing $a_R$ to $b$, one gets a right group action $\nabla_b^R:  \mathcal{M}_G(\mathbf{\Sigma})  \times G \to \mathcal{M}_G(\mathbf{\Sigma}) $. These additional left/right group actions on moduli spaces are essential features in this paper and are probably the main original ingredient in our approach to character varieties. One can completely describe the moduli space associated to $\mathbf{\Sigma}_{a\#b}$ from the moduli space of $\mathbf{\Sigma}$ together with its $G$ left and right actions associated to $a$ and $b$ via the equivalence:
    
   \begin{equation}\label{eq_RightExactSeq}
   \mathcal{M}_G(\mathbf{\Sigma}_{a\#b}) \cong \quotient{\mathcal{M}_G(\mathbf{\Sigma})}{\left( g\cdot [\rho] = [\rho] \cdot g, g\in G \right)}.
   \end{equation}
 As a consequence, for a triangulated marked surface $(\mathbf{\Sigma}, \Delta)$, the space $\prod_{\mathbb{T} \in F(\Delta)}\mathcal{M}_G(\mathbb{T})$ acquires  a structure of $G^{\mathring{\mathcal{E}}(\Delta)}$- bimodule, where $\mathring{\mathcal{E}}(\Delta)$ denotes the set of inner edges of the triangulation, and one has
 \begin{equation}\label{eq_triangulation_modulispace}
  \mathcal{M}_G(\mathbf{\Sigma}) \cong \quotient{\prod_{\mathbb{T} \in F(\Delta)}\mathcal{M}_G(\mathbb{T})}{\left( g\cdot x - x \cdot g, g\in G^{\mathring{\mathcal{E}}(\Delta)}\right)}.
  \end{equation}
 So the  moduli space  $\mathcal{M}_G(\mathbf{\Sigma})$ is completely described by the moduli space of the triangle $\mathcal{M}_G(\mathbb{T})$ with its $G^{\mathcal{A}}$ left/right actions, together with the combinatorial data of the triangulation.

\vspace{5mm}
\par \textbf{Main results of the paper}
\vspace{3mm}
\par Fix $\mathbf{\Sigma}=(\Sigma, \mathcal{A})$ a marked surface and $G$ a standard Lie group. In order to work in the algebro-geometric context, we will replace the moduli space of $1$-forms by an algebraic space of representations. Instead of the fundamental group, we need to consider the fundamental groupoid $\Pi_1(\Sigma)$ whose objects are points in $\Sigma$ and morphisms $\alpha: v_1 \to v_2$ are homotopy classes of continuous paths $c_{\alpha}: [0,1] \to \Sigma$ such that $c(0)=v_1$ and $c(1)=v_2$. We will write $v_1=s(\alpha)$ (source point) and $v_2=t(\alpha)$ (target point). A path is called trivial if it is the homotopy class of a path which is either trivial or contained in a boundary arc. 
The holonomy map induces a bijection between the space $\Omega^1_F(\mathbf{\Sigma}, \mathfrak{g})$ and the representation space
$$ \mathcal{R}_G(\mathbf{\Sigma}) = \{ \rho : \Pi_1(\Sigma) \to G | \rho(\alpha)=e \mbox{ for all trivial path }\alpha\in \Pi_1(\mathcal{A}) \}.$$
In Section \ref{sec_definitions}, we will define an (infinitely generated) algebra $\mathbb{C}[\mathcal{R}_G(\mathbf{\Sigma})]$ whose maximal spectrum is $\mathcal{R}_G(\mathbf{\Sigma})$, so $\mathcal{R}_G(\mathbf{\Sigma})$ is the set of a closed points of an affine scheme (abusively denoted by the same symbol) over $\mathbb{C}$. In order to get an algebraic group action, the gauge group will be replaced by the algebraic gauge group $\mathcal{G}_{\mathbf{\Sigma}}$ of maps $g:\Sigma \to G$ whose restriction to $\mathcal{A}$ is constant equal to the neutral element $e\in G$. The gauge group action is given by 
$$ g\cdot \rho (\alpha) := g(s(\alpha)) \rho(\alpha) g(t(\alpha))^{-1}, \mbox{ for }\rho \in \mathcal{R}_G(\mathbf{\Sigma}), g\in \mathcal{G}_{\mathbf{\Sigma}}, \alpha\in \Pi_1(\Sigma). $$
The \textit{\stated character variety} will then be defined as
$$ \mathcal{X}_G(\mathbf{\Sigma}) := \mathcal{R}_G(\mathbf{\Sigma}) \sslash \mathcal{G}_{\mathbf{\Sigma}}.$$
Its main properties are summarized in the 

\begin{theorem}\label{theorem1} 
\begin{enumerate}

\item The \stated character variety $\mathcal{X}_G(\mathbf{\Sigma})$ is an affine Poisson variety. The Poisson structure depends on the choice of an $Ad$-invariant non-degenerate symmetric pairing $\left(\cdot, \cdot \right): \mathfrak{g}^{\otimes 2} \to \mathbb{C}$  and on a choice of orientations of the boundary arcs of $\mathbf{\Sigma}$. When $G$ is abelian, the Poisson structure does not depend on the orientations.

\item When $\mathbf{\Sigma}=(\Sigma, \emptyset)$ is unmarked, the \stated character variety is canonically isomorphic to the traditional (Culler-Shalen) one equipped with its Goldman's Poisson bracket. 

\item When $\Sigma$ is connected of genus $g$ and the marking non-empty (so $\partial \Sigma \neq \emptyset$), the \stated character variety $\mathcal{X}_G(\mathbf{\Sigma})$ is isomorphic to $G^d$ where 
$$d:=\dim \left( \mathrm{H}_1(\Sigma, \mathcal{A} ; \mathbb{C}) \right)=2g-2+|\mathcal{A}|+|\pi_0(\partial \Sigma)|.$$
As a Poisson variety, $\mathcal{X}_G(\mathbf{\Sigma})$ is isomorphic to the Fock-Rosly moduli space associated to a graph $\Gamma \subset \Sigma$ on which $\Sigma$ retracts by deformation.

\item If $a$ is a boundary arc of $\mathbf{\Sigma}$, the algebra of regular functions of the \stated character variety has both a left and right co-module structure on $\mathbb{C}[G]$, denoted $\Delta_a^L : \mathbb{C}[\mathcal{X}_G(\mathbf{\Sigma})] \rightarrow \mathbb{C}[G]\otimes \mathbb{C}[\mathcal{X}_G(\mathbf{\Sigma})]$ and $\Delta_a^R : \mathbb{C}[\mathcal{X}_G(\mathbf{\Sigma})] \rightarrow  \mathbb{C}[\mathcal{X}_G(\mathbf{\Sigma})]\otimes \mathbb{C}[G]$. If $a$ and $b$ are two boundary arcs with the same orientation, there exists a Poisson embedding $i_{a\#b}$ lying in the following exact sequence: 

$$ 0 \rightarrow \mathbb{C}[\mathcal{X}_G(\mathbf{\Sigma}_{a\#b})] \xrightarrow{i_{a\#b}} \mathbb{C}[\mathcal{X}_G(\mathbf{\Sigma})]  \xrightarrow{ \Delta_a^L - \sigma \circ \Delta_b^R} 
 \mathbb{C}[G] \otimes \mathbb{C}[\mathcal{X}_G(\mathbf{\Sigma})],  
$$
where $\sigma(x\otimes y) = y\otimes x$. In other words, one has $ \mathbb{C}[\mathcal{X}_G(\mathbf{\Sigma}_{a\#b})]=\mathrm{coHH}^0\left( \mathbb{C}[G], {}_a \mathbb{C}[\mathcal{X}_G(\mathbf{\Sigma})]_b \right)$ (see  Subsection $2.5$ for this notation). Moreover the gluing operation is co-associative in the sense that if $a,b,c,d$ are four boundary arcs, one has $i_{a\#b}\circ i_{c\#d}=i_{c\#d}\circ i_{a\#b}$.
\item Let $a,b$ be two boundary arcs of $\mathbf{\Sigma}$ and $\mathbf{\Sigma}_{a\circledast b}$ the marked surface obtained from $\mathbf{\Sigma}\sqcup \mathbb{T}$ by gluing $a$ to one edge of the triangle $\mathbb{T}$ and gluing $b$ to another edge. Then $\mathcal{X}_G(\mathbf{\Sigma}_{a \circledast b})$ is isomorphic to the Alekseev-Malkin's fusion of $\mathcal{X}_G(\mathbf{\Sigma})$ (see Section \ref{sec_fusion} for definitions). Moreover when $\mathbf{\Sigma}$ is a connected marked surface with exactly one boundary arc, then $\mathcal{X}_G(\mathbf{\Sigma})$ is isomorphic to the Alekseev-Kosmann-Meinrenken moduli spaces appearing in  \cite{AlekseevMalkin_PoissonCharVar, AlekseevMalkin_PoissonLie, AlekseevKosmannMeinrenken}.
\item When $\mathbf{\Sigma}=(\Sigma, \mathcal{A})$ is connected and $\mathcal{A}\neq \emptyset$, the $G^{\mathcal{A}}$- Poisson variety $\mathcal{X}_G(\mathbf{\Sigma})$ is a twist of the quasi Poisson variety defined independently by Li Bland-Severa   and Nie in \cite{LiBlandSevera_ModuliSpacesQuiltedSurfaces,Nie_QPoissonGoldmanFormula}.
\end{enumerate}
\end{theorem}
The left exact sequence in the fourth item of Theorem \ref{theorem1} is the algebraic analogue of Equation \eqref{eq_RightExactSeq}.

\begin{corollary}[Triangular decomposition of character varieties]\label{corollary1}
For a triangulated marked surface $(\mathbf{\Sigma}, \Delta)$, one has an exact sequence

\begin{equation*}
0 \rightarrow \mathbb{C}[\mathcal{X}_G(\mathbf{\Sigma})]  \xrightarrow{i^{\Delta}} \otimes_{\mathbb{T}\in F(\Delta)} \mathbb{C}[\mathcal{X}_G(\mathbb{T})]
 \xrightarrow{\Delta^L -  \sigma \circ \Delta^R} 
\left(\otimes_{e\in \mathring{\mathcal{E}}(\Delta)}\mathbb{C}[G]\right) \otimes  \left( \otimes_{\mathbb{T}\in F(\Delta)} \mathbb{C}[\mathcal{X}_G(\mathbb{T})] \right),
\end{equation*}

where the embedding $i^{\Delta}$ is a Poisson morphism.
\end{corollary}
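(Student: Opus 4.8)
The plan is to obtain the corollary by applying Theorem~\ref{theorem1}(4) repeatedly, reassembling $\mathbf{\Sigma}$ from the triangles of $\Delta$ one interior edge at a time, and then identifying the resulting composite embedding with the map $i^\Delta$ of the statement.

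I would begin by recording two preliminary facts. First, the character variety of a disjoint union of punctured surfaces is the product of the character varieties, so that $\mathbb{C}[\mathcal{X}_G(\bigsqcup_{\mathbb{T}\in F(\Delta)}\mathbb{T})]\cong \otimes_{\mathbb{T}\in F(\Delta)}\mathbb{C}[\mathcal{X}_G(\mathbb{T})]$ as Poisson algebras, with the factorwise Poisson structure; this is immediate from the construction of $\mathcal{X}_G$. Second, writing $\mathbf{\Sigma}_0:=\bigsqcup_{\mathbb{T}\in F(\Delta)}\mathbb{T}$, the surface $\mathbf{\Sigma}$ is obtained from $\mathbf{\Sigma}_0$ by gluing, for each interior edge $e\in\mathring{\mathcal{E}}(\Delta)$, the two boundary arcs $a_e$ and $b_e$ of $\mathbf{\Sigma}_0$ that $e$ identifies; I would orient these arcs so that $a_e$ and $b_e$ always carry the same orientation, which is exactly the hypothesis under which Theorem~\ref{theorem1}(4) applies to each individual gluing.

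Fix an ordering $e_1,\dots,e_n$ of the interior edges, put $\mathbf{\Sigma}_k:=(\mathbf{\Sigma}_{k-1})_{|a_{e_k}\#b_{e_k}}$ so that $\mathbf{\Sigma}_n=\mathbf{\Sigma}$, and apply Theorem~\ref{theorem1}(4) at each step to get Poisson embeddings $i_k\colon \mathbb{C}[\mathcal{X}_G(\mathbf{\Sigma}_k)]\hookrightarrow\mathbb{C}[\mathcal{X}_G(\mathbf{\Sigma}_{k-1})]$ with image $\ker(\Delta_{a_{e_k}}^L-\sigma\circ\Delta_{b_{e_k}}^R)$. Set $i^\Delta:=i_1\circ\cdots\circ i_n$, a composition of Poisson embeddings. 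It remains to show that its image equals $\ker(\Delta^L-\sigma\circ\Delta^R)$, where $\Delta^L,\Delta^R$ are the total coactions of the statement. I would do this in two steps. (a) By induction on $k$ one shows that the image of $i_1\circ\cdots\circ i_k$ in $\mathbb{C}[\mathcal{X}_G(\mathbf{\Sigma}_0)]$ equals $\bigcap_{j\le k}\ker(\Delta_{a_{e_j}}^L-\sigma\circ\Delta_{b_{e_j}}^R)$, where now all the coactions are taken on $\mathbb{C}[\mathcal{X}_G(\mathbf{\Sigma}_0)]$ (the arcs $a_{e_j},b_{e_j}$ still being present as boundary arcs of $\mathbf{\Sigma}_0$). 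The inductive step rests on the naturality of the arc coactions: because $a_{e_k}$ and $b_{e_k}$ are boundary arcs untouched by the gluings of $e_1,\dots,e_{k-1}$, the embedding $i_1\circ\cdots\circ i_{k-1}$ intertwines the maps $\Delta_{a_{e_k}}^L,\Delta_{b_{e_k}}^R$ on $\mathbb{C}[\mathcal{X}_G(\mathbf{\Sigma}_{k-1})]$ with those on $\mathbb{C}[\mathcal{X}_G(\mathbf{\Sigma}_0)]$, whence $i_1\circ\cdots\circ i_{k-1}(\mathrm{image}(i_k))=\mathrm{image}(i_1\circ\cdots\circ i_{k-1})\cap\ker(\Delta_{a_{e_k}}^L-\sigma\circ\Delta_{b_{e_k}}^R)$, which by the inductive hypothesis equals $\bigcap_{j\le k}\ker(\Delta_{a_{e_j}}^L-\sigma\circ\Delta_{b_{e_j}}^R)$. (b) One checks $\bigcap_{e\in\mathring{\mathcal{E}}(\Delta)}\ker(\Delta_{a_e}^L-\sigma\circ\Delta_{b_e}^R)=\ker(\Delta^L-\sigma\circ\Delta^R)$: the inclusion $\subseteq$ comes from applying the counit of $\mathbb{C}[G]$ to all but one of the $\mathbb{C}[G]$ tensor factors, while $\supseteq$ follows from the fact that the coactions associated to distinct boundary arcs pairwise commute, so that the vanishing of each $\Delta_{a_e}^L-\sigma\circ\Delta_{b_e}^R$ propagates to the total map. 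Combining (a) and (b) gives $\mathrm{image}(i^\Delta)=\ker(\Delta^L-\sigma\circ\Delta^R)$, which is the claimed exactness; in particular the exact sequence does not depend on the chosen ordering of the edges.

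The main obstacle is the naturality statement used in step (a): one must verify that the coaction map attached to a boundary arc is genuinely local to that arc, i.e. that it is compatible, through the gluing embeddings $i_{|a\#b}$, with gluings carried out elsewhere on the surface. I expect this to come out directly from the way the coactions and the maps $i_{|a\#b}$ are built in the proof of Theorem~\ref{theorem1}(4) --- both being defined arc by arc --- but it is the point that requires unwinding the construction rather than a formal argument. A secondary bookkeeping issue arises when an interior edge has both sides on the same triangle, or when several interior edges are incident to a common triangle, so that the relevant coactions act on the same tensor factor $\mathbb{C}[\mathcal{X}_G(\mathbb{T})]$; one then has to confirm that these still commute, which is again part of the structure underlying Theorem~\ref{theorem1}(4).
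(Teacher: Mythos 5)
Your proposal is correct and follows essentially the same route as the paper: the paper obtains $i^{\Delta}$ by composing the gluing embeddings $i_{|e'\#e''}$ of Proposition \ref{gluing_formula} over the interior edges and asserts the resulting exact sequence directly. Your argument simply makes explicit the induction and the locality of the arc coactions that the paper leaves implicit.
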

Corollary \ref{corollary1} is the algebraic analogue of Equation \eqref{eq_triangulation_modulispace}. 
\vspace{2mm}\par 
When $\mathbf{\Sigma}=(\Sigma, \mathcal{A})$ is connected, a functor $\rho$ is a \textit{good} representation if either $\mathcal{A}\neq \emptyset$ or $\mathcal{A}=\emptyset$ and the restriction of $\rho$ to $\pi_1(\Sigma, v)$ for one (and thus all) basepoint $v\in \Sigma$ is irreducible and has stabilizer equal to the center of $G$. In general, $\rho$ is good if its restriction to every connected component of $\mathbf{\Sigma}$ is good.
Given a functor $\rho \in \mathcal{R}_G(\mathbf{\Sigma})$, we will define a chain complex $(C_{\bullet}(\Sigma, \mathcal{A} ; \rho ), \partial_{\bullet})$ and a cochain complex $(C^{\bullet}(\Sigma, \mathcal{A} ; \rho ), d^{\bullet})$, satisfying the following

\begin{theorem}\label{theorem2}
Given $\rho \in \mathcal{R}_G(\mathbf{\Sigma})$ a good representation with class $[\rho]\in \mathcal{X}_G(\mathbf{\Sigma})$, there exists canonical isomorphisms $\Lambda: T_{[\rho]} \mathcal{X}_{G}(\mathbf{\Sigma}) \xrightarrow{\cong} \mathrm{H}^1(\Sigma, \mathcal{A}; \rho)$ between the Zariski tangent space and the first twisted cohomological group, and $\Lambda^*: \Omega^1_{[\rho]} \mathcal{X}_G(\mathbf{\Sigma}) \xrightarrow{\cong} \mathrm{H}_1(\Sigma, \mathcal{A}; \rho)$ between the cotangent space and the first twisted homological group respectively.
\end{theorem}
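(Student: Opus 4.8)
The plan is to build everything from a concrete, combinatorial model attached to the triangulation. First I would fix a CW structure on $\Sigma_{\mathcal P}$ adapted to the punctured surface: $0$-cells at a chosen base point of each boundary arc (plus interior vertices if needed), $1$-cells given by the boundary arcs together with a system of arcs cutting $\Sigma_{\mathcal P}$ into discs, and $2$-cells the resulting faces. Since a functor $\rho\in\mathcal R_G(\mathbf\Sigma)$ assigns to each $1$-cell an element of $G$ and sends trivial paths (in particular boundary arcs) to the identity, $\rho$ is determined by its values on the ``interior'' $1$-cells, and these values are free. Concretely this recovers item (3) of Theorem \ref{theorem1}: the representation space is $G^E$ where $E$ is the number of interior $1$-cells, $d=\dim H_1(\Sigma_{\mathcal P},\partial\Sigma_{\mathcal P};\mathbb C)$, and after the gauge quotient one gets $G^d$. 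Then I would define $C_\bullet(\Sigma_{\mathcal P},\partial\Sigma_{\mathcal P};\rho)$ as the cellular chain complex of the pair with coefficients twisted by $\mathrm{Ad}_\rho$ along the $1$-skeleton, relative to $\partial\Sigma_{\mathcal P}$ so that boundary-arc cells are killed, and $C^\bullet$ as its $\mathfrak g$-dual; $\mathrm H^1$ and $\mathrm H_1$ are the cohomology/homology in degree one.

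Next I would construct the map $\Lambda$. A tangent vector to $\mathcal X_G(\mathbf\Sigma)$ at $[\rho]$ lifts, by smoothness of $\mathcal R_G(\mathbf\Sigma)=G^E$, to a tangent vector at $\rho$ in the representation space; writing this as a first-order deformation $\rho_t(\alpha)=\exp(tu(\alpha))\rho(\alpha)+O(t^2)$, the functoriality of $\rho_t$ and the triviality condition force $u$ (evaluated on the $1$-cells, with the boundary-arc cells sent to $0$) to be a relative twisted $1$-cocycle: the cocycle identity is exactly the linearization of $\rho_t(\alpha\beta)=\rho_t(\alpha)\rho_t(\beta)$, and vanishing on boundary arcs is the relative condition. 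Gauge transformations by $g_t=\exp(t\xi)$ change $u$ by the coboundary $d\xi$, where $\xi$ vanishes on $\partial\Sigma_{\mathcal P}$, i.e. $\xi$ is a relative twisted $0$-cochain. Hence $\Lambda$ is well defined from $T_{[\rho]}\mathcal X_G(\mathbf\Sigma)$ to $\mathrm H^1(\Sigma_{\mathcal P},\partial\Sigma_{\mathcal P};\rho)$. I would then check injectivity and surjectivity by exhibiting an explicit inverse: a relative $1$-cocycle on the interior $1$-cells defines a deformation of $\rho$ in $G^E$, hence a tangent vector to $\mathcal R_G(\mathbf\Sigma)$, which descends to $\mathcal X_G(\mathbf\Sigma)$; two cocycles differing by a coboundary give gauge-equivalent deformations, hence the same tangent vector. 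The map $\Lambda^*$ on cotangent spaces is then obtained by dualizing $\Lambda$ together with the perfect pairing between $\mathrm H^1(\Sigma_{\mathcal P},\partial\Sigma_{\mathcal P};\rho)$ and $\mathrm H_1(\Sigma_{\mathcal P},\partial\Sigma_{\mathcal P};\rho)$ induced by a chosen Killing form on $\mathfrak g$ (this pairing is perfect because $\mathfrak g$ is, and because we are over a field).

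The main obstacle is not the cocycle bookkeeping but the passage through the GIT-style quotient defining $\mathcal X_G(\mathbf\Sigma)$: I need the Zariski tangent space of the quotient at $[\rho]$ to be computed correctly by ``cocycles modulo coboundaries'' rather than only by the quotient of tangent spaces, which in general differ at non-stable points. Here the situation is favorable for two reasons to be exploited carefully: by item (3) of Theorem \ref{theorem1} the character variety of a connected surface with boundary is simply $G^d$, which is smooth, so $T_{[\rho]}\mathcal X_G(\mathbf\Sigma)$ has dimension $d\cdot\dim\mathfrak g$; and an Euler-characteristic computation for the relative twisted complex gives $\dim\mathrm H^1-\dim\mathrm H^0+\dim\mathrm H^2$ in terms of $\chi(\Sigma_{\mathcal P},\partial\Sigma_{\mathcal P})$, which one matches against $d$ after checking that the relative $\mathrm H^0$ and $\mathrm H^2$ contributions behave as expected (the relative $H^0$ vanishes since every component meets the boundary, and $H^2$ of the pair is governed by the boundary, which carries trivial coefficients). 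So the strategy is: prove $\Lambda$ is a well-defined linear map, prove it is surjective by the explicit lift, and then conclude injectivity by the dimension count $\dim\mathrm H^1=d\cdot\dim\mathfrak g=\dim T_{[\rho]}\mathcal X_G(\mathbf\Sigma)$ rather than by a direct argument. For the closed, connected case one has to separately match this with the classical identification $T_{[\rho]}\mathcal X_G(\Sigma)\cong\mathrm H^1(\Sigma;\mathrm{Ad}_\rho)$ from item (2), checking that our groupoid complex computes the same groupoid cohomology as the usual group cohomology of $\pi_1$ via the inclusion of the group into the fundamental groupoid, which is a routine equivalence. Finally, naturality of $\Lambda$ and $\Lambda^*$ (independence of the chosen CW structure, compatibility with the gluing maps of Theorem \ref{theorem1}) I would record as a corollary of the construction, since each step was phrased cellularly and any two triangulations are related by moves under which the complexes transform by chain homotopy equivalences.
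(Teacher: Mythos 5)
Your overall architecture differs from the paper's: the paper works directly with a bar-type complex over the full fundamental groupoid, identifies $\mathrm{Der}\bigl(\otimes^{\vee}_{\Pi_1(\Sigma_{\mathcal P})}\mathbb{C}[G],\mathbb{C}_{\chi_\rho}\bigr)$ with $\mathrm{C}^1$, shows that vanishing on $\mathcal I_\Delta$ is the cocycle condition, and then deduces the theorem from a commutative diagram whose top row is the sequence of derivation spaces obtained from the defining exact sequence $0\to\mathbb{C}[\mathcal X_G(\mathbf\Sigma)]\to\mathbb{C}[\mathcal R_G(\mathbf\Sigma)]\to\mathbb{C}[\mathcal G]\otimes\mathbb{C}[\mathcal R_G(\mathbf\Sigma)]$. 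That difference would be fine, but your plan has two genuine gaps.

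First, the crux of the theorem is precisely the point you dispatch with ``by smoothness of $\mathcal R_G(\mathbf\Sigma)=G^E$, a tangent vector to $\mathcal X_G(\mathbf\Sigma)$ lifts to a tangent vector at $\rho$.'' Smoothness of the total space does not imply that a derivation of the invariant subalgebra $\mathbb{C}[\mathcal X_G(\mathbf\Sigma)]=\mathbb{C}[\mathcal R_G(\mathbf\Sigma)]^{\mathcal G}$ extends to a derivation of $\mathbb{C}[\mathcal R_G(\mathbf\Sigma)]$; for a general reductive group action on a smooth affine variety the Zariski tangent space of the quotient at a non-stable orbit need not be ``cocycles mod coboundaries.'' Without this lifting your $\Lambda$ is not even defined, and your later ``explicit inverse'' only produces the map $\mathrm H^1\to T_{[\rho]}\mathcal X_G(\mathbf\Sigma)$, whose surjectivity is equivalent to the same lifting statement. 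This is exactly what the paper's Lemma on the commutative diagram is for (the exactness of $\mathrm{Der}(\mathbb{C}[\mathcal G]\otimes\mathbb{C}[\mathcal R])\to\mathrm{Der}(\mathbb{C}[\mathcal R])\to\mathrm{Der}(\mathbb{C}[\mathcal X])\to 0$), and any proof has to supply an argument here rather than invoke smoothness upstairs.

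Second, your injectivity argument is a dimension count resting on Theorem 1(3) ($\mathcal X_G(\mathbf\Sigma)\cong G^d$). In the paper that statement is itself obtained \emph{from} the cohomological description of the tangent space (the value of $d$ in Proposition 3.13 is read off from Proposition 3.12, which uses the discrete analogue of Theorem 2), so you must either recompute $d$ purely combinatorially from the graph model of the appendix or accept a circularity. Worse, the count is simply unavailable for closed surfaces: there $\mathcal X_G(\mathbf\Sigma)$ is singular at reducible representations, $\dim T_{[\rho]}\mathcal X_G(\mathbf\Sigma)$ is not locally constant, and the ``routine equivalence with the classical identification'' you defer to is exactly the non-routine point. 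A smaller but real issue is that your cellular complexes are not the complexes the theorem is stated for; you would still owe a comparison map with the groupoid bar complex (the paper's $\theta^{\mathbb P}$ plays this role for its discrete models).
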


\par This cohomological description of the tangent space will play an important role in the definition of the Poisson structure of \stated character varieties, which will appear as an intersection form on twisted groupoid homology. A related description appeared in \cite{GHJW_ModSpacesParBd} for marked surfaces having exactly one marking per boundary component (see Remark \ref{remark_GHJW} for details).

As for Culler-Shalen character varieties, to a conjugacy invariant regular function $f \in \mathbb{C}[G]^G$ and a simple closed curve $\gamma \in \Sigma$, one can associate a curve function $f_{\gamma} \in \mathbb{C}[\mathcal{X}_G(\mathbf{\Sigma})]$. The new feature in \stated character varieties is that for any regular function $f\in \mathbb{C}[G]$ (not necessarily conjugacy invariant) and any arc $\alpha$ whose endpoints lies in $\mathcal{A}$, one can also define a curve function $f_{\alpha}  \in \mathbb{C}[\mathcal{X}_G(\mathbf{\Sigma})]$ (still defined by $f_{\alpha}([\rho]):= f(\rho(\alpha))$). These functions are the analogue of the stated arcs appearing in stated skein algebras. 
When $G$ is standard, we will prove that $\mathbb{C}[\mathcal{X}_G(\mathbf{\Sigma})]$ is generated by curve functions, like in the unmarked case.
 The Poisson bracket is then characterized the following formula.

\begin{theorem}[Generalized Goldman formula]\label{theorem3}
Given two curve functions $f_{\mathcal{C}_1}, h_{\mathcal{C}_2} \in \mathbb{C}[\mathcal{X}_G(\mathbf{\Sigma})]$ and $\rho\in \mathcal{R}_G(\mathbf{\Sigma})$,  the Poisson bracket is characterized by the following formula:
\begin{eqnarray*} \{ f_{\mathcal{C}_1} , h_{\mathcal{C}_2} \}^{\mathfrak{o}} ([\rho])  &=&   \sum_a \sum_{(v_1, v_2)\in S(a)} \varepsilon(v_1, v_2) \left( X_{f, \mathcal{C}_1}(v_1)\otimes X_{h, \mathcal{C}_2}(v_2), r^{\mathfrak{o}(v_1,v_2)} \right) \\ &&+2\sum_{v\in c_1\cap c_2} \varepsilon(v) \left( X_{f, \mathcal{C}_1}(v), X_{h, \mathcal{C}_2}(v) \right) 
 \end{eqnarray*}
\end{theorem}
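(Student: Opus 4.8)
The plan is to reduce the computation of the Poisson bracket of two curve functions to a purely local problem supported near the intersection points of the curves, using the cohomological description of the (co)tangent spaces from Theorem~\ref{theorem2} together with the triangular decomposition of Corollary~\ref{corollary1}. First I would fix notation: write $\mathcal{C}_1, \mathcal{C}_2$ as immersed oriented curves, $c_1 = \mathcal{C}_1 \cap \partial\Sigma_{\mathcal{P}}$ etc.\ for their boundary points, and recall that $\Lambda^* : \Omega^1_{[\rho]}\mathcal{X}_G(\mathbf{\Sigma}) \xrightarrow{\cong} \mathrm{H}_1(\Sigma_{\mathcal{P}}, \partial\Sigma_{\mathcal{P}}; \rho)$. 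Since a Poisson bivector is a section of $\Lambda^2 T\mathcal{X}_G(\mathbf{\Sigma})$, and since $\{f_{\mathcal{C}_1}, h_{\mathcal{C}_2}\}([\rho]) = \pi_{[\rho]}(d f_{\mathcal{C}_1}, d h_{\mathcal{C}_2})$, the whole statement amounts to computing $d f_{\mathcal{C}}$ as a twisted homology class (this is the class $X_{f,\mathcal{C}}$, supported along $\mathcal{C}$ with coefficient data determined by $f$ and the holonomy of $\rho$) and then pairing two such classes against the Poisson bivector. The key point is that the formula splits as a sum of two kinds of contributions: a boundary term indexed by pairs $(v_1,v_2)\in S(a)$ of endpoints of the two curves lying on a common boundary arc $a$, weighted by the classical $r$-matrix $r^{\mathfrak{o}(v_1,v_2)}$ attached to the chosen orientation $\mathfrak{o}$; and an interior term indexed by transverse intersections $v\in c_1\cap c_2$ in the interior, weighted by the sign $\varepsilon(v)$ of the intersection and the Killing pairing $(\,\cdot\,,\,\cdot\,)$ of the tangent vectors $X_{f,\mathcal{C}_1}(v), X_{h,\mathcal{C}_2}(v)$.

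The key steps, in order, are as follows. (1) Establish that the Poisson bivector $\pi$ on $\mathcal{X}_G(\mathbf{\Sigma})$, viewed through $\Lambda$, is induced by an explicit element of $\mathrm{Hom}(\mathrm{H}_1(\Sigma_{\mathcal{P}},\partial; \rho) \otimes \mathrm{H}_1(\Sigma_{\mathcal{P}},\partial;\rho), \mathbb{C})$ built from the intersection form on the surface twisted by $\mathrm{Ad}_\rho$, plus a correction on the boundary governed by the $r$-matrix --- this is essentially the definition of the Poisson structure referenced after Theorem~\ref{theorem2}, so I would phrase Theorem~\ref{theorem3} as unwinding that definition on curve functions. (2) Compute the differential $d f_{\mathcal{C}}$ at $[\rho]$: parametrize $\mathcal{C}$, and show by the chain rule applied to the defining formula of a curve function (holonomy composed with $f\in\mathbb{C}[G]$) that its class in twisted homology is the $1$-chain $\mathcal{C}$ with coefficient the $\mathfrak{g}$-valued ``gradient'' of $f$ transported by the holonomy --- this is the vector $X_{f,\mathcal{C}}(v)$ evaluated at each point $v$ of $\mathcal{C}$, with the transversality making the pairing well-defined. (3) Pair the two classes: by Step~1 the interior contribution is the sum over transverse intersection points of the twisted intersection number, which evaluates to $2\sum_{v\in c_1\cap c_2}\varepsilon(v)(X_{f,\mathcal{C}_1}(v), X_{h,\mathcal{C}_2}(v))$ once one accounts for the factor coming from symmetrizing the bivector; the boundary contribution comes from the $r$-matrix term and produces $\sum_a\sum_{(v_1,v_2)\in S(a)}\varepsilon(v_1,v_2)(X_{f,\mathcal{C}_1}(v_1)\otimes X_{h,\mathcal{C}_2}(v_2), r^{\mathfrak{o}(v_1,v_2)})$. (4) Check compatibility with the triangular decomposition: verify the formula on the triangle $\mathbb{T}$ directly (where $\mathcal{X}_G(\mathbb{T})\cong G^d$ and everything is explicit), then use that $i^\Delta$ is a Poisson embedding and that curve functions on $\mathbf{\Sigma}$ restrict to sums of curve functions on the triangles to propagate it. (5) Argue that curve functions generate $\mathbb{C}[\mathcal{X}_G(\mathbf{\Sigma})]$ (stated earlier), so that the formula on curve functions together with the Leibniz rule characterizes the bracket entirely, justifying the word ``characterized''.

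The main obstacle I anticipate is Step~3 --- correctly bookkeeping the boundary contributions. Unlike the closed-surface case of Goldman, here the curves have endpoints on boundary arcs, and the holonomy along a path is only defined up to the action of the gauge group, which is trivialized on $\partial\Sigma_{\mathcal{P}}$ but whose infinitesimal action still enters the twisted homology with boundary. Getting the signs $\varepsilon(v_1,v_2)$ right, confirming that the relevant $r$-matrix is the one $r^{\mathfrak{o}(v_1,v_2)}$ dictated by the relative cyclic order of the two curve-endpoints and the orientation of the arc, and checking that when $G$ is abelian the $r$-matrix term is symmetric so the orientation dependence drops out (matching item~1 of Theorem~\ref{theorem1}) will require care. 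A secondary subtlety is making the pairing in Step~2 well-defined: one must choose representatives of $\mathcal{C}_1, \mathcal{C}_2$ that are transverse both to each other and to a small collar of the boundary, and verify independence of these choices --- a standard but nontrivial transversality argument in the twisted setting. The interior term, by contrast, should follow Goldman's original computation almost verbatim once the twisted intersection pairing is identified in Step~1.
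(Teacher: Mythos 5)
Your step (4) is in fact the paper's entire proof, and your steps (2), (3) and (5) match what the paper does; but your step (1) inverts the logic of the argument in a way that matters. In the paper the Poisson bracket is \emph{not} defined by the intersection form on twisted homology: it is defined triangle-by-triangle, as an explicit bivector $P^{\mathbb{T}}$ on the discrete model $G^{\mathbb{G}}$ built from the classical $r$-matrices (a Fock--Rosly-type formula), shown to descend to $\mathbb{C}[\mathcal{X}_G(\mathbb{T},\mathbb{P}^{\mathbb{T}})]$, and then restricted to $\mathbb{C}[\mathcal{X}_G(\mathbf{\Sigma})]$ through the gluing exact sequence. The Jacobi identity comes from the classical Yang--Baxter equation at the level of $P^{\mathbb{T}}$, not from any property of the intersection pairing. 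Theorem~\ref{theorem3} is precisely the statement that this a priori triangulation-dependent bracket evaluates on curve functions to the intersection form $\bigcap{}^{\mathfrak{o}}$; its main corollary is independence of the triangulation. If you take the intersection-form expression as the \emph{definition} of the bracket, as your step (1) suggests, you have nothing to prove but also no Poisson structure (no Jacobi identity, no bivector on the algebra of regular functions), so the ``unwinding the definition'' framing does not work. You should promote your step (4) to be the proof itself: an explicit computation on the bigone and on the triangle identifying $\langle D f\otimes D h, (P^{\mathbb{T}})_\rho\rangle$ with the local contributions of $\bigcap{}^{\mathfrak{o}}$, followed by propagation through $i^{\Delta}$.

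The one ingredient your outline leaves implicit, and which is the crux of the propagation step, is the compatibility of the intersection form with gluing (Lemma~\ref{lemma_gluing_intersection} in the paper): under the map $j_{a\#b}$ on twisted homology, a curve crossing a glued edge contributes two boundary terms, one for each lift $e'$, $e''$ of the edge, and these cancel because the signs satisfy $\varepsilon(v_1^{e'},v_2^{e'})=-\varepsilon(v_1^{e''},v_2^{e''})$ while the relevant $r$-matrix is the same on both sides (the convention $\mathfrak{o}(e')=-\mathfrak{o}(e'')$ is what makes the orderings $\mathfrak{o}(v_1,v_2)$ agree). Without this cancellation the sum of the triangle-local formulas would retain spurious $r$-matrix terms along every interior edge of the triangulation, and the restriction of $\hat{P}=\otimes_{\mathbb{T}}P^{\mathbb{T}}$ to $\mathbb{C}[\mathcal{X}_G(\mathbf{\Sigma})]$ would not match the intrinsic intersection form on $\mathbf{\Sigma}$. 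Your ``main obstacle'' paragraph gestures at this bookkeeping, so the gap is one of explicitness rather than of conception, but it is the lemma on which the whole reduction rests and it needs to be stated and proved, not just anticipated.
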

\par In the above formula, $c_1$ and $c_2$ are two geometric representatives of $\mathcal{C}_1$ and $\mathcal{C}_2$ in transverse position, the first summation is over the boundary arcs $a$, the second summation is over the pairs $(v_1,v_2)$ with $v_i\in a\cap c_i$, the elements $\varepsilon(v), \varepsilon(v_1,v_2), \mathfrak{o}(v_1,v_2) \in \{-1, +1\}$ are signs and $r^{\pm}$ are some classical $r$ matrices. We refer to Subsection $3.4$ for definitions.  When $\mathbf{\Sigma}$ is unmarked, the right-hand-side of above formula coincides with Goldman formula in \cite{Goldman86}. In particular, this formula is still valid for unmarked surfaces with non empty boundary, as was shown by Lawton (\cite[Theorem $15$]{Lawton_PoissonGeomSL3}) and Roche-Szenes (\cite{RocheSzenes}).

\vspace{5mm}
\par \textbf{Organization of the paper}
\vspace{3mm}
\par In Section \ref{sec_definitions}, we introduce the general definition of \stated character varieties. We will define an algebra $\mathbb{C}[\mathcal{R}_G(\mathbf{\Sigma})]$ whose maximal spectrum is the representation space $\mathcal{R}_G(\mathbf{\Sigma})$. We then define the algebraic gauge group action and define the \stated character variety as the maximal spectrum of the subalgebra of $\mathbb{C}[\mathcal{R}_G(\mathbf{\Sigma})]$ of coinvariant functions. The algebra $\mathbb{C}[\mathcal{R}_G(\mathbf{\Sigma})]$ is not finitely generated, hence the representation space is not an affine variety. Thus it will not be  obvious at this stage that the \stated character variety is an affine variety. We then introduce a \textit{discrete model} for the \stated character variety. The idea is that, to define an equivalence class of representation $[\rho]$ in the \stated character variety, we do not need to specify the value of $\rho$ on every paths of the fundamental groupoid, but only on a finite number of generating paths. We introduce the notion of \textit{finite presentation of the fundamental groupoid} which consists of the data of a finite number of paths and a finite number of relations which are sufficient to characterize a class in the relative character variety. Let us give a simple example. Consider the triangle $\mathbb{T}$, so a disc with three boundary arcs $a_1,a_2,a_3$. Fix arbitrary points $v_i \in a_i$ and choose a path $\alpha_i : v_i \to v_{i+1}$ ($i$ is considered modulo $3$) like in Figure \ref{fig_triangle}. We will say that the fundamental groupoid $\Pi_1(\mathbb{T})$ admits a presentation $\mathbb{P}=(\mathbb{V}, \mathbb{G}, \mathbb{RL})$, where $\mathbb{V}=\{v_1,v_2,v_3\}$ are the basepoints ($\mathbb{V}$ non trivially intersect each boundary arc once and each connected component of $\Sigma$), the generators are the paths $\mathbb{G}=\{ \beta_1^{\pm 1}, \beta_2^{\pm 1}, \beta_3^{\pm 1}\}$ (any path between the elements of $\mathbb{V}$ are composition of paths in $\mathbb{G}$) and the relations are the trivial relations $\beta_i \beta_i^{-1} =1$ and the non trivial relation $\beta_1\beta_2\beta_3=1$. 

\begin{figure}[!h] 
\centerline{\includegraphics[width=2.5cm]{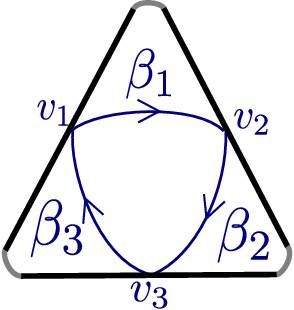} }
\caption{A finite presentation for the fundamental groupoid of the triangle.} 
\label{fig_triangle} 
\end{figure} 

As a consequence the map 
$$ \psi : \mathcal{X}_G(\mathbb{T}) \to \mathcal{X}_G(\mathbb{T}, \mathbb{P}) := \{ (g_1,g_2,g_3) \in G^3 | g_1g_2g_3=e \}$$
sending $[\rho]$ to $(\rho(\beta_1), \rho(\beta_2), \rho(\beta_3))$ will be proved to be an isomorphism. The variety $\mathcal{X}_G(\mathbb{T}, \mathbb{P})\cong G^2$ will be called a discrete model for $\mathcal{X}_G(\mathbb{T})$.

\vspace{2mm}
\par In Section \ref{sec_cohomology}, we define the chain and co-chain complexes defining the twisted groupoid (co)-homological groups, define a natural non-degenerate pairing between them and compare them to usual twisted (co)-homological groups. We then prove Theorem \ref{theorem2}. Eventually we define a skew-symmetric form, the \textit{intersection form}: 
$$\bigcap{}^{\mathfrak{o}} : \mathrm{H}_1(\Sigma, \mathcal{A}; \rho)^{\otimes 2} \rightarrow \mathbb{C}$$
which will characterize the Poisson structure. We then show that this form has a nice behavior for the gluing operation.
\vspace{2mm}
\par In Section \ref{sec_Poisson}, we define the Poisson structures on the algebras of regular functions of \stated character varieties. We first define the Poisson structure on the \stated character variety of the triangle and use triangular decomposition to extend it to general surfaces. We then prove the generalized Goldman formula of Theorem \ref{theorem3} which will imply, on the one hand, that for unmarked surfaces our Poisson structure coincides with Goldman's one and, on the other hand, that it does not depend on the choice of a triangulation but only on the orientations of the boundary arcs.
\vspace{2mm}
\par In Section \ref{sec_abelian}, we consider the case where $G=\mathbb{C}^*$. We prove that the \stated character variety $\mathcal{X}_{\mathbb{C}^*}(\mathbf{\Sigma})$ is canonically isomorphic to the relative singular cohomology group $\mathrm{H}^1(\Sigma, \mathcal{A} ; \mathbb{C}^*)$ and provide a simple description of its Poisson structure. The motivation to study this particular case lies in the connection, established in \cite{KojuQuesneyQNonAb} between this affine variety and the balanced Chekhov-Fock algebra (quantum Teichm\"uller space). 
\vspace{2mm}
\par In Section \ref{sec_fusion} we prove the fifth item of Theorem \ref{theorem1} about the fusion operation using the generalized Goldman formula.
\vspace{2mm}
\par In Appendix \ref{appendix_graph}, we prove that the algebra of regular functions of the \stated character variety is generated by the curve functions provided that $G$ is standard. This result is the key ingredient to identify \stated character varieties with their discrete models. In Appendix \ref{sec_FR}, we compare our construction of \stated character variety for marked surfaces with the Fock-Rosly construction in \cite{FockRosly} based on ciliated graphs. In Appendix \ref{sec_Alekseev} we compare our construction with the moduli spaces appearing in \cite{AlekseevMalkin_PoissonCharVar, AlekseevMalkin_PoissonLie, AlekseevKosmannMeinrenken}. In Appendix \ref{sec_QPoisson}, we use the generalized Goldman formula of Theorem \ref{theorem3} to prove that when $\mathbf{\Sigma}=(\Sigma, \mathcal{A})$ is connected and $\mathcal{A}\neq \emptyset$, the $G^{\mathcal{A}}$- Poisson variety $\mathcal{X}_G(\mathbf{\Sigma})$ is a twist of the quasi Poisson variety defined independently by Li Bland-Severa   and Nie in \cite{LiBlandSevera_ModuliSpacesQuiltedSurfaces,Nie_QPoissonGoldmanFormula}.

\vspace{3mm}
 $\textbf{Acknowledgements:}$ The author is thankful to S.Baseilhac, F.Bonahon, F.Costantino, L.Funar, A.Quesney, P.Roche, F.Ruffino, A.Sikora and J.Toulisse for useful discussions and to the University of South California, the Federal University of S\~ao Carlos, the University of S\~ao Paulo ICMC and the Waseda University for their kind hospitality during the completion of this work. He warmly thanks the referee for pointing to several missing references in literature and for important suggestions which improved the quality of the paper.
 He also thanks S.Lawton for pointing to him the reference \cite{Lawton_PoissonGeomSL3}. He acknowledges  support from the Coordena\c{c}\~ao de Aperfei\c{c}oamento de Pessoal de n\'ivel Superior (CAPES), from the GEometric structures And Representation varieties (GEAR) network, from  the Japanese Society for Promotion of Science (JSPS), from the Centre National de la Recherche Scientifique (CNRS) and from  the European Research Council (ERC DerSympApp) under the European 
 Union
  Horizon 2020 research and innovation program (Grant Agreement No. 768679).


\section{Stated character varieties}\label{sec_definitions}


\subsection{Turning $\Map(X,G)$ into an affine scheme}

\par Let $X$ be a non empty set, $G$ an affine reducible group scheme and $\Map(X,G)$ the set of maps $f:X\to G$. In order to turn $\Map(X,G)$ into an affine scheme, let us define a commutative algebra $\mathbb{C}[\Map(X,G)]$ such that the set of closed points of $\Specm(\mathbb{C}[\Map(X,G)])$ is in one to one bijection with the set of maps $f:X\to G$. We then abusively denote by the same symbol $\Map(X,G)$ the underlying affine scheme and its set of closed points. First, suppose that $G=\SL_N(\mathbb{C})$, so that $\mathbb{C}[\SL_N]=\quotient{\mathbb{C}[X_{i,j}, 1\leq i,j \leq N]}{(\det -1)}$ (where $\det$ is seen as polynomial in the coordinate functions $X_{i,j}$). In this case, one can define
$$ \mathbb{C}[\Map(X,G)]:= \quotient{\mathbb{C}[X_{i,j}^{\alpha}, 1\leq i, j \leq N, \alpha\in X]}{ \left(\det(M_{\alpha})-1 \right)}.$$
Here $M_{\alpha}$ is the $N\times N$ matrix with coefficients in the polynomial algebra $\mathbb{C}[X_{i,j}^{\alpha}, 1\leq i, j \leq N, \alpha \in X]$ defined by $M_\alpha:=( X_{i,j}^{\alpha})_{1\leq i,j\leq N}$. For a general affine reducible group scheme $G$, one can choose an embedding $G\subset \GL_N(\mathbb{C})$ so that $\mathbb{C}[G]=\quotient{\mathbb{C}[X_{i,j}, 1\leq i,j\leq N]}{\left( P_1, \ldots, P_k\right)}$ for some polynomials $P_i$. We can similarly define 
$$ \mathbb{C}[\Map(X,G)]:=  \quotient{\mathbb{C}[X_{i,j}^\alpha, 1\leq i, j \leq N, \alpha\in X]}{ \left( P_1(M_\alpha), \ldots, P_k(M_\alpha) \right)}.$$
Note that for each $\alpha\in X$, there is an obvious embedding $\iota_{\alpha}: \mathbb{C}[G] \hookrightarrow \mathbb{C}[\Map(X,G)]$ sending $X_{i,j}$ to $X_{i,j}^{\alpha}$. For $x\in \mathbb{C}[G]$ and $\alpha \in X$, we write $x_{\alpha}:=\iota_{\alpha}(x) \in \mathbb{C}[\Map(X,G)]$. 
\par 
A closed point of $\Specm(\mathbb{C}[\Map(X,G)])$, described by a character $\chi: \mathbb{C}[\Map(X,G)]\to \mathbb{C}$, induces characters $\chi_{\alpha}:= \chi\circ \iota_{\alpha}: \mathbb{C}[G]\to \mathbb{C}$ which corresponds to elements $f_{\chi}({\alpha})\in G$. We thus get a function $f_{\chi}:X\to G$ and the assignation $\chi \mapsto f_{\chi}$ is a bijection between the closed points of $\Specm(\mathbb{C}[\Map(X,G)])$ and $\Map(X,G)$ as desired.
\par Note that $\Map(X,G)$ has a group structure given by $f_1 \cdot f_2 ({\alpha})=f_1({\alpha})f_2({\alpha})$ for ${\alpha}\in X$ which is reflected by the fact that $\mathbb{C}[\Map(X,G)]$ has a natural Hopf algebra structure uniquely characterized by the requirement that each $\iota_{\alpha}$ is a Hopf algebra morphism. More precisely, let $(\mathbb{C}[G], \Delta, \epsilon, S)$ be the Hopf algebra of regular functions of $G$, then the Hopf algebra structure of $\mathbb{C}[\Map(X,G)]$ is given by 
$$ \Delta(x_{\alpha}) := \sum x^{(1)}_{\alpha} \otimes x^{(2)}_{\alpha}, \quad \epsilon(x_{\alpha}):=\epsilon(x), \quad S(x_{\alpha}):= S(x)_{\alpha}.$$
Here and henceforth, we use Sweedler's notation $\Delta(x)=\sum x^{(1)}\otimes x^{(2)}$ for the coproduct. The coproduct can be alternatively written in matrix notation as 
$$\Delta( M_{\alpha})= M_{\alpha} \boxtimes M_{\alpha}.$$

\begin{remark}
For each finite subset $S\subset X$, we get an embedding $\iota_S:= \otimes_{\alpha \in S}\iota_{\alpha}: \mathbb{C}[G]^{\otimes S} \hookrightarrow \mathbb{C}[\Map(X,G)]$ and we can identify $\mathbb{C}[\Map(X,G)]$ with the projective limit
$$ \mathbb{C}[\Map(X,G)]\cong \varprojlim_{S\subset X, S \mathrm{ finite}} \mathbb{C}[G]^{\otimes S}.$$
The advantage of this alternative definition is that it is independent on the linear embedding $G\subset \GL_N(\mathbb{C})$.

\end{remark}

\subsection{Definition of \stated character varieties}

Let $\mathbf{\Sigma}=(\Sigma, \mathcal{A})$ be a marked surface. 
Recall that $\mathcal{R}_G(\mathbf{\Sigma})$ is the set of functors $\rho : \Pi_1(\Sigma)\to G$ whose restriction to $\Pi_1(\mathcal{A})$ is trivial. ln order to turn it into an affine scheme, note that as a set it is a subset of $\Map(\Pi_1(\Sigma), G)$. 
 Define the ideal $\mathcal{I}_{\Delta}\subset \mathbb{C}[\Map(\Pi_1(\Sigma), G)]$ generated by the elements $x_{\alpha\beta} - \sum x^{(1)}_{\alpha}\cdot x^{(2)}_{\beta}$ for $x\in \mathbb{C}[G]$ and $\alpha, \beta \in \Pi_1(\Sigma)$ two paths such that $t(\alpha)=s(\beta)$.  Define the ideal $\mathcal{I}_{\epsilon} \subset  \mathbb{C}[\Map(\Pi_1(\Sigma), G)]$ generated by the elements $x_{\alpha^t}-\epsilon(x)$ for $x\in \mathbb{C}[G]$ and $\alpha^{t}\in \Pi_1(\mathcal{A})$ (a trivial path). 

\begin{definition}(Representation scheme)
The algebra $\mathbb{C}[\mathcal{R}_G(\mathbf{\Sigma})]$ is the quotient of the algebra $  \mathbb{C}[\Map(\Pi_1(\Sigma), G)]$ by the ideal $\mathcal{I}_{\Delta}+\mathcal{I}_{\epsilon}$. 
\end{definition}

 \begin{remark} For instance, when $G=\SL_N(\mathbb{C})$, we get the synthetic formula 
  $$ \mathbb{C}[\mathcal{R}_{\SL_N}(\mathbf{\Sigma})] \cong
  \quotient{\mathbb{C}\left[X^{\alpha}_{i,j}, \begin{array}{l} \alpha \in \Pi_1(\Sigma), \\ 1\leq i,j\leq N \end{array} \right]}
  {\begin{pmatrix} \det(M_{\alpha})=1, M_{\alpha}M_{\beta}= M_{\alpha\beta}, \\ M_{\alpha^{t}}= \mathds{1}_N \end{pmatrix}}. $$
  \end{remark}

\begin{lemma}\label{lemma_def}
 The maximal spectrum of $\mathbb{C}[\mathcal{R}_G(\mathbf{\Sigma})]$ is in canonical bijection with the representation space $\mathcal{R}_G(\mathbf{\Sigma})$.
 \end{lemma}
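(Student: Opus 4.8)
The plan is to unwind both sides of the claimed bijection and check that the defining relations of $\mathbb{C}[\mathcal{R}_G(\mathbf{\Sigma})]$ translate exactly into the functoriality and triviality conditions defining $\mathcal{R}_G(\mathbf{\Sigma})$. Recall that, since $\mathbb{C}[G]$ is a finitely generated commutative Hopf algebra and $G = \Specmax(\mathbb{C}[G])$, an algebra morphism $\mathbb{C}[G] \to \mathbb{C}$ is the same as a point of $G$; and more generally, by the universal property of $\otimes_{S}^{\vee}$ described at the start of the subsection, an algebra morphism $\otimes_{\Pi_1(\Sigma_{\mathcal{P}})}^{\vee}\mathbb{C}[G] \to \mathbb{C}$ is the same as a set-theoretic map $\rho : \Pi_1(\Sigma_{\mathcal{P}}) \to G$, the correspondence sending $\rho$ to the morphism $\chi_\rho$ with $\chi_\rho(x_\alpha) = x(\rho(\alpha))$. (Concretely, using the presentation of $\mathbb{C}[G]$ by generators $g_i$ and relations $R_j$, a morphism $\mathbb{C}[X_{s,g_i}] \to \mathbb{C}$ killing the $R_j(X_{s,g_\bullet})$ is a choice, for each $s$, of a point of $G$.) So points of $\Specmax \otimes_{\Pi_1(\Sigma_{\mathcal{P}})}^{\vee}\mathbb{C}[G]$ are in canonical bijection with arbitrary maps $\rho : \Pi_1(\Sigma_{\mathcal{P}}) \to G$.

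Next I would observe that $\Specmax$ of a quotient $A/I$ is the subset of $\Specmax A$ consisting of those $\chi$ with $\chi(I) = 0$; so it suffices to check that $\chi_\rho$ kills $\mathcal{I}_\Delta + \mathcal{I}_\epsilon$ if and only if $\rho$ defines a functor $\underline{\Pi}_1(\Sigma_{\mathcal{P}}) \to \underline{G}$ sending trivial paths to the neutral element. For $\mathcal{I}_\Delta$: the generator $x_{\alpha\beta} - \sum x^{(1)}_\alpha \cdot x^{(2)}_\beta$ is sent by $\chi_\rho$ to $x(\rho(\alpha\beta)) - \sum x^{(1)}(\rho(\alpha)) x^{(2)}(\rho(\beta)) = x(\rho(\alpha\beta)) - x(\rho(\alpha)\rho(\beta))$, using that $\Delta$ is dual to multiplication in $G$. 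Since this vanishes for all $x \in \mathbb{C}[G]$ and all composable $\alpha,\beta$ iff $\rho(\alpha\beta) = \rho(\alpha)\rho(\beta)$ for all such pairs (as $\mathbb{C}[G]$ separates points of $G$), $\chi_\rho$ kills $\mathcal{I}_\Delta$ iff $\rho$ is multiplicative on composable morphisms. Similarly $\chi_\rho(x_{\alpha^t} - \epsilon(x)) = x(\rho(\alpha^t)) - \epsilon(x) = x(\rho(\alpha^t)) - x(e)$, which vanishes for all $x$ iff $\rho(\alpha^t) = e$; so $\chi_\rho$ kills $\mathcal{I}_\epsilon$ iff $\rho$ sends every trivial path to $e$. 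One should note that the condition $\rho(\alpha^t) = e$ for $\alpha^t$ a constant path already forces $\rho(\alpha)\rho(\alpha^{-1}) = e$ via $\mathcal{I}_\Delta$, so compatibility with inverses is automatic, and the multiplicativity condition plus $\rho(\mathrm{const}) = e$ is exactly the statement that $\rho$ is a functor $\underline{\Pi}_1(\Sigma_{\mathcal{P}}) \to \underline{G}$; combined with vanishing on paths contained in a boundary arc, this is precisely the definition of an element of $\mathcal{R}_G(\mathbf{\Sigma})$.

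Finally I would package the two preceding steps: the bijection $\{\chi : \otimes_{\Pi_1}^{\vee}\mathbb{C}[G] \to \mathbb{C}\} \leftrightarrow \{\rho : \Pi_1 \to G\}$ restricts to a bijection $\Specmax \mathbb{C}[\mathcal{R}_G(\mathbf{\Sigma})] \leftrightarrow \mathcal{R}_G(\mathbf{\Sigma})$, and this bijection is canonical because it was built purely from the universal properties of $\otimes^{\vee}$, of quotient algebras, and of $\Specmax$, with no choices involved. The only mildly delicate point — the part I expect to need the most care — is the set-theoretic identification of $\Specmax \otimes_{\Pi_1}^{\vee}\mathbb{C}[G]$ with maps $\Pi_1 \to G$: one must make sure the "all but finitely many factors equal $1$" restriction in the definition of $\otimes^{\vee}$ does not secretly impose a finite-support condition on $\rho$, which it does not, since a character is determined by its values on the generators $x_\alpha$ and these values may be prescribed independently for each $\alpha$. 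Everything else is a direct translation through Sweedler's notation and the duality between the Hopf structure on $\mathbb{C}[G]$ and the group structure on $G$.
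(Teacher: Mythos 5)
Your proposal is correct and follows essentially the same route as the paper's own proof: identify characters of $\otimes_{\Pi_1(\Sigma_{\mathcal{P}})}^{\vee}\mathbb{C}[G]$ with arbitrary maps $\Pi_1(\Sigma_{\mathcal{P}})\to G$ via the factorwise restriction, then observe that vanishing on $\mathcal{I}_{\Delta}$ and $\mathcal{I}_{\epsilon}$ translates exactly into functoriality and triviality on trivial paths. Your extra remarks (that compatibility with inverses is automatic, and that the $\otimes^{\vee}$ restriction imposes no finite-support condition on $\rho$) are correct refinements of the same argument.
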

 
 \begin{proof}
 Let $p: \mathbb{C}[\Map(\Pi_1(\Sigma), G)] \to \mathbb{C}[\mathcal{R}_G(\mathbf{\Sigma})]$ be the quotient map. 
 Let $\chi : \mathbb{C}[\mathcal{R}_G(\mathbf{\Sigma})] \rightarrow \mathbb{C}$ be a character. The character  $\widetilde{\chi}:= \chi\circ p:  \mathbb{C}[\Map(\Pi_1(\Sigma), G)]\to \mathbb{C}$ defines a map $\rho: \Pi_1(\Sigma) \to G$. Since $\widetilde{\chi}$ vanishes on the ideal $\mathcal{I}_{\Delta}$, it satisfies $\rho(\alpha \beta)=\rho(\alpha) \rho(\beta)$, i.e. $\rho$ is a functor. Since $\widetilde{\chi}$ vanishes on the ideal $\mathcal{I}_{\epsilon}$, it satisfies $\rho(\alpha^t)=e$ for all $\alpha^t\in \Pi_1(\mathcal{A})$, i.e. $\rho \in \mathcal{R}_G(\mathbf{\Sigma})$. Conversely, a functor  $\rho \in \mathcal{R}_G(\mathbf{\Sigma})$ is in particular a map $\Pi_1(\Sigma)\to G$ so defines a character $\widetilde{\chi}: \mathbb{C}[\Map(\Pi_1(\Sigma), G)] \to \mathbb{C}$. The fact that $\rho$ is functor implies $\widetilde{\chi}(\mathcal{I}_{\Delta})=0$ and the fact that it is trivial on $\Pi_1(\mathcal{A})$ implies $\widetilde{\chi}(\mathcal{I}_{\epsilon})=0$ so $\widetilde{\chi}$ lifts to  a character $\chi \mathbb{C}[\mathcal{R}_G(\mathbf{\Sigma})] \rightarrow \mathbb{C}$. 
 These two assignments $\rho \rightarrow \chi$ and $\chi \rightarrow \rho$ are inverse to each other, thus define the desired bijection.
 \end{proof}

\par Define the Hopf algebra $\mathbb{C}[\mathcal{G}]:= \mathbb{C}[\Map( \Sigma\setminus \mathcal{A} , G)]$ whose maximal spectrum is in natural bijection with the gauge group $\mathcal{G}$ of maps $g:\Sigma\to G$ with  trivial restriction on $\mathcal{A}$.   The algebraic group $\mathcal{G}$ acts algebraically on the algebra $\mathbb{C}[\mathcal{R}_G(\mathbf{\Sigma})]$ as follows. We define a left Hopf co-action $\Delta^{\mathcal{G}}: \mathbb{C}[\mathcal{R}_G(\mathbf{\Sigma})]  \rightarrow \mathbb{C}[\mathcal{G}] \otimes\mathbb{C}[\mathcal{R}_G(\mathbf{\Sigma})] $ by the formula:
$$ {\Delta}^{\mathcal{G}}(x_{\alpha}) = \left\{ 
\begin{array}{ll}
\sum (x'_{s(\alpha)}\cdot S(x''')_{t(\alpha)} )\otimes x''_{\alpha} & \mbox{, if }s(\alpha), t(\alpha) \in \Sigma\setminus \mathcal{A}; \\
\sum x'_{s(\alpha)} \otimes x''_{\alpha} & \mbox{, if }s(\alpha) \in \Sigma\setminus \mathcal{A}, t(\alpha) \in \mathcal{A}; \\
\sum S(x'')_{t(\alpha)} \otimes x'_{\alpha} & \mbox{, if } s(\alpha) \in \mathcal{A}, t(\alpha) \in \Sigma \setminus \mathcal{A}; \\
1 \otimes x_{\alpha} &\mbox{, if } s(\alpha), t(\alpha) \in \mathcal{A}.
\end{array}\right. $$
\par This co-action defines  an algebraic action of $\mathcal{G}$ on $\mathbb{C}[\mathcal{R}_G(\mathbf{\Sigma})] $ which corresponds to the  group action $ \mathcal{G} \times \mathcal{R}_G(\mathbf{\Sigma}) \rightarrow \mathcal{R}_G(\mathbf{\Sigma})$ of the introduction, defined by 
$$ g\cdot \rho (\alpha) := g(s(\alpha)) \rho(\alpha) g(t(\alpha))^{-1}, \mbox{ for any }\rho \in \mathcal{R}_G(\mathbf{\Sigma}), g\in \mathcal{G}, \alpha\in \Pi_1(\Sigma). $$

\begin{definition}(\Stated character varieties)
Consider the sub-algebra $\mathbb{C}[\mathcal{X}_G(\mathbf{\Sigma})]:= \mathbb{C}[\mathcal{R}_G(\mathbf{\Sigma})]^{\mathcal{G}}\subset \mathbb{C}[\mathcal{R}_G(\mathbf{\Sigma})]$ of $\mathcal{G}$-invariant functions. The \stated character variety $\mathcal{X}_G(\mathbf{\Sigma})$ is defined as the maximal spectrum of the algebra $\mathbb{C}[\mathcal{X}_G(\mathbf{\Sigma})]$.
\end{definition}

 We will prove that $\mathbb{C}[\mathcal{X}_G(\mathbf{\Sigma})]$ is finitely generated and reduced, hence that the \stated character variety is an affine variety (except possibly for closed surfaces and $G\neq \GL_N, \SL_N$ in which case the question whether $\mathcal{X}_G(\Sigma)$ is reduced or not is open in general, see \cite{Sikora}). 

\subsection{Stabilizer}\label{sec_stabilizer}

In this subsection, we suppose that $\mathbf{\Sigma}$ is connected.
For $\rho \in \mathcal{R}_G(\mathbf{\Sigma})$, we denote by $S_{\rho}\subset \mathcal{G}_{\mathbf{\Sigma}}$ its stabilizer for the $\mathcal{G}_{\mathbf{\Sigma}}$ action. For $v\in \Sigma\setminus \mathcal{A}$, let $\rho_v : \pi_1(\Sigma, v)\to G$ be the restriction of $\rho$ to $\pi_1(\Sigma, v)$ and $p_v: \mathcal{R}_G(\mathbf{\Sigma})\to \Hom(\pi_1(\Sigma, v), G)$ be the regular projection sending $\rho$ to $\rho_v$. We denote by $S_{\rho, v}\subset G$ the stabilizer of $\rho_v$ for the $G$ action by conjugacy. 

\begin{lemma}\label{lemma_stabilizer}
\begin{enumerate}
\item If $\mathcal{A}\neq \emptyset$, then the action of $\mathcal{G}_{\mathbf{\Sigma}}$ on $\mathcal{R}_G(\mathbf{\Sigma})$ is free. 
\item If $\mathcal{A}=\emptyset$, for every $\rho\in \mathcal{R}_G(\mathbf{\Sigma})$ and $v\in \Sigma$ the map $S_{\rho}\to S_{\rho, v}$ sending $g$ to $g(v)$ is an isomorphism.
\end{enumerate}
\end{lemma}

\begin{proof}
Let $\rho\in \mathcal{R}_G(\mathbf{\Sigma})$ and $g\in \mathcal{G}_{\mathbf{\Sigma}}$ such that $g\cdot \rho = \rho$. Then for $\alpha: v \to w$ a path, one has 
$$ g(v) \rho(\alpha) = \rho(\alpha) g(w).$$
If $\mathcal{A}\neq \emptyset$, we can choose $w\in \mathcal{A}$ in which case $g(w)=e$ and, by connectedness,  for every $v\in \Sigma$ one can find a path $\alpha: v\to w$. The above equality then implies $g(v)=e$ as well. Therefore $g$ is the constant map with value $e$ and the stabilizer of $\rho$ is trivial. If $\mathcal{A}=\emptyset$, fix a basepoint $v\in \Sigma$. For every $w\in \Sigma$, one can find a path $\alpha: v\to w$ and then $g(w)= \rho(\alpha)^{-1} g(v) \rho(\alpha)$. Therefore $g$ is determined by $g(v)$ and the map $S_{\rho}\to G$ sending $g$ to $g(v)$ is injective. If $g(v)\in G$ is fixed, one can extend it to a map $g: \Sigma \to G$ by the formula $g(w):=  \rho(\alpha)^{-1} g(v) \rho(\alpha)$ if and only if  $\rho(\alpha)^{-1} g(v) \rho(\alpha)$ does not depend on the choice of the path $\alpha$ connecting $v$ to $w$. If $\beta: v\to w$ is another path and $\gamma_v= \alpha \beta^{-1}\in \pi_1(\Sigma, v)$, then $\rho(\alpha)^{-1} g(v) \rho(\alpha)=\rho(\beta)^{-1} g(v) \rho(\beta)$ if and only if $\rho(\gamma_v)$ commutes with $g(v)$. Therefore $g(v)\in G$ is in the image of the embedding  $S_{\rho}\to G$ if and only if it commutes with all elements $\rho(\gamma_v)$ for $\gamma_v \in \pi_1(\Sigma, v)$, i.e. if and only if $g(v)\in S_{\rho, v}$. This concludes the proof.
\end{proof}

\subsection{Curve functions}

\par We now define a set of regular functions on the character varieties which will be proved to generate the algebra of regular functions. A \textit{curve} $\mathcal{C}$ in $\mathbf{\Sigma}$ is a homotopy class of continuous map $c:[0,1]\rightarrow \Sigma$ such that either $c(0)=c(1)$ (closed curve) or $c(0), c(1) \in \partial \mathcal{A}$ (open curve or arc). 
For open curves, we allow the homotopy to move the endpoints $c(0)$ and $c(1)$ inside their boundary arcs. The map $c$ is called a \textit{geometric representative} of $\mathcal{C}$. The path $\alpha_c : c(0) \rightarrow c(1)$ in $\Pi_1(\Sigma)$ defined by $c$ is called a \textit{path representative} of $\mathcal{C}$.
\vspace{2mm}
\par Let $\mathcal{C}$ be a curve and $f\in \mathbb{C}[G]$ a regular function which is further assumed to be invariant by conjugacy if $\mathcal{C}$ is closed. Let  
$\alpha_{\mathcal{C}}$ a path representative of $\mathcal{C}$.

\begin{definition}
 We define the \textit{curve function} $f_{\mathcal{C}}\in \mathbb{C}[\mathcal{X}_G(\mathbf{\Sigma})]$ to be the class in $\mathbb{C}[\mathcal{R}_G(\mathbf{\Sigma})]$ of the element $f_{\alpha_C} \in \mathbb{C}[\Map(\Pi_1(\Sigma),G)]$. 
 \end{definition}
 
 This class does not depend on the choice of the path representative and is invariant under the gauge group action, hence the function $f_{\mathcal{C}}$ is well defined. 
 
\begin{proposition}\label{prop_holonomy_functions}
When $G$ is standard, the algebra $\mathbb{C}[\mathcal{X}_G(\mathbf{\Sigma})]$ is generated by the curve functions.
\end{proposition}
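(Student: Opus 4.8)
The plan is to work at the level of the representation algebra $\otimes_{\Pi_1(\Sigma_{\mathcal{P}})}^{\vee}\mathbb{C}[G]$ and exploit that it is generated, by construction, by the elements $x_{\alpha}$ for $x\in\mathbb{C}[G]$ and $\alpha\in\Pi_1(\Sigma_{\mathcal{P}})$. Consequently $\mathbb{C}[\mathcal{R}_G(\mathbf{\Sigma})]$ is generated by the images of these elements, and $\mathbb{C}[\mathcal{X}_G(\mathbf{\Sigma})]=\mathbb{C}[\mathcal{R}_G(\mathbf{\Sigma})]^{\mathcal{G}}$ is then obtained by a Reynolds-type averaging (available since $\mathcal{G}$, a finite product of reducible groups once we restrict attention to a finite support, is reducible). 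The subtlety is that an arbitrary element $x_{\alpha}$ is \emph{not} itself gauge invariant — the co-action $\widetilde{\Delta}^{\mathcal{G}}$ twists both endpoints when $s(\alpha),t(\alpha)\in\mathring{\Sigma}_{\mathcal{P}}$. So the real content of the statement is that, after passing to $\mathcal{G}$-invariants, one can reduce to curve functions, i.e.\ to elements $f_{\alpha_{\mathcal{C}}}$ whose associated path is either closed (when $f$ is a class function) or has both endpoints in $\partial\Sigma_{\mathcal{P}}$.

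The key steps, in order, would be: \emph{(1)} Reduce to a finitely generated situation. Any invariant regular function involves only finitely many paths; using a finite presentation $\mathbb{P}$ of the fundamental groupoid (as set up in the discrete-model discussion of Section~2), one replaces $\mathbb{C}[\mathcal{X}_G(\mathbf{\Sigma})]$ by the GIT quotient $\mathbb{C}[\mathcal{X}_G(\mathbf{\Sigma},\mathbb{P})]$ of an affine variety of the form $G^{E}$ (one copy of $G$ per edge of $\mathbb{P}$) by the discrete gauge group $G^{V_{\mathrm{int}}}$ (one copy per interior vertex). \emph{(2)} On this finite model, observe that $\mathbb{C}[G^{E}]$ is generated by matrix-coefficient functions $x_e$ attached to single edges $e$, hence — invoking reducibility and the Reynolds operator — $\mathbb{C}[G^{E}]^{G^{V_{\mathrm{int}}}}$ is spanned by averages of products $\prod_{e} x_e^{(e)}$. \emph{(3)} Show each such average is a polynomial in curve functions. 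For a single edge $e$ with both endpoints interior, the averaged function $\int_{g,h} \Delta(x)\big(g\,M_e\,h^{-1}\big)$ is, after expanding the coaction via Sweedler notation, a sum of products $x'_{v_1}\cdot(\text{stuff})\cdot S(x''')_{v_2}$; iterating the averaging over a maximal tree of $\mathbb{P}$ one can gauge-fix the tree edges to the identity, at the cost of replacing the remaining edge-variables by the holonomies of a generating set of loops and boundary-to-boundary arcs — these holonomies are exactly path representatives of curves, and the functions of them that survive the final averaging over the residual conjugation are class functions on the loops and arbitrary functions on the arcs, i.e.\ curve functions. \emph{(4)} Conclude that the curve functions generate.

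The honest way to run step~(3) without a messy tree computation is to use the description of $\mathbb{C}[\mathcal{R}_G(\mathbf{\Sigma})]$ in terms of the fundamental groupoid directly: fix a spanning tree $T$ of (the $1$-skeleton underlying) a presentation, so that $\Pi_1$ is freely generated over $T$ by the edges $e\notin T$ together with, for each boundary arc, a distinguished path from a fixed interior basepoint to that arc. Gauge-fixing $T$ to the identity identifies $\mathbb{C}[\mathcal{X}_G(\mathbf{\Sigma})]$ with $\mathbb{C}[G^{E\setminus T}]^{G}$ where the residual $G$ acts by simultaneous conjugation on the loop-variables and by left/right translation on the arc-variables; the $i,j$ matrix coefficients of the arc-variables are literally curve functions $(x_{i,j})_{\mathcal{C}}$ for $\mathcal{C}$ the corresponding open curve, the loop-variables' invariants are generated by traces of words — which are class-function curve functions — and cross terms are handled by the standard fact that $\mathbb{C}[G^m]^{G}$ is generated by matrix coefficients of the $G$-translate orbits of each factor, all expressible through curve functions along concatenated paths. \emph{The main obstacle} I expect is bookkeeping the gauge action at boundary versus interior vertices consistently (the four cases in the definition of $\widetilde{\Delta}^{\mathcal{G}}$), and making sure the reduction to a finite presentation is compatible with the isomorphism between the discrete and canonical character varieties established in Section~2; granting that isomorphism, the rest is the classical first-fundamental-theorem-of-invariant-theory argument for $G$-actions on products, adapted to the groupoid setting.
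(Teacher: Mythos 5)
Your overall strategy---reduce to a finite combinatorial model, contract a spanning tree so that only one interior vertex survives, and then invoke the classical generation-by-traces results for simultaneous conjugation---is essentially the strategy of the paper's Appendix A (the graphs $\Gamma$, the contraction $\Gamma\mapsto\Gamma(v)$ of Lemma \ref{lemma_appendix2}, the one-vertex model $\Gamma(n,m)$ of Lemma \ref{lemma_appendix1}, and the invariant-theoretic input of Lemma \ref{lemma_Sikora}). But your step (1) is circular as written. You pass to the discrete model by ``granting'' the isomorphism $\mathbb{C}[\mathcal{X}_G(\mathbf{\Sigma},\mathbb{P})]\cong\mathbb{C}[\mathcal{X}_G(\mathbf{\Sigma})]$ of Proposition \ref{proposition_discrete_model}. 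In the paper, however, only the \emph{injectivity} of $\phi^{\mathbb{P}}$ is proved independently (Lemmas \ref{lemma_injectivity1} and \ref{lemma_injectivity2}); its \emph{surjectivity}---which is exactly the assertion you need, namely that every $\mathcal{G}$-invariant function on the full representation space is expressible through the generators of a finite presentation---is deduced \emph{from} Proposition \ref{prop_holonomy_functions}. So you may not use that isomorphism here. The fix is the one the paper uses: given an invariant class $[x]$, do not choose a presentation of the whole fundamental groupoid; instead form the ad hoc finite graph $\Gamma$ whose edges are precisely the finitely many paths occurring in a representative of $x$. Then $[x]$ lies in the image of $\mathbb{C}[\mathcal{X}_G(\Gamma)]\rightarrow\mathbb{C}[\mathcal{X}_G(\mathbf{\Sigma})]$ by construction, with no surjectivity statement required, and the whole problem becomes a statement about graphs (Proposition \ref{prop_graph}).

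Two further points on steps (2)--(3). The Reynolds averaging is legitimate on a finite model but is not what carries the argument: when $V^{\partial}(\Gamma)\neq\emptyset$ the residual gauge action at the last interior vertex is free, and Lemma \ref{lemma_appendix1} exhibits an explicit Hopf-algebraic isomorphism $\mathbb{C}[G]^{\otimes\mathcal{D}}\cong\mathbb{C}[\mathcal{X}_G(\Gamma(n,m))]$ with explicit inverse, so no invariant theory is needed there at all. The genuinely non-trivial input occurs only when $V^{\partial}=\emptyset$, where one needs that $\mathbb{C}[\mathcal{X}_G(\mathbb{F}_m)]$ is generated by trace functions of \emph{words} in the generators (not merely class functions of the individual loop variables, as your phrase ``class functions on the loops'' suggests); this is a cited theorem of Procesi, Brumfield--Hilden, Sikora and Florentino--Lawton, valid precisely for the listed groups $G$, not an instance of a general first fundamental theorem. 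With the circularity repaired and these two points made precise, your argument coincides with the paper's.
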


\par The proof of Proposition \ref{prop_holonomy_functions} is postponed to Appendix A. For now on, we will only consider standard Lie groups $G$ in order to use Proposition \ref{prop_holonomy_functions}.

\subsection{Discrete models}

\par We now define the notion of finite presentation of the fundamental groupoid. Let $\mathbf{\Sigma}=(\Sigma, \mathcal{A})$ be a marked surface. 

\begin{definition}
A \textit{finite generating set} for the fundamental groupoid $\Pi_1(\Sigma)$ relatively to $\mathcal{A}$ is a pair $(\mathbb{V}, \mathbb{G})$, where $\mathbb{V}$ is a finite subset of $\Sigma$ and $\mathbb{G}$ is a finite subset of $\Pi_1(\Sigma)$ such that: 
\begin{enumerate}
\item The set $\mathbb{V}$ is the set of endpoints of the elements of $\mathbb{G}$, \emph{i.e.} $\mathbb{V}=\{ s(\beta), t(\beta) | \beta\in \mathbb{G} \}$.
\item Any pair of generators $\beta_1, \beta_2\in \mathbb{G}$ admits some geometric representatives whose interior are disjoint embedded curves.
\item If $\beta \in \mathbb{G}$ then $\beta^{-1}\in \mathbb{G}$, where $\beta^{-1}$ is the path with opposite orientation of $\beta$.
\item Every curve $\mathcal{C}$ admits a path representative $\alpha_{\mathcal{C}}$ such that $\alpha_{\mathcal{C}}=\beta_1\ldots \beta_n$ with $\beta_i \in \mathbb{G}$.
\item Every boundary arc $a\in \mathcal{A}$ contains a unique element $v_a \in \mathbb{V}$.
\end{enumerate}
\end{definition}

 \par A finite generating set $(\mathbb{V}, \mathbb{G})$ can be characterized by an unoriented embedded graph $\Gamma \subset \Sigma_{\mathcal{P}}$ whose vertices are the elements of $\mathbb{V}$ and whose edges are some geometric representatives of the elements of  $\mathbb{G}$ whose interior are pairwise disjoint. More precisely, we represent any pair  $\beta, \beta^{-1}$ of generators in $\mathbb{G}$ by a single edge in $\Gamma$. Such a graph will be called a \textit{presenting graph} of the generating set. We also denote by $\Pi_1(\mathbb{G})$ the sub-category of $\Pi_1(\Sigma)$ whose objects are the elements of $\mathbb{V}$ and morphisms are composition of elements of $\mathbb{G}$. Note that $\mathbb{V}$ intersects non-trivially each boundary arc and each connected component of $\Sigma$ and that each curve admits a path representative in $\Pi_1(\mathbb{G})$, so the inclusion $\Pi_1(\mathbb{G})\subset \Pi_1(\Sigma)$ is fully faithful. 
 
 \vspace{2mm}
 \par Let $\mathcal{F}(\mathbb{G})$ denote the  free semi-group generated by the elements of $\mathbb{G}$ and let $\Rel_{\mathbb{G}}$ denote the sub-set of $\mathcal{F}(\mathbb{G})$ of elements of the form $R=\beta_{1}\star \ldots \star \beta_{n}$ such that $t(\beta_{i})= s(\beta_{i+1})$ and such that the path $\beta_1\ldots \beta_n$ is trivial. We write $R^{-1}:= \beta_n^{-1} \star \ldots \star \beta_1^{-1}$.

 \begin{definition}
  A finite subset $\mathbb{RL}$ of $\Rel_{\mathbb{G}}$ is called a \textit{finite set of relations} if:
 
 \begin{enumerate}
 \item Every word  $R\in \Rel_{\mathbb{G}}$ can be decomposed as $ R = \beta \star R_1^{\varepsilon_1} \star \ldots \star R_m^{\varepsilon_m}\star \beta^{-1}$, where $R_i \in \mathbb{RL}$, $\varepsilon_i \in \{ \pm 1 \}$ and $\beta=\beta_{1}\star \ldots \star \beta_{n}\in \mathcal{F}(\mathbb{G})$ is such that $t(\beta_{i})= s(\beta_{i+1})$. 
 \item If $\beta\in \mathbb{G}$, the relation $\beta\star \beta^{-1}$ belongs to $\mathbb{RL}$. We call such a relation a \textit{trivial relation} of $\mathbb{RL}$.
 \end{enumerate}
 
 \par A \textit{finite presentation} of $\Pi_1(\Sigma)$ (relatively to $\mathcal{A}$) is a triple $\mathbb{P}=(\mathbb{V}, \mathbb{G}, \mathbb{RL})$ where $(\mathbb{V}, \mathbb{G})$ is a finite generating set and $\mathbb{RL}$ a finite set of relations.
 \end{definition}
 
  Given such a finite presentation with $\mathbb{G}=\{\beta_1, \ldots , \beta_n\}$ and $\mathbb{RL}=\{R_1, \ldots R_m\}$, we define the map $\mathcal{R} : G^{\mathbb{G}} \rightarrow G^{\mathbb{RL}}$ by the formula
 $$ \mathcal{R}(g_1, \ldots, g_n) = \left(R_1(g_1,\ldots, g_n), \ldots, R_m(g_1, \ldots, g_m) \right) \mbox{, for any }(g_1, \ldots, g_n)\in G^{\mathbb{G}}. $$

 \begin{definition}
 The \textit{discrete representation variety} is the subset $\mathcal{R}_G(\mathbf{\Sigma}, \mathbb{P}):= \mathcal{R}^{-1}(e, \ldots, e) \subset G^{\mathbb{G}}$.
 \end{definition}
 
  Since $G$ is affine, $G^{\mathbb{G}}$ is an affine variety and since the subset $\mathcal{R}_G(\mathbf{\Sigma})$ is defined by polynomial equations, the discrete representation variety is a finitely generated affine scheme. As we shall see, when $\mathcal{A}\neq \emptyset$ or when $G=\GL_N, \SL_N$, it is reduced as well, so it is a variety indeed.
 \vspace{2mm}
 \par Decompose the set of vertices as $\mathbb{V}= \mathring{\mathbb{V}}\cup \mathbb{V}^{\partial}$ where $\mathring{\mathbb{V}}=\mathbb{V} \cap (\Sigma\setminus \mathcal{A})$ and $\mathbb{V}^{\partial}= \mathbb{V}\cap \mathcal{A}$. We define the \textit{discrete gauge group} to be the algebraic reducible group $\mathcal{G}_{\mathbb{P}}:= G^{\mathring{\mathbb{V}}}$. The discrete gauge group acts algebraically on $G^{\mathbb{G}}$ as follows. Let $g=(g_{\mathring{v}_1}, \ldots, g_{\mathring{v}_s}) \in \mathcal{G}_{\mathbb{P}}$ and $\rho=(\rho(\beta_1), \ldots , \rho(\beta_n))\in G^{\mathbb{G}}$. Define $g\cdot \rho = (g\cdot \rho(\beta_1), \ldots , g\cdot \rho(\beta_n))$ by the formula: 
 $$ g\cdot \rho (\beta_i) = \left\{ 
 \begin{array}{ll}
 g(s(\beta_i)) \rho(\beta_i) g(t(\beta_i))^{-1} &\mbox{, if }s(\beta_i), t(\beta_i) \in \mathring{\mathbb{V}}; 
 \\ g(s(\beta_i)) \rho(\beta_i) & \mbox{, if }s(\beta_i) \in \mathring{\mathbb{V}}, t(\beta_i) \in \mathbb{V}^{\partial}; 
 \\ \rho(\beta_i)g(t(\beta_i))^{-1} & \mbox{, if }s(\beta_i)\in \mathbb{V}^{\partial}, t(\beta_i)\in \mathring{\mathbb{V}}; 
 \\ \rho(\beta_i) &\mbox{, if } s(\beta_i), t(\beta_i) \in \mathbb{V}^{\partial}.
 \end{array} \right.
 $$
 \par The action of $\mathcal{G}_{\mathbb{P}}$ preserves the sub-variety $\mathcal{R}_G(\mathbf{\Sigma}, \mathbb{P})\subset G^{\mathbb{G}}$, hence induces an algebraic action of the discrete gauge group on the representation variety. 
 
 \begin{definition}
 The \textit{discrete \stated character variety} is the GIT quotient
 $$ \mathcal{X}_G(\mathbf{\Sigma}, \mathbb{P}):= {\mathcal{R}_G(\mathbf{\Sigma}, \mathbb{P})}\sslash {\mathcal{G}_{\mathbb{P}}}. $$
 \end{definition}

 In other words, $\mathbb{C}[\mathcal{X}_G(\mathbf{\Sigma}, \mathbb{P})] = \mathbb{C}[\mathcal{R}_G(\mathbf{\Sigma}, \mathbb{P})]^{\mathcal{G}_{\mathbb{P}}}$ is the sub-algebra of functions invariants under the action of the gauge group. Since the discrete gauge group is reductive and the representations variety is an affine variety, the discrete character variety is an affine variety whenever $\mathcal{R}_G(\mathbf{\Sigma}, \mathbb{P})$ is. 
 \vspace{2mm}
 \par The idea of defining a gauge equivalence class of connections by their holonomies over a finite set of paths is called lattice gauge field theory in the physics literature. It has been used by several mathematicians including the authors of \cite{BuffenoirRoche, BuffenoirRoche2, FockRosly, AlekseevKosmannMeinrenken, AlekseevGrosseSchomerus_LatticeCS1, AlekseevGrosseSchomerus_LatticeCS2, GHJW_ModSpacesParBd, BaseilhacRoche_LGFT1}. We now list some finite presentations that will be used in the paper.
 
 \begin{example}\label{example_finite_presentations}
 \begin{enumerate}
 \item Let $\mathbf{\Sigma}=(\Sigma, \emptyset)$ be an unmarked surface such that $\Sigma$ is  connected. Fix $b\in \Sigma$ a basepoint and consider a finite presentation $P=\left<G, R\right>$ of the fundamental group $\pi_1(\Sigma, b)$. We associate to this finite presentation a finite presentation $\mathbb{P}$ of the fundamental groupoid where $\mathbb{V}=\{b\}$, the set of generators $\mathbb{G}$ is the set of elements of $G$ together with their inverse and the set of relations $\mathbb{RL}$ is the set $R$ to which we add the eventual missing trivial relations $\gamma \star \gamma^{-1}$. The discrete representation space associated to this presentation is the set of group morphisms $\rho : \pi_1(\Sigma, b) \rightarrow G$ and the discrete gauge $\mathcal{G}_{\mathbb{P}}=G$ acts by conjugacy. Hence the discrete character variety associated to such a presentation is the traditional (Culler-Shalen) one, that is $\mathcal{X}_G(\mathbf{\Sigma}, \mathbb{P})= {\Hom \left( \pi_1(\Sigma, b), G \right) }\sslash G$.
 
 \item Suppose that $\mathbf{\Sigma}$ is unmarked and $\Sigma$ closed. A cellular decomposition of $\Sigma$ induces a finite presentation of the fundamental groupoid where $\mathbb{V}$ is the set of $0$-cells, $\mathbb{G}$ is the set of $1$-cells and the non-trivial relations of $\mathbb{RL}$ correspond to the $2$-cells.

 \item To a fat graph $\Gamma$, one can associate a surface $\Sigma(\Gamma)$ by thickening the graph. If moreover the fat graph has a cilitated structure $c$ (total ordering of the adjacent half-edge of each puncture) one can associate a marked surface $\mathbf{\Sigma}^0(\Gamma,c)=(\Sigma(\Gamma), \mathcal{A}(c))$ by placing a boundary arc at each cilium. One then get a finite presentation $\mathbb{P}=(\mathbb{V}, \mathbb{G}, \mathbb{RL})$ of $\Pi_1(\Sigma)$ relatively to $\mathcal{A}(c)$ where $\mathbb{V}$ is the set of vertices of $\Gamma$, $\mathbb{G}$ its edges and $\mathbb{RL}$ has only trivial relations. The associated discrete model was considered by Fock and Rosly in \cite{FockRosly}. We refer to Appendix \ref{sec_FR} for details.
 
  \item The \textit{bigon} (disc with two boundary arcs) and the triangle (disc with three boundary arcs) have natural presentations of their fundamental groupoid depicted in Figure \ref{fig_presentation}. The presentation for the bigon has generators $\mathbb{G}=\{\alpha^{\pm 1}\}$, where $\alpha$ has endpoints in both boundary arcs and only trivial relations, so the corresponding discrete model is $\mathcal{X}_G(\mathbb{B}, \mathbb{P})\cong G$. The presentation for the triangle, described in the introduction, has generators $\mathbb{G}=\{\alpha_i^{\pm 1}, i =1,2,3\}$ and the only one non-trivial relation $\alpha_1\star \alpha_2\star\alpha_3$, so the corresponding discrete model is $\mathcal{X}_G(\mathbb{T}, \mathbb{P})\cong \{(g_1,g_2,g_3)\in G^3| g_1g_2g_3=e\}$. 
 
 \item For a triangulated marked surface $(\mathbf{\Sigma}, \Delta)$ (see Section \ref{sec_triangulation_modularoperad} for details on triangulations), we define a finite presentation $\mathbb{P}^{\Delta}$ where $\mathbb{V}$ has one point $v_e$  in each edge $e\in \mathcal{E}(\Delta)$ of the triangulation,  $\mathbb{G}$ has $6$ generators ${\alpha_{i,\mathbb{T}}}^{\pm 1}, i=1,2,3$ in each face $\mathbb{T}\in F(\Delta)$, defined as in the case of the triangle, and $\mathbb{RL}$ has one non-trivial relation ${\alpha_{1,\mathbb{T}}}\star{\alpha_{2,\mathbb{T}}}\star{\alpha_{3,\mathbb{T}}}$ for each face $\mathbb{T}\in F(\Delta)$. Figure \ref{fig_presentation} illustrates the example of a one punctured torus. The discrete model associated to such a triangulation was considered by Buffenoir and Roche in \cite{BuffenoirRoche, BuffenoirRoche2}. 
 
\begin{figure}[!h] 
\centerline{\includegraphics[width=10cm]{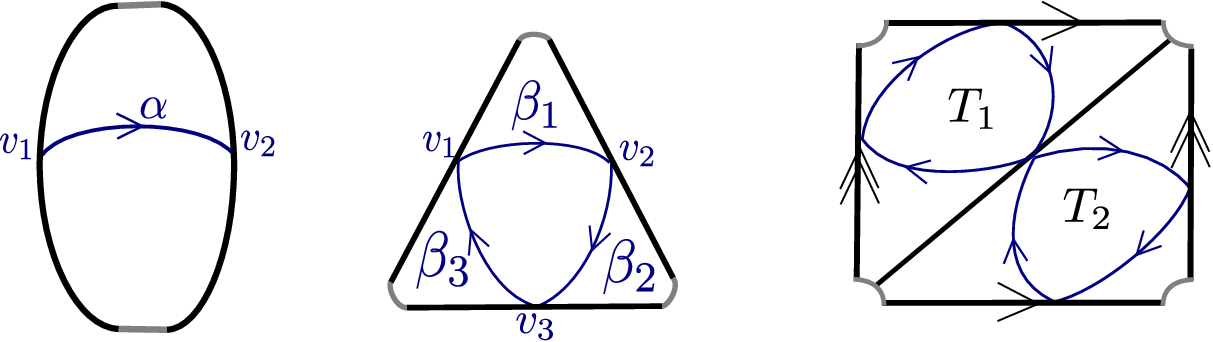} }
\caption{On the left, the bigon  and the triangle with their canonical presentations.  On the right, a triangulated  torus having one boundary component and no boundary arc with the finite presentation associated to a triangulation.} 
\label{fig_presentation} 
\end{figure} 
 
 \item Suppose $\Sigma$ is a compact connected oriented surface of genus $g$ with $n>0$ boundary components. For each boundary component $\partial$ choose a single boundary arc $a_{\partial}\subset \partial$ and set $\mathcal{A}=\{ a_{\partial} \}_{\partial \in \pi_0(\partial \Sigma)}$ and $\mathbf{\Sigma}=(\Sigma, \mathcal{A})$. Choose a point $v_0$ in the interior of $\Sigma$ and for each boundary arc, choose a point $v_{\partial} \in a_{\partial}$. Set $\mathbb{V}=\{v_0\} \cup \{v_{\partial}, \partial \in \pi_0(\partial \Sigma)\}$. Consider some longitudes and meridians $\lambda_1,\mu_1, \ldots, \lambda_g, \mu_g \in \pi_1(\Sigma, v_0)(=\End_{\Pi_1(\Sigma)}(v_0))$. For each boundary component $\partial$, choose a path $\delta_{\partial} : v_0\to v_{\partial}$ and a peripheral loop $\gamma_{\partial}: v_{\partial} \to v_{\partial}$ parallel to $\partial$. Set $\mathbb{G}=\{\lambda_i, \mu_i, \delta_{\partial}, \gamma_{\partial} | 1\leq i \leq g, \partial \in \pi_0(\partial \Sigma)\}$. The relation $R:= \prod_{i=1}^g [\lambda_i, \mu_i] \prod_{\partial \in \pi_0(\Sigma)} \delta_{\partial}\gamma_{\partial} \delta_{\partial}^{-1}$ together with the trivial relations form a set of relations $\mathbb{RL}$ so that $\mathbb{P}=(\mathbb{V}, \mathbb{G}, \mathbb{RL})$ is a finite presentation of $\Pi_1(\Sigma)$ relatively to $\mathcal{A}$. The associated discrete model $\mathcal{X}_G(\mathbf{\Sigma}, \mathbb{P})$ where considered by Guruprasad, Huebschmann, Jeffrey and Weinstein in \cite{GHJW_ModSpacesParBd}. 
 
 \item Consider a marked surface $\mathbf{\Sigma}=(\Sigma, \mathcal{A})$, where $\Sigma$ is connected of genus $g$ and $\mathcal{A}\neq \emptyset$. 
 The groupoid $\Pi_1(\Sigma)$ admits a finite presentation relative to $\mathcal{A}$ with no non-trivial relation  depicted in Figure \ref{fig_generators_final} and defined as follows. 
  For each boundary arc $a$ fix a point $v_a \in a$ and set $\mathbb{V}=\{v_a\}_{a\in \mathcal{A}}$. We fix one particular boundary arc $a_0$ in some boundary component $\partial_0\in \pi_0(\partial \Sigma)$ with point $v_0:=v_{a_0}$. 
   Consider some longitudes and meridians $\lambda_1, \mu_1, \ldots, \lambda_g, \mu_g \in \pi_1(\Sigma, v_0)(=\End_{\Pi_1(\Sigma)}(v_0))$. For each boundary component $\partial$ of $\partial \Sigma$ with no boundary arc, consider a closed path $\delta_{\partial} \in \pi_1(\Sigma, v_0)$ encircling $\partial$ once. For each boundary component $\partial \neq \partial_0$ of $\partial \Sigma$ having some boundary arcs $a_{\partial, 1}, \ldots, a_{\partial, k}$ ordained cyclically in the counterclockwise direction,  consider a path $\delta_{\partial}: v_0\to v_{\partial, 1}$ in the case where $\partial \neq \partial_0$, and some paths $\beta_{\partial, i}: v_{a_{\partial, i}} \to v_{a_{\partial, i+1}}, i\in \mathbb{Z}/k\mathbb{Z}$ homotopic to subarcs of $\partial$. 
   The set $\mathbb{G}'$ formed by the paths $\lambda_i, \mu_i, i=1,\ldots, g$ and  by the paths $\delta_{\partial}$ and $\beta_{\partial, i}$, together with their inverses, forms a finite set of generators for $\Pi_1(\Sigma)$ defining a finite presentation having exactly one non trivial relation. Using this relation, one can express any element of the form $\beta_{\partial, i}$ in term of the other generators. The set $\mathbb{G}$ obtained from $\mathbb{G}'$ by removing an arbitrary pair of generators $\beta_{\partial, i}^{\pm 1}$ form the  generating set of a finite presentation $\mathbb{P}=(\mathbb{V}, \mathbb{G}, \mathbb{RL})$ of $\Pi_1(\Sigma)$ (relative to $\mathcal{A}$) having no non-trivial relation. Note that the set $\mathbb{G}$ has cardinal
   $$ d:= \frac{1}{2}|\mathbb{G}| = 2g-2 +|\mathcal{A}| + |\pi_0(\partial \Sigma)|.$$
   Therefore the discrete model is $\mathcal{X}_G(\mathbf{\Sigma}, \mathbb{P}) \cong G^d$.

 In the particular case where $\mathbf{\Sigma}$ has exactly one boundary arc, the associated discrete model was considered by Alekseev and Malkin in \cite{AlekseevMalkin_PoissonCharVar} in the context of classical lattice gauge field theory and in \cite{AlekseevSchomerus_RepCS, Faitg_LGFT_MCG, Faitg_LGFT_SSkein, BaseilhacRoche_LGFT1} in the quantum case.
 
\begin{figure}[!h] 
\centerline{\includegraphics[width=9cm]{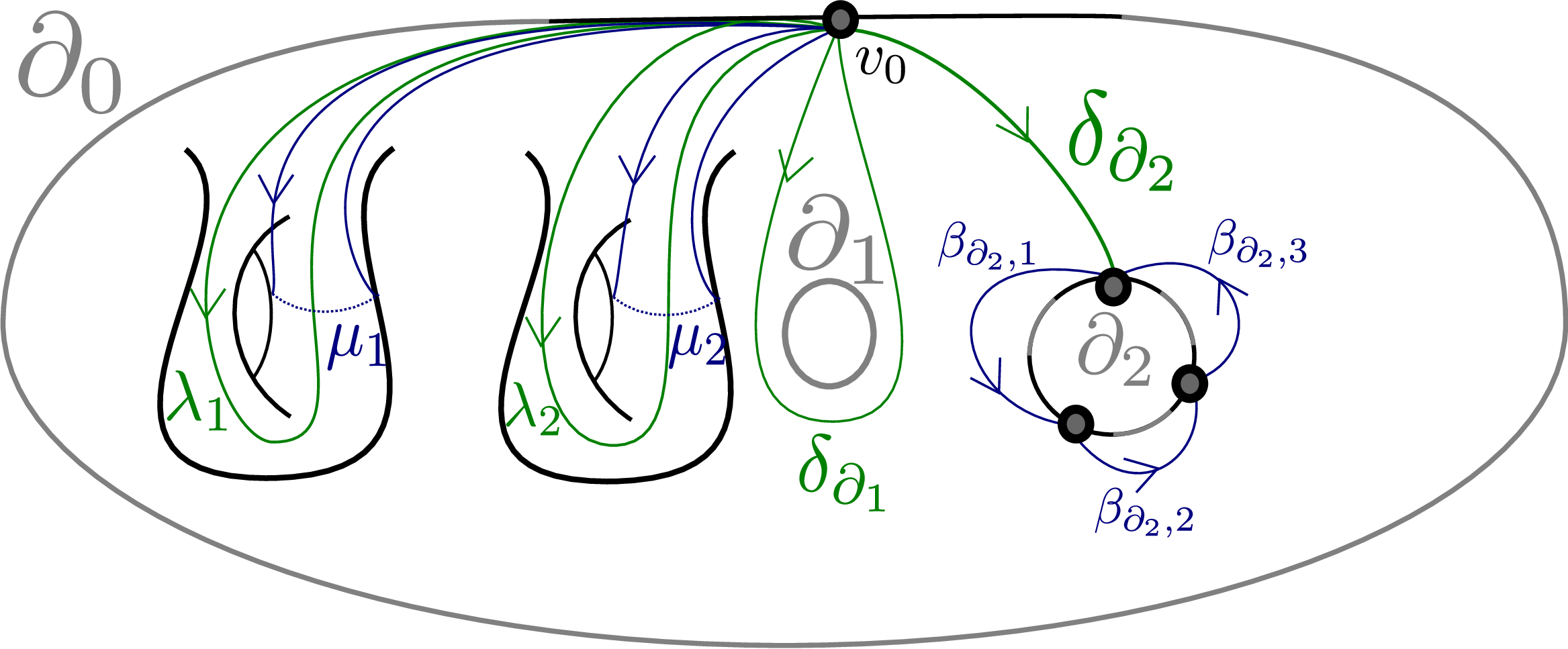} }
\caption{A set of generators $\mathbb{G}$ for $\Pi_1(\Sigma)$ when the marked surface has genus $2$, and three boundary components $\partial_0,\partial_1, \partial_2$ having respectively $1, 0$ and $3$ boundary arcs. Here, $\mathbb{G}$ is obtained from $\mathbb{G}'$ by removing the arc $\beta_{\partial_0, 1}$. } 
\label{fig_generators_final} 
 \end{figure}

 \end{enumerate}
\end{example}

\begin{remark} By Example \ref{example_finite_presentations} the character variety of any marked surface admits a discrete model.
\end{remark}

\par Let $(\mathbf{\Sigma}, \mathbb{P})$ be a punctured surface with a finite presentation and consider a curve $\mathcal{C}$ together with a regular function $f\in \mathbb{C}[G]$ which is further assumed to be invariant by conjugacy if $\mathcal{C}$ is closed. Consider a path representative of $\mathcal{C}$ together with a decomposition $\alpha_{\mathcal{C}}= \beta_{i_1}\ldots \beta_{i_k}$ where the $\beta_{i}\in \mathbb{G}$ are in the generating set. 

\begin{definition}
The \textit{curve function} $f_{\mathcal{C}} \in \mathbb{C}[\mathcal{X}_G(\mathbf{\Sigma}, \mathbb{P})]$ is defined by the formula $f_{\mathcal{C}} (\rho) = f(\rho(\beta_{i_1})\ldots \rho(\beta_{i_k}))$.
\end{definition}

It results from the definition of being a finite set of relations that $f_{\mathcal{C}}$ does not depend on the path representative $\alpha_{\mathcal{C}}$ nor on its decomposition in $\mathbb{G}$. Moreover $f_{\mathcal{C}}$ is invariant under the action of the discrete gauge group, hence it is a regular function $f_{\mathcal{C}}\in \mathbb{C}[\mathcal{X}_G(\mathbf{\Sigma}, \mathbb{P})]$ of the character variety. 

\vspace{2mm}
\par We now define a canonical isomorphism $\Psi^{\mathbb{P}} : \mathcal{X}_G(\mathbf{\Sigma}) \xrightarrow{\cong} \mathcal{X}_G(\mathbf{\Sigma}, \mathbb{P})$ between the  character variety defined in the first sub-section and the discrete model we introduced in this subsection. Recall that we defined a morphism $\mathcal{R}: G^{\mathbb{G}}\rightarrow G^{\mathbb{RL}}$ such that $\mathcal{R}_G(\mathbf{\Sigma}, \mathbb{P})=\mathcal{R}^{-1}(e, \ldots, e)$. Denote by $\mathcal{R}^* : \mathbb{C}[G]^{\otimes \mathbb{RL}} \rightarrow \mathbb{C}[G]^{\otimes \mathbb{G}}$ the morphism of algebras associated to $\mathcal{R}$. We have the following exact sequence
$$ \mathbb{C}[G]^{\otimes \mathbb{RL}} \xrightarrow{\mathcal{R}^* -\eta^{\otimes \mathbb{G}} \circ \epsilon^{\otimes \mathbb{RL}}} \mathbb{C}[G]^{\otimes \mathbb{G}} \rightarrow \mathbb{C}[\mathcal{R}_G(\mathbf{\Sigma}, \mathbb{P})] \rightarrow 0.$$
\par On the other hand, the representation space $\mathcal{R}_G(\mathbf{\Sigma})$ is defined by the following exact sequence
$$ \mathcal{I}_{\Delta}+\mathcal{I}_{\epsilon} \xrightarrow{\iota} \mathbb{C}[\Map(\Pi_1(\Sigma), G)] \rightarrow \mathbb{C}[\mathcal{R}_G(\mathbf{\Sigma})] \rightarrow 0,$$
where $\iota$ represents the inclusion map. Consider the natural injective morphism $\widetilde{\phi}^{\mathbb{P}}:= \otimes_{\alpha \in \mathbb{G}}\iota_{\alpha} : \mathbb{C}[G]^{\otimes \mathbb{G}} \hookrightarrow \mathbb{C}[\Map(\Pi_1(\Sigma), G)]$. Denote by $\mathcal{I}_{\mathbb{P}}\subset \mathbb{C}[\Map(\Pi_1(\Sigma), G)] $ the ideal generated by the algebra $\widetilde{\phi}^{\mathbb{P}} \circ (\mathcal{R}^* - \eta^{\otimes \mathbb{G}} \circ \epsilon^{\otimes \mathbb{RL}}) \left(\mathbb{C}[G]^{\otimes \mathbb{RL}} \right)$.

\vspace{2mm}
\par If $R:= \beta_1 \star \ldots \star \beta_n \in \Rel_{\mathbb{G}}$ is a relation and $x\in \mathbb{C}[G]$, define the element $x_R:= \sum (x^{(1)})_{\beta_1} \ldots (x^{(n)})_{\beta_n}$ and denote by $\mathcal{I}_R \subset  \mathbb{C}[\Map(\Pi_1(\Sigma), G)]$ the ideal generated by the elements $x_R - \epsilon(x)$ with $x\in \mathbb{C}[G]$. By definition, the ideal $\mathcal{I}_{\mathbb{P}}$ is the sum of the ideals $\mathcal{I}_R$ with $R\in \mathbb{RL}$. We have the equalities:

\begin{eqnarray*}
x_R - \epsilon(x) &=& \sum (x^{(1)})_{\beta_{i_1}}   \ldots  (x^{(k)})_{\beta_{i_k}} - \epsilon(x) \\
&=&  \left( \sum (x^{(1)})_{\beta_{i_1}}  \ldots  (x^{(k)})_{\beta_{i_k}} -(x)_{\beta_{i_1}\ldots \beta_{i_k}} \right) \\
 && +  \left( (x)_{\beta_{i_1}\ldots \beta_{i_k}} - \epsilon(x) \right) \in \mathcal{I}_{\Delta} + \mathcal{I}_{\epsilon}
\end{eqnarray*}

\par This proves the inclusion $\mathcal{I}_{\mathbb{P}} \subset \mathcal{I}_{\Delta}+\mathcal{I}_{\epsilon}$, hence  the morphism $\widetilde{\phi}^{\mathbb{P}} : \mathbb{C}[G]^{\otimes \mathbb{G}} \hookrightarrow  \mathbb{C}[\Map(\Pi_1(\Sigma), G)] $ induces a morphism $\phi^{\mathbb{P}} : \mathbb{C}[\mathcal{R}_G(\mathbf{\Sigma}, \mathbb{P})] \hookrightarrow \mathbb{C}[\mathcal{R}_G(\mathbf{\Sigma})]$.  Consider the injective algebra morphism $\mathbb{C}[\mathcal{G}_{\mathbb{P}}]=\mathbb{C}[G]^{\otimes \mathring{\mathbb{V}}} \hookrightarrow  \mathbb{C}[\Map(\Sigma \setminus \mathcal{A}, G)]= \mathbb{C}[\mathcal{G}]$ induced by the inclusion $\mathring{\mathbb{V}}\subset\Sigma\setminus \mathcal{A}$. This inclusion induces a surjective morphism of algebraic groups $\iota_{\mathcal{G}} : \mathcal{G} \rightarrow \mathcal{G}_{\mathbb{P}}$. The injective morphism $\phi^{\mathbb{P}} : \mathbb{C}[\mathcal{R}_G(\mathbf{\Sigma}, \mathbb{P})] \hookrightarrow \mathbb{C}[\mathcal{R}_G(\mathbf{\Sigma})]$ is $\iota_{\mathcal{G}}$-equivariant by definition, hence it induces an injective algebra morphism $\phi^{\mathbb{P}} : \mathbb{C}[\mathcal{X}_G(\mathbf{\Sigma}, \mathbb{P})] \hookrightarrow \mathbb{C}[\mathcal{X}_G(\mathbf{\Sigma})]$. We denote by $\Psi^{\mathbb{P}} : \mathcal{X}_G(\mathbf{\Sigma}) \rightarrow \mathcal{X}_G(\mathbf{\Sigma}, \mathbb{P})$ the surjective regular map induced by $\phi^{\mathbb{P}}$.

\begin{proposition}\label{proposition_discrete_model}
 The regular morphism $\Psi^{\mathbb{P}} : \mathcal{X}_G(\mathbf{\Sigma}) \rightarrow \mathcal{X}_G(\mathbf{\Sigma}, \mathbb{P})$ is an isomorphism. Therefore, the \stated character variety $\mathcal{X}_G(\mathbf{\Sigma})$ is a scheme of finite type.
\end{proposition}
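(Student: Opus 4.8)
The plan is to upgrade the already-constructed injection $\phi^{\mathbb{P}} : \mathbb{C}[\mathcal{X}_G(\mathbf{\Sigma}, \mathbb{P})] \hookrightarrow \mathbb{C}[\mathcal{X}_G(\mathbf{\Sigma})]$ to an isomorphism of $\mathbb{C}$-algebras; once this is done, $\Psi^{\mathbb{P}}$ is an isomorphism of affine schemes by passing to maximal spectra, and the affineness of $\mathcal{X}_G(\mathbf{\Sigma})$ will follow from the finite generation of $\mathbb{C}[\mathcal{X}_G(\mathbf{\Sigma}, \mathbb{P})]$. Since injectivity of $\phi^{\mathbb{P}}$ is already in hand, the only thing left to prove is surjectivity.

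For surjectivity I would invoke Proposition \ref{prop_holonomy_functions}: the target algebra $\mathbb{C}[\mathcal{X}_G(\mathbf{\Sigma})]$ is generated by the curve functions $f_{\mathcal{C}}$, so it suffices to show that each $f_{\mathcal{C}}$ lies in $\Image(\phi^{\mathbb{P}})$. Given a curve $\mathcal{C}$, condition (4) in the definition of a finite generating set provides a path representative $\alpha_{\mathcal{C}} = \beta_{i_1}\cdots\beta_{i_k}$ with $\beta_{i_j}\in\mathbb{G}$. Iterating the defining relation $x_{\alpha\beta}\equiv \sum x^{(1)}_{\alpha}\cdot x^{(2)}_{\beta}$ of $\mathcal{I}_{\Delta}$ shows that, modulo $\mathcal{I}_{\Delta}$, one has $f_{\alpha_{\mathcal{C}}}\equiv \sum f^{(1)}_{\beta_{i_1}}\cdots f^{(k)}_{\beta_{i_k}}$, which is an element of the subalgebra $\widetilde{\phi}^{\mathbb{P}}\!\left(\mathbb{C}[G]^{\otimes\mathbb{G}}\right)\subset \otimes^{\vee}_{\Pi_1(\Sigma_{\mathcal{P}})}\mathbb{C}[G]$. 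Hence $f_{\mathcal{C}} = \phi^{\mathbb{P}}(g)$, where $g\in\mathbb{C}[\mathcal{R}_G(\mathbf{\Sigma},\mathbb{P})]$ is the class of that element; and $g$ is precisely the discrete curve function $\rho\mapsto f(\rho(\beta_{i_1})\cdots\rho(\beta_{i_k}))$, which is $\mathcal{G}_{\mathbb{P}}$-invariant (this was observed when the discrete curve functions were introduced; alternatively it follows from the $\iota_{\mathcal{G}}$-equivariance and injectivity of $\phi^{\mathbb{P}}$ together with surjectivity of $\iota_{\mathcal{G}}$). So $f_{\mathcal{C}}\in \phi^{\mathbb{P}}(\mathbb{C}[\mathcal{X}_G(\mathbf{\Sigma},\mathbb{P})])$, and since such functions generate the target, $\phi^{\mathbb{P}}$ is surjective, hence bijective.

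For the affineness claim, I would argue that $\mathbb{C}[\mathcal{X}_G(\mathbf{\Sigma},\mathbb{P})] = \mathbb{C}[\mathcal{R}_G(\mathbf{\Sigma},\mathbb{P})]^{\mathcal{G}_{\mathbb{P}}}$ is a finitely generated $\mathbb{C}$-algebra: $\mathcal{R}_G(\mathbf{\Sigma},\mathbb{P})$ is a closed subvariety of $G^{\mathbb{G}}$ with $\mathbb{G}$ finite, hence has finitely generated coordinate ring, and $\mathcal{G}_{\mathbb{P}} = G^{\mathring{\mathbb{V}}}$ is a finite product of copies of $G$ and thus reductive, so Hilbert's finiteness theorem applies. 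Transporting through the isomorphism $\phi^{\mathbb{P}}$, the algebra $\mathbb{C}[\mathcal{X}_G(\mathbf{\Sigma})]$ is finitely generated, so $\mathcal{X}_G(\mathbf{\Sigma})$ is an affine variety.

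The hard part is entirely packaged in Proposition \ref{prop_holonomy_functions} (curve functions generate the invariant algebra), whose proof is deferred to Appendix A and is the only step that uses the specific list of groups $G$; granting it, the remaining steps are formal bookkeeping with the ideals $\mathcal{I}_{\Delta}$, $\mathcal{I}_{\epsilon}$ and with the $\iota_{\mathcal{G}}$-equivariance of $\phi^{\mathbb{P}}$. The one point I would want to double-check is that Appendix A does not itself invoke the discrete model when proving Proposition \ref{prop_holonomy_functions}, so as to avoid a circular argument.
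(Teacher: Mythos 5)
Your surjectivity argument is essentially the paper's own: curve functions generate $\mathbb{C}[\mathcal{X}_G(\mathbf{\Sigma})]$ by Proposition \ref{prop_holonomy_functions}, each curve function is (modulo $\mathcal{I}_{\Delta}$) represented by an element of $\widetilde{\phi}^{\mathbb{P}}\bigl(\mathbb{C}[G]^{\otimes\mathbb{G}}\bigr)$, and its preimage is the $\mathcal{G}_{\mathbb{P}}$-invariant discrete curve function. Your worry about circularity with Appendix A is well placed but harmless: the appendix argues via character varieties of graphs built from the paths occurring in a given invariant element, not via the discrete model. The affineness argument (reductivity of $\mathcal{G}_{\mathbb{P}}=G^{\mathring{\mathbb{V}}}$ plus Hilbert's finiteness theorem) is also fine.

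The genuine gap is your opening claim that ``injectivity of $\phi^{\mathbb{P}}$ is already in hand.'' It is not; it is the hard half of the proposition. What is established before the proposition is only the inclusion $\mathcal{I}_{\mathbb{P}}\subset\mathcal{I}_{\Delta}+\mathcal{I}_{\epsilon}$, which makes $\phi^{\mathbb{P}}$ \emph{well defined} on the quotient $\mathbb{C}[\mathcal{R}_G(\mathbf{\Sigma},\mathbb{P})]$; the hooked arrows in that passage anticipate a fact not yet proved. Injectivity requires the reverse containment $\left(\mathcal{I}_{\Delta}+\mathcal{I}_{\epsilon}\right)\cap\widetilde{\phi}^{\mathbb{P}}\bigl(\mathbb{C}[G]^{\otimes\mathbb{G}}\bigr)\subset\mathcal{I}_{\mathbb{P}}$, i.e.\ that every relation among the generators forced by the \emph{full} fundamental groupoid already lies in the ideal generated by the \emph{finite} set $\mathbb{RL}$. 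This could conceivably fail at the level of ideals even though it holds at the level of points: the Nullstellensatz only gives equality of radicals, and it is not a priori clear that these ideals are radical (the paper makes exactly this point after Lemma \ref{lemma_injectivity1}). The paper handles this with the Hopf-algebraic computations of Lemmas \ref{lemma_injectivity1} and \ref{lemma_injectivity2}, showing for instance that $x_{\beta}\otimes 1 - 1\otimes S(x)_{\beta^{-1}}$ lies in the ideal generated by the trivial relation $\beta\star\beta^{-1}$, and that $\mathcal{I}_{R_1\star R_2}\subset\mathcal{I}_{R_1}+\mathcal{I}_{R_2}$, $\mathcal{I}_{\beta\star R\star\beta^{-1}}\subset\mathcal{I}_R+\mathcal{I}_{\beta\star\beta^{-1}}$, and $\mathcal{I}_{R^{-1}}\subset\mathcal{I}_R+\sum_i\mathcal{I}_{\beta_i\star\beta_i^{-1}}$, so that the decomposition property defining a finite set of relations propagates to ideal containments. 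Without some argument of this kind your proof establishes only surjectivity of $\phi^{\mathbb{P}}$, hence only that $\Psi^{\mathbb{P}}$ is a closed immersion, not an isomorphism.
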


\begin{corollary}\label{coro_discrete_model} Suppose that $\mathbf{\Sigma}$ is connected.
\begin{enumerate}
\item If $\mathcal{A}\neq \emptyset$, then $\mathcal{X}_G(\mathbf{\Sigma})\cong G^n$ for some $n\geq 0$. In particular it is a smooth affine variety.
\item If $\mathcal{A}=\emptyset$, then $\mathcal{X}_G(\mathbf{\Sigma})$ is isomorphic to the usual character variety. In particular it is reduced (thus a variety) when $G=\GL_N, \SL_N$ or when $\Sigma$ is open. 
\end{enumerate}
\end{corollary}

\par The surjectivity of $\phi^{\mathbb{P}}$ will follow from Proposition \ref{prop_holonomy_functions}. To prove the injectivity, we first state two technical lemmas. 

\begin{lemma}\label{lemma_injectivity1}
Let $\psi_1, \psi_2 : \mathbb{C}[G]\rightarrow \mathbb{C}[G]^{\otimes 2}$ be the two morphisms defined by $\psi_1:=\Delta-\eta^{\otimes 2}\circ \epsilon$ and $\psi_2:= \id\otimes \epsilon - \epsilon\otimes S$. Denote by $\mathcal{I}_1, \mathcal{I}_2 \subset \mathbb{C}[G]^{\otimes 2}$ the ideals generated by the images of $\psi_1$ and $\psi_2$ respectively. Then one has an inclusion $\mathcal{I}_2\subset \mathcal{I}_1$.
\end{lemma}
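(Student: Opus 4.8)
The plan is to unravel both maps into explicit Sweedler formulas and then exhibit each generator of $\mathcal{I}_2$ as an element of $\mathcal{I}_1$, using only the counit and antipode axioms of the Hopf algebra $\mathbb{C}[G]$ (which is commutative, so that $\mathbb{C}[G]^{\otimes 2}$ is commutative and all ideals are two-sided). Writing $\Delta(x)=\sum x^{(1)}\otimes x^{(2)}$, the map $\psi_1$ is $x\mapsto \sum x^{(1)}\otimes x^{(2)}-\epsilon(x)(1\otimes 1)$. For $\psi_2=\id\otimes\epsilon-\epsilon\otimes S$, applying the two summands after the comultiplication and using the counit axiom $\sum x^{(1)}\epsilon(x^{(2)})=x=\sum\epsilon(x^{(1)})x^{(2)}$ gives $\psi_2(x)=x\otimes 1-1\otimes S(x)$; in any reading, $\mathcal{I}_2$ is the ideal of $\mathbb{C}[G]^{\otimes 2}$ generated by the elements $x\otimes 1-1\otimes S(x)$. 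So it suffices to show $x\otimes 1-1\otimes S(x)\in\mathcal{I}_1$ for every $x\in\mathbb{C}[G]$.

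The key identity is
$$x\otimes 1-1\otimes S(x)=\sum\psi_1\!\left(x^{(1)}\right)\cdot\bigl(1\otimes S(x^{(2)})\bigr),$$
the product being taken in $\mathbb{C}[G]^{\otimes 2}$. To verify it, expand the right-hand side into its two pieces. The contribution of $\Delta(x^{(1)})$, rewritten by coassociativity as $\sum x^{(1)}\otimes x^{(2)}S(x^{(3)})$, collapses by the antipode axiom $\sum y^{(1)}S(y^{(2)})=\epsilon(y)1$ applied in the last two tensor slots, and then by the counit axiom, to $x\otimes 1$. The contribution of $\epsilon(x^{(1)})(1\otimes 1)$ collapses by the counit axiom to $1\otimes S(x)$. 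Subtracting yields the identity. Since $\mathcal{I}_1$ is an ideal containing every $\psi_1(x^{(1)})$, the right-hand side lies in $\mathcal{I}_1$, hence so does $x\otimes 1-1\otimes S(x)$; as these generate $\mathcal{I}_2$, we conclude $\mathcal{I}_2\subseteq\mathcal{I}_1$.

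This is a short Hopf-algebraic bookkeeping argument with no genuine obstacle; the only points that need care are the correct reading of the shorthand for $\psi_2$ and keeping the Sweedler indices straight when passing from a two-fold to a three-fold coproduct via coassociativity. As sanity checks: for $\mathbb{C}[G]=\mathbb{C}[\mathbb{C}^*]$ with $\Delta(x)=x\otimes x$, $S(x)=x^{-1}$, the identity reduces to $(x\otimes x-1\otimes 1)(1\otimes x^{-1})=x\otimes 1-1\otimes x^{-1}$; and for $\mathbb{C}[\SL_2]$ and the matrix coefficient $a$, the two-term sum coming from $\Delta(a)=a\otimes a+b\otimes c$ reproduces $a\otimes 1-1\otimes d$ after simplifying with $ad-bc=1$.
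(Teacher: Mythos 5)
Your proof is correct, and your key identity
$x\otimes 1-1\otimes S(x)=\sum\psi_1(x^{(1)})\cdot\bigl(1\otimes S(x^{(2)})\bigr)$
is precisely the coordinate-free form of the computation the paper performs: the paper writes the same identity only for the matrix coefficients of $\GL_N$, namely $x_{i,j}\otimes 1-1\otimes S(x_{i,j})=\sum_k\bigl(\Delta(x_{i,k})-\epsilon(x_{i,k})\bigr)\bigl(1\otimes S(x_{k,j})\bigr)$, and then transfers the conclusion to a general $G$ by choosing a closed embedding $G\hookrightarrow\GL_N(\mathbb{C})$ and pushing both ideals forward along the resulting surjective Hopf morphism $p:\mathbb{C}[\GL_N]\to\mathbb{C}[G]$. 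Your Sweedler-notation version makes that second step unnecessary: since the identity uses only coassociativity, the counit axiom and the antipode axiom, it holds in any Hopf algebra (and in fact exhibits the generators of $\mathcal{I}_2$ inside the right ideal generated by $\operatorname{Im}\psi_1$, so commutativity is not even needed). What the paper's route buys is concreteness — the $\GL_N$ computation is a one-line matrix identity — at the cost of an extra reduction step; your route buys generality and a self-contained argument. Your reading of the shorthand $\psi_2=\id\otimes\epsilon-\epsilon\otimes S$ as producing the generators $x\otimes 1-1\otimes S(x)$ is also the one the paper intends, as its subsequent use of the lemma (the identity $[x]_\beta=[S(x)]_{\beta^{-1}}$) confirms.
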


\par Note that Lemma \ref{lemma_injectivity1} implies that in both $\mathbb{C}[\mathcal{R}_G(\mathbf{\Sigma})]$ and $\mathbb{C}[\mathcal{R}_G(\mathbf{\Sigma}, \mathbb{P})]$, we have the equality $[x]_{\beta} = [S(x)]_{\beta^{-1}}$. The inclusion $\mathcal{I}_1\subset \mathcal{I}_2$ obviously holds for any Hopf algebra. Moreover, the fact that for any $g_1, g_2 \in G$ we have $g_1g_2=e$ if and only if $g_1=g_2^{-1}$ implies, by the Nullstellensatz, that the radicals $\sqrt{\mathcal{I}}_1$ and $\sqrt{\mathcal{I}}_2$ are equal. However it is not obvious, a priori, that $\mathcal{I}_1$ is radical. 

\begin{proof}
We first suppose that $G=\GL_N(\mathbb{C})$ and write 
$$\mathbb{C}[\GL_N]:= \quotient{ \mathbb{C}[\det^{-1}, x_{i,j}, 1\leq i,j \leq N ]}{(\det \cdot \det^{-1} -1)}.$$
 A straightforward computation shows the equality
$$ x_{i,j}\otimes 1 - 1 \otimes S(x_{i,j}) = \sum_k \left( \Delta(x_{i,k}) - \epsilon(x_{i,k}) \right) \left( 1\otimes S(x_{k,j}) \right) \subset \mathcal{I}_1.$$
This proves the inclusion  $\mathcal{I}_2\subset \mathcal{I}_1$ when $G=\GL_N(\mathbb{C})$. Now for a general affine Lie group $G$, consider a closed embedding $G \hookrightarrow \GL_N(\mathbb{C})$ defined by a surjective Hopf morphism $p: \mathbb{C}[\GL_N] \rightarrow \mathbb{C}[G]$. The morphism $p$ sends the ideals $\mathcal{I}_1$ and $\mathcal{I}_2$ associated to $\GL_N(\mathbb{C})$ to the corresponding ideals associated to $G$. Hence the result holds for any $G$.
\end{proof}

\begin{lemma}\label{lemma_injectivity2}
Let $R_1, R_2$ and $R=\beta_1 \ldots \beta_n$ be some relations in $\Rel_{\mathbb{G}}$ and $\beta \in \Pi_1(\Sigma)$ a path such that $t(\beta)=s(\beta_1)$. Then the followings statements hold: 
\begin{enumerate}
\item One has the inclusion $\mathcal{I}_{R_1 \star R_2} \subset \mathcal{I}_{R_1} + \mathcal{I}_{R_2}$.
\item One has the inclusion $\mathcal{I}_{\beta \star R \star \beta^{-1}} \subset \mathcal{I}_R + \mathcal{I}_{\beta \star \beta^{-1}}$.
\item One has the inclusion $\mathcal{I}_{R^{-1}} \subset \mathcal{I}_R + \sum_i \mathcal{I}_{\beta_i \star \beta_i^{-1}}$.
\item If $\alpha= \alpha_1 \ldots \alpha_n $ is a path in $\mathbb{G}$ such that each $\alpha_i$ is in $\mathbb{G}$, then for any $x\in \mathbb{C}[G]$ one has
$$ x_{\alpha} - \sum (x^{(1)})_{\alpha_1} \ldots (x^{(n)})_{\alpha_n} \in \mathcal{I}_{\mathbb{P}}.$$
\end{enumerate}
 \par In particular, for any relation $R\in \Rel_{\mathbb{G}}$, one has $\mathcal{I}_R \subset \mathcal{I}_{\mathbb{P}}$.
\end{lemma}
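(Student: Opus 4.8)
The plan is to prove the four inclusions by formal computations in $\otimes_{\Pi_1(\Sigma_{\mathcal{P}})}^{\vee}\mathbb{C}[G]$, using throughout the iterated-coproduct expression $x_R=\sum (x^{(1)})_{\beta_1}\cdots(x^{(n)})_{\beta_n}$ of the generators of $\mathcal{I}_R$ attached to a word $R=\beta_1\star\dots\star\beta_n$, where $\sum x^{(1)}\otimes\dots\otimes x^{(n)}$ denotes the $(n-1)$-fold coproduct of $x$. Items (1) and (2) are counit contractions. For (1), coassociativity factors the word $R_1\star R_2$ into its two blocks, giving $x_{R_1\star R_2}=\sum (x^{(1)})_{R_1}\,(x^{(2)})_{R_2}$; then
$$x_{R_1\star R_2}-\epsilon(x)=\sum\bigl((x^{(1)})_{R_1}-\epsilon(x^{(1)})\bigr)(x^{(2)})_{R_2}\;+\;\bigl((x)_{R_2}-\epsilon(x)\bigr),$$
where the first summand lies in $\mathcal{I}_{R_1}$ and the second lies in $\mathcal{I}_{R_2}$ after the contraction $\sum\epsilon(x^{(1)})(x^{(2)})_{R_2}=(x)_{R_2}$. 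Item (2) applies the same trick to $x_{\beta\star R\star\beta^{-1}}=\sum(x^{(1)})_\beta(x^{(2)})_R(x^{(3)})_{\beta^{-1}}$: the factor $(x^{(2)})_R-\epsilon(x^{(2)})$ lies in $\mathcal{I}_R$, and after discarding it the expression collapses to $x_{\beta\star\beta^{-1}}-\epsilon(x)\in\mathcal{I}_{\beta\star\beta^{-1}}$. The only property of the product used is that a generator of an ideal times an algebra element stays in the ideal, so these manipulations are insensitive to whether a word repeats a generator, the convention $x_s\cdot y_s=(xy)_s$ absorbing the overlap of tensor factors.

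Item (3) is the technical core and is where Lemma \ref{lemma_injectivity1} is used. That lemma supplies, modulo $\mathcal{I}_{\beta\star\beta^{-1}}$, the identity $[x]_\beta=[S(x)]_{\beta^{-1}}$; since $\mathbb{C}[G]$ is the Hopf algebra of regular functions of a group, its antipode is an involution, so equivalently $[x]_{\beta^{-1}}=[S(x)]_\beta$ modulo $\mathcal{I}_{\beta\star\beta^{-1}}$. Substituting this letter by letter into $x_{R^{-1}}=\sum(x^{(1)})_{\beta_n^{-1}}\cdots(x^{(n)})_{\beta_1^{-1}}$ — each substitution changing the element only by something in $\mathcal{I}_{\beta_i\star\beta_i^{-1}}$ — then commuting the resulting factors and using the iterated anti-comultiplicativity $\Delta^{(n-1)}(S(x))=\sum S(x^{(n)})\otimes\dots\otimes S(x^{(1)})$, one recognizes $x_{R^{-1}}$ as $(S(x))_R$ modulo $\sum_i\mathcal{I}_{\beta_i\star\beta_i^{-1}}$; since $\epsilon\circ S=\epsilon$, this is exactly (3). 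I expect the main obstacle to be the bookkeeping of tensor slots in this step: one must track which copy of $\mathbb{C}[G]$ each Sweedler factor occupies, deal with words that repeat a letter so that several factors collapse into one copy, and verify that the reordering matches $\Delta^{(n-1)}\circ S$ on the nose; once this is set up, the rest of the lemma is essentially formal.

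For the final assertion, take an arbitrary $R\in\Rel_{\mathbb{G}}$ and use condition (1) of the definition of a finite set of relations to write $R=\beta\star R_1^{\varepsilon_1}\star\dots\star R_m^{\varepsilon_m}\star\beta^{-1}$ with $R_i\in\mathbb{RL}$, $\varepsilon_i\in\{\pm1\}$ and $\beta=\gamma_1\star\dots\star\gamma_k\in\mathcal{F}(\mathbb{G})$. Iterating (2) over the letters of $\beta$ bounds $\mathcal{I}_R$ by $\mathcal{I}_{R_1^{\varepsilon_1}\star\dots\star R_m^{\varepsilon_m}}+\sum_j\mathcal{I}_{\gamma_j\star\gamma_j^{-1}}$; iterating (1) bounds the first of these by $\sum_i\mathcal{I}_{R_i^{\varepsilon_i}}$; and each $\mathcal{I}_{R_i^{\varepsilon_i}}$ lies in $\mathcal{I}_{\mathbb{P}}$: for $\varepsilon_i=1$ it equals $\mathcal{I}_{R_i}\subset\mathcal{I}_{\mathbb{P}}$ by definition, and for $\varepsilon_i=-1$ item (3) puts it inside $\mathcal{I}_{R_i}+\sum\mathcal{I}_{\eta\star\eta^{-1}}$, the sum running over the letters $\eta$ of $R_i$. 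All the trivial relations $\gamma_j\star\gamma_j^{-1}$ and $\eta\star\eta^{-1}$ belong to $\mathbb{RL}$ by condition (2) of the definition, so every summand lies in $\mathcal{I}_{\mathbb{P}}=\sum_{S\in\mathbb{RL}}\mathcal{I}_S$, whence $\mathcal{I}_R\subset\mathcal{I}_{\mathbb{P}}$. Item (4) then follows: for $\alpha\in\mathbb{G}$ written as $\alpha=\alpha_1\cdots\alpha_n$ with $\alpha_i\in\mathbb{G}$, the word $\alpha_1\star\dots\star\alpha_n\star\alpha^{-1}$ lies in $\Rel_{\mathbb{G}}$, so modulo $\mathcal{I}_{\mathbb{P}}$ one has $\sum(x^{(1)})_{\alpha_1}\cdots(x^{(n)})_{\alpha_n}(x^{(n+1)})_{\alpha^{-1}}\equiv\epsilon(x)$; replacing $(x^{(n+1)})_{\alpha^{-1}}$ by $[S(x^{(n+1)})]_\alpha$ via the trivial relation $\alpha\star\alpha^{-1}$ and then applying a short computation with the coproduct and the antipode axiom $\sum S(x^{(1)})x^{(2)}=\epsilon(x)$ to solve for $x_\alpha$ gives $x_\alpha-\sum(x^{(1)})_{\alpha_1}\cdots(x^{(n)})_{\alpha_n}\in\mathcal{I}_{\mathbb{P}}$.
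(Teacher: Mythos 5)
Your argument is correct and follows essentially the same route as the paper: coassociativity/counit telescoping for (1) and (2), substitution of $[x]_{\beta^{-1}}$ by $[S(x)]_{\beta}$ via Lemma \ref{lemma_injectivity1} for (3), and reduction of (4) to the conjugation relation built from $\alpha_1\star\dots\star\alpha_n$ and $\alpha^{-1}$ together with the antipode axiom. The only (harmless) deviations are that your decomposition in (2) peels off the $\mathcal{I}_R$-term directly and therefore does not need Lemma \ref{lemma_injectivity1} at that step, and that you organize (3) as an iterated one-letter substitution rather than the paper's inclusion-exclusion sum over sign vectors; these are equivalent telescopings.
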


\begin{proof}
Fix $x \in \mathbb{C}[G]$ and compute:
\begin{eqnarray*}
x_{R_1\star R_2} - \epsilon(x) &=& \sum (x^{(1)})_{R_1}(x^{(2)})_{R_2} - \epsilon(x) \\
&=& \sum \left( (x^{(1)})_{R_1} - \epsilon(x^{(1)}) \right) \left( (x^{(2)})_{R_2} - \epsilon(x^{(2)}) \right)\\
&&  + \left(x_{R_1}-\epsilon(x)\right) + \left(x_{R_2} - \epsilon(x)\right) \in  \mathcal{I}_{R_1} + \mathcal{I}_{R_2}
\end{eqnarray*}
This proves the first assertion. For the second, we compute:
\begin{eqnarray*}
x_{\beta\star R \star \beta^{-1}} - \epsilon(x) &=& \sum (x^{(1)})_{\beta} (x^{(2)})_R (x^{(3)})_{\beta^{-1}} - \epsilon(x) \\
&=& \sum (x^{(1)})_{\beta} (x^{(2)})_{R} \left( x^{(3)}_{\beta^{-1}} - S(x^{(3)})_{\beta} \right) \\
&& + \sum (x^{(1)})_{\beta} \left( (x^{(2)})_R -\epsilon(x^{(2)}) \right) S(x^{(3)})_{\beta} \in \mathcal{I}_R + \mathcal{I}_{\beta \star \beta^{-1}}
\end{eqnarray*}
Here we used Lemma \ref{lemma_injectivity1} for the last inclusion. To prove the third assertion, we first introduce some notations. Let $\varepsilon = (\varepsilon_1, \ldots, \varepsilon_n) \in \{ -, + \}^n$ and denote by $| \varepsilon |$ the number of indices $i$ such that $\varepsilon_i = -$. Using Sweedler's notation $\Delta^{(n-1)}(x)= \sum x^{(1)}\otimes \ldots\otimes x^{(n)}$, write $b_{\varepsilon}:= \sum a^{(1)}_{\varepsilon_1} \ldots a^{(n)}_{\varepsilon_n}$, where $a^{(i)}_+ := S(x^{(i)})_{\beta_{n-i}}$ and $a^{(i)}_- := S(x^{(i)})_{\beta_{n-i}} - (x^{(i)})_{\beta_{n-i}^{-1}}$. One has the equality
$$ x_{R^{-1}} - \epsilon(x) = \sum_{\varepsilon \in \{-, + \}^n} (-1)^{|\varepsilon|} b_{\varepsilon} - \epsilon(x).$$
If there exists an index $i$ such that $\varepsilon_i = -$, then $b_{\varepsilon} \in \mathcal{I}_{\beta_i \star \beta_i^{-1}}$, by Lemma \ref{lemma_injectivity1}. If $\varepsilon_i=+$ for every index $i$, then $b_{(+, \ldots, +)} -\epsilon(x) = S(x)_R - \epsilon(x) \in \mathcal{I}_R$. This proves the third assertion. By definition of being a set of relations, the three first assertions imply the inclusion  $\mathcal{I}_R \subset \mathcal{I}_{\mathbb{P}}$ for any $R\in \Rel_{\mathbb{G}}$. To prove the last assertion, consider the relation $R':= (\alpha_1\ldots \alpha_n)\star \alpha_n^{-1} \star \ldots \star \alpha_1^{-1}$. One has the following congruences
\begin{eqnarray*}
x_{\alpha_1 \ldots \alpha_n}- \sum (x^{(1)})_{\alpha_1} \ldots (x^{(n)})_{\alpha_n} 
&\equiv&  \sum (x^{(1)})_{\alpha_1\ldots \alpha_n} S(x^{(n)})_{\alpha_n}\ldots S(x^{(1)})_{\alpha_1} -\epsilon(x) \pmod{\mathcal{I}_{\mathbb{P}}} \\
&\equiv & \sum (x^{(1)})_{\alpha_1\ldots \alpha_n} (x^{(n)})_{\alpha_n^{-1}} \ldots (x^{(1)})_{\beta_1^{-1}} - \epsilon(x) \pmod{\mathcal{I}_{\mathbb{P}}} \\
&\equiv&  x_{R'} - \epsilon(x) \equiv 0 \pmod{\mathcal{I}_{\mathbb{P}}} 
\end{eqnarray*}
This proves the last assertion and completes the proof.

\end{proof}

\begin{proof}[Proof of Proposition \ref{proposition_discrete_model}] Since the algebra $\mathbb{C}[\mathcal{X}_G(\mathbf{\Sigma})]$ is generated by its curve functions by Proposition \ref{prop_holonomy_functions} and since the morphism $\phi^{\mathbb{P}}$ sends curve functions to curve functions, the morphism $\phi^{\mathbb{P}}$ is surjective by the definition of being a generating set.
To prove the injectivity, we need to show the inclusion $  \left( \mathcal{I}_{\Delta}+\mathcal{I}_{\epsilon} \right) \cap \widetilde{\phi}^{\mathbb{P}} \left( \mathbb{C}[G]^{\otimes \mathbb{G}} \right) \subset \mathcal{I}_{\mathbb{P}}$. The algebra $\mathcal{I}_{\epsilon} \cap \widetilde{\phi}^{\mathbb{P}} \left( \mathbb{C}[G]^{\otimes \mathbb{G}} \right)$ is generated by elements of the form $x_R -\epsilon(x)$ for $R \in \Rel_{\mathbb{G}}$ and $x\in \mathbb{C}[G]$, hence it is included in $\mathcal{I}_{\mathbb{P}}$ by Lemma \ref{lemma_injectivity2}. The algebra $\mathcal{I}_{\Delta} \cap \widetilde{\phi}^{\mathbb{P}} \left( \mathbb{C}[G]^{\otimes \mathbb{G}} \right)$ is generated by elements of the form
$x_{\alpha_1 \ldots \alpha_n} - \sum (x^{(1)})_{\alpha_1} \ldots (x^{(n)})_{\alpha_n}$ where $\alpha= \alpha_1 \ldots \alpha_n$ and the paths $\alpha_i$ belong to $\mathbb{G}$ and 
$x\in \mathbb{C}[G]$. By the last assertion of Lemma  \ref{lemma_injectivity2}, this algebra is included in  $\mathcal{I}_{\mathbb{P}}$. This proves the injectivity of $\phi^{\mathbb{P}}$ and concludes the proof.

\end{proof}

\begin{proof}[Proof of Corollary \ref{coro_discrete_model}] The first and second assertions follow using for $\mathbb{P}$ the presentations of the third and first item of Example \ref{example_finite_presentations} respectively and using the fact that the representation scheme $\Hom(\pi_1(\Sigma, v), G)$ is reduced whenever either $G=\GL_N, \SL_N$ or $\Sigma$ is open (see \cite{Sikora}).
\end{proof}

\subsection{The gluing formula}

\begin{definition} Let $\mathbf{\Sigma}$ be a punctured surface and $a$ a boundary arc.
 We define a left Hopf comodule $\widetilde{\Delta}_a^{L} :  \mathbb{C}[\Map(\Pi_1(\Sigma), G)]\rightarrow \mathbb{C}[G] \otimes  \mathbb{C}[\Map(\Pi_1(\Sigma), G)]$ and a right Hopf comodule $\widetilde{\Delta}_a^R :  \mathbb{C}[\Map(\Pi_1(\Sigma), G)] \rightarrow  \mathbb{C}[\Map(\Pi_1(\Sigma), G)] \otimes \mathbb{C}[G] $ by the formulas: 

\begin{align*}
&\widetilde{\Delta}_a^L (x_{\alpha}) := \left\{ \begin{array}{ll}
\sum x' S(x''') \otimes x''_{\alpha} & \mbox{, if }s(\alpha), t(\alpha) \in a; \\
\sum x' \otimes x''_{\alpha} & \mbox{, if } s(\alpha) \in a, t(\alpha)\notin a; \\
\sum S(x'') \otimes x'_{\alpha} & \mbox{, if }s(\alpha) \notin a, t(\alpha) \in a; \\
1\otimes x_{\alpha} & \mbox{, if }s(\alpha), t(\alpha) \notin a.
\end{array}\right.
\\
&\widetilde{\Delta}_a^R:= \sigma \circ (S\otimes \id) \circ \widetilde{\Delta}_a^L.
\end{align*}
\end{definition}

\par It follows from the axioms of cocommutativity and compatibility of the coproduct with the counit in the Hopf algebra $\mathbb{C}[G]$, that the Hopf comodules $\widetilde{\Delta}_a^L$ and $\widetilde{\Delta}_a^R$ vanish on the ideal $\mathcal{I}_{\Delta}+\mathcal{I}_{\epsilon}$, hence induce some Hopf comodules $\Delta_a^{L} : \mathbb{C}[\mathcal{R}_G(\mathbf{\Sigma})] \rightarrow \mathbb{C}[G] \otimes \mathbb{C}[\mathcal{R}_G(\mathbf{\Sigma})]$ and  $\Delta_a^R : \mathbb{C}[\mathcal{R}_G(\mathbf{\Sigma})] \rightarrow \mathbb{C}[\mathcal{R}_G(\mathbf{\Sigma})] \otimes \mathbb{C}[G] $ by passing to the quotient.
The Hopf comodules $\Delta_a^L, \Delta_a^R$ are equivariant for the gauge group action, hence induce, by restriction, Hopf comodules (still denoted by the same letter) $\Delta_a^{L} : \mathbb{C}[\mathcal{X}_G(\mathbf{\Sigma})] \rightarrow \mathbb{C}[G] \otimes \mathbb{C}[\mathcal{X}_G(\mathbf{\Sigma})]$ and  $\Delta_a^R : \mathbb{C}[\mathcal{X}_G(\mathbf{\Sigma})] \rightarrow \mathbb{C}[\mathcal{X}_G(\mathbf{\Sigma})] \otimes \mathbb{C}[G] $ .

\vspace{2mm}
\par
Now consider two (distinct) boundary arcs $a, b$ of $\mathbf{\Sigma}$ and $\mathbf{\Sigma}_{a\#b}$ the glued marked surface. Denote by $\pi : \Sigma_{\mathcal{P}} \rightarrow \Sigma_{a\#b}$ the natural projection and by $c$ the common image of $a$ and $b$ by $\pi$. Define an algebra morphism $i_{a\# b} : \mathbb{C}[\mathcal{R}_G(\mathbf{\Sigma}_{ a\#b})] \rightarrow \mathbb{C}[\mathcal{R}_G(\mathbf{\Sigma})]$ as follows. Let $\alpha \in \Pi_1(\Sigma_{a\#b})$ be a path and $c_{\alpha}: [0,1]\rightarrow \Sigma_{a\#b}$ be a geometric representative of $\alpha$ transversed to $c$. Choose a sequence $0=t_0 < t_1 < \ldots < t_n=1$ such that $c_{\alpha}( (t_i, t_{i+1}))$ does not intersect $c$. Each geometric arc ${c_{\alpha}}_{|[t_i, t_{i+1}]}$ induces a path $\alpha_i \in \Pi_1(\Sigma)$.

\begin{definition} The morphism $i_{a\# b} : \mathbb{C}[\mathcal{R}_G(\mathbf{\Sigma}_{| a\#b})] \rightarrow \mathbb{C}[\mathcal{R}_G(\mathbf{\Sigma})]$  is defined by:  
$$i_{a\#b}([x_{\alpha}]):= \sum [(x^{(1)})_{\alpha_1} \ldots (x^{(n)})_{\alpha_n}] \in \mathbb{C}[\mathcal{R}_G(\mathbf{\Sigma})],$$ 
where $x\in \mathbb{C}[G]$ and $\alpha$ a path transversed to $c$.
\end{definition}

If follows from the definitions of $\mathcal{I}_{\Delta}$ and $\mathcal{I}_{\epsilon}$ that the element $i_{a\#b}([x_{\alpha}])$ does not depend on the choice of a geometric representative of $\alpha$ nor on its decomposition and that the map  $i_{a\# b} : \mathbb{C}[\mathcal{R}_G(\mathbf{\Sigma}_{| a\#b})] \rightarrow \mathbb{C}[\mathcal{R}_G(\mathbf{\Sigma})]$ is an algebra morphism. The restriction $\widetilde{\pi} : \Sigma\setminus \mathcal{A} \rightarrow \Sigma_{a\#b} \setminus (\mathcal{A}_{a\#b} \cup c)$ of the projection $\pi$ is a homeomorphism. We define a Hopf algebra morphism $\phi_{a\#b}^{\mathcal{G}} : \mathbb{C}[\mathcal{G}_{\mathbf{\Sigma}_{a\#b}}] \rightarrow \mathbb{C}[\mathcal{G}_{\mathbf{\Sigma}}]$ by sending a generator $x_{v} \in \mathbb{C}[\mathcal{G}_{\mathbf{\Sigma}_{a\#b}}]$ to the generator $x_{\widetilde{\pi}^{-1}(v)} \in \mathbb{C}[\mathcal{G}_{\mathbf{\Sigma}}]$ if $v\notin c$ and to the element $\epsilon(x)$ if $v\in c$. The morphism $i_{a\# b} : \mathbb{C}[\mathcal{R}_G(\mathbf{\Sigma}_{a\#b})] \rightarrow \mathbb{C}[\mathcal{R}_G(\mathbf{\Sigma})]$ is $\phi_{a\#b}^{\mathcal{G}}$-equivariant, hence induces by restriction a morphism (still denoted by the same symbol): 
$$i_{a\# b} : \mathbb{C}[\mathcal{X}_G(\mathbf{\Sigma}_{a\#b})] \rightarrow \mathbb{C}[\mathcal{X}_G(\mathbf{\Sigma})].$$ 

\vspace{2mm}
\par As explained in the introduction, the main motivation to introduce our generalization of character varieties lies in the following gluing formula.
\begin{proposition}[Fundamental gluing property]\label{gluing_formula}
The following sequence is exact
$$ 0 \rightarrow \mathbb{C}[\mathcal{X}_G(\mathbf{\Sigma}_{a\#b})] \xrightarrow{i_{a\#b}} \mathbb{C}[\mathcal{X}_G(\mathbf{\Sigma})] \xrightarrow{ \Delta_a^L - \sigma \circ \Delta_b^R}   \mathbb{C}[G] \otimes  \mathbb{C}[\mathcal{X}_G(\mathbf{\Sigma})], $$
where $\sigma(x\otimes y) = y\otimes x$. Moreover, if $a,b,c,d$ are four distinct boundary arcs, one has $i_{a\#b}\circ i_{c\#d}=i_{c\#d}\circ i_{a\#b}$.
\end{proposition}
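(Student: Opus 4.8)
The plan is to transport the statement to a carefully chosen discrete model, where it becomes a comparison of invariants under two nested discrete gauge groups. By Proposition \ref{proposition_discrete_model} it suffices to argue with a finite presentation. First I would pick a finite presentation $\mathbb{P}=(\mathbb{V},\mathbb{G},\mathbb{RL})$ of $\Pi_1(\Sigma_{\mathcal{P}})$ having \emph{exactly one} vertex $v_a\in a$ and \emph{exactly one} vertex $v_b\in b$; such a presentation always exists (take a triangulation in which $a,b$ are edges and use $\mathbb{P}^{\Delta}$, or use $\mathbb{P}^{\mathbb{B}}$ when $\mathbf{\Sigma}$ is a bigone, or build $\mathbb{P}$ from a spine of $\Sigma_{\mathcal{P}}$ through $v_a,v_b$). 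Then I would show that $\mathbb{P}$ induces a finite presentation $\mathbb{P}_{a\#b}$ of $\Pi_1((\Sigma_{|a\#b})_{\mathcal{P}_{a\#b}})$ with the \emph{same} generating set $\mathbb{G}$ and the \emph{same} relation set $\mathbb{RL}$, in which $v_a$ and $v_b$ are identified to a single vertex $v_c$ now lying on the interior arc $c$: the generating property holds because a curve in $\mathbf{\Sigma}_{|a\#b}$ put transverse to $c$ is cut along $c$ into a composition of paths of $\Sigma_{\mathcal{P}}$ whose endpoints can be pushed into $\{v_a\}\cup\{v_b\}=\{v_c\}$, each a word in $\mathbb{G}$; the relation property follows from the same van Kampen argument that produces $\mathbb{P}$, the only new elements of $\Rel_{\mathbb{G}}$ being words around the handle created by the gluing, which are conjugate to words already decomposable over $\mathbb{RL}$. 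Consequently $\mathcal{R}_G(\mathbf{\Sigma},\mathbb{P})$ and $\mathcal{R}_G(\mathbf{\Sigma}_{|a\#b},\mathbb{P}_{a\#b})$ are literally the same closed subvariety of $G^{\mathbb{G}}$, cut out by the same equations, and under this identification $i_{a\#b}$ is the identity on the coordinate rings of the representation varieties (it sends the curve function of a curve transverse to $c$ to $\rho\mapsto f(\rho(\alpha_1)\cdots\rho(\alpha_n))$, which is exactly the curve function computed in $\mathbb{P}_{a\#b}$). The only thing that changes is the discrete gauge group: $\mathcal{G}_{\mathbb{P}}=G^{\mathring{\mathbb{V}}}$ while $\mathcal{G}_{\mathbb{P}_{a\#b}}=G^{\mathring{\mathbb{V}}\sqcup\{v_c\}}=\mathcal{G}_{\mathbb{P}}\times G_{v_c}$, the extra factor $G_{v_c}$ acting by gauge transformations based at $v_c$, a vertex that was on the boundary of $\mathbf{\Sigma}$ and hence invisible to $\mathcal{G}_{\mathbb{P}}$.

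Granting this, exactness on the left is immediate: $\mathbb{C}[\mathcal{X}_G(\mathbf{\Sigma}_{|a\#b})]=\mathbb{C}[\mathcal{R}_G(\mathbf{\Sigma},\mathbb{P})]^{\mathcal{G}_{\mathbb{P}}\times G_{v_c}}$ sits inside $\mathbb{C}[\mathcal{R}_G(\mathbf{\Sigma},\mathbb{P})]^{\mathcal{G}_{\mathbb{P}}}=\mathbb{C}[\mathcal{X}_G(\mathbf{\Sigma})]$ as the subalgebra of functions that are moreover $G_{v_c}$-invariant, and $i_{a\#b}$ is this inclusion, in particular injective. It then remains to identify $\ker(\Delta_a^L-\sigma\circ\Delta_b^R)$ with this subalgebra. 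Unwinding the defining formulas for $\widetilde{\Delta}_a^{L}$ and $\widetilde{\Delta}_b^{R}=\sigma\circ(S\otimes\id)\circ\widetilde{\Delta}_b^{L}$, one checks in the discrete model that $\Delta_a^L$ is the co-action dual to the action of $G$ by gauge transformations based at the boundary vertex $v_a$ (this action commutes with $\mathcal{G}_{\mathbb{P}}$, hence descends to $\mathbb{C}[\mathcal{X}_G(\mathbf{\Sigma})]$), and that $\sigma\circ\Delta_b^R=(S\otimes\id)\circ\Delta_b^L$ is dual to the action of $G$ based at $v_b$ precomposed with $g\mapsto g^{-1}$. Since the gauge actions based at the distinct boundary vertices $v_a$ and $v_b$ of $\mathbf{\Sigma}$ commute, and since the gauge action of $g\in G_{v_c}$ on $\mathcal{R}_G(\mathbf{\Sigma},\mathbb{P})$ is, under $v_a=v_b=v_c$, exactly the composition of the $v_a$-action and the $v_b$-action by the same $g$, one gets for $f\in\mathbb{C}[\mathcal{X}_G(\mathbf{\Sigma})]$ that $\Delta_a^L(f)=\sigma\circ\Delta_b^R(f)$ if and only if $g\cdot_{v_a}f=g^{-1}\cdot_{v_b}f$ for all $g\in G$, if and only if $(g\cdot_{v_b}\,g\cdot_{v_a})f=f$ for all $g\in G$, if and only if $f$ is invariant under $\mathcal{G}_{\mathbb{P}}\times G_{v_c}$, i.e. $f\in\Image(i_{a\#b})$. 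This gives exactness at the middle term and finishes the proof modulo the first paragraph.

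I expect the main obstacle to be precisely the topological statement of the first paragraph: that a one-vertex-per-arc finite presentation of $\mathbf{\Sigma}$ descends, after identifying $v_a$ with $v_b$, to a finite presentation of $\mathbf{\Sigma}_{|a\#b}$ with the same generators and relations, even though $\pi_1$ of the glued surface is strictly larger (it acquires a handle generator, or its set of boundary components changes). The safe route is to take the presenting graph $\Gamma$ of $\mathbb{P}$ to be a spine of $\Sigma_{\mathcal{P}}$ through $v_a$ and $v_b$ and observe that $\Gamma$ with $v_a,v_b$ identified is a spine of $(\Sigma_{|a\#b})_{\mathcal{P}_{a\#b}}$; completeness of $\mathbb{RL}$ then reduces, via van Kampen applied to the decomposition of the glued surface along $c$, to its completeness for $\mathbb{P}$. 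Secondary, purely bookkeeping points are: the behaviour of boundary components and of condition $(5)$ when $a$ and $b$ lie on the same boundary component of $\Sigma$ (and the corresponding description of the boundary vertices of $\mathbb{P}_{a\#b}$); the compatibility of the intrinsic, curve-cutting definition of $i_{a\#b}$ with its discrete-model description; and the exact match between the antipode and flip in $\widetilde{\Delta}_b^R$ and the interpretation ``act by $g^{-1}$ at $v_b$'', which is a direct computation from the Hopf-algebra formulas defining $\widetilde{\Delta}_b^{L/R}$ and the discrete gauge action. None of these should present a genuine difficulty once the spine picture is fixed.
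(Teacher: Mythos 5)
Your proposal is correct and shares its skeleton with the paper's proof: both pass to a discrete model with exactly one vertex on each of $a$ and $b$, observe that $\mathcal{R}_G(\mathbf{\Sigma},\mathbb{P})$ and $\mathcal{R}_G(\mathbf{\Sigma}_{|a\#b},\mathbb{P}_{a\#b})$ are the same subvariety of $G^{\mathbb{G}}$ while the discrete gauge group gains one factor $G_{v_c}$, and get left-exactness from the nesting of invariants. Where you genuinely diverge is the middle-exactness step, i.e. the identification $\ker(\Delta_a^L-\sigma\circ\Delta_b^R)=\ker(\Delta_{v_c}^L-\eta\otimes\id)$. The paper imposes extra conditions on $\mathbb{P}$ (distinguished generators $\beta_a,\beta_b$ touching $v_a,v_b$ and appearing in no non-trivial relation) precisely so that $\mathbb{C}[\mathcal{R}_G(\mathbf{\Sigma},\mathbb{P})]$ factors as $\mathbb{C}[G]_{\beta_a}\otimes\mathbb{C}[G]_{\beta_b}\otimes\mathcal{A}$, and then invokes the purely Hopf-algebraic Lemma \ref{lemma_gluing} ($\ker\Psi_1=\ker\Psi_2=\Ima(\Delta)$, proved via an explicit algebra automorphism $\varphi$ of $\mathcal{H}^{\otimes 3}$). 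You instead evaluate the $\mathbb{C}[G]$-factor at every $g\in G$ — legitimate since $\mathbb{C}[G]$ is reduced, so an element of $\mathbb{C}[G]\otimes M$ vanishes iff all its evaluations do — identify $(\ev_g\otimes\id)\circ\Delta_a^L$ and $(\ev_{g}\otimes\id)\circ\sigma\circ\Delta_b^R=(\ev_{g^{-1}}\otimes\id)\circ\Delta_b^L$ with the pullbacks of the boundary-based gauge transformations at $v_a$ and $v_b$, and conclude by the elementary manipulation $g\cdot_{v_a}f=g^{-1}\cdot_{v_b}f\iff (g\cdot_{v_b})(g\cdot_{v_a})f=f$, the latter being exactly $G_{v_c}$-invariance. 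Your route buys you a shorter, more geometric argument that needs neither the tensor factorization nor the special shape of the presentation (only one vertex per glued arc), at the cost of being tied to reduced coordinate rings over an algebraically closed field, whereas the paper's kernel lemma is a statement about arbitrary Hopf algebras. The one point you should not skip in a write-up is the one you already flag: verifying that the pushforward $(\pi(\mathbb{V}),\pi_*\mathbb{G},\pi_*\mathbb{RL})$ is a bona fide finite presentation of $\Pi_1((\Sigma_{|a\#b})_{\mathcal{P}_{a\#b}})$ — in particular that the new trivial words created by composing across $v_c$ still decompose over $\mathbb{RL}$; your spine/van Kampen sketch is the right way to do it (and the paper itself leaves this implicit), so this is not a gap relative to the published argument.
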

\par We first state a technical lemma.
\begin{lemma}\label{lemma_gluing} Let $\mathcal{H}$ be a Hopf algebra and $\Delta_L : \mathcal{H}^{\otimes 2} \rightarrow \mathcal{H}^{\otimes 3}$ be the morphism defined by $\Delta_L(x\otimes y \otimes z) := \sum x'\otimes S(x'')y' \otimes y''$. Denote by $\Psi_1, \Psi_2: \mathcal{H}^{\otimes 2} \rightarrow \mathcal{H}^{\otimes 3}$ the morphisms defined by $\Psi_1 := \id\otimes \Delta - \Delta\otimes \id$ and $\Psi_2 := \Delta_L - \id\otimes \eta \otimes \id$ respectively. Then $\ker(\Psi_1)=\ker(\Psi_2)= \Ima(\Delta)$.
\end{lemma}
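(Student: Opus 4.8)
The plan is to establish the two equalities $\ker(\Psi_1)=\Ima(\Delta)$ and $\ker(\Psi_2)=\Ima(\Delta)$ separately, the first being a standard coalgebra fact and the second being the one that actually encodes the gluing identification. For the inclusion $\Ima(\Delta)\subset\ker(\Psi_1)$ one simply applies $\Psi_1$ to $\Delta(x)=\sum x'\otimes x''$ and uses coassociativity: both $(\id\otimes\Delta)\Delta$ and $(\Delta\otimes\id)\Delta$ equal $\Delta^{(2)}$, so the difference is zero. For $\Ima(\Delta)\subset\ker(\Psi_2)$, apply $\Delta_L$ to $\Delta(x)\otimes(\text{something})$; more precisely note that $\Delta_L$ acts on the first two tensor slots by $x\otimes y\mapsto\sum x'\otimes S(x'')y'\otimes y''$, so on an element of the form $\sum x'\otimes x''$ (viewed in $\mathcal{H}^{\otimes 2}$) one gets $\sum x^{(1)}\otimes S(x^{(2)})x^{(3)}\otimes x^{(4)}=\sum x'\otimes \epsilon(x'')1\otimes x'''=\sum x'\otimes 1\otimes x''$, which is exactly $(\id\otimes\eta\otimes\id)\Delta(x)$; hence $\Psi_2$ vanishes on $\Ima(\Delta)$.

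For the reverse inclusions the key trick is to exhibit, in each case, a left inverse of $\Delta$ on the relevant domain that can be built from $\Psi_i$. For $\Psi_1$: if $\sum_j u_j\otimes v_j\in\ker(\Psi_1)$, apply $\epsilon$ to the \emph{first} factor of $\Psi_1(\sum u_j\otimes v_j)=0$. Since $(\epsilon\otimes\id\otimes\id)(\id\otimes\Delta)=\Delta\circ(\text{something})$ — concretely $(\epsilon\otimes\id\otimes\id)(\id\otimes\Delta)(\sum u_j\otimes v_j)=\sum\epsilon(u_j)\Delta(v_j)$ while $(\epsilon\otimes\id\otimes\id)(\Delta\otimes\id)(\sum u_j\otimes v_j)=\sum u_j\otimes v_j$ by the counit axiom — one obtains $\sum_j u_j\otimes v_j=\Delta\left(\sum_j\epsilon(u_j)v_j\right)\in\Ima(\Delta)$. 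For $\Psi_2$: suppose $\sum_j u_j\otimes v_j\in\ker(\Psi_2)$, i.e. $\sum_j u_j'\otimes S(u_j'')v_j'\otimes v_j''=\sum_j u_j\otimes 1\otimes v_j$. Apply the counit to the \emph{second} tensor factor: the right-hand side becomes $\sum_j u_j\otimes v_j$, while the left-hand side, using $\epsilon(S(u_j'')v_j')=\epsilon(u_j'')\epsilon(v_j')$ and the counit axioms, collapses to $\sum_j u_j'\epsilon(u_j'')\otimes\epsilon(v_j')v_j''=\sum_j u_j\otimes v_j$ — that is a tautology, so this particular contraction is not enough. Instead apply $\id\otimes\id\otimes\epsilon$ won't help either; the right contraction is to multiply the first two slots: apply $m\otimes\id$ (with $m$ the product of $\mathcal{H}$) to both sides. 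The left side gives $\sum_j u_j' S(u_j'')v_j'\otimes v_j''=\sum_j \epsilon(u_j)v_j'\otimes v_j''=\Delta\left(\sum_j\epsilon(u_j)v_j\right)$ using the antipode axiom $\sum x'S(x'')=\epsilon(x)1$, while the right side gives $\sum_j u_j\cdot 1\otimes v_j=\sum_j u_j\otimes v_j$. Hence $\sum_j u_j\otimes v_j=\Delta\left(\sum_j\epsilon(u_j)v_j\right)\in\Ima(\Delta)$, completing the proof.

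The routine parts are the Sweedler-notation bookkeeping and the verification that the contractions above are algebra (or at least linear) maps, which they visibly are. The step I expect to be the only genuinely delicate point is choosing the \emph{correct} contraction for $\ker(\Psi_2)\subset\Ima(\Delta)$: several natural-looking applications of $\epsilon$ produce tautologies, and one must instead fold the first two tensor legs together with the multiplication and invoke the antipode axiom $\sum x'S(x'')=\epsilon(x)1$ to recover $\Delta$. Once that is identified, both equalities follow and, chaining them, $\ker(\Psi_1)=\ker(\Psi_2)=\Ima(\Delta)$ as claimed.
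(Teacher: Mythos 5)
Your proof is correct. The two halves that carry the real content --- $\Ima(\Delta)\subset\ker(\Psi_i)$ via coassociativity and the antipode axiom, and $\ker(\Psi_2)\subset\Ima(\Delta)$ by folding the first two legs with the multiplication $m\otimes\id$ and using $\sum x'S(x'')=\epsilon(x)1$ to produce the preimage $Y=\sum_j\epsilon(u_j)v_j$ --- are exactly the paper's computations. Where you diverge is in how $\ker(\Psi_1)$ is handled: the paper introduces the automorphism $\varphi(x\otimes y\otimes z)=-\sum x'\otimes S(x'')y\otimes z$ of $\mathcal{H}^{\otimes 3}$ (with inverse $-\sum x'\otimes x''y\otimes z$) satisfying $\varphi\circ\Psi_1=\Psi_2$, so that $\ker(\Psi_1)=\ker(\Psi_2)$ comes for free and only one kernel needs to be computed, whereas you prove $\ker(\Psi_1)\subset\Ima(\Delta)$ directly by contracting the first slot with $\epsilon$ and invoking the counit axiom. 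Your route is marginally more elementary (no need to verify invertibility of $\varphi$) at the cost of four separate inclusions instead of two; the paper's version makes the structural relation between $\Psi_1$ and $\Psi_2$ explicit, which is the conceptual point behind the comodule reformulation of the gluing. Your observation that the naive counit contractions of the $\Psi_2$ equation are tautologies, and that one must instead multiply the first two legs, is precisely the delicate step, and you identify it correctly.
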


\begin{proof} Consider the automorphism $\varphi : \mathcal{H}^{\otimes 3} \rightarrow \mathcal{H}^{\otimes 3}$ defined by $\varphi(x\otimes y \otimes z)= -\sum x'\otimes S(x'')y\otimes z$. A straightforward computation shows that $\varphi$ has the inverse defined by $\varphi^{-1}(x\otimes y \otimes z) = -\sum x'\otimes x'' y \otimes z$ and that $\varphi\circ \Psi_1 = \Psi_2$. Hence we have the equality $\ker(\Psi_1)=\ker(\Psi_2)$. The inclusion $\ker(\Delta) \subset \ker(\Psi_1)$ follows from the co-associativity of the co-product. It remains to show the inclusion $\ker(\Psi_2)\subset \Ima(\Delta)$ to conclude. Let $X= \sum x_i \otimes y_i \in \ker(\Psi_2)$ and define $Y:= \sum \eta \circ\epsilon(x_i)y_i \in \mathcal{H}$. One has
\begin{eqnarray*}
\Psi_2(X) = 0 &\iff & \sum x_i' \otimes S(x_i'')y_i'\otimes y_i'' = \sum x_i\otimes 1 \otimes y_i
 \\  &\implies & \sum \eta\circ \epsilon (x_i)y_i' \otimes y_i'' = \sum x_i\otimes y_i \iff \Delta(Y)=X 
 \end{eqnarray*}
 where we passed from the first line to the second by composing the equalities with $\mu \otimes \id$. We thus have proved  that $\ker(\Psi_1)=\ker(\Psi_2)=\Ima(\Delta)$. This concludes the proof.
\end{proof}
\par Consider a punctured surface $\mathbf{\Sigma}$ with two boundary arcs $a$ and $b$ and a finite presentation $\mathbb{P}=(\mathbb{V}, \mathbb{G}, \mathbb{RL})$ of the fundamental groupoid $\Pi_1(\Sigma)$ such that $\mathbb{V}\cap a$ and $\mathbb{V}\cap b$ have  cardinal one. Denote by $\pi: \Sigma_{\mathcal{P}} \rightarrow {\Sigma_{|a\# b}}_{\mathcal{P}_{a\#b}}$ the projection map. Define the presentation $\mathbb{P}_{a\#b}$ of $\Pi_1\left(\Sigma_{a\# b}\right)$ by setting $\mathbb{V}_{a\#b} = \pi (\mathbb{V})$, $\mathbb{G}_{a\#b} = \pi_* (\mathbb{G})$ and $\mathbb{RL}_{a\#b} = \pi_*(\mathbb{RL})$. Denote by $\Delta_a^L : \mathbb{C}[\mathcal{X}_G(\mathbf{\Sigma}, \mathbb{P})] \rightarrow \mathbb{C}[G] \otimes \mathbb{C}[\mathcal{X}_G(\mathbf{\Sigma}, \mathbb{P})]$ and $\Delta_b^R : \mathbb{C}[\mathcal{X}_G(\mathbf{\Sigma}, \mathbb{P})] \rightarrow  \mathbb{C}[\mathcal{X}_G(\mathbf{\Sigma}, \mathbb{P})]\otimes  \mathbb{C}[G]$ the Hopf comodule maps induced by the isomorphism $\phi^{\mathbb{P}} : \mathbb{C}[\mathcal{X}_G(\mathbf{\Sigma}, \mathbb{P})] \cong \mathbb{C}[\mathcal{X}_G(\mathbf{\Sigma})] $. Also denote by $i_{a\#b}: \mathbb{C}[\mathcal{X}_G(\mathbf{\Sigma}_{a\#b}, \mathbb{P}_{a\#b})]\rightarrow \mathbb{C}[\mathcal{X}_G(\mathbf{\Sigma}, \mathbb{P})]$ and $\phi_{a\#b}^{\mathcal{G}_{\mathbb{P}}}: \mathbb{C}[\mathcal{G}_{\mathbb{P}_{a\#b}}] \rightarrow \mathbb{C}[\mathcal{G}_{\mathbb{P}}] $ the morphisms induced by $\phi^{\mathbb{P}}$, $\phi^{\mathbb{P}_{a\#b}}$ and the inclusion $\mathring{\mathbb{V}}\hookrightarrow \mathring{\mathbb{V}}_{a\#b}$.
\vspace{2mm}
\par By Proposition \ref{proposition_discrete_model}, to prove Proposition \ref{gluing_formula} it is sufficient to find a finite presentation $\mathbb{P}$ such that the following sequence is exact
$$ 0 \rightarrow \mathbb{C}[\mathcal{X}_G(\mathbf{\Sigma}_{a\#b}, \mathbb{P}_{a\#b})] \xrightarrow{i_{a\#b}} \mathbb{C}[\mathcal{X}_G(\mathbf{\Sigma}, \mathbb{P})] \xrightarrow{\Delta_a^L - \sigma \circ  \Delta_b^R}   \mathbb{C}[G] \otimes  \mathbb{C}[\mathcal{X}_G(\mathbf{\Sigma}, \mathbb{P})].$$

\begin{proof}[Proof of Proposition \ref{gluing_formula}]
Fix a finite presentation $\mathbb{P}$ of $\Pi_1(\Sigma_{\mathcal{P}})$ such that: 
\begin{enumerate}
\item Both $\mathbb{V}\cap a =\{v_a \}$ and $\mathbb{V}\cap b =\{v_b \}$ are singletons. 
\item There exists some path $\beta_a : v_a \rightarrow \mathring{v}$ and $\beta_b : \mathring{v}' \rightarrow v_b$ in $\mathbb{G}$ such that $\mathring{v}, \mathring{v}' \in \mathring{\mathbb{V}}$.
\item If $\alpha \in \mathbb{B} \setminus \{ \beta_a^{\pm 1}, \beta_b^{\pm 1} \}$, then $\{ v_a, v_b \} \cap \{s(\alpha), t(\alpha) \} = \emptyset$. Moreover $\mathbb{RL}$ does not contain non trivial relations involving the paths $\beta_a^{\pm 1}, \beta_b^{\pm 1}$.
\end{enumerate}
\par Fix also a gluing map $\varphi : \overline{a} \cong \overline{b}$ sending $v_a$ to $v_b$ and denote by $\pi: \Sigma_{\mathcal{P}} \rightarrow \Sigma_{a\# b}$ the projection map, by $c$ the image of $a$ and $b$ by $\pi$ and by $v_c$ the image of $v_a$ and $v_b$. For simplicity, using the projection $\pi$, we will identify the sets $\mathbb{G} = \mathbb{G}_{a\#b}$, the sets $\mathbb{R}=\mathbb{R}_{a\#b}$ and write $\mathring{\mathbb{V}}_{a\#b} = \mathring{\mathbb{V}}\cup \{v_c\}$. Under these identifications, we have the equalities $\mathcal{R}_G(\mathbf{\Sigma}_{|a\#b}, \mathbb{P}_{a\#b}) = \mathcal{R}_G(\mathbf{\Sigma}, \mathbb{P})$ and $\mathcal{G}_{\mathbb{P}_{a\#b}} = \mathcal{G}_{\mathbb{P}}\times G_{v_c}$. 

\vspace{2mm} \par
 Under the identification $\mathbb{C}[\mathcal{G}_{\mathbb{P}_{a\#b}}] = \mathbb{C}[\mathcal{G}_{\mathbb{P}}] \otimes \mathbb{C}[G]_{v_c}$ the comodule map $\Delta_{\mathcal{G}_{\mathbb{P}_{a\#b}}}^L : \mathbb{C}[\mathcal{R}_G(\mathbf{\Sigma}, \mathbb{P})] \rightarrow \left( \mathbb{C}[\mathcal{G}_{\mathbb{P}}] \otimes \mathbb{C}[G]_{v_c} \right) \otimes  \mathbb{C}[\mathcal{R}_G(\mathbf{\Sigma}, \mathbb{P})]$ decomposes as $\Delta_{\mathcal{G}_{\mathbb{P}_{a\#b}}}^L(f) = (\Delta^L_{\mathcal{G}_{\mathcal{P}}} (f))_{13}\cdot (\Delta^L_{v_c} (f))_{23}$, where $\Delta^L_{v_c} : \mathbb{C}[\mathcal{X}_G(\mathbf{\Sigma}, \mathbb{P})] \rightarrow \mathbb{C}[G]_{v_c} \otimes \mathbb{C}[\mathcal{X}_G(\mathbf{\Sigma}, \mathbb{P})]$ is the Hopf co-action induced at the point $v_c$.   Hence one has the equality $\ker \left( \Delta_{\mathcal{G}_{\mathbb{P}_{a\#b}}}^L-\eta\otimes \eta \otimes \id\right) = \ker \left( \Delta_{\mathcal{G}_{\mathcal{P}}} -\eta \otimes \id \right) \cap \ker \left(\Delta^L_{v_c}-\eta\otimes \id \right)$, and one has the following exact sequence
 
$$ 0 \rightarrow \mathbb{C}[\mathcal{X}_G(\mathbf{\Sigma}_{a\#b}, \mathbb{P}_{a\#b})] \xrightarrow{i_{a\#b}} \mathbb{C}[\mathcal{X}_G(\mathbf{\Sigma}, \mathbb{P})] \xrightarrow{\Delta^L_{v_c} - \eta\otimes \id} \mathbb{C}[G]_{v_c} \otimes \mathbb{C}[\mathcal{X}_G(\mathbf{\Sigma}, \mathbb{P})]. $$

\par We need to show the equality $\ker\left( \Delta^L_{v_c} - \eta\otimes \id \right) = \ker \left( \Delta_a^L - \sigma \circ \Delta_b^R \right)$ to conclude the proof. Recall that the algebra of regular functions of the representation variety is defined as the coimage:
$$  \mathbb{C}[G]^{\otimes \mathbb{RL}} \xrightarrow{\mathcal{R}^* - \epsilon^{\otimes \mathbb{RL}}} \mathbb{C}[G]^{\otimes \mathbb{G}} \rightarrow \mathbb{C}[\mathcal{R}_G(\mathbf{\Sigma}, \mathbb{P})] \rightarrow 0$$
\par Since $\mathbb{RL}$ does not contain any non trivial relation involving $\beta_a^{\pm 1}, \beta_b^{\pm 1}$, we have the tensor decomposition $\mathbb{C}[\mathcal{R}_G(\mathbf{\Sigma}, \mathbb{P})] = \mathbb{C}[G]_{\beta_a} \otimes \mathbb{C}[G]_{\beta_b} \otimes \mathcal{A}$, where  $\mathbb{C}[G]_{\beta_a}$ is the image in the quotient of the factor $\mathbb{C}[G]_{\beta_a} \otimes \mathbb{C}[G]_{\beta_a^{-1}}$, $\mathbb{C}[G]_{\beta_b}$ is the image of the factor $\mathbb{C}[G]_{\beta_b}\otimes \mathbb{C}[G]_{\beta_b^{-1}}$ and $\mathcal{A}$ is the image of the factor $\otimes_{\alpha \in \mathbb{G}\setminus \{\beta_a^{\pm 1}, \beta_b^{\pm 1} \}} \mathbb{C}[G]_{\alpha}$. Denote by $\Psi_1, \Psi_2 : \mathbb{C}[G]_{\beta_a}\otimes \mathbb{C}[G]_{\beta_b} \rightarrow \mathbb{C}[G]_{\beta_a} \otimes \mathbb{C}[G]_{v_c} \otimes \mathbb{C}[G]_{\beta_b}$ defined, as in Lemma \ref{lemma_gluing}, by the formulas $\Psi_1 := \id\otimes \Delta - \Delta\otimes \id$ and $\Psi_2 := \Delta_L - \id\otimes \eta \otimes \id$ respectively. Also define the permutation isomorphism $P :  \mathbb{C}[G]_{v_c} \otimes \mathbb{C}[G]_{\beta_a} \otimes \mathbb{C}[G]_{\beta_b} \cong  \mathbb{C}[G]_{\beta_a} \otimes \mathbb{C}[G]_{v_c} \otimes \mathbb{C}[G]_{\beta_b}$ defined by $P(x\otimes y \otimes z) := y\otimes x \otimes z$. 
\vspace{2mm}
\par By definition of the comodule maps $\Delta_a^L$ and $\Delta_b^R$, the following diagram commutes:
$$\begin{tikzcd}
\mathbb{C}[G]_{\beta_a} \otimes \mathbb{C}[G]_{\beta_b}\otimes \mathcal{A}
\arrow[rr, "\Delta_a^L - \sigma \circ \Delta_b^R"] 
\arrow[rrd,  "\Psi_1 \otimes \id_{\mathcal{A}}"']
&&
 \mathbb{C}[G]_{v_c} \otimes \left(\mathbb{C}[G]_{\beta_a} \otimes \mathbb{C}[G]_{\beta_b}\otimes \mathcal{A}\right) 
 \arrow[d, "P\otimes \id_{\mathcal{A}}", "\cong"'] 
 \\ {} &&
 \mathbb{C}[G]_{\beta_a} \otimes \mathbb{C}[G]_{v_c} \otimes \mathbb{C}[G]_{\beta_b} \otimes \mathcal{A}
\end{tikzcd}$$
\par Moreover, by definition of the gauge group action, the following diagram commutes:
$$\begin{tikzcd}
\mathbb{C}[G]_{\beta_a} \otimes \mathbb{C}[G]_{\beta_b}\otimes \mathcal{A}
\arrow[rr, "\Delta_{v_c}^L - \eta\otimes \id"] 
\arrow[rrd,  "\Psi_2 \otimes \id_{\mathcal{A}}"']
&&
 \mathbb{C}[G]_{v_c} \otimes \left(\mathbb{C}[G]_{\beta_a} \otimes \mathbb{C}[G]_{\beta_b}\otimes \mathcal{A}\right) 
 \arrow[d, "P\otimes \id_{\mathcal{A}}", "\cong"'] 
 \\ {} &&
 \mathbb{C}[G]_{\beta_a} \otimes \mathbb{C}[G]_{v_c} \otimes \mathbb{C}[G]_{\beta_b} \otimes \mathcal{A}
\end{tikzcd}$$

\par Hence, by Lemma \ref{lemma_gluing},  we have the equalities:
\begin{eqnarray*}
\mathbb{C}[\mathcal{X}_G(\mathbf{\Sigma}_{a\#b}, \mathbb{P}_{a\#b})] &=& \mathbb{C}[\mathcal{X}_G(\mathbf{\Sigma}, \mathbb{P})] \cap \ker \left( \Delta_{v_c}^L - \eta \otimes \id\right) \\
&=& \mathbb{C}[\mathcal{X}_G(\mathbf{\Sigma}, \mathbb{P})] \cap \ker \left(\Psi_1\otimes \id_{\mathcal{A}} \right) \\
&=& \mathbb{C}[\mathcal{X}_G(\mathbf{\Sigma}, \mathbb{P})] \cap \ker \left(\Psi_2\otimes \id_{\mathcal{A}} \right) \\
&=& \mathbb{C}[\mathcal{X}_G(\mathbf{\Sigma}, \mathbb{P})] \cap \ker \left( \Delta_a^L - \sigma \circ \Delta_b^R \right) 
\end{eqnarray*}
\par We thus have proved that following sequence is exact
$$ 0 \rightarrow \mathbb{C}[\mathcal{X}_G(\mathbf{\Sigma}_{a\#b}, \mathbb{P}_{a\#b})] \xrightarrow{i_{a\#b}} \mathbb{C}[\mathcal{X}_G(\mathbf{\Sigma}, \mathbb{P})] \xrightarrow{\Delta_a^L - \sigma \circ  \Delta_b^R}   \mathbb{C}[G] \otimes  \mathbb{C}[\mathcal{X}_G(\mathbf{\Sigma}, \mathbb{P})]. $$
\par We conclude using Proposition \ref{proposition_discrete_model}.

\end{proof}

\subsection{Triangular decompositions}\label{sec_triangulation_modularoperad}

\begin{definition}\label{def_triangulation}
A marked surface $\mathbf{\Sigma}$ is \textit{triangulable} if it can be obtained from a disjoint union $\mathbf{\Sigma}^{\Delta}:=\bigsqcup_{\mathbb{T}\in F(\Delta)} \mathbb{T}$ of triangles by gluing some pairs of boundary arcs. A \textit{triangulation} $\Delta$ is the  data of the disjoint union $\mathbf{\Sigma}^{\Delta}:=\bigsqcup_{\mathbb{T}\in F(\Delta)} \mathbb{T}$ together with the pairs of boundary arcs glued together.
\end{definition}

The only non triangulable connected marked surfaces are:  the (unmarked) closed connected surfaces, the unmarked sphere with one or two boundary component, the disc with one or two boundary arcs.  

\begin{remark}\label{remark_punctured_surfaces}
 The groupoid of marked surfaces with isomorphisms is equivalent to a category of punctured surfaces so both language can be used interchangeably, though a triangulation is easier to visualise using punctured surfaces. A punctured surface is a pair $(S, \mathcal{P})$ where $S$ is a compact oriented surface and $\mathcal{P}\subset S$ a finite subset of punctures which intersects non-trivially each connected component of $\partial S$. Isomorphisms of punctured surfaces are preserving-orientation homeomorphisms which preserves the sets of punctures. One associates a punctured surface $(S, \mathcal{P})$ to a marked surface $(\Sigma, \mathcal{A})$ by shrinking down to a puncture each connected component of $\partial \Sigma\setminus \mathcal{A}$. The reverse operation consists in blowing up each inner puncture and setting $\mathcal{A}=\partial S \setminus \mathcal{P}$. A $\mathcal{P}$-arc in $(S,\mathcal{P})$ is an immersion $e : [0,1]\to S$ sending $0, 1$ to elements of $\mathcal{P}$ and whose restriction to $(0,1)$ is an embedding into $S\setminus \mathcal{P}$. An ideal triangulation of $(S,\mathcal{P})$ is a maximal set of pairwise non homotopic (relatively to their endpoints) $\mathcal{P}$-arcs (the edges) with disjoint interior. 
A triangulation of $(\Sigma, \mathcal{A})$ is the same as an ideal triangulation of its associated punctured surface $(S, \mathcal{P})$. 
\end{remark}

\par  Consider a triangulated marked surface $(\mathbf{\Sigma}, \Delta)$, so $\mathbf{\Sigma}$ is obtained from  $\mathbf{\Sigma}^{\Delta}:=\bigsqcup_{\mathbb{T}\in F(\Delta)} \mathbb{T}$ by gluing the triangles along pairs of edges. Each inner edge $e\in \mathcal{E}(\Delta)$ lifts to two boundary arcs $e'$ and $e''$ of $\mathbf{\Sigma}^{\Delta}$. By composing the morphisms $i_{e'\#e''}$, one obtains an injective morphism $i^{\Delta} : \mathbb{C}[\mathcal{X}_G(\mathbf{\Sigma})]\hookrightarrow \otimes_{\mathbb{T}\in F(\Delta)}\mathbb{C}[\mathcal{X}_G(\mathbb{T})]$. The comodule maps $\Delta_{e'}^L$ and $\Delta_{e''}^R$ induce comodule maps $\Delta^L$ and $\Delta^R$ such that we have the following exact sequence

\begin{equation*}
0 \rightarrow \mathbb{C}[\mathcal{X}_G(\mathbf{\Sigma})]  \xrightarrow{i^{\Delta}} \otimes_{\mathbb{T}\in F(\Delta)} \mathbb{C}[\mathcal{X}_G(\mathbb{T})]
 \xrightarrow{\Delta^L -  \sigma \circ \Delta^R} 
\left(\otimes_{e\in \mathring{\mathcal{E}}(\Delta)}\mathbb{C}[G]\right) \otimes  \left( \otimes_{\mathbb{T}\in F(\Delta)} \mathbb{C}[\mathcal{X}_G(\mathbb{T})] \right).
 \end{equation*}

\vspace{2mm}
\par The short exact sequence of Proposition \ref{gluing_formula} can be reformulated as follows. If $A$ is an algebra and $M$ a $A$-bimodule, the $0$-th Hochschild homology group is defined by $\mathrm{HH}_0(A, M):= \quotient{M}{(a\cdot m - m\cdot a, a \in A, m\in M)}$. Denoting by $\nabla^L : A\otimes M \rightarrow M$ and $\nabla^R: M\otimes A \rightarrow M$ the left and right module maps, the algebra $\mathrm{HH}_0(A,M)$ is thus defined by the coimage in the following exact sequence

$$  A \otimes M \xrightarrow{ \nabla^L - \sigma \circ \nabla^R  } M \rightarrow \mathrm{HH}_0 (A,M) \rightarrow 0.$$

\par Now consider a co-algebra $C$ with a bicomodule $M$ defined by the comodules maps $\Delta^L : M \rightarrow C\otimes M$ and $\Delta^R : M \rightarrow M\otimes C$. By dualizing the preceding exact sequence, it is natural to define the $0$-th coHochschild cohomology group $\mathrm{coHH}^0(C,M)$  as the kernel in the following exact sequence
$$ 0 \rightarrow \mathrm{coHH}^0(C,M) \rightarrow M  \xrightarrow{\Delta^L  - \sigma \circ\Delta^R} C \otimes M. $$
\par Denote by  ${}_a \mathbb{C}[\mathcal{X}_G(\mathbf{\Sigma})]_b$ the $\mathbb{C}[G]$  bi-comodule defined by $\Delta_a^L$ and $\Delta_b^R$. Proposition \ref{gluing_formula} can be re-written more elegantly by the formula
$$ \mathbb{C}[\mathcal{X}_G(\mathbf{\Sigma}_{|a\#b})] = \mathrm{coHH}^0(\mathbb{C}[G], {}_a \mathbb{C}[\mathcal{X}_G(\mathbf{\Sigma})]_b ). $$


\section{Twisted groupoid (co)homologies and the tangent spaces}\label{sec_cohomology}


\begin{notations} Denote by $\mathfrak{g}$ the Lie algebra of the complex affine reducible Lie group $G$. We fix once and for all a non-degenerate symmetric $G$-invariant pairing $\left(\cdot, \cdot \right) : \mathfrak{g}^{\otimes 2} \rightarrow \mathbb{C}$. If $g\in G$, denote by $L_g :G\rightarrow G$ and $R_g:G\rightarrow G$ the regular maps defined by $L_g(h)= gh$ and $R_g(h)=hg$. For a tangent vector  $X\in T_h G$ and a group element $g$, we will use the notations $gX:= D_h L_g (X)\in T_{gh}G$ and $Xg:= D_h R_g(X)\in T_{hg}G$. If $g\in G$, we define the symmetric non-degenerate pairing $\left(\cdot, \cdot \right)_g : T_g G \otimes T_g G \rightarrow \mathbb{C}$ by the formula $(X, Y)_g:= (g^{-1}X, g^{-1}Y)$.
\end{notations}

\subsection{Twisted groupoid (co)homologies}

\par Let $\mathbf{\Sigma}$ be a punctured surface and $\rho \in \mathcal{R}_G(\mathbf{\Sigma})$ a representation. Denote by $P^{(n)}$ the set of $n+1$-tuples $(\alpha_n, \ldots, \alpha_0)$ of elements of $\Pi_1(\Sigma)$ such that $t(\alpha_i)=s(\alpha_{i-1})$. Define the vector space $\mathrm{C}_n (\Sigma ; \rho)$ as the  quotient
$$ \mathrm{C}_n (\Sigma ; \rho):= \quotient{\left(\oplus_{(\alpha_n, \ldots, \alpha_0) \in P^{(n)}} T_{\rho(\alpha_n\ldots \alpha_0)}G \right)}{\sim},$$
where the equivalence relation $\sim$ is defined for any $(\alpha_n, \ldots, \alpha_0, \beta)\in P^{(n+1)}$ and $X\in T_{\rho(\alpha_n \ldots \alpha_0)}G$ by the formula $X \sim X\rho(\beta)$, where $X\rho(\beta) \in  T_{\rho(\alpha_n \ldots \alpha_0\beta)}G$. Given $(\alpha_n, \ldots, \alpha_0)\in P^{(n)}$ and $X\in T_{\rho(\alpha_n \ldots \alpha_0)}G$, we denote by $\left<(\alpha_n, \ldots, \alpha_0), X\right> \in  \mathrm{C}_n (\Sigma ; \rho)$ the class of the corresponding element. Define a map $\partial_n :  \mathrm{C}_n (\Sigma ; \rho) \rightarrow  \mathrm{C}_{n-1} (\Sigma ; \rho)$ by the formula
\begin{equation*}
 \partial_n\left( \left< (\alpha_n, \ldots, \alpha_0), X\right> \right) := \left< (\alpha_{n-1}, \ldots, \alpha_0), \rho(\alpha_n)^{-1}X \right>  + \sum_{k=1}^n (-1)^{k+n+1}\left< (\alpha_n, \ldots, \alpha_k \alpha_{k-1}, \ldots, \alpha_0), X  \right>.
  \end{equation*}
  \par A straightforward computation shows that $\partial_n \circ \partial_{n-1} = 0$, hence we have defined a chain complex $(\mathrm{C}_{\bullet}(\Sigma; \rho), \partial_{\bullet})$. Define the sub-complex $\mathrm{C}_{\bullet}(\mathcal{A}; \rho) \subset \mathrm{C}_{\bullet}(\Sigma; \rho)$ as the sub-space spanned by elements $\left<(\alpha_n, \ldots, \alpha_0), X\right>$ where the $\alpha_i\in \Pi_1(\mathcal{A})$.
  
  \begin{definition}
   The chain complex $(\mathrm{C}_{\bullet}(\Sigma, \mathcal{A}; \rho), \partial_{\bullet})$ is defined by setting  $\mathrm{C}_{n}(\Sigma, \mathcal{A}; \rho):= \quotient{ \mathrm{C}_{n}(\Sigma; \rho)}{\mathrm{C}_{n}(\mathcal{A}; \rho)}$ and by passing the boundary map to the quotient. We will denote by $\mathrm{H}_n(\Sigma, \mathcal{A}; \rho)$ its homology groups.
   \end{definition}
   
  \par Denote by $\mathrm{C}^n(\Sigma ; \rho)$ the vector space of maps $\sigma^n : P^{(n)} \rightarrow \oplus_{(\alpha_n, \ldots, \alpha_0) \in P^{(n)}}T_{\rho(\alpha_n\ldots \alpha_0)}G$ such that $\sigma^n (\alpha_n, \ldots, \alpha_0) \in T_{\rho(\alpha_n, \ldots, \alpha_0)}G$ and such that $\sigma^n (\alpha_n, \ldots, \alpha_0 \beta)= \sigma^n(\alpha_n, \ldots, \alpha_0) \rho(\beta)$ for all $(\alpha_n, \ldots, \alpha_0, \beta) \in P^{(n+1)}$. Define a map $d^n : \mathrm{C}^n(\Sigma ; \rho) \rightarrow \mathrm{C}^{n+1}(\Sigma ; \rho)$ by the formula
  $$ d^n c^n (\alpha_{n+1}, \ldots, \alpha_0):= \rho(\alpha_{n+1})c(\alpha_i, \ldots, \alpha_0) + \sum_{k=1}^n (-1)^{k+n}c(\alpha_{n+1}, \ldots, \alpha_k \alpha_{k-1}, \ldots, \alpha_0).$$
  A straightforward computation shows that $d^n \circ d^{n+1} = 0$, hence $\left( \mathrm{C}^{\bullet}(\Sigma ; \rho), d^{\bullet} \right)$ is a cochain complex. 
  
  \begin{definition}
  The complex $\left( \mathrm{C}^{\bullet}(\Sigma, \mathcal{A} ; \rho), d^{\bullet} \right)$ is defined as the sub-complex whose graded part  $\mathrm{C}^n(\Sigma, \mathcal{A} ; \rho) \subset \mathrm{C}^n(\Sigma ; \rho)$ consists in the maps $\sigma^n$ vanishing on the elements $(\alpha_n, \ldots, \alpha_0)$ where $\alpha_i\in \Pi_1(\mathcal{A})$. We denote by $\mathrm{H}^n(\Sigma, \mathcal{A}; \rho)$ its cohomology groups.
  \end{definition}

\begin{definition}
   We define a $0$-graded pairing $\left< \cdot, \cdot \right> : \mathrm{C}_{\bullet}(\Sigma, \mathcal{A}; \rho) \otimes \mathrm{C}^{\bullet}(\Sigma, \mathcal{A}; \rho) \rightarrow \mathbb{C}$ by the formula 
  $$\left< <(\alpha_n, \ldots, \alpha_{0}), X > , c^n \right> := \left(  X, c^n(\alpha_n, \ldots, \alpha_0) \right)_{\rho(\alpha_n \ldots \alpha_0)}.$$
 \end{definition} 
  
  Since the pairings $\left(\cdot, \cdot \right)_g : T_g G^{\otimes 2} \rightarrow \mathbb{C}$ are non-degenerate,  the pairing $\left< \cdot, \cdot \right> $ is also non-degenerate. If follows from the definitions that, for all $\sigma_{n+1} \in \mathrm{C}_{n+1}(\Sigma, \mathcal{A}; \rho)$ and for all $c^n \in \mathrm{C}^n(\Sigma, \mathcal{A}; \rho)$, one has the equality $\left< \partial_{n+1} \sigma_{n+1}, c^n \right> = \left< \sigma_{n+1}, d^n c^n \right>$. Hence the pairing $\left< \cdot, \cdot \right>$ induces a $0$-graded non-degenerate pairing in homology (still denoted by the same letter): 
  $$\left< \cdot, \cdot \right> : \mathrm{H}_{\bullet}(\Sigma, \mathcal{A}; \rho) \otimes \mathrm{H}^{\bullet}(\Sigma, \mathcal{A} ; \rho) \rightarrow \mathbb{C}.$$ 

\begin{notations} If $\alpha \in \Pi_1(\Sigma)$ and $X \in T_{\rho(\alpha)}G$, we will denote by $[\alpha, X] \in \mathrm{H}^1(\Sigma, \mathcal{A}; \rho)$ the class of the element $\left< (\alpha, 1_{t(\alpha)}), X\right>$, where $1_{t(\alpha)}$ represents the constant path based at $t(\alpha)$.
\end{notations}

\subsection{Comparison with standard twisted group (co)homology}

Suppose that $\mathbf{\Sigma}=(\Sigma, \emptyset)$ is a connected unmarked surface and fix a basepoint $v\in \Sigma$. Consider the universal covering 
$$ \widehat{\Sigma}= \{ \alpha : [0,1] \to \Sigma, t(\alpha)=v\}, \quad \pi: \widehat{\Sigma}\to \Sigma, \quad \pi: \alpha \mapsto s(\alpha).$$
$\pi_1(\Sigma, v)$ (right) acts on $\widehat{\Sigma}$ by $\alpha \cdot \gamma:= \alpha \gamma$. 
Let $\rho: \Pi_1(\Sigma)\to G$ and consider its restriction $\rho_{v}: \pi_1(\Sigma, v) \to G$. Then $\pi_1(\Sigma, v)$ acts on $\mathfrak{g}$ by $\gamma\cdot X:= \rho_v(\gamma)^{-1}X\rho_v(\gamma)$.
Consider the complexes
$$\mathrm{C}_{\bullet}(\Sigma, Ad_{\rho_v}):= \mathrm{C}_{\bullet}(\widehat{\Sigma}, \mathbb{Z})\otimes_{\mathbb{Z}[\pi_1(\Sigma, v)]} \mathfrak{g}, \quad \mathrm{C}^{\bullet}(\Sigma, Ad_{\rho_v}):= \Hom_{\mathbb{Z}[\pi_1(\Sigma, v)]}(\mathrm{C}_{\bullet}(\widehat{\Sigma}, \mathbb{Z}), \mathfrak{g}).$$

\begin{proposition}\label{prop_group_homology} The chain complexes $\mathrm{C}_{\bullet}(\Sigma ; Ad_{\rho_v})$ and $\mathrm{C}_{\bullet}(\Sigma; \rho)$ are homotopy equivalent. Similarly, the cochain complexes $\mathrm{C}^{\bullet}(\Sigma ; Ad_{\rho_v})$ and $\mathrm{C}^{\bullet}(\Sigma ; \rho)$ are homotopy equivalent.
\end{proposition}
 
 Therefore, the  complex $\mathrm{C}_{\bullet}(\Sigma; \rho)$ is a "basepoint free" analogue of $\mathrm{C}_{\bullet}(\Sigma; Ad_{\rho_v})$. As for $\mathcal{X}_G(\mathbf{\Sigma})$, the fact that we switch to a basepoint free object is what permits the gluing operation. 
 
\begin{proof}
The main idea is to associate to each $(\alpha_n, \ldots, \alpha_0)\in P^{(n)}$ with $t(\alpha_0)=v$ a singular chain $\sigma_{(\alpha_n, \ldots, \alpha_0)}: \Delta^n \to \widehat{\Sigma}$. Let $v_i \in \mathbb{R}^{n+1}$ be the point whose $j$ coordinate is $\delta_{ij}$ and consider the simplex $\Delta^n:= [v_0, \ldots, v_n] \subset \mathbb{R}^{n+1}$, where $[\cdot ]$ denotes the convex hull. By definition, a singular map is a continuous map $\sigma^n: \Delta^n \to \widehat{\Sigma}$ and $\mathrm{C}_n(\Sigma; Ad_{\rho_v})$ is spanned by elements $[\sigma^n \otimes X]$, with $X\in \mathfrak{g}$, modulo the relation $[\sigma^n \cdot \gamma \otimes X]=[\sigma^n \otimes \rho_v(\gamma)^{-1}X\rho_v(\gamma)]$ for $\gamma\in \pi_1(\Sigma , v)$. 
\par  Let $\beta_i:= [v_{n-i}, v_{n-i+1}] \subset \Delta^n$ and consider $\Lambda^n := \cup_{i=1}^n \beta_i \subset \Delta^n$. Let $\iota: \Lambda^n \hookrightarrow \Delta^n$ be the inclusion map and consider a retraction by deformation $r: \Delta^n \to \Lambda^n$ and a homotopy $h: [0,1]\times \Delta^n \to \Delta^n$ such that $h_0= \id$, $h_1= \iota \circ r$ and $\restriction{h_t}{\Lambda^n}= \id_{\Lambda^n}$ for all $t\in [0,1]$. To $(\alpha_n, \ldots, \alpha_0) \in P^{(n)}$ for which $t(\alpha_0)=v$, we associate a continuous map $\widetilde{\sigma}_{(\alpha_n, \ldots, \alpha_0)}: \Lambda^n \to \widehat{\Sigma}$ as follows. The map  $\widetilde{\sigma}_{(\alpha_n, \ldots, \alpha_0)}$ sends $v_{n-i}$ to the path $\alpha_i \alpha_{i-1}\ldots \alpha_0 \in \widehat{\Sigma}$. For $\tau \in [0,1]$ consider the path $\alpha_i^{\tau}: [0,1]\to \Sigma$, $\alpha_i^{\tau}(t):= \alpha_i (\tau t)$ (so $\alpha_i^0=1$ and $\alpha_i^1=\alpha_i$). The restriction of $\widetilde{\sigma}_{(\alpha_n, \ldots, \alpha_0)}$ to the edge $\beta_i=\{ \tau v_{n-i} +(1-\tau)v_{n-i+1}, \tau \in [0,1]\}$ is 
$$ \widetilde{\sigma}_{(\alpha_n, \ldots, \alpha_0)}(\tau v_{n-i} +(1-\tau)v_{n-i+1}):= \alpha_i^{\tau} \alpha_{i-1}\ldots \alpha_0.$$
The $n$-simplex ${\sigma}_{(\alpha_n, \ldots, \alpha_0)}: \Delta^n \to \widehat{\Sigma}$ is defined by ${\sigma}_{(\alpha_n, \ldots, \alpha_0)}:= \widetilde{\sigma}_{(\alpha_n, \ldots, \alpha_0)} \circ r$. Define $\psi_{\bullet}: \mathrm{C}_{\bullet}(\Sigma, \rho) \to \mathrm{C}_{\bullet}(\Sigma, Ad_{\rho_v})$ by the formula
$$ \Psi_n: \mathrm{C}_n(\Sigma; \rho) \to \mathrm{C}_n(\Sigma; Ad_{\rho_v}), \quad \Psi_n: \left<(\alpha_n, \ldots, \alpha_0), X\right> \mapsto [\sigma_{(\alpha_n, \ldots, \alpha_0)}\otimes \rho(\alpha_n\ldots \alpha_0)^{-1}X].$$
Note that if $\gamma \in \pi_1(\Sigma, v)$ then 
\begin{multline*} \Psi_n\left( \left< (\alpha_n, \ldots, \alpha_0 \gamma), X \rho(\gamma)\right> \right) = [\sigma_{(\alpha_n, \ldots, \alpha_0)}\cdot \gamma \otimes \rho(\gamma)^{-1} \rho(\alpha_n\ldots \alpha_0)^{-1} X \rho(\gamma) ]
\\ = [\sigma_{(\alpha_n, \ldots, \alpha_0)}\otimes \rho(\alpha_n \ldots \alpha_0)X] = \Psi_n\left( \left<(\alpha_n, \ldots, \alpha_0), X\right>\right).
\end{multline*}
Therefore $\Psi_n$ is well defined. To prove that $\Psi_n$ is a morphism of chain complexes, recall the boundary map in singular homology is given by 
$$\partial [\sigma^n \otimes X]= \sum_{i=0}^n (-1)^i [\restriction{\sigma^n}{[v_0, \ldots, \check{v}_i, \ldots, v_n]} \otimes X].$$
By analyzing the definition of $\sigma_{(\alpha_n, \ldots, \alpha_0)}$ we see that 
$$ \restriction{\sigma_{(\alpha_n, \ldots, \alpha_0)}}{[v_0, \ldots, \check{v}_i, \ldots, v_n]} = \left\{ \begin{array}{ll}
 \sigma_{(\alpha_{n-1}, \ldots, \alpha_0)} & \mbox{, if }i=0; \\
 \sigma_{(\alpha_n, \ldots, \alpha_{n-i+1}\alpha_{n-i}, \ldots, \alpha_0)} & \mbox{, if }i\geq 1.
 \end{array} \right.$$
Therefore
\begin{multline*}
\partial \circ \Psi_n \left( \left<(\alpha_n,\ldots, \alpha_0), X\right>\right) = \partial \left( [\sigma_{(\alpha_n, \ldots, \alpha_0)}\otimes \rho(\alpha_n \ldots \alpha_0)^{-1}X] \right) \\
= \sum_{i=0}^n (-1)^i [\restriction{\sigma_{(\alpha_n, \ldots, \alpha_0)}}{[v_0, \ldots, \check{v}_i, \ldots, v_n]} \otimes \rho(\alpha_n \ldots \alpha_0)^{-1}X] \\
= [\sigma_{\alpha_{n-1}, \ldots, \alpha_0} \otimes \rho(\alpha_n \ldots \alpha_0)^{-1}X] + \sum_{i=1}^n (-1)^i [ \sigma_{(\alpha_n, \ldots, \alpha_{n-i+1}\alpha_{n-i}, \ldots, \alpha_0)} \otimes \rho(\alpha_n \ldots \alpha_0)^{-1}X] \\
=\Psi_{n-1}\left( 
\left< (\alpha_{n-1}, \ldots, \alpha_0), \rho(\alpha_n)^{-1}X \right> + \sum_{j=1}^n (-1)^{j+1+n} \left<(\alpha_n, \ldots, \alpha_{j}\alpha_{j-1}, \ldots, \alpha_0), X \right>
 \right) 
 \\
  = \Psi_{n-1}\circ \partial  \left( \left<(\alpha_n,\ldots, \alpha_0), X\right>\right).
\end{multline*}
To prove that $\Psi_{\bullet}$ is a homotopy equivalence, let us construct a quasi-inverse $\Theta_{\bullet}: \mathrm{C}_{\bullet}(\Sigma; Ad_{\rho_v}) \to \mathrm{C}_{\bullet}(\Sigma; \rho)$. Consider a singular chain $\sigma^n: \Delta^n \to \widehat{\Sigma}$ and write $\widetilde{\sigma}^n:= \sigma^n \circ r : \Lambda^n \to \widehat{\Sigma}$. By parametrizing each arc $\beta_i$ of $\Lambda^n$, one defines  an element $\alpha_{\sigma}=(\alpha_n^{\sigma}, \ldots, \alpha_0^{\sigma})\in P^{(n)}$ such that $\sigma_{\alpha_{\sigma}}=\sigma$. Set 
$$ \Theta_n :  \mathrm{C}_{n}(\Sigma, Ad_{\rho_v}) \to \mathrm{C}_{n}(\Sigma, \rho), \quad \Theta_n: [\sigma \otimes X] \mapsto \left< \alpha_{\sigma}, \rho(\alpha_n^{\sigma} \ldots \alpha_0^{\sigma}) X\right>.$$
Clearly $\Theta_n \circ \Psi_n= \id$. Let us prove that $\Psi_{\bullet} \circ \Theta_{\bullet} \sim \id$. To $\sigma: \Delta^n \to \widehat{\Sigma}$, one can associate $h\circ(\sigma \times \mathds{1}): \Delta^n \times [0,1] \to \widehat{\Sigma}$. Imitating the construction of the Prism operator in the proof of \cite[Theorem $2.10$]{Hatcher_Book} (to which we refer for further details), we subdivide $\Delta^n\times [0,1]$ into $n+1$-simplexes as follows. Write $\Delta^n\times \{0\}= [v_0, \ldots, v_n]$ and $\Delta^n \times \{1\}= [w_0, \ldots, w_n]$ (subsets of $\mathbb{R}^{n+2}$) and define the Prism operator
$$ \widetilde{H}_n: \mathrm{C}_n(\widehat{\Sigma}; \mathbb{Z}) \to \mathrm{C}_{n+1}(\widehat{\Sigma}; \mathbb{Z}), \quad \widetilde{H}_n: \sigma \mapsto \sum_i (-1)^i \restriction{h \circ (\sigma \times \mathds{1})}{[v_0, \ldots, v_i, w_i, \ldots, w_n]}.$$
By tensoring with $\mathfrak{g}$, it induces a map $H_n : \mathrm{C}_n(\Sigma; Ad_{\rho_v}) \to \mathrm{C}_{n+1}(\Sigma; Ad_{\rho_v})$. By a computation similar to the one done in \cite[Theorem $2.10$]{Hatcher_Book}, we see that 
$$ \Psi_n \circ \Theta_n - \id = \partial \circ H_n - H_{n-1} \circ \partial.$$
Thus $\Psi_{\bullet}$ is a homotopy equivalence. We obtain the similar result for cohomology by duality.

\end{proof}

\begin{lemma}\label{lemma_H2=0} Let $\mathbf{\Sigma}=(\Sigma, \mathcal{A})$ be a connected marked surface such that $\mathcal{A}\neq \emptyset$. Then for all $\rho \in \mathcal{R}_G(\mathbf{\Sigma})$, one has $\mathrm{H}^2(\Sigma, \mathcal{A}; \rho)=0$.
\end{lemma}

\begin{proof}
Using the pairing between homology and cohomology, it suffices to prove that $\mathrm{H}_2(\Sigma, \mathcal{A}; \rho)=0$.
Let us first suppose that $|\mathcal{A}|=1$ and let $v\in \mathcal{A}$. The homotopy equivalence $\Psi_{\bullet}: \mathrm{C}_{\bullet}(\Sigma; \rho) \to \mathrm{C}_{\bullet}(\Sigma; Ad_{\rho_v})$, sends the subcomplex $\mathrm{C}_{\bullet}(\mathcal{A}; \rho)$ to $\mathrm{C}_{\bullet}(\mathcal{A}; Ad_{\rho_v})$ so induces an isomorphism $\mathrm{H}^2(\Sigma, \mathcal{A}; \rho) \cong \mathrm{H}^2(\Sigma, \mathcal{A}; Ad_{\rho_v})=0$; the latter vanishes since $\Sigma$ retracts to a subgraph $\Gamma$ whose only vertex is $v$, so the pair $(\Sigma, \mathcal{A})$ retracts to the pair $(\Gamma, \{v\})$. When $|\mathcal{A}|\geq 2$, for each $a\in \mathcal{A}$ fix $v_a \in a$ and let $\underline{\Sigma}:= \quotient{\Sigma}{(v_a \sim v_b, a,b \in \mathcal{A})}$ be the surface obtained from $\Sigma$ by identifying all pairs $(v_a,v_b)$ to a single point $v$ and smoothing the corners. The marked surface $\underline{\mathbf{\Sigma}}$ is $\underline{\Sigma}$ with a single boundary arc $a$ containing $v$. The projection $\pi: \Sigma \to \underline{\Sigma}$ induces a functor $\pi_*: \Pi_1(\Sigma) \to \Pi_1(\underline{\Sigma})$ which is an equivalence. Let $\underline{\rho}: \Pi_1(\underline{\Sigma})\to G$ such that $\underline{\rho} \circ \pi_*= \rho$. The functor $\pi_*$ induces an isomorphism of chain complexes $\pi_{\bullet}: \mathrm{C}_{\bullet}(\Sigma, \mathcal{A}; \rho) \to \mathrm{C}_{\bullet}(\underline{\Sigma}, \{a\}; \underline{\rho})$ so $\mathrm{H}_2(\Sigma, \mathcal{A}; \rho) \cong \mathrm{H}_2(\underline{\Sigma}, \{a\}; \underline{\rho}) =0$ by the preceding case.

\end{proof}

\subsection{Relation with the (co)tangent spaces of the \stated character varieties}

\par We first recall some basic facts about tangent spaces (see \cite{Hart} for more details). Let $X$ be a complex affine variety and $x\in X$ be a point represented by a character $\chi_x : \mathbb{C}[X] \rightarrow \mathbb{C}$. We endow the space $\mathbb{C}$ with a structure of $\mathbb{C}[X]$-bimodule, denoted $\mathbb{C}_{\chi_x}$, defined for any $f\in \mathbb{C}[X]$ and $z\in \mathbb{C}$ by the formula $f\cdot z = z \cdot f := \chi_x(f)z$. The Zariski tangent space $T_xX$ is defined as the set $\mathrm{Der}(\mathbb{C}[X], \mathbb{C}_{\chi_x})$ of derivations, that is of linear maps $\varphi : \mathbb{C}[X] \rightarrow \mathbb{C}$ satisfying $\varphi(fh)=\chi_x(f)\varphi(h) +\varphi(f)\chi_x(h)$. If $f:X\rightarrow Y$ is a regular map defined by an algebra morphism $f^* : \mathbb{C}[Y] \rightarrow \mathbb{C}[X]$, define $D_x f : T_xX \rightarrow T_y Y$ as the map sending a derivation $\varphi$ to $\varphi \circ f^*$.

\vspace{2mm}
\par Let $\Pi_1(\Sigma, \mathcal{A})$ be the category whose objects are the points of $\Sigma$ and such that $\Pi_1(\Sigma, \mathcal{A})(x,y):= \quotient{ \Pi_1(\Sigma)(x,y)}{\Pi_1(\mathcal{A})(x,y)}$.
Consider the two affine schemes:
$$ X_1:= \Map(\Pi_1(\Sigma, \mathcal{A}), G), \quad X_2:= \Map(\Pi_1(\Sigma, \mathcal{A})\times_{\Sigma}\Pi_1(\Sigma, \mathcal{A}), G). $$
Here $\Pi_1(\Sigma, \mathcal{A})\times_{\Sigma}\Pi_1(\Sigma, \mathcal{A})$ is the set of pairs $(\alpha_2,\alpha_1)$ such that $s(\alpha_2)=t(\alpha_1)$. Set 
$$ \mathcal{R}: X_1 \to X_2, \quad \mathcal{R}(\rho): (\alpha_2, \alpha_1) \mapsto \rho(\alpha_2\alpha_1)^{-1}\rho(\alpha_2)\rho(\alpha_1).$$
Clearly, $X_1$ and $X_2$ are smooth schemes and $\mathcal{R}_G(\mathbf{\Sigma})=\mathcal{R}^{-1}(e)$ where $e$ is the constant map with value the neutral element of $G$. Fix $\rho \in \mathcal{R}_G(\mathbf{\Sigma})$ and define two isomorphisms
$$ \Lambda_1: T_{\rho} X_1 \xrightarrow{\cong} \mathrm{C}^1(\Sigma, \mathcal{A}; \rho) \quad \mbox{ and }\quad \Lambda_2: T_{e} X_2 \xrightarrow{\cong} \mathrm{C}^2(\Sigma, \mathcal{A}; \rho)$$
as follows. A derivation $\varphi \in T_{\rho}X_1= \mathrm{Der} \left( \mathbb{C}[\Map(\Pi_1(\Sigma, \mathcal{A}), G)], \mathbb{C}_{\rho} \right)$ induces on each factor $\mathbb{C}[G]_{\alpha} =\iota_{\alpha}(\mathbb{C}[G])\subset  \mathbb{C}[\Map(\Pi_1(\Sigma), G)]$ an element $X_{\alpha} \in \mathrm{Der}(\mathbb{C}[G], \mathbb{C}_{\chi_{\rho(\alpha)}})= T_{\rho(\alpha)}G$. We define $\Lambda_1(\varphi)$ as the map sending $(\alpha_1,\alpha_0)$ to $X_{\alpha_1}\rho(\alpha_0)^{-1}$. 
 Conversely, if $c^1\in  \mathrm{C}^1(\Sigma, \mathcal{A}; \rho)$ and $\alpha \in \Pi_1(\Sigma, \mathcal{A})$, consider the derivation $X_{\alpha}:= c^1(\alpha, 1_{t(\alpha)}) \in T_{\rho(\alpha)}G$. Define $\Lambda_1^{-1}(c^1)\in \mathrm{Der}\left( \mathbb{C}[\Map(\Pi_1(\Sigma, \mathcal{A}), G)], \mathbb{C}_{\chi_{\rho}} \right)$ by the formula $\Lambda_1^{-1}(c^1) (x_{\alpha}):= X_{\alpha} (x)$. The maps $\Lambda_1$ and $\Lambda_1^{-1}$ are inverse each other, hence are isomorphisms.
\par Similarly, a derivation $\psi \in T_{e}X_2= \mathrm{Der} \left( \mathbb{C}[\Map(\Pi_1(\Sigma, \mathcal{A})\times_{\Sigma}\Pi_1(\Sigma, \mathcal{A}), G)], \mathbb{C}_{e} \right)$ induces on each factor $\mathbb{C}[G]_{(\alpha_2,\alpha_1)} \subset  \otimes_{(\beta_2,\beta_1) \in\Pi_1(\Sigma, \mathcal{A})\times_{\Sigma}\Pi_1(\Sigma, \mathcal{A})}^{\vee}\mathbb{C}[G]_{(\beta_2,\beta_1)}$ an element $Y_{(\alpha_2,\alpha_1)} \in T_eG=\mathfrak{g}$. We define $\Lambda_2(\psi)$ as the map sending $(\alpha_2, \alpha_1,\alpha_0)$ to $\rho(\alpha_2\alpha_1)Y_{(\alpha_2, \alpha_1)}\rho(\alpha_0)$. We prove that $\Lambda_2$ is an isomorphism similarly than for $\Lambda_1$. 

\begin{lemma}\label{lemma_cohomology1}
In the following diagram
$$ \begin{tikzcd}
0 \ar[r] & T_{\rho}\mathcal{R}_G(\mathbf{\Sigma}) \ar[r] \ar[d, dotted, "\exists! \cong", "\Lambda'"'] &
T_{\rho}X_1 \ar[r, "D_{\rho}\mathcal{R}"] \ar[d, "\Lambda_1", "\cong"'] &
T_eX_2 \ar[d, "\Lambda_2", "\cong"'] \\
0 \ar[r] & \mathrm{Z}^1(\Sigma, \mathcal{A}; \rho) \ar[r] & 
 \mathrm{C}^1(\Sigma, \mathcal{A}; \rho) \ar[r, "d^1"]] & 
\mathrm{C}^2(\Sigma, \mathcal{A}; \rho)
\end{tikzcd}$$
one has $d^1\circ \Lambda_1 = \Lambda_2 \circ D_{\rho}\mathcal{R}$. Therefore, $\Lambda_1$ induces an isomorphism $\Lambda': T_{\rho}\mathcal{R}_G(\mathbf{\Sigma})  \xrightarrow{\cong} \mathrm{Z}^1(\Sigma, \mathcal{A}; \rho)$. Moreover $\rho$ is a smooth point of $\mathcal{R}_G(\mathbf{\Sigma})$ if and only if $\mathrm{H}^2(\Sigma, \mathcal{A}; \rho)=0$.

\end{lemma}

\begin{proof}
This is proved by a simple computation as follows: 
\begin{multline*} \mathcal{R}(\rho + \varepsilon X) (\alpha_2, \alpha_1)= (\rho(\alpha_2\alpha_1) + \varepsilon X(\alpha_2\alpha_1))^{-1}(\rho(\alpha_2) + \varepsilon X(\alpha_2))(\rho(\alpha_1)+ \varepsilon X(\alpha_1)) \\ \equiv e + \varepsilon 
\left( \rho(\alpha_2\alpha_1)^{-1}X(\alpha_2)\rho(\alpha_1) + \rho(\alpha_1)^{-1}X(\alpha_1) - X(\alpha_2\alpha_1) \rho(\alpha_2\alpha_1) \right) \pmod{\varepsilon^2}.\end{multline*}
So if $\Lambda_1(\varphi)=: c^1$ and $X_{\alpha}=c^1(\alpha, 1)$, then 
$$
 \Lambda_2\circ D_{\rho}\mathcal{R} (\varphi): (\alpha_2, \alpha_1, \alpha_0)   = 
 X_{\alpha_2} \rho(\alpha_1\alpha_0) =\rho(\alpha_2)X_{\alpha_1}\rho(\alpha_0) - X_{\alpha_2\alpha_1}\rho(\alpha_0)
 =
 d^1c^1(\alpha_2, \alpha_1, \alpha_0).
 $$

\end{proof}

\par Recall that we defined the gauge group via $\mathbb{C}[\mathcal{G}]= \mathbb{C}[\Map(\Sigma \setminus \mathcal{A}, G)]$. Its neutral element is represented by the character $\chi_0 : \mathbb{C}[\mathcal{G}] \rightarrow \mathbb{C}$ defined by $\chi_0(x_v)=\epsilon(x)$. Define an isomorphism $\Lambda^0 : \mathrm{Der}\left(\mathbb{C}[\mathcal{G}], \mathbb{C}_{\chi_0} \right)\xrightarrow{\cong} \mathrm{C}^0\left(\Sigma, \mathcal{A} ; \rho \right)$ as follows. A derivation $\varphi_0 \in \mathrm{Der}\left(\mathbb{C}[\mathcal{G}], \mathbb{C}_{\chi_0} \right)$ induces on each factor $\mathbb{C}[G]_v=\iota_v(\mathbb{C}[G]) \subset \mathbb{C}[\mathcal{G}]$ a derivation $X_v \in \mathrm{Der}(\mathbb{C}[G]_v, \mathbb{C}_{\epsilon})=\mathfrak{g}$. Define $\Lambda^0 (\varphi_0)=c^0$ by the formula $c^0(\alpha):= X_{s(\alpha)}\rho(\alpha)^{-1}$. The inverse map of $\Lambda^0$ sends a map $c^0\in  \mathrm{C}^0\left( \Sigma, \mathcal{A} ; \rho \right)$ for which $X_v:= c^0(1_v) \in \mathfrak{g}$, to the derivation $\varphi_0$ defined by $\varphi_0(x_v):= X_v(x)$. 
\vspace{2mm}
\par Consider the map $c_{\rho} : \mathcal{G} \rightarrow \mathcal{R}_G(\mathbf{\Sigma})$ defined via the gauge group action $c_{\rho}(g):= g\cdot \rho$. This map is better described as the algebra morphism $c_{\rho}^* : \mathbb{C}[\mathcal{R}_G(\mathbf{\Sigma})] \rightarrow \mathbb{C}[\mathcal{X}_G(\mathbf{\Sigma})]$ defined as the composition 
$$c_{\rho}^* : \mathbb{C}[\mathcal{R}_G(\mathbf{\Sigma})] \xrightarrow{\Delta_{\mathcal{G}}^L} \mathbb{C}[\mathcal{G}] \otimes \mathbb{C}[\mathcal{R}_G(\mathbf{\Sigma})] \xrightarrow{\id \otimes \chi_{\rho}} \mathbb{C}[\mathcal{G}].$$
\par The morphism $D_e c_{\rho}: \mathrm{Der}\left(\mathbb{C}[\mathcal{G}], \mathbb{C}_{\chi_0}\right) \rightarrow \mathrm{Der} \left(\mathbb{C}[\mathcal{R}_G(\mathbf{\Sigma})], \mathbb{C}_{\chi_{\rho}} \right)$ is the map sending a derivation $\varphi_0$ to the derivation $\varphi_0 \circ c_{\rho}^*$. 

\begin{lemma}\label{lemma_cohomology2}
The following diagram is commutative: 
$$\begin{tikzcd}
T_e\mathcal{G}
 \arrow[r, "D_e c_{\rho}"] \arrow[d, "\cong"', "\Lambda^0"] &
T_{\rho}\mathcal{R}_G(\mathbf{\Sigma})
 \arrow[d, "\cong"', "\Lambda'"]  \\
\mathrm{C}^0\left(\Sigma, \mathcal{A} ; \rho \right)
\arrow[r, "-d^0"] &
\mathrm{Z}^1\left(\Sigma, \mathcal{A} ; \rho \right)
\end{tikzcd}$$

\end{lemma}

\begin{proof}

Let $\varphi_0 \in  \mathrm{Der}\left( \mathbb{C}[\mathcal{G}], \mathbb{C}_{\chi_0} \right)=T_e\mathcal{G}$. For each $v\in \Sigma\setminus \mathcal{A}$ denote by $X_v \in \mathfrak{g}$ the vector induced by $\varphi_0$ and set $c^0 := \Lambda^0 (\varphi_0)$ the map defined by $c^0(1_v) = X_v$.  For each $\alpha \in \Pi_1(\Sigma)$ denote by $X_{\alpha} \in T_{\rho(\alpha)}G$ the derivation induced by $D_e c_{\rho} (\varphi_0)$ and set $c^1 := \Lambda'\circ D_e c_{\rho} (\varphi_0)$ the map defined by $c^1(\alpha, 1_{t(\alpha)})=X_{\alpha}$. Choose a path $\alpha \in \Pi_1(\Sigma)$ such that $s(\alpha), t(\alpha) \in \Sigma\setminus \mathcal{A}$ and an element $x\in \mathbb{C}[G]$. One has: 
\begin{eqnarray*}
X_{\alpha}(x) &=& D_e c_{\rho} (\varphi_0) (x_{\alpha}) = \varphi_0 \circ (\id \otimes \chi_{\rho}) \circ (\Delta_{\mathcal{G}}^L) (x_{\alpha}) \\
 &=&  \varphi_0 \circ (\id \otimes \chi_{\rho}) \left( \sum x'_{s(\alpha)} S(x''')_{t(\alpha)} \otimes x''_{\alpha} \right) \\
 &=& \sum \varphi_0 \left( x'_{s(\alpha)} S(x''')_{t(\alpha)} \right) \chi_{\rho}(x''_{\alpha}) \\
  &=& \sum \epsilon(x') \varphi_0 (S(x''')_{t(\alpha)}) \chi_{\rho} (x''_{\alpha}) + \sum \epsilon \circ S(x''') \varphi_0(x'_{s(\alpha)}) \chi_{\rho}(x''_{\alpha}) \\
   &=& (\rho(\alpha)X_{t(\alpha)}- X_{s(\alpha)}\rho(\alpha))(x)
   \end{eqnarray*}
   The above equalities imply that $c^1(\alpha, 1_{t(\alpha)}) = -d^0 c^0 (\alpha, 1_{t(\alpha)})$. The cases where $(s(\alpha), t(\alpha))\cap \mathcal{A} \neq \emptyset$ are proved similarly.  Hence we have the equality $c^1 = -d^0 (c^0)$ which proves that $\Lambda'\circ D_e c_{\rho} =- d^0 \circ \Lambda^0$ and concludes the proof.

\end{proof}

Putting Lemmas \ref{lemma_cohomology1} and \ref{lemma_cohomology2} together, we see that $\Lambda'$ induces an isomorphism 
$$\Lambda'': \quotient{ T_{\rho}\mathcal{R}_G(\mathbf{\Sigma})}{\Image(D_ec_{\rho})} \xrightarrow{\cong} \mathrm{H}^1(\Sigma, \mathcal{A}; \rho).$$

Let $\iota : \mathbb{C}[\mathcal{X}_G(\mathbf{\Sigma})] \hookrightarrow \mathbb{C}[\mathcal{R}_G(\mathbf{\Sigma})]$ be the inclusion map and $p: \mathcal{R}_G(\mathbf{\Sigma}) \to \mathcal{X}_G(\mathbf{\Sigma})$ be the projection map defined by $\iota$. 
Since $p\circ c_{\rho}$ is the constant map with value $[\rho]$, its derivative at $e$ vanishes and the chain rules implies that $T_ep$ passes to the quotient to a map (denoted by the same letter) $T_ep :  \quotient{ T_{\rho}\mathcal{R}_G(\mathbf{\Sigma})}{\Image(D_ec_{\rho})} \to T_{[\rho]}\mathcal{X}_{G}(\mathbf{\Sigma})$. Consider the composition 
$$ \Upsilon: \mathrm{H}^1(\Sigma, \mathcal{A}; \rho) \xrightarrow{ (\Lambda'')^{-1}} \quotient{ T_{\rho}\mathcal{R}_G(\mathbf{\Sigma})}{\Image(D_ec_{\rho})} \xrightarrow{ T_ep} T_{[\rho]}\mathcal{X}_{G}(\mathbf{\Sigma}).$$

Recall from Section \ref{sec_stabilizer} that $\rho$ is a good representation if for each connected component $\mathbf{\Sigma}^0=(\Sigma^0, \mathcal{A}^0)$ of $\mathbf{\Sigma}$ then either $\mathcal{A}^0\neq \emptyset$ or for $v\in \Sigma^0$ the induced representation $\rho_v: \pi_1(\Sigma, v) \to G$ is irreducible and has stabilizer equal to the center $C(G)$ of $G$. 

\begin{theorem}\label{theorem_cohomology}
If $\rho$ is a good representation then $\Upsilon: \mathrm{H}^1(\Sigma, \mathcal{A}; \rho)  \to T_{[\rho]}\mathcal{X}_{G}(\mathbf{\Sigma})$ is an isomorphism.
\end{theorem}

The proof is a direct adaptation of the arguments in \cite{Sikora} (made for classical character varieties) based on the following consequence of Luna's slice \'etale theorem \cite{Luna_SliceTheorem}.  

\begin{lemma}\label{lemma_LunaSlice} Let ${G}$ be a  reductive group which acts on an affine scheme ${R}$ and write $X:= R\sslash G$ and $\pi: R\to X$ the quotient map. Let $\rho\in R$ such that $(i)$ $\rho$ is a smooth point of $R$ and $(ii)$ the orbit $\mathcal{O}_{\rho}=G\cdot \rho \subset R$ is closed and $(iii)$ the stabilizer $S_{\rho}$ of $\rho$ acts trivially on $R$. Then $D_{\rho}\pi$ induces an isomorphism $\quotient{T_{\rho}R}{T_{\rho}\mathcal{O}} \xrightarrow{\cong} T_{[\rho]}X$. 
\end{lemma}

\begin{proof} By Luna's slice \'etale theorem \cite{Luna_SliceTheorem} (see also \cite{Drezet_LunaSlice}), conditions $(i)$ and $(ii)$ imply that  there exists a $\mathcal{G}$ invariant subscheme $S\subset R$ containing $\rho$ such that $(1)$ the map $p: G\times_{S_{\rho}}S \to R$, $p(g,s)=g\cdot s$ is \'tale and $(2)$ the projection map $q: S\sslash S_{\rho} \to X$ is \'etale. Therefore the following differentials of $p$ and $\pi$ are isomorphisms: 
$$ D_{(e,\rho)} p : T_{(e,\rho)} G\times_{S_{\rho}}S \xrightarrow{\cong} T_{\rho}R \quad \mbox{ and } \quad D_{[\rho]}q: T_{[\rho]} S\sslash S_{\rho} \xrightarrow{\cong} T_{[\rho]}X.$$
 By hypothesis $(iii)$, $G\times_{S_{\rho}}S= (\quotient{G}{S_{\rho}})\times S$ and $S\sslash S_{\rho}=S$ so we have a commutative diagram
 $$\begin{tikzcd}
T_e(\quotient{G}{S_{\rho}})\oplus T_{\rho}S   \ar[d, "D_{(e,\rho)}p", "\cong"'] \ar[r, "0 \oplus \id"] & T_{\rho}S \ar[d, "D_{[\rho]}q", "\cong"'] \\
T_{\rho}R \ar[r, "D_{\rho}\pi"] & T_{[\rho]}X
\end{tikzcd}$$
We conclude using the fact that $T_{\rho}\mathcal{O}_{\rho}$ is the image of $T_e(\quotient{G}{S_{\rho}})$ by $D_{(e,\rho)}p$.

\end{proof}

\begin{proof}[Proof of Theorem \ref{theorem_cohomology}] Without loss of generality, we suppose that $\mathbf{\Sigma}$ is connected.  Let $\rho$ be a good representation, $\mathcal{O}_{\rho}:=\mathcal{G}\cdot \rho \subset \mathcal{R}_G(\mathbf{\Sigma})$ its orbit and $S_{\rho}$ its stabilizer. Recall from Lemma \ref{lemma_stabilizer} that $S_{\rho}=\{e\}$ if $\mathcal{A}\neq \emptyset$ and that $S_{\rho}\cong C(G)$ else. In particular $S_{\rho}$ acts trivially on $\mathcal{R}_G(\mathbf{\Sigma})$. 
\par \textbf{Step 1:} Let us prove that $\Image(D_{\rho}c_{\rho})=T_{\rho}\mathcal{O}_{\rho}$. Clearly, the map $c_{\rho}: \mathcal{G}\to \mathcal{R}_G(\mathbf{\Sigma})$ induces an isomorphism $\mathcal{G}\sslash S_{\rho} \cong \mathcal{O}_{\rho}$. Decomposing $c_{\rho}$ as $c_{\rho}: \mathcal{G} \to \mathcal{G} \sslash S_{\rho} \cong \mathcal{O}_{\rho}$, we need to prove that the map $D_ec_{\rho} : T_e\mathcal{G} \to T_e \mathcal{G}\sslash S_{\rho} \cong T_e \mathcal{O}_{\rho}$ is an epimorphism. Let us apply Lemma \ref{lemma_LunaSlice} to $R= \mathcal{G}$ and $G= S_{\rho}$. Clearly $e\in \mathcal{G}$ is a smooth point (since $\mathcal{G}$ is smooth) and its orbit $S_{\rho}\subset \mathcal{G}$ is closed; so Lemma \ref{lemma_LunaSlice} implies that $T_e\mathcal{G} \to T_e \mathcal{G}\sslash S_{\rho}$ is an epimorphism. 
\par \textbf{Step 2:} Let us prove that $\mathcal{O}_{\rho} \subset \mathcal{R}_G(\mathbf{\Sigma})$ is closed. When $\mathcal{A}\neq \emptyset$, this follows from the fact that $\mathcal{G}\cong \mathcal{O}_{\rho}$. When $\mathcal{A}=\emptyset$, fix $v\in \Sigma$ and consider the map $p_v: \mathcal{R}_G(\mathbf{\Sigma}) \to \Hom(\pi_1(\Sigma, v), G)$ sending $\rho$ to its restriction $\rho_v$ at $\pi_1(\Sigma, v)$. Let $\mathcal{O}_{\rho, v}\subset \Hom(\pi_1(\Sigma, v), G)$ be the conjugacy $G$ orbit of $\rho_v$. By Lemma \ref{lemma_stabilizer}, $p_v^{-1}(\mathcal{O}_{\rho, v})=\mathcal{O}_{\rho}$ so $\mathcal{O}\subset \mathcal{R}_G(\mathbf{\Sigma})$ is closed whenever $\mathcal{O}_{\rho,v} \subset \Hom(\pi_1(\Sigma, v), G)$ is closed. We conclude using \cite[Theorem $30$]{Sikora}. 
\par \textbf{Step 3:} We show that $\mathrm{H}^2(\Sigma, \mathcal{A}; \rho)=0$. Indeed, if $\mathcal{A}\neq \emptyset$, this is proved in Lemma \ref{lemma_H2=0}. If $\mathcal{A}=\emptyset$, by Proposition \ref{prop_group_homology}, one has $\mathrm{H}^2(\Sigma; \rho)\cong \mathrm{H}^2( \Sigma; Ad_{\rho_v})$ and the latter vanishes when $\rho_v$ is a good representation by \cite{Sikora}.
\par 
 We can now conclude. Since $\mathrm{H}^2(\Sigma, \mathcal{A}; \rho)=0$, Lemma \ref{lemma_cohomology1} implies that $\rho$ is a smooth point. In Step $2$ we proved that its orbit is closed so Lemma \ref{lemma_LunaSlice} and step $1$ imply that $\Upsilon: \quotient{T_{\rho}\mathcal{R}_G(\mathbf{\Sigma})}{\Image(D_ec_{\rho})} = \quotient{T_{\rho}\mathcal{R}_G(\mathbf{\Sigma})}{T_e\mathcal{O}_{\rho}} \to T_{[\rho]}\mathcal{X}_G(\mathbf{\Sigma})$ is an isomorphism.  

\end{proof}

\begin{definition}
For $\rho\in \mathcal{R}_G(\mathbf{\Sigma})$ a good representation, we denote by $\Lambda : T_{[\rho]} \mathcal{X}_G(\mathbf{\Sigma}) \xrightarrow{\cong} \mathrm{H}^1\left(\Sigma, \mathcal{A} ; \rho \right)$ the inverse of $\Upsilon$.  The non-degenerate pairing $\left< \cdot, \cdot \right> : \mathrm{H}_{1}(\Sigma, \mathcal{A}; \rho) \otimes \mathrm{H}^{1}(\Sigma, \mathcal{A} ; \rho) \rightarrow \mathbb{C}$ induces an isomorphism between the space $ \mathrm{H}_{1}(\Sigma, \mathcal{A}; \rho) $ and the dual of $ \mathrm{H}^{1}(\Sigma, \mathcal{A} ; \rho)$. Identifying the cotangent space $\Omega^1_{[\rho]} \mathcal{X}_G(\mathbf{\Sigma})$ with the dual of the Zariski tangent space $T_{[\rho]}\mathcal{X}_G(\mathbf{\Sigma})$, the isomorphism  $\Lambda$ induces an isomorphism $\Lambda^* : \Omega^1_{[\rho]} \mathcal{X}_G(\mathbf{\Sigma}) \xrightarrow{\cong}  \mathrm{H}_{1}(\Sigma, \mathcal{A}; \rho) $.
 \end{definition}

  Note that we have proved Theorem \ref{theorem2}, i.e. if  $\rho \in \mathcal{R}_G(\mathbf{\Sigma})$ is a good representation with class $[\rho]\in \mathcal{X}_G(\mathbf{\Sigma})$, there exists canonical isomorphisms $\Lambda: T_{[\rho]} \mathcal{X}_{G}(\mathbf{\Sigma}) \xrightarrow{\cong} \mathrm{H}^1(\Sigma, \mathcal{A}; \rho)$ between the Zariski tangent space and the first twisted cohomological group, and $\Lambda^*: \Omega^1_{[\rho]} \mathcal{X}_G(\mathbf{\Sigma}) \xrightarrow{\cong} \mathrm{H}_1(\Sigma, \mathcal{A}; \rho)$ between the cotangent space and the first twisted homological group respectively.

\begin{notations} Let $f_{\mathcal{C}}$ be a curve function and $\alpha$ a path representative of $\mathcal{C}$. Define the vector $X_{f, \alpha} \in T_{\rho(\alpha)} G$ as the vector such that for all $Y \in T_{\rho(\alpha)}G$ one has $D_{\rho(\alpha)} f (Y) = \left( X_{f, \alpha} , Y\right)_{\rho(\alpha)}$. Recall that we denote by $[\alpha, X]\in \mathrm{H}_{1}(\Sigma, \mathcal{A}; \rho) $ the class of the element $\left< (\alpha, 1_{t(\alpha)}), X \right>$. Both the vector $X_{f, \alpha}$ and the class $[\alpha, X_{f, \alpha}]$ are independent of the choice of the path representative $\alpha$. We will denote them by $X_{f, \mathcal{C}}$ and $[\mathcal{C}, X_{f, \mathcal{C}}]$ respectively.
\end{notations}

\begin{lemma}\label{lemma_derivative}
The isomorphism  $\Lambda^* : \Omega^1_{[\rho]} \mathcal{X}_G(\mathbf{\Sigma}) \xrightarrow{\cong}  \mathrm{H}_{1}(\Sigma, \mathcal{A}; \rho) $ sends  the derivative $D_{[\rho]} f_{\mathcal{C}}$ to the class $[\mathcal{C}, X_{f, \mathcal{C}}]$. 
\end{lemma}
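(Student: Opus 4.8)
The plan is to unwind all the identifications made in the previous subsection and reduce everything to a direct computation at the level of derivations of $\mathbb{C}[G]$. Recall that $\Lambda^*$ is the dual (with respect to the non-degenerate pairing $\langle\cdot,\cdot\rangle$) of the isomorphism $\Lambda : T_{[\rho]}\mathcal{X}_G(\mathbf{\Sigma}) \xrightarrow{\cong} \mathrm{H}^1(\Sigma_{\mathcal{P}},\partial\Sigma_{\mathcal{P}};\rho)$, itself induced by $\Lambda'''$, which in turn comes from $\Lambda''$ and $\Lambda'$. So the first step is to express the derivative $D_{[\rho]}f_{\mathcal{C}} \in \Omega^1_{[\rho]}\mathcal{X}_G(\mathbf{\Sigma}) = \mathrm{Der}(\mathbb{C}[\mathcal{X}_G(\mathbf{\Sigma})],\mathbb{C}_{\chi_{[\rho]}})^\vee$ as a linear functional on $\mathrm{Der}(\mathbb{C}[\mathcal{X}_G(\mathbf{\Sigma})],\mathbb{C}_{\chi_{[\rho]}})$: it sends a derivation $\bar\varphi$ to $\bar\varphi(f_{\mathcal{C}})$, where $f_{\mathcal{C}}$ is viewed as a regular function on the character variety (equivalently, $D_{[\rho]}f_{\mathcal{C}}(\bar\varphi) = \bar\varphi \circ f_{\mathcal{C}}^*$ applied to the coordinate on $\mathbb{C}$). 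The claim to be proved is then that for every $\bar\varphi$, lifting it through $p$ to some $\varphi \in \mathrm{Der}(\mathbb{C}[\mathcal{R}_G(\mathbf{\Sigma})],\mathbb{C}_{\chi_\rho})$ and setting $c^1 = \Lambda'''(\varphi) \in \mathrm{Z}^1(\Sigma_{\mathcal{P}},\partial\Sigma_{\mathcal{P}};\rho)$, one has
$$\bar\varphi(f_{\mathcal{C}}) = \left\langle\, [\mathcal{C}, X_{f,\mathcal{C}}]\,,\, [c^1]\,\right\rangle.$$

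The second step is to compute the left-hand side. Fix a path representative $\alpha = \alpha_{\mathcal{C}}$ of $\mathcal{C}$. By definition $f_{\mathcal{C}}$ is the class of $f_{\alpha} \in \otimes^{\vee}_{\Pi_1(\Sigma_{\mathcal{P}})}\mathbb{C}[G]$, so $\varphi(f_{\mathcal{C}}) = \varphi(f_\alpha)$. Now $\varphi$ restricted to the factor $\mathbb{C}[G]_\alpha$ is precisely the derivation $X_\alpha \in T_{\rho(\alpha)}G$ appearing in the definition of $\Lambda'$, hence $\varphi(f_\alpha) = X_\alpha(f) = D_{\rho(\alpha)}f(X_\alpha)$. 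By the very definition of $X_{f,\alpha} = X_{f,\mathcal{C}}$, this equals $(X_{f,\mathcal{C}}, X_\alpha)_{\rho(\alpha)}$.

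The third step is to recognize the right-hand side. By definition of the pairing $\langle\cdot,\cdot\rangle$ between chains and cochains, $\langle \langle(\alpha, 1_{t(\alpha)}), X_{f,\mathcal{C}}\rangle, c^1\rangle = (X_{f,\mathcal{C}}, c^1(\alpha, 1_{t(\alpha)}))_{\rho(\alpha)}$. But $\Lambda'''$ (through $\Lambda''$, $\Lambda'$) is built so that $c^1(\alpha, 1_{t(\alpha)}) = X_\alpha$ — this is exactly the normalization used in the inverse construction $\Lambda'^{-1}(c^1)(x_\alpha) = X_\alpha(x)$ with $X_\alpha = c^1(\alpha, 1_{t(\alpha)})$. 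Hence the right-hand side is also $(X_{f,\mathcal{C}}, X_\alpha)_{\rho(\alpha)}$, matching the left-hand side. One then checks the two remaining points of bookkeeping: that $X_{f,\mathcal{C}}$ and the homology class $[\mathcal{C}, X_{f,\mathcal{C}}]$ do not depend on the choice of path representative $\alpha$ (already noted in the Notations block preceding the lemma, and following from the fact that changing $\alpha$ by a trivial path changes the pair by an element equivalent under $\sim$ and modulo $\mathrm{C}_\bullet(\partial\Sigma_{\mathcal{P}};\rho)$), and that the functional $\bar\varphi \mapsto \bar\varphi(f_{\mathcal{C}})$ is well-defined on $\mathrm{H}^1$, i.e. vanishes on coboundaries $d^0 c^0$; this last fact is automatic since it corresponds, under the pairing, to $\langle \partial_1(\text{something}), c^1\rangle$ being zero for a cocycle, or more directly to gauge-invariance of $f_{\mathcal{C}}$.

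The only genuinely delicate point — the main obstacle — is keeping the chain of dualizations and sign conventions straight: $\Lambda^*$ is defined via $\Lambda$ and the pairing, $\Lambda$ via $\Lambda'''$, and there is a sign $-d^0$ in the diagram of Lemma~\ref{lemma_cohomology2}. One must verify that these signs are consistent so that no stray minus sign appears in the final identity; I expect this to come out cleanly because the sign in $-d^0$ affects only the identification of the coboundary, not the cocycle-level formula, and the class $[\mathcal{C}, X_{f,\mathcal{C}}] = [\langle(\alpha, 1_{t(\alpha)}), X_{f,\mathcal{C}}\rangle]$ is manifestly a cycle in the relative complex (its boundary $\partial_1$ lands in $\mathrm{C}_0(\partial\Sigma_{\mathcal{P}};\rho)$ whenever the endpoints of $\alpha$ lie on boundary arcs, as they do for an open curve, and is zero when $\mathcal{C}$ is closed). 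Everything else is the routine unwinding described above.
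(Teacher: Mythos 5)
Your proposal is correct and follows essentially the same route as the paper: both proofs unwind the chain of identifications and reduce the claim to the single computation $\left<[\mathcal{C},X_{f,\mathcal{C}}],[c^1]\right>=\left(X_{f,\mathcal{C}},c^1(\alpha,1_{t(\alpha)})\right)_{\rho(\alpha)}=D_{\rho(\alpha)}f\cdot X_{\alpha}=\varphi(f_{\alpha})=D_{[\rho]}f_{\mathcal{C}}\cdot[\varphi]$, using that $c^1(\alpha,1_{t(\alpha)})$ is exactly the restriction $X_{\alpha}$ of the lifted derivation to the factor $\mathbb{C}[G]_{\alpha}$. Your extra remarks on independence of the path representative and on the sign in $-d^0$ are sound bookkeeping that the paper handles in the preceding Notations block and in Lemma~\ref{lemma_cohomology2}.
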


\begin{proof} Let $\varphi \in \mathrm{Der}\left( \mathbb{C}[\mathcal{R}_G(\mathbf{\Sigma})], \chi_{\rho} \right)$ and define $c^1:= \Lambda''' (\varphi)$ and $[\varphi]:= \varphi \circ \iota \in T_{[\rho]} \mathcal{X}_G(\mathbf{\Sigma})$ such that $\Lambda([\varphi])=[c^1]$. We must show that $D_{[\rho]}f_{\mathcal{C}} \cdot [\varphi] = \left< [\mathcal{C}, X_{f, \mathcal{C}}], [c^1] \right>$ to conclude. Unravelling the definitions, one has the following identities
\begin{eqnarray*}
\left< [\mathcal{C}, X_{f,\mathcal{C},}], [c^1] \right> &=& \left( X_{\alpha, f}, c^1(\alpha, 1_{t(\alpha)}) \right) = D_{\rho(\alpha)}f \cdot c^1(\alpha, 1_{t(\alpha)}) \\
 &=& D_{\rho} f_{\alpha} \cdot \varphi = D_{[\rho]}f_{\mathcal{C}} \cdot [\varphi] 
\end{eqnarray*}
This concludes the proof.

\end{proof}

\par Let $a$ and $b$ be two boundary arcs of $\mathbf{\Sigma}$ and consider the injective morphism $i_{a\# b} : \mathbb{C}[\mathcal{X}_G(\mathbf{\Sigma}_{ a\#b})] \hookrightarrow \mathbb{C}[\mathcal{X}_G(\mathbf{\Sigma})]$ of Proposition \ref{gluing_formula}, corresponding to a surjective regular map $\Psi_{a\#b} : \mathcal{X}_G(\mathbf{\Sigma}) \rightarrow \mathcal{X}_G(\mathbf{\Sigma}_{ a\#b})$. Write $\rho_{a\#b}= \Psi_{a\#b}(\rho)$. The derivative $D_{[\rho]} \Psi_{a\#b} : T_{[\rho]} \mathcal{X}_G(\mathbf{\Sigma}) \rightarrow T_{[\rho_{a\#b}]} \mathcal{X}_G(\mathbf{\Sigma}_{ a\#b})$, together with the isomorphism $\Lambda$, induce a surjective linear map $\eta_{a\#b} : \mathrm{H}^1 \left(\Sigma, \mathcal{A} ; \rho \right) \rightarrow \mathrm{H}^1 \left( \Sigma_{a\#b}, \mathcal{A}_{a\#b} ; \rho_{a\#b} \right)$. By duality, one obtains also an injective map $j_{a\#b} : \mathrm{H}_1 \left( \Sigma_{a\#b}, \mathcal{A}_{a\#b} ; \rho_{a\#b} \right) \rightarrow \mathrm{H}_1 \left( \Sigma, \mathcal{A} ; \rho \right)$ described as follows.
\vspace{2mm}
\par Note that it follows from Proposition \ref{prop_holonomy_functions} and Lemma \ref{lemma_derivative} that the space $\mathrm{H}_1 \left( \Sigma, \mathcal{A} ; \rho \right) $ is spanned by the classes of the form $[\mathcal{C}, X]$. Denote by $\pi : \Sigma \rightarrow \Sigma_{a\#b}$ the natural projection and by $c$ the image of $a$ and $b$ by $\pi$. Let $\mathcal{C}$ be a curve in $\mathbf{\Sigma}_{a\#b}$. Let $\alpha_{\mathcal{C}}$ be a path representative of $\mathcal{C}$ and choose a decomposition $\alpha_{\mathcal{C}} = \beta_1 \ldots \beta_n$ such that each path $\beta_i$ admits a geometric representative whose interior does not intersect $c$. Lifting each geometric representative through $\pi$, we obtain a collection $\alpha_1, \ldots, \alpha_n$ of paths in $\Pi_1(\Sigma)$ such that the path $\alpha_1\ldots \alpha_n$ is sent to $\alpha_{\mathcal{C}}$ through $\pi$.

\begin{lemma}\label{lemma_decomp_paths}
The linear map  $j_{a\#b} : \mathrm{H}_1 \left(\Sigma_{a\#b}, \mathcal{A}_{a\#b} ; \rho_{a\#b} \right) \rightarrow \mathrm{H}_1 \left( \Sigma, \mathcal{A} ; \rho \right)$ is characterized by the formula
$$ j_{a\#b} \left( [\mathcal{C}, X] \right) = \sum_i [\alpha_i, \rho\left( \alpha_1 \ldots \alpha_{i-1}\right)^{-1} X \rho\left(\alpha_{i+1}\ldots \alpha_{n}\right)^{-1} ].$$
\end{lemma}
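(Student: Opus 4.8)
The map $j_{a\#b}$ is defined as the dual of the surjection $\eta_{a\#b} : \mathrm{H}^1(\Sigma_{\mathcal{P}},\partial\Sigma_{\mathcal{P}};\rho) \to \mathrm{H}^1({\Sigma_{|a\#b}}_{\mathcal{P}_{|a\#b}},\partial{\Sigma_{|a\#b}}_{\mathcal{P}_{|a\#b}};\rho_{|a\#b})$, which itself is $\Lambda \circ D_{[\rho]}\Psi_{a\#b} \circ \Lambda^{-1}$, the transpose (under the nondegenerate pairing $\langle\cdot,\cdot\rangle$) being $\Lambda^* \circ D_{[\rho]}\Psi_{a\#b}^* \circ (\Lambda^*)^{-1}$ on homology. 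So the strategy is: take a generating class $[\mathcal{C},X]$ of the target homology group (these classes span, by the remark preceding the lemma, which cites Proposition \ref{prop_holonomy_functions} and Lemma \ref{lemma_derivative}), and verify the claimed formula by testing it against an arbitrary cohomology class $[c^1]$ in $\mathrm{H}^1(\Sigma_{\mathcal{P}},\partial\Sigma_{\mathcal{P}};\rho)$, using that the pairing is nondegenerate. Since $\mathrm{H}_1$ is spanned by the $[\mathcal{C},X]$ and the pairing in homology is nondegenerate, this determines $j_{a\#b}$ uniquely, so it suffices to check the pairing identity.

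First I would translate the defining relation. For a curve $\mathcal{C}$ in $\mathbf{\Sigma}_{|a\#b}$ and a covector $\xi$ dual to the tangent direction, the class $[\mathcal{C},X]$ corresponds under $\Lambda^*$ to (a multiple of) the derivative $D_{[\rho_{|a\#b}]}f_{\mathcal{C}}$ for a suitable $f$, by Lemma \ref{lemma_derivative}. The key point connecting the two sides is the explicit description of $i_{a\#b}$ from the \textbf{Definition} preceding Proposition \ref{gluing_formula}: $i_{a\#b}([x_\alpha]) = \sum [(x^{(1)})_{\alpha_1}\cdots(x^{(n)})_{\alpha_n}]$, where $\alpha = \beta_1\cdots\beta_n$ is decomposed at the intersections with $c$ and the $\alpha_i$ are the lifts. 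So $f_{\mathcal{C}} \circ \Psi_{a\#b} = i_{a\#b}(f_{\mathcal{C}})$ is, as a function on $\mathcal{X}_G(\mathbf{\Sigma})$, given by $\rho \mapsto f(\rho(\alpha_1)\cdots\rho(\alpha_n))$. Differentiating this composite function at $[\rho]$ using the Leibniz rule (the chain rule for the multiplication map $G^n \to G$) gives exactly $n$ terms: the $i$-th term is $D$ of $f$ composed with left-translation by $\rho(\alpha_1\cdots\alpha_{i-1})$ and right-translation by $\rho(\alpha_{i+1}\cdots\alpha_n)$, acting on the $\alpha_i$-slot. Converting this derivative back through $\Lambda^*$ via Lemma \ref{lemma_derivative} and the formula $(X,Y)_g = (g^{-1}X, g^{-1}Y)$ for the translated pairings produces precisely the vectors $\rho(\alpha_1\cdots\alpha_{i-1})^{-1} X_{f,\mathcal{C}} \,\rho(\alpha_{i+1}\cdots\alpha_n)^{-1}$ sitting in $T_{\rho(\alpha_i)}G$, which is the right-hand side of the claimed formula.

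Concretely, the computation: $D_{[\rho]}(f_{\mathcal{C}}\circ\Psi_{a\#b}) = \sum_{i=1}^n D_{\rho(\alpha_1\cdots\alpha_n)}f \circ (\text{insert }Y_i\text{ in slot }i)$, where the full tangent perturbation at $\rho(\alpha_1\cdots\alpha_n)$ coming from a perturbation $Y_i \in T_{\rho(\alpha_i)}G$ of $\rho(\alpha_i)$ alone is $\rho(\alpha_1\cdots\alpha_{i-1})\,Y_i\,\rho(\alpha_{i+1}\cdots\alpha_n)$. Matching $D_{\rho(\alpha_1\cdots\alpha_n)}f(\,\cdot\,) = (X_{f,\mathcal{C}}, \cdot)_{\rho(\alpha_1\cdots\alpha_n)}$ against $Y_i$ and unwinding the pairing $(\cdot,\cdot)_g(g^{-1}\cdot,g^{-1}\cdot)$ shows the pairing of $[\mathcal{C},X_{f,\mathcal{C}}]$ with $\eta_{a\#b}(c^1)$ equals $\sum_i (\rho(\alpha_1\cdots\alpha_{i-1})^{-1} X_{f,\mathcal{C}}\,\rho(\alpha_{i+1}\cdots\alpha_n)^{-1},\ c^1(\alpha_i,1))_{\rho(\alpha_i)}$, which is exactly $\langle \sum_i[\alpha_i, \rho(\alpha_1\cdots\alpha_{i-1})^{-1}X_{f,\mathcal{C}}\rho(\alpha_{i+1}\cdots\alpha_n)^{-1}], [c^1]\rangle$. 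By nondegeneracy of the homology pairing and the fact that classes $[\mathcal{C},X]$ span, this identifies $j_{a\#b}$ as claimed.

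\textbf{Main obstacle.} The bookkeeping hurdle is keeping the left/right translations straight: the representation-space formula for $i_{a\#b}$ uses $\Delta$ (Sweedler legs distributed left-to-right across $\alpha_1,\dots,\alpha_n$), while the tangent-space formula needs the chain rule for $G^n \to G$, $(g_1,\dots,g_n)\mapsto g_1\cdots g_n$, whose differential in the $i$-th slot is left-multiplication by the prefix $g_1\cdots g_{i-1}$ and right-multiplication by the suffix $g_{i+1}\cdots g_n$; one must check these match under the identification $T_gG \cong \mathfrak g$ used throughout Section 3, and that the $\langle\cdot,\cdot\rangle_g$-pairings transform with the correct $\rho(\alpha_i)^{-1}$ factors so the vectors land in the right tangent spaces. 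A secondary subtlety is verifying that the expression on the right-hand side is indeed a well-defined element of relative homology — independent of the chosen decomposition $\alpha_{\mathcal{C}}=\beta_1\cdots\beta_n$ and of the geometric representatives — but this follows from the well-definedness of $i_{a\#b}$ already established before Proposition \ref{gluing_formula}, together with the equivalence relation $X\sim X\rho(\beta)$ defining the chain groups. Modulo this routine but fiddly translation, the proof is a direct differentiation of the composite $f_{\mathcal{C}}\circ\Psi_{a\#b}$.
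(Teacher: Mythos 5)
Your proposal is correct and follows essentially the same route as the paper: the paper's (one-line) proof reduces everything to the homological identity $[\beta_1\cdots\beta_n,X]=\sum_i[\beta_i,\rho(\beta_1\cdots\beta_{i-1})^{-1}X\rho(\beta_{i+1}\cdots\beta_n)^{-1}]$ (which comes from $\partial_2$ of a $2$-chain) together with the definition of $i_{a\#b}$ on curve functions, and your chain-rule differentiation of $f_{\mathcal{C}}\circ\Psi_{a\#b}$ tested against cocycles via the non-degenerate pairing is exactly the dual formulation of that identity. The left/right translation bookkeeping you flag does check out, using the $\mathrm{Ad}$-invariance of $(\cdot,\cdot)$ to move the suffix $\rho(\alpha_{i+1}\cdots\alpha_n)^{-1}$ to the correct side.
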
 

\begin{proof}
The proof is a straightforward consequence of the definition and of the following equality in homology
$$[\beta_1\ldots \beta_n, X] = \sum_i [\beta_i, \rho\left( \beta_1 \ldots \beta_{i-1}\right)^{-1} X \rho\left(\beta_{i+1}\ldots \beta_{n}\right)^{-1} ]. $$

\end{proof}

\subsection{Discrete versions of twisted cohomology}

\par Let $\mathbf{\Sigma}$ be a punctured surface, $\mathbb{P}$ a finite presentation of $\Pi_1(\Sigma)$ and $\rho \in \mathcal{R}_G(\mathbf{\Sigma}, \mathbb{P})$.  Recall that we defined a regular map $\mathcal{R} : G^{\mathbb{G}} \rightarrow G^{\mathbb{RL}}$ such that $\mathcal{R}_G(\mathbf{\Sigma}, \mathbb{P}):= \mathcal{R}^{-1}(e, \ldots, e)$ and that we defined the discrete gauge group as $\mathcal{G}_{\mathbb{P}}:= G^{\mathring{\mathbb{V}}}$. Define a regular map $c_{\rho} : G^{\mathring{\mathbb{V}}} \rightarrow G^{\mathbb{G}}$ by the gauge group action $c_{\rho}(g) = g\cdot \rho$.

\begin{definition} The cochain complex $\left(\mathrm{C}^{\bullet} (\mathbf{\Sigma}, \mathbb{P}; \rho), d^{\bullet} \right)$ is defined as follows.
 The graded space $\mathrm{C}^{\bullet} (\mathbf{\Sigma}, \mathbb{P}; \rho)$ has only non trivial graded parts in degree $0,1$ and $2$ which are defined by: 
\begin{equation*}
\mathrm{C}^{0} (\mathbf{\Sigma}, \mathbb{P}; \rho):= T_{(e, \ldots, e )} G^{\mathring{\mathbb{V}}}\cong \mathfrak{g}^{\oplus \mathring{\mathbb{V}}}\quad 
\mathrm{C}^{1} (\mathbf{\Sigma}, \mathbb{P}; \rho):= T_{\rho} G^{\mathbb{G}} \cong \oplus_{\beta \in \mathbb{G}} T_{\rho(\beta)}G \quad
\mathrm{C}^{2} (\mathbf{\Sigma}, \mathbb{P}; \rho):= T_{(e, \ldots, e)} G^{\mathbb{RL}} \cong \mathfrak{g}^{\oplus \mathbb{RL}} 
\end{equation*}
\par The co-boundary maps $d^0 : \mathrm{C}^{0} (\mathbf{\Sigma}, \mathbb{P}; \rho)\rightarrow \mathrm{C}^{1} (\mathbf{\Sigma}, \mathbb{P}; \rho)$ and $d^1 : \mathrm{C}^{1} (\mathbf{\Sigma}, \mathbb{P}; \rho)\rightarrow \mathrm{C}^{2} (\mathbf{\Sigma}, \mathbb{P}; \rho)$ are defined as the derivatives $d^0 := D_{(e, \ldots, e)} c_{\rho}$ and $d^1:= D_{\rho} \mathcal{R}$. 
\end{definition}

Since the composition $\mathcal{R} \circ c_{\rho}$ is a constant map, the chain-rule implies the equality $d^1 \circ d^0 = 0$ .
 The inclusion $\mathcal{R}_G(\mathbf{\Sigma}, \mathbb{P}) \subset G^{\mathbb{G}}$ induces an injective morphism $i: T_{\rho} \mathcal{R}_G(\mathbf{\Sigma}, \mathbb{P}) \hookrightarrow \mathrm{C}^1(\mathbf{\Sigma}, \mathbb{P} ; \rho)$.
Moreover the inclusion $\mathbb{C}[\mathcal{X}_G(\mathbf{\Sigma}, \mathbb{P}) ]\subset \mathbb{C}[\mathcal{R}_G(\mathbf{\Sigma}, \mathbb{P})]$ induces a surjective morphism $p : T_{\rho} \mathcal{R}_G(\mathbf{\Sigma}, \mathbb{P}) \rightarrow T_{[\rho]} \mathcal{X}_G(\mathbf{\Sigma}, \mathbb{P})$.

\begin{lemma}\label{lemma_homo_discrete}

\begin{enumerate}
\item The image of the morphism $i$ is the space $ \mathrm{Z}^1(\mathbf{\Sigma}, \mathbb{P}; \rho)$ of cocycles. Hence $i$ induces an isomorphism $j : T_{\rho} \mathcal{R}_G(\mathbf{\Sigma}, \mathbb{P}) \xrightarrow{\cong}  \mathrm{Z}^1(\mathbf{\Sigma}, \mathbb{P}; \rho)$.
\item The following diagram commutes: 
$$\begin{tikzcd}
 T_{(e, \ldots, e)}G^{\mathbb{G}} 
 \arrow[r, "D_e c_{\rho}"] \arrow[d, "="'] &
T_{\rho} \mathcal{R}_G(\mathbf{\Sigma}, \mathbb{P})
\arrow[r,"p"] \arrow[d, "\cong"', "j"] &
T_{[\rho]}\mathcal{X}_G(\mathbf{\Sigma}, \mathbb{P})
\arrow[r] & 0 \\
\mathrm{C}^0\left(\mathbf{\Sigma}, \mathbb{P} ; \rho \right)
\arrow[r, "d^0"] &
\mathrm{Z}^1\left(\mathbf{\Sigma}, \mathbb{P} ; \rho \right)
\arrow[r] &
\mathrm{H}^1\left(\mathbf{\Sigma}, \mathbb{P} ; \rho \right)
\arrow[r] & 0
\end{tikzcd}$$

\par Moreover both lines in the above diagram are exact if $\rho$ is a good representation.
\end{enumerate}
\end{lemma}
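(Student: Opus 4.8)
The plan is to mirror, in the finite-dimensional setting, the argument already carried out for the continuous model in Lemma \ref{lemma_cohomology2}, but now everything is a genuine algebraic variety so the derivations can be replaced by honest Zariski tangent vectors and the chain-rule does the bookkeeping. First I would prove item (1). The inclusion $\mathcal{R}_G(\mathbf{\Sigma},\mathbb{P}) = \mathcal{R}^{-1}(e,\ldots,e) \subset G^{\mathbb{G}}$ is a fibre of the regular map $\mathcal{R}$, so the image of $i : T_\rho \mathcal{R}_G(\mathbf{\Sigma},\mathbb{P}) \hookrightarrow T_\rho G^{\mathbb{G}} = \mathrm{C}^1(\mathbf{\Sigma},\mathbb{P};\rho)$ is contained in $\ker(D_\rho \mathcal{R}) = \ker(d^1) = \mathrm{Z}^1(\mathbf{\Sigma},\mathbb{P};\rho)$. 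For the reverse inclusion I would argue directly with the functor-of-points / dual numbers description: a tangent vector at $\rho$ is an algebra map $\mathbb{C}[G^{\mathbb{G}}] \to \mathbb{C}[\varepsilon]$ lifting $\chi_\rho$, and it kills the defining ideal of $\mathcal{R}_G(\mathbf{\Sigma},\mathbb{P})$ precisely when its image under $D_\rho\mathcal{R}$ vanishes; since the defining ideal of the fibre is generated by the pullbacks of the coordinate functions vanishing at $(e,\ldots,e)$, being a cocycle is exactly the condition to factor through $\mathbb{C}[\mathcal{R}_G(\mathbf{\Sigma},\mathbb{P})]$. This gives the isomorphism $j : T_\rho\mathcal{R}_G(\mathbf{\Sigma},\mathbb{P}) \xrightarrow{\cong} \mathrm{Z}^1(\mathbf{\Sigma},\mathbb{P};\rho)$.

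For item (2), the commutativity of the left square is immediate: under the identification $j$, the top map $D_e c_\rho$ followed by $j$ is by construction the composition $T_{(e,\ldots,e)}G^{\mathring{\mathbb{V}}} \xrightarrow{D_e c_\rho} T_\rho G^{\mathbb{G}}$, which is precisely $d^0$ (the map $d^0$ was \emph{defined} as $D_{(e,\ldots,e)}c_\rho$, and $c_\rho$ factors through $\mathcal{R}_G(\mathbf{\Sigma},\mathbb{P})$ since $\mathcal{R}\circ c_\rho = 0$). Commutativity of the right square, and exactness of the bottom row at $\mathrm{H}^1$, is just the definition $\mathrm{H}^1 = \mathrm{Z}^1/\mathrm{im}(d^0)$. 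The real content is the exactness of the \emph{top} row at $T_\rho\mathcal{R}_G(\mathbf{\Sigma},\mathbb{P})$, i.e. that $\ker(p) = \mathrm{im}(D_e c_\rho)$. Here I would follow the same template as in the proof of Lemma \ref{lemma_cohomology2}: the discrete character variety sits in the exact sequence
\begin{equation*}
0 \to \mathbb{C}[\mathcal{X}_G(\mathbf{\Sigma},\mathbb{P})] \to \mathbb{C}[\mathcal{R}_G(\mathbf{\Sigma},\mathbb{P})] \xrightarrow{\Delta^L_{\mathcal{G}_{\mathbb{P}}} - \eta\otimes\id} \mathbb{C}[\mathcal{G}_{\mathbb{P}}]\otimes \mathbb{C}[\mathcal{R}_G(\mathbf{\Sigma},\mathbb{P})],
\end{equation*}
since $\mathbb{C}[\mathcal{X}_G(\mathbf{\Sigma},\mathbb{P})] = \mathbb{C}[\mathcal{R}_G(\mathbf{\Sigma},\mathbb{P})]^{\mathcal{G}_{\mathbb{P}}}$ is the equalizer of the coaction and the trivial coaction. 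Applying $\mathrm{Der}(-,\mathbb{C}_{\chi})$ (equivalently, taking Zariski tangent spaces, which is left-exact and turns this into a right-exact sequence of tangent spaces) and then using the splitting isomorphism $\kappa$ exactly as before — $\mathrm{Der}(A\otimes B, \chi_0\otimes\chi_\rho) \cong \mathrm{Der}(A,\chi_0)\oplus\mathrm{Der}(B,\chi_\rho)$ via $\varphi_0\oplus\varphi_1 \mapsto \varphi_0\otimes\chi_\rho + \chi_0\otimes\varphi_1$ — one checks that the connecting map $\theta\circ\kappa$ sends $\varphi_0\oplus\varphi_1$ to $D_e c_\rho(\varphi_0) - (\text{a fixed point})$, so that $\mathrm{im}(D_e c_\rho) = \ker(p)$.

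The main obstacle I expect is purely a matter of being careful about \emph{reducedness and smoothness issues}: the identifications $\mathrm{Der}(\mathbb{C}[X],\mathbb{C}_{\chi_x}) = T_x X$ and the left-exactness of $\mathrm{Der}(-,\mathbb{C}_\chi)$ on a left-exact sequence of algebras are harmless, but the GIT quotient $\mathcal{X}_G(\mathbf{\Sigma},\mathbb{P}) = \mathcal{R}_G(\mathbf{\Sigma},\mathbb{P})\sslash \mathcal{G}_{\mathbb{P}}$ means $\mathbb{C}[\mathcal{X}_G(\mathbf{\Sigma},\mathbb{P})]$ is the ring of invariants, and the surjectivity of $p : T_\rho\mathcal{R}_G(\mathbf{\Sigma},\mathbb{P}) \to T_{[\rho]}\mathcal{X}_G(\mathbf{\Sigma},\mathbb{P})$ must be obtained from the surjectivity of $\mathbb{C}[\mathcal{X}_G(\mathbf{\Sigma},\mathbb{P})] \hookrightarrow \mathbb{C}[\mathcal{R}_G(\mathbf{\Sigma},\mathbb{P})]$ being an inclusion that dualizes correctly — this is exactly the step where one needs $\mathcal{G}_{\mathbb{P}}$ reductive so that the sequence above is exact on the right after taking invariants, and where one must verify $\eta\otimes\id$ contributes nothing to the image of $\theta$. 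Everything else is the chain-rule and Sweedler-notation bookkeeping already done verbatim in Lemma \ref{lemma_cohomology2}, which I would simply invoke.
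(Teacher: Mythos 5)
Your proposal is correct and follows essentially the same route as the paper: item (1) is obtained by dualizing the presentation $\mathbb{C}[G]^{\otimes \mathbb{RL}} \to \mathbb{C}[G]^{\otimes \mathbb{G}} \to \mathbb{C}[\mathcal{R}_G(\mathbf{\Sigma}, \mathbb{P})] \to 0$ so that the image of $i$ is identified with $\ker(D_\rho\mathcal{R}) = \mathrm{Z}^1$, and item (2) by noting that the left square commutes by the definition $d^0 = D_e c_\rho$ and that exactness of the top row is the same $\kappa$-splitting argument as in Lemma \ref{lemma_cohomology2}. The reductivity/surjectivity subtlety you flag for $p$ is real but is treated no more carefully in the paper's own argument.
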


\begin{proof}
Denote by $\mathcal{R}^* : \mathbb{C}[G]^{\otimes \mathbb{G}} \rightarrow \mathbb{C}[G]^{\otimes \mathbb{RL}}$ the morphism defining the regular map $\mathcal{R}$. The representation variety is defined as the co-image
$$  \mathbb{C}[G]^{\otimes \mathbb{RL}} \xrightarrow{\mathcal{R}^* - \eta^{\otimes \mathbb{G}} \circ\epsilon^{\otimes \mathbb{RL}}} \mathbb{C}[G]^{\otimes \mathbb{G}} \rightarrow \mathbb{C}[\mathcal{R}_G(\mathbf{\Sigma}, \mathbb{P} )] \rightarrow 0.$$
\par Denote by $\kappa : T_{\rho} G^{\otimes \mathbb{G}} \rightarrow T_e G^{\mathbb{RL}}$ the map sending a derivation $\varphi$ to $D_{\rho} \mathcal{R} (\varphi) - \eta^{\otimes \mathbb{RL}}$. The above exact sequence induces the following one
$$ T_{\rho}\mathcal{R}_G(\mathbf{\Sigma}, \mathbb{P}) \xrightarrow{i} T_{\rho}G^{\mathbb{G}} \xrightarrow{\kappa} T_e G^{\mathbb{RL}} \rightarrow 0.$$
\par Since the maps $\kappa$ and $D_e\mathcal{R}= d^1$ have the same kernel, this proves the first assertion. The proof that the first line, in the diagram of the second assertion, is exact is a straightforward adaptation of the argument in the  proof of Theorem \ref{theorem_cohomology} using Luna slice theorem. The commutativity of the diagram follows from the definition $d^0 := D_e c_{\rho}$. This concludes the proof.

\end{proof}

\par Lemma \ref{lemma_homo_discrete} implies that the morphism $j$ induces an isomorphism $\Lambda^{\mathbb{P}} : T_{[\rho]} \mathcal{X}_G(\mathbf{\Sigma}, \mathbb{P}) \xrightarrow{\cong} \mathrm{H}^1\left( \mathbf{\Sigma}, \mathbb{P} ; \rho \right)$. The isomorphism  $\Psi^{\mathbb{P}} : \mathcal{X}_G(\mathbf{\Sigma}) \xrightarrow{\cong} \mathcal{X}_G(\mathbf{\Sigma}, \mathbb{P})$ of Proposition \ref{proposition_discrete_model} induces an isomorphism $T_{[\rho]} \Psi^{\mathbb{P}} : T_{[\rho]} \mathcal{X}_G(\mathbf{\Sigma}) \xrightarrow{\cong} T_{[\rho^{\mathbb{P}}]} \mathcal{X}_G(\mathbf{\Sigma}, \mathbb{P})$. 
Define the isomorphism $\theta^{\mathbb{P}} : \mathrm{H}^1\left( \Sigma, \mathcal{A} ; \rho \right) 
\xrightarrow{\cong} \mathrm{H}^1(\mathbf{\Sigma}, \mathbb{P} ; \rho{\mathbb{P}})$ as the composition

$$ \theta^{\mathbb{P}} :  \mathrm{H}^1\left( \Sigma, \mathcal{A} ; \rho \right)
 \xrightarrow{\Lambda^{-1}} T_{[\rho]} 
 \mathcal{X}_G(\mathbf{\Sigma})
 \xrightarrow{T_{[\rho]} \Psi^{\mathbb{P}}} T_{[\rho_{\mathbb{P}}]}
  \mathcal{X}_G(\mathbf{\Sigma}, \mathbb{P})
   \xrightarrow{\Lambda^{\mathbb{P}}} 
    \mathrm{H}^1\left( \mathbf{\Sigma}, \mathbb{P} ; \rho_{\mathbb{P}} \right).$$
Unfolding the definitions, the isomorphism $\theta^{\mathbb{P}}$ sends a class $[c^1]$ to a class $[c^1_{\mathbb{P}}]$ where $c^1_{\mathbb{P}}= \oplus_{\beta \in \mathbb{G}} c^1(\beta, 1_{t(\beta)})$.

\begin{remark}\label{remark_GHJW}
Let $\mathbf{\Sigma}$ be a connected marked surface  with non-trivial boundary and exactly one boundary arc per boundary component and consider the presentation $\mathbb{P}$ of $\Pi_1(\Sigma)$ defined in $6$-th item of Exemple \ref{example_finite_presentations}. In this case, the isomorphism  $\Lambda^{\mathbb{P}} : T_{[\rho]} \mathcal{X}_G(\mathbf{\Sigma}, \mathbb{P}) \xrightarrow{\cong} \mathrm{H}^1\left( \mathbf{\Sigma}, \mathbb{P} ; \rho \right)$ was defined by Guruprasad-Huebschmann-Jeffrey-Weinstein in \cite{GHJW_ModSpacesParBd}.
\end{remark}

\subsection{The intersection form}

\par We first recall from (\cite{DrinfeldrMatrix}, \cite[Section $2.1$]{ChariPressley})  the definition of a classical $r$-matrix. Let $\tau \in \mathfrak{g}^{\otimes 2}$ be the invariant bi-vector dual to the non-degenerate pairing $\left(\cdot, \cdot\right)$. A \textit{classical }$r$-\textit{matrix } is an element $r\in \mathfrak{g}^{\otimes 2}$ such that:

\begin{enumerate}
\item The symmetric part $\frac{r+\sigma(r)}{2}$ of $r$ is the invariant bi-vector $\tau$.
\item The bi-vector $r$ satisfies the following classical Yang-Baxter equation: $$ 0= [r_{12}, r_{13}] + [r_{12}, r_{23}] + [r_{13}, r_{23}] \in \mathfrak{g}^{\otimes 3}$$
\end{enumerate}

\par If $\mathfrak{g}$ is a simple Lie algebra and $\mathfrak{g}=\mathfrak{n}^-\oplus \mathfrak{h} \oplus \mathfrak{n}^+$ a Cartan decomposition, the invariant bi-vector decomposes as $\tau = \tau^0 + \tau^{-+} + \tau^{+-}$ where $\tau^0 \in \mathfrak{h}^{\otimes 2}$, $\tau^{-+}\in \mathfrak{n}^-\otimes \mathfrak{n}^+$ and $\tau^{+-}=\sigma(\tau^{-+}) \in \mathfrak{n}^+\otimes \mathfrak{n}^-$. 

\begin{definition}
We define the classical $r$-matrices $r^{\pm}$ by the formulas $r^+ := \tau^0 + 2 r^{+-}$ and $r^-:= \tau^0 +2 r^{-+}$. 
\end{definition}

For instance, suppose that $\mathfrak{g}=\mathfrak{sl}_2$ is identified with the space of traceless $2\times 2$ matrices and set $H:=\begin{pmatrix} 1 & 0 \\ 0& -1 \end{pmatrix}$, $E:= \begin{pmatrix} 0 & 1 \\ 0&0 \end{pmatrix}$ and $F:= \begin{pmatrix} 0&0 \\ 1 & 0 \end{pmatrix}$. Choosing the Killing form with invariant bi-vector $\tau = \frac{1}{2} H\otimes H + E\otimes F +F \otimes E$, we find $r^+ = \frac{1}{2} H\otimes H + 2 E\otimes F$ and $r^-= \frac{1}{2} H\otimes H +2 F\otimes E$.
\vspace{2mm}
\par If $\mathfrak{g}$ is abelian, we define $r^+ = r^- = \tau$. If $G$ is a complex reductive Lie group, its Lie algebra decomposes as a direct sum $\mathfrak{g}=\oplus_i \mathfrak{g}_i$ where each summand $\mathfrak{g}_i$ is either simple or abelian. In that case we define the classical $r$-matrices $r^{\pm}:= \oplus_i r_i^{\pm}$.

\vspace{2mm}
\par The goal of this subsection is to define a skew-symmetric pairing $\bigcap{}^{\mathfrak{o}} : \mathrm{H}_1(\Sigma, \mathcal{A}; \rho)^{\otimes 2} \rightarrow \mathbb{C}$ depending on a choice $\mathfrak{o}$ of orientation of each boundary arc of $\mathbf{\Sigma}$. 
\begin{notations} Given such an orientation $\mathfrak{o}$ and $a$ a boundary arc, we will write $\mathfrak{o}(a)=+$ if the $\mathfrak{o}$-orientation of $a$ agrees with the orientation  induced by the orientation of $\Sigma$ on its boundary, and write $\mathfrak{o}(a)=-$ if it is the opposite orientation. 
\end{notations}
\par Fix a representation $\rho\in \mathcal{R}_G(\mathbf{\Sigma})$. Let $\mathcal{C}_1 , \mathcal{C}_2$ be two curves of $\mathbf{\Sigma}$. Two geometric representatives $c_1$ and $c_2$ of  $\mathcal{C}_1$ and $ \mathcal{C}_2$ are said \textit{ in transverse position} if the images of $c_1$ and $c_2$ intersect transversally in $\Sigma \setminus \mathcal{A}$ along simple crossings. We denote by $\alpha_1$ and $\alpha_2$ the path representatives associated to $c_1$ and $c_2$.  For $i=1,2$, fix a vector $X_i \in T_{\rho(\alpha_i)}G$ with the additional assumption that $X_i$ is $G$-invariant if $\mathcal{C}_i$ is closed. If $v$ is a point of the image of $c_i$, it induces a decomposition $c_i = c_i^- \cdot c_i^+$ of the geometric representative and hence a decomposition $\alpha_i = \alpha_i^- \alpha_i^+$ of the path such that $t(\alpha_i^-)=s(\alpha_i^+)=v$. We denote by $X_i(v)\in \mathfrak{g}$ the vector $X_i(v):= \rho(\alpha_i^-)^{-1} X_i \rho(\alpha_i^+)^{-1}$. 
\vspace{2mm}
\par Let $v\in c_1 \cap c_2\subset\Sigma \setminus \mathcal{A}$ be an intersection point and denote by $e_1, e_2 \in T_v \Sigma$ the tangent vectors of $c_1$ and $c_2$ respectively at the point $v$. We define the sign intersection $\varepsilon(v)= +1$ if $(e_1, e_2)$ is an oriented basis of $T_v \Sigma$ and $\varepsilon(v)=-1$ else. Let $a$ be a boundary arc and denote by $S(a)$ the set of pairs $(v_1, v_2)$ of points such that $v_i \in c_i \cap a$. Note that $c_1$ and $c_2$ do not have intersection point in $a$ by definition. Given $(v_1, v_2) \in S(a)$, we define a sign $\varepsilon(v_1, v_2) \in \{ \pm 1 \}$ as follows. Isotope $c_1$ around $a$ to bring $v_1$ in the same position than $v_2$ and denote by $c_1'$ the new geometric curve. The isotopy should preserve the transversality condition and should not make appear any new inner intersection point. Define $e_1, e_2\in T_{v_2} \Sigma_{\mathcal{P}}$ the tangent vectors at $v_2$ of $c_1'$ and $c_2$ respectively. Define $\varepsilon(v_1, v_2) = +1$ if $(e_1, e_2)$ is an oriented basis and $\varepsilon(v_1, v_2)=-1$ else.
\vspace{2mm}
 \par Note that the orientation $\mathfrak{o}$ induces a total order $<_{\mathfrak{o}}$ on the set of elements of $a$. For $(v_1,v_2)\in S(a)$ we will write $\mathfrak{o}(v_1,v_2)= +1$ if $v_1 <_{\mathfrak{o}} v_2$ and $\mathfrak{o}(v_1,v_2)=-1$ if $v_2 <_{\mathfrak{o}} v_1$.

\begin{definition}
 Define a complex number $(c_1, X_1)\bigcap{}^{\mathfrak{o}} (c_2, X_2) \in \mathbb{C}$ by the formula

\begin{equation*} (c_1, X_1)\bigcap{}^{\mathfrak{o}} (c_2, X_2) := \sum_a \sum_{(v_1, v_2)\in S(a)} \varepsilon(v_1, v_2) \left( X_1(v_1)\otimes X_2(v_2), r^{\mathfrak{o}(v_1,v_2)} \right) +2\sum_{v\in c_1\cap c_2} \varepsilon(v) \left( X_1(v), X_2(v) \right).\end{equation*}
\end{definition}

In this formula, we have use the pairing $\left(\cdot, \cdot \right) :\mathfrak{g}^{\otimes 2} \otimes \mathfrak{g}^{\otimes 2} \rightarrow \mathbb{C}$ defined by $\left(x_1\otimes y_1, x_2 \otimes y_2 \right)= (x_1, x_2) (y_1, y_2)$. 

\begin{lemma}\label{lemma_intersection}
\begin{enumerate}
\item The number $(c_1, X_1)\bigcap{}^{\mathfrak{o}} (c_2, X_2)$ is independent on the choice of the geometric representative $c_1, c_2$ of $\mathcal{C}_1, \mathcal{C}_2$. Hence it induces a skew-symmetric pairing $\bigcap{}^{\mathfrak{o}} : \mathrm{Z}_1 \left(\Sigma, \mathcal{A} ; \rho \right)^{\otimes 2} \rightarrow \mathbb{C}$.
\item The pairing $\bigcap{}^{\mathfrak{o}}$ vanishes on the sub-space $(\mathrm{B}_1 \left(\Sigma, \mathcal{A} ; \rho \right)\otimes \mathrm{Z}_1 \left(\Sigma, \mathcal{A} ; \rho \right))\oplus (\mathrm{Z}_1 \left(\Sigma, \mathcal{A} ; \rho \right) \otimes \mathrm{B}_1 \left(\Sigma, \mathcal{A}; \rho \right))$. Therefore, it induces a skew-symmetric pairing
\begin{equation*}
\bigcap{}^{\mathfrak{o}} : \mathrm{H}_1(\Sigma, \mathcal{A}; \rho)^{\otimes 2} \rightarrow \mathbb{C}.\end{equation*}
\end{enumerate}
\end{lemma}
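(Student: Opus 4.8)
The plan is to prove the two statements together, working at the level of chains. First I would establish a local model for each of the two summands in the definition of $(c_1,X_1)\bigcap^{\mathfrak o}(c_2,X_2)$: near an interior crossing $v$ the contribution is $2\varepsilon(v)(X_1(v),X_2(v))$, and near a boundary arc $a$ the contribution is $\sum_{(v_1,v_2)\in S(a)}\varepsilon(v_1,v_2)(X_1(v_1)\otimes X_2(v_2),r^{\mathfrak o(v_1,v_2)})$. For part (1), the issue is to check invariance under an isotopy of $c_1$ (and symmetrically $c_2$) that keeps the curves transverse with only simple crossings. Any such isotopy can be decomposed into a sequence of elementary moves: (a) an ambient isotopy not changing the combinatorics of $c_1\cap c_2$ and $c_1\cap a$; (b) passing a strand of $c_1$ across a strand of $c_2$ (a Reidemeister-II type move creating or destroying a pair of crossings $v_+,v_-$); (c) sliding an endpoint of $c_1$ along a boundary arc $a$, possibly across an endpoint of $c_2$ on the same arc; and (d) sliding an endpoint of $c_1$ along $a$ across the point of a crossing $v$ of $c_1$ with $c_2$ near $a$, i.e. trading an interior-crossing contribution for a boundary contribution. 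Moves of type (a) are trivial. For type (b), the two new crossings carry opposite signs $\varepsilon(v_+)=-\varepsilon(v_-)$ and the same value of $X_i$ in the limit, so the two contributions $2\varepsilon(v_\pm)(X_1(v_\pm),X_2(v_\pm))$ cancel — here one uses that $X_i(v)$ depends only on the homotopy class of the splitting, which is locally constant. For type (c), sliding $v_1$ past $v_2$ exchanges the roles in the order $<_{\mathfrak o}$, hence flips both $\varepsilon(v_1,v_2)$ and $\mathfrak o(v_1,v_2)$, and the change in the boundary term is governed by $r^+-r^-$; but $r^+-r^- = 2(\tau^{+-}-\tau^{-+})$ is the antisymmetric part of $2\tau$ and one checks the sign flip of $\varepsilon$ exactly compensates the passage $r^{+}\leftrightarrow r^{-}$, so the sum is unchanged (this is the one move genuinely using the special form of $r^\pm$). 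Move (d) is the crucial compatibility: pushing an endpoint of $c_1$ across an interior crossing $v$ turns the $2\varepsilon(v)(X_1(v),X_2(v))$ term into a pair of boundary terms whose $r^+$-parts and $r^-$-parts recombine, using $r^++r^-=2\tau$ and $(\,\cdot\,,\tau)=\mathrm{id}$, precisely into $2\varepsilon(v)(X_1(v),X_2(v))$; this is a short computation with Sweedler-free tensor contractions. Skew-symmetry then follows from $\varepsilon(v_2,v_1)=-\varepsilon(v_1,v_2)$, $\mathfrak o(v_2,v_1)=-\mathfrak o(v_1,v_2)$, $\sigma(r^{\pm})=2\tau-r^{\mp}$, and $\varepsilon(v)$ being skew for the interior crossings, together with the resulting cancellation of the $2\tau$-terms against the interior sum.

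For part (2), I would show directly that if $c_1$ represents a boundary $\partial_2\sigma_2$ of a $2$-chain — equivalently, up to the identifications of Lemma \ref{lemma_homo_discrete} and Lemma \ref{lemma_derivative}, if $(c_1,X_1)$ represents an element of $\mathrm B_1$ — then it pairs to zero with every cycle $(c_2,X_2)$. Since by Proposition \ref{prop_holonomy_functions} and Lemma \ref{lemma_derivative} the space $\mathrm H_1$ is spanned by classes $[\mathcal C,X]$, it suffices to treat $X_2$ arising as $X_{h,\mathcal C_2}$ for a curve function; then the pairing $(c_1,X_1)\bigcap^{\mathfrak o}(c_2,X_2)$ can be recognized, via Lemma \ref{lemma_derivative}, as $D_{[\rho]} h_{\mathcal C_2}$ evaluated on a tangent vector that is in the image of $\mathrm B_1$, hence zero because $D_{[\rho]}h_{\mathcal C_2}$ is a well-defined covector on the character variety and annihilates coboundaries by the exactness in Lemma \ref{lemma_cohomology2}. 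Alternatively, and more self-containedly, a boundary element $\partial_2\langle(\alpha_1,\alpha_0),X\rangle$ corresponds to replacing the curve $c_1$ by a product of three curves meeting at a single interior point with $X_1(v)$ taking values $\rho(\alpha_1)^{-1}X$, $X$ and $-X\rho(\alpha_0)^{-1}$ appropriately, and one checks the contributions telescope.

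The main obstacle I expect is move (d) together with the bookkeeping in the skew-symmetry check: getting the signs $\varepsilon(v_1,v_2)$ right under the isotopy that brings $v_1$ next to $v_2$, and seeing transparently that the three identities $r^++r^-=2\tau$, $\sigma(r^\pm)=2\tau-r^\mp$, and $(\,\cdot\,,\tau)=\mathrm{id}_{\mathfrak g}$ are exactly what is needed to make the interior and boundary contributions fit together. The classical Yang–Baxter equation is not needed for this lemma — it will only enter later when verifying the Jacobi identity for the Poisson bracket — so I would not invoke it here.
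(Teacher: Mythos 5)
Your overall strategy is the one the paper uses: reduce part (1) to invariance under elementary moves on pairs of transverse representatives, handle the interior Reidemeister-II move by $G$-invariance of $\left(\cdot,\cdot\right)$, handle the boundary move by $r^++r^-=2\tau$, and prove part (2) by the telescoping identity for boundaries $\partial_2\left<(\alpha_2,\alpha_1,1_{t(\alpha_1)}),X\right>$. Two points in your plan are not right as stated.

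First, your move (c) does not exist as a standalone move, and the invariance you claim for it is false. Two endpoints $v_1\in c_1\cap a$ and $v_2\in c_2\cap a$ cannot exchange their order on $a$ without the two strands crossing, so any such slide necessarily creates or destroys an interior intersection point near $a$. If you compute the effect of a pure order swap (flipping both $\varepsilon(v_1,v_2)$ and $\mathfrak{o}(v_1,v_2)$), the boundary contribution changes by $-\varepsilon(v_1,v_2)\left(X_1(v_1)\otimes X_2(v_2),\, r^{+}+r^{-}\right)=-2\varepsilon(v_1,v_2)\left(X_1(v_1),X_2(v_2)\right)$, which is not zero in general; it is precisely compensated by the contribution of the interior crossing that must appear. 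In other words, your (c) and (d) are one and the same move (the second move in the paper's Figure), and $r^++r^-=2\tau$ is used exactly once. Relatedly, the identity you quote for skew-symmetry should be $\sigma(r^{\pm})=r^{\mp}=2\tau-r^{\pm}$, not $2\tau-r^{\mp}$; with the correct identity the boundary sum and the interior sum are each skew-symmetric on their own, and there is no cancellation of ``$2\tau$-terms against the interior sum''.

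Second, your primary argument for part (2) is circular at this point in the paper: recognizing $(c_1,X_1)\bigcap{}^{\mathfrak{o}}(c_2,X_2)$ as $D_{[\rho]}h_{\mathcal{C}_2}$ evaluated on a tangent vector presupposes that the intersection form is induced by a bivector on the character variety, which is exactly what the Poisson structure of Section 4 establishes \emph{using} this lemma. Your self-contained alternative is the correct route: one checks directly that for $\partial_2\left<(\alpha_2,\alpha_1,1),X\right>$ one has $(c_{12},X)\bigcap{}^{\mathfrak{o}}(c',Y)=(c_1,\rho(\alpha_2)^{-1}X)\bigcap{}^{\mathfrak{o}}(c',Y)+(c_2,X\rho(\alpha_1)^{-1})\bigcap{}^{\mathfrak{o}}(c',Y)$, which is what the paper does.
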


\begin{proof} First remark that if we denote by $c_1^{-1}$ the geometric curve defined by $c_1^{-1}(t) = c_1(1-t)$, then one has the equality
$$ (c_1, X_1)\bigcap{}^{\mathfrak{o}} (c_2, X_2) = (c_1^{-1}, -\rho(\alpha_1)^{-1} X_1 \rho(\alpha_1)^{-1}) \bigcap{}^{\mathfrak{o}} (c_2, X_2 )$$
\par Suppose that $(c_1, c_2)$ and $(c'_1, c'_2)$ are two pairs of geometric representatives of $\mathcal{C}_1, \mathcal{C}_2$. One can pass from the pair $(c_1,c_2)$ to the pair $(c'_1, c'_2)$ by a sequence of elementary moves which consist in the two moves drawn in Figure \ref{fig_moves_intersection} together with the elementary moves obtained from these two moves by changing the orientation of $c_1$ or $c_2$ or both. By the above formula, to prove the first point of the lemma,  it is sufficient to show the invariance of $ (c_1, X_1)\bigcap{}^{\mathfrak{o}} (c_2, X_2) $ by the two elementary moves of Figure \ref{fig_moves_intersection}.

\begin{figure}[!h] 
\centerline{\includegraphics[width=12cm]{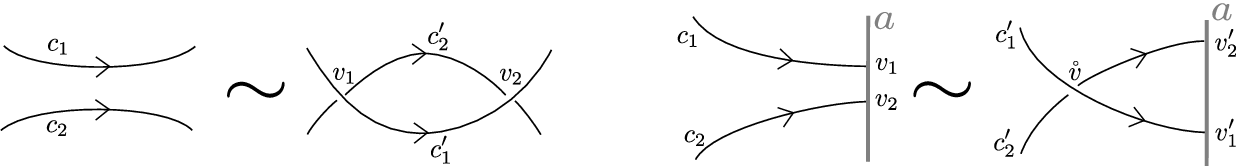} }
\caption{Two elementary moves for pairs of geometric representatives of a pair of  curves.  } 
\label{fig_moves_intersection} 
\end{figure} 

\vspace{2mm}
\par Suppose that $(c_1, c_2)$ and $(c'_1, c'_2)$ are two pairs which differ from the elementary move  drawn in the left part of Figure \ref{fig_moves_intersection}. Denote by $v_1$ and $v_2$ the two additional points induced by the move and, for $i=1,2$, decompose the paths $\alpha_i= \alpha_i^- \alpha_i^0 \alpha_i^+$ where $t(\alpha_i^-)=s(\alpha_i^0)= v_1$ and $t(\alpha_i^0)=s(\alpha_i^+)=v_2$. Note that $\alpha_1^0=\alpha_2^0=: \alpha_0$. We compute:
\begin{multline*}
(c'_1, X_1)\bigcap{}^{\mathfrak{o}} (c'_2, X_2)  - (c_1, X_1)\bigcap{}^{\mathfrak{o}} (c_2, X_2)  
=  2\varepsilon(v_1) (X_1(v_1), X_2(v_1)) + 2\varepsilon(v_2) (X_1(v_2), X_2(v_2)) \\
= (+2) \left( \rho(\alpha_1^-)^{-1} X_1 \rho(\alpha_1^+)^{-1} \rho(\alpha^0) ^{-1} , \rho(\alpha_2^-) ^{-1} X_2 \rho(\alpha_2^+)^{-1}\rho(\alpha^0)^{-1} \right) \\
  +(-2) \left( \rho(\alpha^0)^{-1}\rho(\alpha_1^-)^{-1} X_1 \rho(\alpha_1^+)^{-1}  ,\rho(\alpha^0)^{-1} \rho(\alpha_2^-) ^{-1} X_2 \rho(\alpha_2^+)^{-1} \right) =0
 \end{multline*}
 \par In the last line, we used the $G$-invariance of the pairing $\left(\cdot, \cdot \right)$. Next suppose that $(c_1, c_2)$ and $(c'_1, c'_2)$ are two pairs which differ from the elementary move drawn in the right part of Figure \ref{fig_moves_intersection}. Denote by $a$ the boundary arc and $\mathring{v}, v_1, v_2, v'_1, v'_2$ the points defined in Figure \ref{fig_moves_intersection}. For $i=1,2$, decompose the paths $\alpha'_i = \alpha_i^- \alpha_i^0$ such that $t(\alpha_i^-)=s(\alpha_i^0)=\mathring{v}$ and $t(\alpha_i^0)=v'_i$. Note that $\alpha_1^0=\alpha_2^0=: \alpha^0$. We compute:
 
 \begin{align*}
 &(c'_1, X_1)\bigcap{}^{\mathfrak{o}} (c'_2, X_2)  - (c_1, X_1)\bigcap{}^{\mathfrak{o}} (c_2, X_2)  \\
 & = 2\varepsilon(\mathring{v}) \left( X_1(\mathring{v}), X_2(\mathring{v}) \right) +\varepsilon(v_1', v_2') \left( X_1(v'_1)\otimes X_2(v'_2), r^{\mathfrak{o}(v_1', v_2')} \right) \\
 & -\varepsilon(v_1, v_2) \left(X_1(v_1)\otimes X_2(v_2), r^{\mathfrak{o}(v_1,v_2)} \right) \\
 & = 2 \left( \rho(\alpha_1^-)^{-1} X_1 \rho(\alpha^0)^{-1} , \rho(\alpha_1^-)^{-1} X_2 \rho(\alpha^0)^{-1} \right) - \left( \rho(\alpha_1)^{-1} X_1 \otimes \rho(\alpha_2)^{-1} X_2, r^{-\mathfrak{o}(a)} \right) \\ 
 &- \left( \rho(\alpha_1)^{-1} X_1 \otimes \rho(\alpha_2)^{-1} X_2 , r^{\mathfrak{o} (a)} \right) \\
  & = \left( \rho(\alpha_1)^{-1}X_1 \otimes \rho(\alpha_2)^{-1} X_2, 2 \tau - r^{-\mathfrak{o} (a)} -r^{\mathfrak{o}(a)} \right) = 0
   \end{align*}
In the above equalities we used both the $G$ invariance of the pairing $\left(\cdot, \cdot \right)$ and the fact that $\tau$ is the symmetric part of the $r$-matrix $r^{\mathfrak{o}(a)}$.

\vspace{2mm}
\par To prove the  second part of the lemma first note  that  $\mathrm{B}_1 \left(\Sigma, \mathcal{A} ; \rho \right)$ is spanned by co-boundary elements of the form $ \partial_2 \left< (\alpha_2, \alpha_1, 1_{t(\alpha_1)}), X \right>$ such that the paths $\alpha_2, \alpha_1$ and $\alpha_2 \alpha_1$ admit geometric representatives $c_1, c_2$ and $c_{12}$ respectively. Since 
$$\partial_2 \left< (\alpha_2, \alpha_1, 1), X \right> = \left< (\alpha_1, 1), \rho(\alpha_2)^{-1}X \right> + \left< (\alpha_2, 1), X\rho(\alpha_1)^{-1} \right> - \left< (\alpha_2\alpha_1, 1),X \right>$$
 we need to prove that for any geometric path $c'$ transverse to $c_1$, $c_2$ and $c_{12}$, one has the equality
$$ (c_{12}, X) \bigcap{}^{\mathfrak{o}} (c', Y) = (c_1, \rho(\alpha_2)^{-1}X) \bigcap{}^{\mathfrak{o}} (c', Y) + (c_2, X \rho(\alpha_1)^{-1}) \bigcap{}^{\mathfrak{o}} (c',Y).$$
This equality follows from a straightforward computation. 

\end{proof}

\begin{definition} We call \textit{intersection form} the skew-symmetric pairing:
$$\bigcap{}^{\mathfrak{o}} : \mathrm{H}_1(\Sigma, \mathcal{A}; \rho)^{\otimes 2} \rightarrow \mathbb{C}.$$
\end{definition}

\par We now show that the intersection form behaves well for the gluing operation. Let $a$ and $b$ be two boundary arcs of $\mathbf{\Sigma}$. Recall that we defined a map 
$j_{a\#b} : \mathrm{H}_1 \left( \Sigma_{a\#b}, \mathcal{A}_{a\#b} ; \rho_{a\#b} \right) \rightarrow \mathrm{H}_1 \left( \Sigma, \mathcal{A}; \rho \right)$ characterized by Lemma \ref{lemma_decomp_paths}. We choose an orientation $\mathfrak{o}$ of the boundary arcs of $\mathbf{\Sigma}$ such that the gluing map $\varphi$ preserves the orientation of $a$ and $b$. This is equivalent to the equality of the signs $\mathfrak{o}(a)=-\mathfrak{o}(b)$. Still denote by $\mathfrak{o}$ the induced orientation of the boundary arcs of $\mathbf{\Sigma}_{a\#b}$. 

\begin{lemma}\label{lemma_gluing_intersection} 
The following diagram commutes: 
$$\begin{tikzcd}
 \mathrm{H}_1 \left(\Sigma_{a\#b}, \mathcal{A}_{a\#b} ; \rho_{a\#b} \right)^{\otimes 2} 
\arrow[rr, "\bigcap{}^{\mathfrak{o}}"] \arrow[d, hook,  "(j_{a\#b})^{\otimes 2}"']  &&\mathbb{C} \\
 \mathrm{H}_1 \left( \Sigma, \mathcal{A} ; \rho \right)^{\otimes 2}
 \arrow[rru, "\bigcap{}^{\mathfrak{o}}"'] & &
 \end{tikzcd}$$

\end{lemma}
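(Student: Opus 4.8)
The plan is to check the identity on a spanning set and then match terms. By Proposition~\ref{prop_holonomy_functions} and Lemma~\ref{lemma_derivative}, the group $\mathrm{H}_1\!\left({\Sigma_{|a\#b}}_{\mathcal{P}_{|a\#b}}, \partial{\Sigma_{|a\#b}}_{\mathcal{P}_{|a\#b}}; \rho_{a\#b}\right)$ is spanned by the classes $[\mathcal{C}, X]$, and both $\bigcap{}^{\mathfrak{o}}$ and $j_{a\#b}$ are (bi)linear, so it suffices to prove $\bigcap{}^{\mathfrak{o}}\!\left(j_{a\#b}[\mathcal{C}_1, X_1], j_{a\#b}[\mathcal{C}_2, X_2]\right) = \bigcap{}^{\mathfrak{o}}\!\left([\mathcal{C}_1, X_1], [\mathcal{C}_2, X_2]\right)$ for two such classes. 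I would begin by fixing geometric representatives $c_1, c_2$ of $\mathcal{C}_1, \mathcal{C}_2$ in $\Sigma_{|a\#b}$ which are in transverse position, transverse to $c$, and such that all mutual crossings and all crossings with $c$ are pairwise distinct points of $\mathring{\Sigma}_{|a\#b}\setminus c$. Decomposing the path representatives along the crossings with $c$ and lifting through $\pi$ as in Lemma~\ref{lemma_decomp_paths} writes $j_{a\#b}[\mathcal{C}_i, X_i] = \sum_k [\alpha_{i,k}, Y_{i,k}]$, where the connected components $c_i^{(k)}$ of $\pi^{-1}(c_i)$ are geometric representatives of the $\alpha_{i,k}$.

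A preliminary bookkeeping step identifies the twisted vectors. Since $\rho_{a\#b}$ sends a path transverse to $c$ to the ordered product of the $\rho$ of its lifts (this is exactly how $i_{a\#b}$, hence $\Psi_{a\#b}$, was defined in Subsection~$2.4$), a direct computation as in the proof of Lemma~\ref{lemma_decomp_paths} shows that for any point $v$ lying on a lifted arc $c_i^{(k)}$ the vector $X_i(v)$ of Subsection~$3.4$, computed in $\Sigma_{\mathcal{P}}$, equals $X_i(\pi(v))$, the corresponding vector of $c_i$ computed in $({\Sigma_{|a\#b}})_{\mathcal{P}_{|a\#b}}$; in particular the two lifts $p^a\in a$ and $p^b\in b$ of a crossing $p$ of $c_i$ with $c$ both carry the vector $X_i(p)$.

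I would then expand $\sum_{k,l}\bigcap{}^{\mathfrak{o}}\!\left([\alpha_{1,k}, Y_{1,k}], [\alpha_{2,l}, Y_{2,l}]\right)$ and sort the terms into three families. First, the interior crossings between the lifted arcs are in bijection with the interior crossings of $c_1\cap c_2$ (all of which avoid $c$ by genericity); as $\pi$ is an orientation‑preserving local homeomorphism on $\mathring{\Sigma}_{\mathcal{P}}$, the signs $\varepsilon(v)$ and the vectors agree, so this family reproduces $2\sum_{v\in c_1\cap c_2}\varepsilon(v)\left(X_1(v), X_2(v)\right)$. Second, on a boundary arc $a'$ of $\Sigma$ different from $a$ and $b$ — equivalently, a boundary arc of $\Sigma_{|a\#b}$ — the lifted arcs meet $a'$ only at the genuine endpoints $s(c_i), t(c_i)$, which cutting along $c$ leaves unchanged, so these terms reproduce exactly the $S(a')$‑contribution to $\bigcap{}^{\mathfrak{o}}\!\left([\mathcal{C}_1, X_1], [\mathcal{C}_2, X_2]\right)$, with matching $\varepsilon$, $\mathfrak{o}$ and vectors. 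Adding these two families already gives $\bigcap{}^{\mathfrak{o}}\!\left([\mathcal{C}_1, X_1], [\mathcal{C}_2, X_2]\right)$, so the proof reduces to showing that the third family, consisting of the $S(a)$‑ and $S(b)$‑contributions, vanishes. For an ordered pair $(p,q)$ with $p\in c_1\cap c$ and $q\in c_2\cap c$ this family contributes the pair of terms $\varepsilon(p^a,q^a)\left(X_1(p)\otimes X_2(q), r^{\mathfrak{o}(p^a,q^a)}\right)$ and $\varepsilon(p^b,q^b)\left(X_1(p)\otimes X_2(q), r^{\mathfrak{o}(p^b,q^b)}\right)$, and the claim is that these cancel.

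The cancellation is the heart of the argument and the step I expect to be the main obstacle: it is a purely local but sign‑sensitive verification, and it is the gluing analogue of the elementary‑move computations in the proof of Lemma~\ref{lemma_intersection}. Here the hypotheses are used essentially: the gluing map $\varphi$ is orientation‑compatible and $\mathfrak{o}(a)=-\mathfrak{o}(b)$, so that cutting $\Sigma_{|a\#b}$ along $c$ equips $a$ and $b$ with opposite boundary orientations relative to the common parametrization of $c$. Working in the local model $c\times(-1,1)$ of a bicollar of $c$, one reads off from the two halves of $c_i$ near a transverse crossing with $c$ — one landing on the $a$‑copy of the slit, the other on the $b$‑copy — the precise relation between $\varepsilon(p^a,q^a), \mathfrak{o}(p^a,q^a)$ on one side and $\varepsilon(p^b,q^b), \mathfrak{o}(p^b,q^b)$ on the other; combined with the identities $\tfrac{r^++\sigma(r^+)}{2}=\tau$ and $r^++r^-=2\tau$ this yields $\varepsilon(p^a,q^a)\,r^{\mathfrak{o}(p^a,q^a)} + \varepsilon(p^b,q^b)\,r^{\mathfrak{o}(p^b,q^b)} = 0$. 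Granting this, the three families combine to give $\bigcap{}^{\mathfrak{o}}\!\left(j_{a\#b}[\mathcal{C}_1, X_1], j_{a\#b}[\mathcal{C}_2, X_2]\right) = \bigcap{}^{\mathfrak{o}}\!\left([\mathcal{C}_1, X_1], [\mathcal{C}_2, X_2]\right)$, which is the commutativity of the diagram.
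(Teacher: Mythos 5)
Your reduction follows the paper's own proof almost verbatim: span $\mathrm{H}_1$ by classes $[\mathcal{C},X]$, choose representatives transverse to $c$, decompose and lift via Lemma \ref{lemma_decomp_paths}, check that the interior crossings and the contributions of the boundary arcs other than $a$ and $b$ match term by term, and reduce the statement to the vanishing of the $S(a)\cup S(b)$ contribution. Up to that point the argument is sound and is exactly what the paper does.

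The gap is the step you yourself flag as the main obstacle, and the hint you give for closing it points in the wrong direction. For a pair $(v_1,v_2)\in S(c)$ with lifts $(v_1^a,v_2^a)$ and $(v_1^b,v_2^b)$, the cancellation does not use any identity among $r^+$, $r^-$ and $\tau$. It follows from three elementary facts: $X_i(v_i^a)=X_i(v_i^b)$; $\mathfrak{o}(v_1^a,v_2^a)=\mathfrak{o}(v_1^b,v_2^b)$, because the gluing map identifies the $\mathfrak{o}$-orientations of $a$ and $b$, so the total orders $<_{\mathfrak{o}}$ on $a$ and on $b$ both pull back the same order on $c$; and $\varepsilon(v_1^a,v_2^a)=-\varepsilon(v_1^b,v_2^b)$, because the lifted arcs leave $c$ on opposite sides of the slit. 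The two summands therefore involve the \emph{same} $r$-matrix $r^{\mathfrak{o}(v_1^a,v_2^a)}$ and the same vectors, with opposite signs, and are literal negatives of one another. Your asserted identity $\varepsilon(p^a,q^a)\,r^{\mathfrak{o}(p^a,q^a)}+\varepsilon(p^b,q^b)\,r^{\mathfrak{o}(p^b,q^b)}=0$ holds \emph{only} in this configuration, where it is trivial; in the configuration your invoked identities would address, namely $\mathfrak{o}(p^a,q^a)=-\mathfrak{o}(p^b,q^b)$, the sum equals $\pm\left(X_1(v_1)\otimes X_2(v_2),\,r^++r^-\right)=\pm 2\left(X_1(v_1)\otimes X_2(v_2),\tau\right)$ or $\pm\left(X_1(v_1)\otimes X_2(v_2),\,r^+-r^-\right)$, neither of which vanishes in general. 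So the sign relation between the two lifts must actually be derived from the hypothesis $\mathfrak{o}(a)=-\mathfrak{o}(b)$; that derivation is the content of the lemma and is missing from your argument.
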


\begin{proof}
Denote by $c$ the image in $\Sigma_{a\#b}$ of the boundary arcs $a$ and $b$ and denote by $\pi$ the projection map. Consider $[\mathcal{C}_1, X_1], [\mathcal{C}_2, X_2] \in  \mathrm{H}_1 \left(\Sigma_{a\#b}, \mathcal{A}_{a\#b} ; \rho_{a\#b} \right)$ two generators and choose some geometric representatives $c_1$ and $c_2$ in transverse position such that $c_1\cap c_2\cap c = \emptyset$. Denote by $\alpha$ and $\beta$ the paths in $\Pi_1(\Sigma)$ representing the classes of $c_1$ and $c_2$ respectively. For $i=1, 2$ choose a decomposition $c_i = c_i^1 \ldots c_i^{n_i}$ such that the interior of each sub-arc $c_i^j$ does not intersect $c$. The arcs $c_i^j$ lift through the projection $\pi$ to arcs in $\Sigma$ whose classes in $\Pi_1(\Sigma)$ will be denoted $\alpha_i^j$. Denote by $X_{i,j}$ the vector $X_{i,j} := \rho(\alpha_i^1 \ldots \alpha_i^{j-1})^{-1} X \rho(\alpha_i^{j+1} \ldots \alpha_i^{n_i})^{-1} \in T_{\rho(\alpha_i^j)}G$.

By Lemma \ref{lemma_decomp_paths}, one has the equality for $i=1,2$
$$ j_{a\#b} ([\mathcal{C}_i, X_i]) = \sum_j [\alpha_i^j , X_{i,j}]. $$
Hence, one has 
\begin{equation}\label{equationalacon} j_{a\#b}([\mathcal{C}_1, X_1]) \bigcap{}^{\mathfrak{o}} j_{a\#b}([\mathcal{C}_2, X_2]) = \sum_{i,j} [\alpha_1^i, X_1^i ] \bigcap{}^{\mathfrak{o}} [\alpha_2^j , X_2^j]. \end{equation}
\par The projection map $\pi$ induces a bijection between the set of intersection points in $c_1 \cap c_2$ and the disjoint union of intersection points in  $\bigsqcup_{i,j} c_1^i \cap c_2^j$. Moreover the contribution of the points of $c_1 \cap c_2$ in the formula defining $[\mathcal{C}_1, X_1] \bigcap{}^{\mathfrak{o}} [\mathcal{C}_2, X_2 ]$ is equal to the contribution of the points of $\bigsqcup_{i,j} c_1^i \cap c_2^j$ in the right-hand-side of Equation \eqref{equationalacon}. If $d$ represents a boundary arc of $\mathbf{\Sigma}$ distinct from $a$ and $b$, the projection $\pi$ induces a bijection between the set of pairs $(v_1, v_2) \in d\cap c_1 \times d\cap c_2$ and the disjoint union over the indexes $i$ and $j$ of set of pairs $(v_1, v_2) \in \pi(d) \cap c_1^i \times \pi(d)\cap c_2^j$. 
Moreover the contribution of the pairs of points in $d$ in the formula defining $[\mathcal{C}_1, X_1] \bigcap{}^{\mathfrak{o}} [\mathcal{C}_2, X_2 ]$ is equal to the contribution of the points in $\pi(d)$ in the right-hand-side of Equation \eqref{equationalacon}. Denote by $S(c)$ the set of pairs $(v_1, v_2) \in c_1 \cap c \times c_2 \cap c$. Also define $S(a)$ the set of pairs $(v_1, v_2)$ such that there exists some indexes $i$ and $j$ such that $v_1 \in c_1^i \cap a$ and $v_2 \in c_2^j \cap a$. 
Define $S(b)$ in the same manner. Each pair $(v_1, v_2) \in S(c)$ induces exactly two pairs $(v_1^a, v_2^a) \in S(a)$ and $(v_1^b, v_2^b) \in S(b)$ corresponding to the lifts by the projection $\pi$ of the points $v_1$ and $v_2$. Hence the sets $S(c), S(a)$ and $S(b)$ are in natural bijection. By the preceding discussion, one has:

\begin{align*}
& j_{a\#b}([\mathcal{C}_1, X_1]) \bigcap{}^{\mathfrak{o}} j_{a\#b}([\mathcal{C}_2, X_2])  - [\mathcal{C}_1, X_1] \bigcap{}^{\mathfrak{o}} [\mathcal{C}_2, X_2]  = \sum_{(v_1, v_2)\in S(a)\cup S(b)} \varepsilon(v_1, v_2) \left( X_1(v_1)\otimes  X_2(v_2) , r^{\mathfrak{o}(v_1, v_2)} \right) \\
& = \sum_{(v_1, v_2) \in S(c) } \varepsilon(v_1^a, v_2^a) \left( X_1(v_1^a)\otimes  X_2(v_2^a) , r^{\mathfrak{o}(v_1^a, v_2^a)} \right) +\varepsilon(v_1^b, v_2^b ) \left( X_1(v_1^b)\otimes  X_2(v_2^b) , r^{\mathfrak{o}(v_1^b, v_2^b)} \right)
\end{align*}

\par Remark that for each pair $(v_1, v_2)\in S(c)$,  one has the equalities $\mathfrak{o}(v_1^a, v_2^a) = \mathfrak{o}(v_1^b, v_2^b)$, $X_i(v_i^a)= X_i (v_i^b)$ and $\varepsilon(v_1^a, v_2^a) = - \varepsilon(v_1^b, v_2^b)$. Therefore, the above sum vanishes and we have proved the lemma.

\end{proof}

\section{Poisson structure}\label{sec_Poisson}

\subsection{Definition of the Poisson bracket}
\par Given $M$ a smooth manifold, a Poisson structure on $M$ is a bi-vector field $w\in \Lambda^2 TM$ such that the Schouten bracket $[w, w]_S$ vanishes. Such a bi-vector endows the algebra $C^{\infty}(M)$ of smooth functions with a Poisson bracket $\left\{ \cdot , \cdot \right\}$ defined by the formula $\{ f, h \} (x) := \left< D_xf \otimes D_x h , w_x\right>$ (see e.g. \cite{ChariPressley, GPVanhaecke} for details). In this section, given an orientation $\mathfrak{o}$ of the boundary arcs of a punctured surface $\mathbf{\Sigma}$ and a finite presentation $\mathbb{P}$ of the fundamental groupoid, we want to define a Poisson bracket $\left\{\cdot, \cdot \right\}^{\mathfrak{o}}$ on the algebra $\mathbb{C}[\mathcal{X}_G(\mathbf{\Sigma}, \mathbb{P})]$. Since the tangent space at a point $[\rho] \in \mathcal{X}_G(\mathbf{\Sigma}, \mathbb{P})$ identifies with the twisted groupoid homology, the naive idea is to define an element $w_{\rho}^{\mathfrak{o}} \in \Lambda^2 \mathrm{H}_1(\mathbf{\Sigma}, \mathbb{P}; \rho) $ and then define a Poisson bracket using the formula $\left\{ f, h\right\}^{\mathfrak{o}} ([\rho])= \left< (\Lambda^{\mathbb{P}})^{\otimes 2} (D_{[\rho]}f \otimes D_{[\rho]}h), w_{\rho}^{\mathfrak{o}} \right>$. However, since we deal with an affine variety rather than a smooth manifold and we care about the algebra of regular functions rather than the algebra of smooth functions,  we need to formulate the construction in the algebraic setting. 

\vspace{2mm}
\par We first recall some basic algebraic facts from \cite{GPVanhaecke}. Let $A$ be a commutative algebra and $M$ a bimodule. Denote by $\mathfrak{X}^n (A, M)$ the $A$-module of $n$ skew symmetric forms $P\in \Hom_A\left( \Lambda^n A, M\right)$ such that $P$ is a derivation in each of its variables. The graded algebra $\mathfrak{X}^{\bullet}(A,M):= \oplus_{n\geq 0} \mathfrak{X}^n(A,M)$ has a structure of Gerstenhaber algebra $\left(\mathfrak{X}^{\bullet}, \wedge, [\cdot, \cdot]_S\right)$ where $\wedge$ represents the wedge product and $[\cdot, \cdot]_S$ is the Schouten bracket. If $\chi : M_1 \rightarrow M_2$ is a morphism of $A$-bimodules, there is a well defined morphism $\chi_* : \mathfrak{X}^{\bullet}(A,M_1)\rightarrow \mathfrak{X}^{\bullet}(A,M_2)$ sending $P$ to $\chi\circ P$. If $M=A$, we simply denote by $\mathfrak{X}^{\bullet}(A)$ the algebra $\mathfrak{X}^{\bullet}(A,A)$. If $X$ is an affine variety, the algebra $\mathfrak{X}^{\bullet}(\mathbb{C}[X])$ plays the same role than the Gerstenhaber algebra $\Lambda^{\bullet} TM$ in differential geometry. It follows from the definition of the Schouten bracket that a bi-derivation $P\in \mathfrak{X}^2(\mathbb{C}[X])$ is a Poisson bracket if and only if the Schouten bracket $[P,P]_S$ vanishes. If $x\in X$ is represented by a character $\chi_x : \mathbb{C}[X]\rightarrow \mathbb{C}$ and $\mathbb{C}_{\chi_x}$ represents the corresponding $\mathbb{C}[X]$ bimodule structure on $\mathbb{C}$, we will denote by $P_x \in \mathfrak{X}^n(\mathbb{C}[X], \mathbb{C}_{\chi_x})$ the derivation associated to an element $P\in \mathfrak{X}^n(\mathbb{C}[X])$ by the formula $P_x:= \chi_x \circ P$. Note that $\mathfrak{X}^1(\mathbb{C}[X], \mathbb{C}_{\chi_x})$ is, by definition, the Zariski tangent space $T_x X$.

\begin{notations} Note that the orientation of $\Sigma$ induces an orientation of its boundary and thus of its boundary arcs. For $\mathfrak{o}$ an orientation of the boundary arcs of $\mathbf{\Sigma}$ and $a$ a boundary arc, we write $\mathfrak{o}(a)=+1$ of the $\mathfrak{o}$-orientation of $a$ coincides with the one induced by $\Sigma$ and write $\mathfrak{o}(a)=-1$ elsewhere. 
\end{notations}

\subsubsection{The case of the bigon}\label{sec_bigon_Poisson} We first consider the case where $\mathbf{\Sigma}= \mathbb{B}$. Recall from Example \ref{example_finite_presentations} that the bigon is endowed with a canonical presentation $\mathbb{P}^{\mathbb{B}}$ with only generators $\alpha^{\pm}$, where $\alpha$ is a path such that $s(\alpha)$ lies in some boundary arc $a$ and $t(\alpha)$ lies in the other boundary arc $b$. The morphism $\mathcal{X}_G(\mathbb{B})\rightarrow G$ sending a class $[\rho]$ to $\rho(\alpha)$ is an isomorphism by Proposition \ref{proposition_discrete_model}. Let $\mathfrak{o}$ be an orientation of the boundary arcs, writing $\varepsilon_1:= \mathfrak{o}(a)$ and $\varepsilon_2:=\mathfrak{o}(b)$, we want to define a Poisson bracket $\left\{ \cdot, \cdot \right\}^{\varepsilon_1, \varepsilon_2}$ on the algebra $\mathbb{C}[G]\cong \mathbb{C}[\mathcal{X}_G(\mathbb{B})]$. The Lie group $G$ has a Poisson bi-vector field $w^{\varepsilon_1, \varepsilon_2} \in \Lambda^2 TG$ defined at $g\in G$ by the formula $w_g^{\varepsilon_1, \varepsilon_2}:= \overline{r}^{\varepsilon_1} (g\otimes g) + (g\otimes g) \overline{r}^{\varepsilon_2}$. Here we denoted by $\overline{r}^{\pm}$ the skew-symmetric part of $r^{\pm}$. It is a classical fact (\cite[Proposition $2.2.2$]{ChariPressley}) that the classical Yang-Baxter equation implies that the Schouten bracket $[w^{\varepsilon_1, \varepsilon_2}, w^{\varepsilon_1, \varepsilon_2}]$ vanishes, hence the algebra $C^{\infty}(G)$ has a Poisson bracket defined by

 $$\left\{ f, h \right\}^{\varepsilon_1, \varepsilon_2} (g) = \left< D_g f \otimes D_g h, \overline{r}^{\varepsilon_1} (g\otimes g) + (g\otimes g) \overline{r}^{\varepsilon_2} \right>. $$

\par Remark that only the brackets $\left\{ \cdot, \cdot \right\}^{-, +}$ and $\left\{ \cdot, \cdot \right\}^{+,-}$ endow $G$ with a Poisson Lie structure, i.e. are such that the product in $G$ is a Poisson morphism. We now translate the preceding discussion in algebraic terms. Denote by $\mathbb{C}_{\epsilon}$ the $\mathbb{C}[G]$ bimodule associated to the neutral element $e\in G$, that is such that $f\cdot z = z\cdot f = \epsilon(f)z$. Let $r\in \mathfrak{g}^{\otimes 2}$ be a classical $r$-matrix and $\overline{r}$ its skew-symmetric part. Fix $(X_i)_i$ a basis of the Lie algebra $\mathfrak{g}= \mathfrak{X}^1(\mathbb{C}[G], \mathbb{C}_{\epsilon})$ and decompose $\overline{r}$ as $\overline{r}=\sum_{i,j} \overline{r}^{ij} X_i \wedge X_j$. The left-translation map $L_g : G\rightarrow G$ defined by $L_g(h)=gh$ is a regular map with associated algebra morphism $L_g^* :\mathbb{C}[G]\rightarrow \mathbb{C}[G]$ defined by $L_g^* = (\chi_g \otimes \id)\circ \Delta$. Define a bi-derivation $P^{L,\overline{r}} \in \mathfrak{X}^2(\mathbb{C}[G])$ by the formula

\begin{equation*}
 P^{L,\overline{r}}:= \sum_{i,j} \overline{r}^{ij} \mu\circ \left[ \left( (\id\otimes X_i)\circ \Delta\right) \otimes \left( (\id\otimes X_j)\circ \Delta \right)  - \left( (\id\otimes X_j)\circ \Delta\right) \otimes \left( (\id\otimes X_i)\circ \Delta \right) \right]. 
 \end{equation*}

 
For $g\in G$ represented by a character $\chi_g$,  denote by $P^{L,\overline{r}}_g\in \mathfrak{X}^2(\mathbb{C}[G], \mathbb{C}_{\chi_g})$ the bi-derivation $P^{L,\overline{r}}:= \chi_g \circ P^{L,\overline{r}}$.  One has the equalities

\begin{eqnarray*} 
P^{L,\overline{r}}_g &=& \sum_{i,j} \overline{r}^{ij} \mu\circ \left[ \left( (\chi_g\otimes X_i)\circ \Delta\right) \otimes \left( (\chi_g\otimes X_j)\circ \Delta \right) - \left( (\chi_g\otimes X_j)\circ \Delta\right) \otimes \left( (\chi_g\otimes X_i)\circ \Delta \right) \right] \\
&=& \sum_{i,j} \overline{r}^{ij} \left[ (X_i \wedge X_j ) \circ (L_g^*)^{\otimes 2} \right] =  (D_e L_g)^{\otimes 2} (\overline{r})= (g\otimes g)\overline{r} 
\end{eqnarray*}

\par Similarly, define the bi-derivation $P^{R,\overline{r}} \in \mathfrak{X}^2(\mathbb{C}[G])$ by the formula

\begin{equation*}
 P^{R,\overline{r}}:= \sum_{i,j} \overline{r}^{ij} \mu\circ \left[ \left( (X_i \otimes \id)\circ \Delta\right) \otimes \left( (X_j \otimes \id)\circ \Delta \right)  - \left( (X_j \otimes \id)\circ \Delta\right) \otimes \left( (X_i \otimes \id)\circ \Delta \right) \right]. 
 \end{equation*}

\par A similar computation shows that $P_g^{R, \overline{r}} = \overline{r} (g\otimes g)$. 

\begin{definition}
Define the Poisson bracket $\left\{ \cdot, \cdot \right\}^{\varepsilon_1, \varepsilon_2}\in \mathfrak{X}^2(\mathbb{C}[G])$ by the formula $\left\{ \cdot, \cdot \right\}^{\varepsilon_1, \varepsilon_2} := P^{R, \overline{r}^{\varepsilon_1}} + P^{L, \overline{r}^{\varepsilon_2}}$. 
\end{definition}

This is an algebraic analog of the differential Poisson structure defined previously and the vanishing of the Schouten bracket $[ \left\{ \cdot, \cdot \right\}^{\varepsilon_1, \varepsilon_2}, \left\{ \cdot, \cdot \right\}^{\varepsilon_1, \varepsilon_2} ]_S$ follows from the classical Yang-Baxter equation by a similar argument than in the differential geometric setting. We denote by $\mathbb{C}[G]^{\varepsilon_1, \varepsilon_2}$ the algebra $\mathbb{C}[G]$ equipped with the Poisson bracket $\left\{\cdot, \cdot, \right\}^{\varepsilon_1, \varepsilon_2}$.

\begin{lemma}\label{lemma_poissonlie}
For $\varepsilon_1, \varepsilon_2, \varepsilon \in \{\pm \}$, the following assertions hold:
\begin{enumerate}
\item The co-product $\Delta : \mathbb{C}[G]^{\varepsilon_1, \varepsilon_2} \rightarrow \mathbb{C}[G]^{\varepsilon_1, \varepsilon} \otimes \mathbb{C}[G]^{-\varepsilon, \varepsilon_2}$ is a Poisson morphism.
\item The antipode $S: \mathbb{C}[G]^{\varepsilon_1, \varepsilon_2} \rightarrow \mathbb{C}[G]^{-\varepsilon_1, -\varepsilon_2}$ is Poisson morphism.
\end{enumerate}
\end{lemma}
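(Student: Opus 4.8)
The plan is to verify both assertions by reducing to the explicit bivector formula $w_g^{\varepsilon_1,\varepsilon_2} = \overline r^{\varepsilon_1}(g\otimes g) + (g\otimes g)\overline r^{\varepsilon_2}$ and tracking how left- and right-translation bivectors transform under the group operations. For the first item, I would start from the defining formula $\{\cdot,\cdot\}^{\varepsilon_1,\varepsilon_2} = P^{R,\overline r^{\varepsilon_1}} + P^{L,\overline r^{\varepsilon_2}}$ and note that $\Delta$ being a Poisson morphism $\mathbb{C}[G]^{\varepsilon_1,\varepsilon_2}\to \mathbb{C}[G]^{\varepsilon_1,\varepsilon}\otimes\mathbb{C}[G]^{-\varepsilon,\varepsilon_2}$ amounts, after passing to characters $\chi_{g_1}\otimes\chi_{g_2}$ of the target, to the pointwise identity of bivectors on $G\times G$ at $(g_1,g_2)$, pushed forward by the multiplication map $m\colon G\times G\to G$. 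The bivector on the target at $(g_1,g_2)$ is $w^{R,\overline r^{\varepsilon_1}}_{g_1} + w^{L,\overline r^{\varepsilon}}_{g_1} \oplus w^{R,\overline r^{-\varepsilon}}_{g_2} + w^{L,\overline r^{\varepsilon_2}}_{g_2}$ (placed in the appropriate factors), and one computes $D_{(g_1,g_2)}m$ applied to this, using $D_{(g_1,g_2)}m(X\oplus 0) = Xg_2$ and $D_{(g_1,g_2)}m(0\oplus Y) = g_1 Y$. The cross terms $w^{L,\overline r^{\varepsilon}}_{g_1}$ and $w^{R,\overline r^{-\varepsilon}}_{g_2}$ give $(g_1\otimes g_1)\overline r^{\varepsilon}$ transported to $(g_1 g_2 \otimes g_1 g_2)$ via right multiplication by $g_2$, respectively $\overline r^{-\varepsilon}(g_2\otimes g_2)$ transported via left multiplication by $g_1$; by $G$-invariance of $\overline r^{\pm}$ (more precisely, $(g\otimes g)\overline r^{\pm}(g^{-1}\otimes g^{-1})$ is again a bivector of the same translation type), these two contributions cancel precisely because $\overline r^{\varepsilon} + \overline r^{-\varepsilon}$ is symmetric (it equals $2\tau$ up to the antisymmetrization, so its skew part vanishes: $\overline{r^{+}} = -\overline{r^{-}}$ since $r^+ + r^- = 2\tau^0 + 2\tau^{+-} + 2\tau^{-+} = 2\tau$ is symmetric). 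What remains is $\overline r^{\varepsilon_1}(g_1g_2\otimes g_1 g_2) + (g_1g_2\otimes g_1g_2)\overline r^{\varepsilon_2}$, which is exactly $w^{\varepsilon_1,\varepsilon_2}_{g_1g_2}$.

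For the second item, the antipode corresponds to the inversion map $\iota\colon G\to G$, $g\mapsto g^{-1}$, and $S$ being a Poisson morphism $\mathbb{C}[G]^{\varepsilon_1,\varepsilon_2}\to \mathbb{C}[G]^{-\varepsilon_1,-\varepsilon_2}$ means $D_g\iota$ sends $w^{\varepsilon_1,\varepsilon_2}_g$ to $w^{-\varepsilon_1,-\varepsilon_2}_{g^{-1}}$. Using $D_g\iota(X) = -g^{-1}Xg^{-1}$ for $X\in T_gG$, one computes $(D_g\iota)^{\otimes 2}$ applied to $\overline r^{\varepsilon_1}(g\otimes g)$: this equals $(g^{-1}\otimes g^{-1})\overline r^{\varepsilon_1}$ viewed at $g^{-1}$, which is the $P^{R}$-type bivector but now on the \emph{right}; similarly $(D_g\iota)^{\otimes 2}$ of $(g\otimes g)\overline r^{\varepsilon_2}$ becomes $\overline r^{\varepsilon_2}(g^{-1}\otimes g^{-1})$ of $P^{L}$-type. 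Since $\overline r^{\varepsilon_i}$ is skew-symmetric one has $\overline r^{\varepsilon_i} = -\sigma(\overline r^{\varepsilon_i})$, but more to the point we need the identity $\overline{r^{\pm}}$ restricted to the role it plays: the key point is that passing $\overline r^{\varepsilon}$ from the $P^L$-slot to the $P^R$-slot (and vice versa) while also negating introduces exactly the sign flip $\varepsilon \mapsto -\varepsilon$, because $r^{-} = \sigma(r^{+})$ by construction (from $\tau^{+-} = \sigma(\tau^{-+})$), hence $\overline{r^{-}} = \sigma(\overline{r^{+}}) = -\overline{r^{+}}$. I would carry out this bookkeeping carefully to land on $w^{-\varepsilon_1,-\varepsilon_2}_{g^{-1}}$.

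The cleanest way to organize both computations is to do them at the level of the algebraic bi-derivations $P^{L,\overline r}, P^{R,\overline r}$ rather than pointwise, since the lemma is about Poisson morphisms of algebras of regular functions; but the pointwise verification via the bivector $w$ is logically equivalent (a bi-derivation on an affine variety vanishes iff all its specializations $P_x$ do), so I would state the proof at the bivector level and remark that this is equivalent to the algebraic statement. The main obstacle I anticipate is purely a matter of sign and slot discipline: correctly keeping track of which translation slot ($P^L$ versus $P^R$) each term occupies after applying $D m$ or $D\iota$, and matching the resulting signs against the identities $\overline{r^{+}} = -\overline{r^{-}}$ and $r^- = \sigma(r^+)$; the conceptual content is the classical fact (cf. \cite[Chapter 2]{ChariPressley}) that $(G, w^{-,+})$ and $(G, w^{+,-})$ are Poisson–Lie groups, and everything here is a chase of that statement through the definitions, so no genuinely new input is needed beyond the Yang–Baxter relation already invoked for well-definedness of the brackets.
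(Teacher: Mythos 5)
The paper records no argument for this lemma (its proof reads ``straightforward computation''), so the only question is whether your computation goes through. Part (1) does: after pushing the product bivector forward along the multiplication map, the two cross terms are $(g_1\otimes g_1)\overline{r}^{\varepsilon}(g_2\otimes g_2)$ and $(g_1\otimes g_1)\overline{r}^{-\varepsilon}(g_2\otimes g_2)$, which already live in the same slot of $T_{g_1g_2}G^{\otimes 2}$ and cancel directly from $\overline{r}^{\varepsilon}+\overline{r}^{-\varepsilon}=0$. Your appeal to ``$G$-invariance of $\overline{r}^{\pm}$'' is both unnecessary and false (only the symmetric part $\tau$ is $\mathrm{Ad}$-invariant); fortunately no transport between translation types is needed here.

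Part (2) has a genuine gap. You assert that the bookkeeping ``lands on $w^{-\varepsilon_1,-\varepsilon_2}_{g^{-1}}$'' because a negation combines with the slot swap, but the two minus signs in $D_g\iota(X)=-g^{-1}Xg^{-1}$ cancel in $(D_g\iota)^{\otimes 2}$, so no negation occurs. What the computation actually gives is
\[
(D_g\iota)^{\otimes 2}\bigl(w^{\varepsilon_1,\varepsilon_2}_g\bigr) \;=\; (g^{-1}\otimes g^{-1})\,\overline{r}^{\varepsilon_1} \;+\; \overline{r}^{\varepsilon_2}\,(g^{-1}\otimes g^{-1}) \;=\; w^{\varepsilon_2,\varepsilon_1}_{g^{-1}},
\]
a transposition of the two signs, not their negation. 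Since $w^{-\varepsilon_1,-\varepsilon_2}=-w^{\varepsilon_1,\varepsilon_2}$, the identity $w^{\varepsilon_2,\varepsilon_1}=w^{-\varepsilon_1,-\varepsilon_2}$ holds exactly when $\varepsilon_1=-\varepsilon_2$, and no amount of careful bookkeeping will produce it otherwise: for $G=\SL_2$ with $f=x_{12}$ and $h=x_{21}$ one finds $\{f,h\}^{+,+}(e)=2$ while $\{S(f),S(h)\}^{-,-}(e)=-2$. So your argument, carried out, proves that $S:\mathbb{C}[G]^{\varepsilon_1,\varepsilon_2}\to\mathbb{C}[G]^{\varepsilon_2,\varepsilon_1}$ is Poisson; this agrees with the statement you are asked to prove only in the Poisson--Lie case $\varepsilon_2=-\varepsilon_1$, which is in fact the only case used downstream (in Lemma \ref{lemma_poisson_gluing} the two glued arcs carry opposite signs). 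You should either restrict part (2) to that case or establish the transposed statement; as sketched, the final step of your part (2) fails.
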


\begin{proof} The proof is a straightforward computation. \end{proof}

\begin{remark}\label{remark_PoissonBigon}
Consider an embedding $G\subset \GL_N(\mathbb{C})$ so that $\mathbb{C}[G]$ is a quotient of $\mathbb{C}[GL_N]=\quotient{\mathbb{C}[x_{i,j}, \det^{-1}| 1\leq i,j \leq N]}{(\det \det^{-1}=1)}$ and $\mathbb{C}[G]$ is generated by the classes of the matrix coefficient functions $x_{i,j}$. Consider the $N\times N$ matrix $M(\alpha)=\{ x_{i,j} \}_{1\leq i,j\leq N}$ with coefficients in $\mathbb{C}[G]$. Then the Poisson bracket $\{ \cdot, \cdot \}^{\varepsilon_1, \varepsilon_2}$ is described by the down-to-earth formula:
$$ \{ M(\alpha) \otimes M(\alpha) \}^{\varepsilon_1, \varepsilon_2} = \overline{r}^{\varepsilon_1} (M(\alpha)\otimes M(\alpha)) +(M(\alpha)\otimes M(\alpha)) \overline{r}^{\varepsilon_2}.$$
Here we used the classical notation $\{N \otimes N\}$ to denote the matrix defined by $\{N\otimes N \}_{i j k l}= \{x_{i,j}, x_{k,l} \}$ and $\overline{r}$ is the skew-symmetric part of $r$.
\end{remark}

\subsubsection{The case of the triangle}  We next consider the triangle $\mathbb{T}$. Fix an orientation $\mathfrak{o}$ of its three boundary arcs. Recall from Example \ref{example_finite_presentations} that the fundamental groupoid of the triangle has a canonical presentation $\mathbb{P}^{\mathbb{T}}$ with six generators $\mathbb{G}=\{\beta_1^{\pm 1},\beta_2^{\pm 1},\beta_3^{\pm 1}\}$ and a unique non-trivial relation $R=\beta_1\star\beta_2\star \beta_3$. If $v\in \mathbb{V}$ belongs to a boundary arc $a$, we write $r_v:=r^{\mathfrak{o}(a)}$. 

\vspace{2mm}\par
We first define a Poisson structure on the affine variety $G^{\mathbb{G}}$ as follows. If $A$ and $B$ are two commutative algebras, there is a morphism $\mathfrak{X}^1(A)\otimes \mathfrak{X}^1(B) \rightarrow \mathfrak{X}^1(A\otimes B)$ sending $v_1\oplus v_2$ to the derivation $v$ defined by $v(a\otimes b) := v_1(a)\otimes b + a\otimes v_2(b)$. Hence we have a natural morphism $\oplus_{\beta \in \mathbb{G}} \mathfrak{X}^1(\mathbb{C}[G]) \rightarrow \mathfrak{X}^1(\mathbb{C}[G]^{\otimes \mathbb{G}})$ (corresponding to the morphism $\oplus_{\beta \in \mathbb{G}} TG \rightarrow T G^{ \mathbb{G}}$ in differential geometry). For each generator $\delta \in \mathbb{G}$ denote by $i_{\delta} : \mathfrak{X}^1(\mathbb{C}[G]) \rightarrow \mathfrak{X}^1(\mathbb{C}[G]^{\otimes \mathbb{G}})$ the corresponding embedding. If $X\in \mathfrak{g}$ and $\delta \in \mathbb{G}$,  denote by $X^{\delta} \in \mathfrak{X}^1(\mathbb{C}[G]^{\otimes \mathbb{G}})$ the sum $X^{\delta}:= i_{\delta} \left( (\id\otimes X)\circ \Delta \right) + i_{\delta^{-1}}\left((X\otimes \id)\circ \Delta \right)$. Given $r=\sum_{i,j}r^{ij} X_i \wedge X_j$ a classical $r$-matrix  and $\delta_1, \delta_2 \in \mathbb{G}$ two generators, define the bi-derivation $P_r^{\delta_1, \delta_2} \in \mathfrak{X}^2(\mathbb{C}[G]^{\otimes \mathbb{G}} )$ by the formula
$$ P_r^{\delta_1, \delta_2} = \sum_{i,j} r^{ij} X_i^{\delta_1} \wedge X_j^{\delta_2}. $$

\begin{definition}
We eventually define a Poisson bracket $P^{\mathbb{T}} \in \mathfrak{X}^2(\mathbb{C}[G]^{\otimes \mathbb{G}})$ by the formula
$$ P^{\mathbb{T}} := \frac{1}{2} \left(\sum_{\delta \in \mathbb{G}} P_{s(\delta)}^{\delta, \delta}  \right) + \sum_{i\in \mathbb{Z}/3\mathbb{Z}} P_{r_{v_{i+1}}}^{\beta_i, \beta_{i+1}^{-1}}.
$$
\end{definition}

\par Note that if $\rho= (\rho(\delta))_{\delta\in \mathbb{G}} \in \mathcal{R}_G(\mathbb{T}, \mathbb{P}^{\mathbb{T}}) \subset G^{\mathbb{G}}$, then $X^{\alpha}_{\rho} = (\rho(\alpha)X)\oplus (-X\rho(\alpha)^{-1}) \in T_{\rho(\alpha)}G \oplus T_{\rho(\alpha^{-1})}G \subset T_{\rho} G^{\mathbb{G}}$ and the above formula defining $P^{\mathbb{T}}$ is similar to the formula used by Fock and Rosly in \cite{FockRosly} to define a Poisson structure in the geometric differential context. Remark that if $(\delta_1, \delta_2)$ and $(\delta'_1, \delta'_2)$ are two distinct pairs of generators in $\mathbb{G}$, then the Schouten bracket $[P_v^{\delta_1, \delta_2}, P_{v'}^{\delta'_1, \delta'_2} ]_S$ vanishes for any $v,v'$. Moreover, it follows from the fact that $r$ is a classical $r$-matrix that the Schouten bracket $[P_v^{\delta_1, \delta_2}, P_v^{\delta_1, \delta_2} ]_S$ also vanishes. Hence we have $[P^{\mathbb{T}}, P^{\mathbb{T}}]_S=0$ and $P^{\mathbb{T}}$ is a Poisson bracket on the algebra $\mathbb{C}[G]^{\otimes \mathbb{G}}$.

\vspace{2mm}
\par Let $Y$ be an affine Poisson variety and $X\subset Y$ be a sub-variety whose closed embedding is defined by a surjective morphism $p: \mathbb{C}[Y] \rightarrow \mathbb{C}[X]$. The Poisson structure $P$ of $Y$ induces a Poisson structure on $X$ if and only if the ideal $\mathcal{I}=\ker(p)$ is Poisson ideal, \emph{i.e}. if $P\left( \mathcal{I}\otimes \mathbb{C}[Y]\right) \subset \mathcal{I}$. This condition is equivalent to the fact that for any $x\in X$ one has $P_x\left( \mathcal{I} \otimes \mathbb{C}[Y] \right) =0$ or equivalently to the fact that for any $x\in X$ one has $P_x \in \Lambda^2 T_x X \subset \Lambda^2 T_x Y$. Specialising the preceding discussion to the embedding $\mathcal{X}_G(\mathbb{T}, \mathbb{P}^{\mathbb{T}})=\mathcal{R}_G(\mathbb{T}, \mathbb{P}^{\mathbb{T}}) \subset G^{\mathbb{G}}$, to prove that the Poisson bracket $P^{\mathbb{T}}$ induces a Poisson bracket on $\mathbb{C}[\mathcal{X}_G(\mathbb{T}, \mathbb{P}^{\mathbb{T}})]$, we need to show that for any $\rho \in \mathcal{X}_G(\mathbb{T}, \mathbb{P}^{\mathbb{T}})$, the bi-vector $(P^{\mathbb{T}})_{\rho} \in \Lambda^2 T_{\rho} G^{\mathbb{G}} = \Lambda^2 \mathrm{C}^1(\mathbb{T}, \mathbb{P}^{\mathbb{T}}; \rho)$ lies in $\Lambda^2 \mathrm{Z}^1(\mathbb{T}, \mathbb{P}^{\mathbb{T}} ; \rho)= \Lambda^2 T_{\rho} \mathcal{R}_G(\mathbb{T}, \mathbb{P}^{\mathbb{T}})$ by Lemma \ref{lemma_homo_discrete}. 

\vspace{2mm}
\par
Let us state a more explicit description of $(P^{\mathbb{T}})_{\rho}$. Consider the embedding $\Lambda^2 \mathrm{C}^1(\mathbb{T}, \mathbb{P}^{\mathbb{T}}; \rho) \subset \mathrm{C}^1(\mathbb{T}, \mathbb{P}^{\mathbb{T}}; \rho)^{\otimes 2} = \oplus_{\delta_1, \delta_2 \in \mathbb{G}} T_{\rho(\delta_1)}G\otimes T_{\rho(\delta_2)}G$ and denote by $w(\delta_1, \delta_2) \subset T_{\rho(\delta_1)}G \otimes T_{\rho(\delta_2)}G$ the projection of $(P^{\mathbb{T}})_{\rho}$ in the corresponding summand,  such that $(P^{\mathbb{T}})_{\rho}= \oplus_{(\delta_1, \delta_2) \in \mathbb{G}^2} w(\delta_1, \delta_2)$. Then $(P^{\mathbb{T}})_{\rho}$ is characterized by the following equalities, where we denote by $\overline{r}$ the skew-symmetric part of $r$ and where $\sigma(x\otimes y)=y\otimes x$: 

\begin{align}
\label{eq_a}&w(\delta_1, \delta_2) = -\sigma (w(\delta_2, \delta_1)) &\mbox{, for all }\delta_1, \delta_2 \in \mathbb{G}; \\
\label{eq_b}&w(\delta_1^{-1}, \delta_2) = -(\rho(\delta_1)^{-1}\otimes 1)w(\delta_1, \delta_2)(\rho(\delta_1)^{-1}\otimes 1)&\mbox{, for all }\delta_1, \delta_2 \in \mathbb{G}; \\
\label{eq_c}&w(\delta, \delta) = \overline{r}_{s(\delta)} (\rho(\delta)\otimes \rho(\delta)) + (\rho(\delta)\otimes \rho(\delta))\overline{r}_{t(\delta)} & \mbox{, for all }\delta\in \mathbb{G}; \\
\label{eq_d}&w(\beta_{i}, \beta_{i+1}^{-1}) = (\rho(\beta_{i})\otimes \rho(\beta_{i+1})^{-1})r_{v_{i+1}}  &\mbox{,for all } i\in \mathbb{Z}/3\mathbb{Z}.
\end{align}

\begin{lemma}\label{lemma_poisson_triangle}
For any $\rho \in \mathcal{R}_G(\mathbb{T}, \mathbb{P}^{\mathbb{T}})$, one has $(P^{\mathbb{T}})_{\rho} \in \Lambda^2 \mathrm{Z}^1 (\mathbb{T}, \mathbb{P}^{\mathbb{T}} ; \rho)$.
\end{lemma}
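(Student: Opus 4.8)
The plan is to use the identification $\mathrm{Z}^1(\mathbb{T},\mathbb{P}^{\mathbb{T}};\rho)=\ker(d^1)\subseteq \mathrm{C}^1(\mathbb{T},\mathbb{P}^{\mathbb{T}};\rho)$ provided by Lemma~\ref{lemma_homo_discrete}, together with the elementary fact that for a subspace $W\subseteq V$ of a finite dimensional vector space one has, viewing $\Lambda^2 V\subseteq V\otimes V$, the description $\Lambda^2 W=\{\omega\in\Lambda^2 V:(\ell\otimes\id)\omega\in W\text{ for every }\ell\in V^*\}$. Applied to $W=\mathrm{Z}^1\subseteq V=\mathrm{C}^1$ and the skew-symmetric element $(P^{\mathbb{T}})_{\rho}$, this turns the claim $(P^{\mathbb{T}})_{\rho}\in\Lambda^2\mathrm{Z}^1$ into the single condition $(d^1\otimes\id)\big((P^{\mathbb{T}})_{\rho}\big)=0$ in $\mathrm{C}^2\otimes\mathrm{C}^1$; equivalently, for each fixed $\delta_2\in\mathbb{G}$ the $\mathrm{C}^1$-valued ``first leg'' of $(P^{\mathbb{T}})_{\rho}$ is a $1$-cocycle. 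So the whole proof reduces to applying $d^1$ to the first slot, one generator $\delta_2$ at a time.

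Next I would split $d^1$ according to the relations of $\mathbb{P}^{\mathbb{T}}$: the trivial relations $\delta\star\delta^{-1}$ with $\delta\in\{\alpha,\beta,\gamma\}$, and the unique non-trivial relation $R=\gamma\star\beta\star\alpha$. Along a trivial relation $\delta\star\delta^{-1}$ the corresponding component of $(d^1\otimes\id)\big((P^{\mathbb{T}})_{\rho}\big)$ is $w(\delta,\delta_2)\rho(\delta)^{-1}+\rho(\delta)w(\delta^{-1},\delta_2)$, the translations acting on the first leg, and this vanishes identically because of the second of the displayed defining relations of $(P^{\mathbb{T}})_{\rho}$, namely $w(\delta^{-1},\delta_2)=-(\rho(\delta)^{-1}\otimes 1)w(\delta,\delta_2)(\rho(\delta)^{-1}\otimes 1)$. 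Hence only the relation $R$ remains, and what must be proved is, for each $\delta_2\in\mathbb{G}$,
\[
w(\gamma,\delta_2)(\rho(\gamma)^{-1}\otimes 1)+(\rho(\gamma)\otimes 1)\,w(\beta,\delta_2)\,(\rho(\alpha)\otimes 1)+(\rho(\alpha)^{-1}\otimes 1)\,w(\alpha,\delta_2)=0,
\]
the operators acting on the first tensor factor (this is the formula for $d^1_{R}$ from the proof of Proposition~\ref{prop_dimension}, applied in the first slot).

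I would then reduce this displayed identity to the single case $\delta_2=\alpha$. First, combining skew-symmetry with the $\delta^{-1}$-relation yields $w(\delta_1,\delta_2^{-1})=-(1\otimes\rho(\delta_2)^{-1})w(\delta_1,\delta_2)(1\otimes\rho(\delta_2)^{-1})$; since every operator appearing in the identity acts on the first leg only, the identity for $\delta_2^{-1}$ follows from the one for $\delta_2$. Second, the cyclic relabelling $\alpha\mapsto\gamma\mapsto\beta\mapsto\alpha$, $v_1\mapsto v_2\mapsto v_3\mapsto v_1$ of $\mathbb{P}^{\mathbb{T}}$ carries $P^{\mathbb{T}}$ to itself: it fixes $\tfrac12\sum_{\delta\in\mathbb{G}}P^{\delta,\delta}_{s(\delta)}$ and cyclically permutes the corner terms $P^{\beta^{-1},\alpha}_{r_{v_1}},P^{\alpha^{-1},\gamma}_{r_{v_2}},P^{\gamma^{-1},\beta}_{r_{v_3}}$, while sending $R$ to a cyclic rotation of itself, so the cases $\delta_2=\gamma$ and $\delta_2=\beta$ follow from $\delta_2=\alpha$.

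For $\delta_2=\alpha$ I would read off from the displayed characterisation of $(P^{\mathbb{T}})_{\rho}$ that $w(\alpha,\alpha)=\overline{r}_{v_1}(\rho(\alpha)\otimes\rho(\alpha))+(\rho(\alpha)\otimes\rho(\alpha))\overline{r}_{v_2}$, and, starting from the corner values $w(\beta,\alpha^{-1})=(\rho(\beta)\otimes\rho(\alpha)^{-1})r_{v_1}$ and $w(\alpha,\gamma^{-1})=(\rho(\alpha)\otimes\rho(\gamma)^{-1})r_{v_2}$ and applying skew-symmetry and the $\delta^{-1}$-relations, that $w(\beta,\alpha)=-(\rho(\beta)\otimes 1)r_{v_1}(1\otimes\rho(\alpha))$ and $w(\gamma,\alpha)=(1\otimes\rho(\alpha))\sigma(r_{v_2})(\rho(\gamma)\otimes 1)$. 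Substituting these into the displayed identity, using $\rho(\gamma)\rho(\beta)\rho(\alpha)=e$ to replace $\rho(\gamma)\rho(\beta)$ by $\rho(\alpha)^{-1}$, and then writing $r_{v}=\tau+\overline{r}_{v}$ and $\sigma(r_{v})=\tau-\overline{r}_{v}$, all the $\overline{r}$-contributions cancel and the left-hand side collapses to $(1\otimes\rho(\alpha))\tau-(\rho(\alpha)^{-1}\otimes 1)\tau(\rho(\alpha)\otimes\rho(\alpha))$, which is $0$ by the $G$-invariance $(g\otimes g)\tau=\tau(g\otimes g)$ of the invariant bi-vector $\tau$ dual to the fixed pairing. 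I expect no genuine conceptual obstacle: the delicate points are purely bookkeeping, namely getting the signs and the order of the left/right translations right when deriving the formulas for the mixed coefficients $w(\beta,\alpha)$, $w(\gamma,\alpha)$ from the minimal list of relations characterising $(P^{\mathbb{T}})_{\rho}$, and checking carefully that the cyclic relabelling above really is a symmetry of $P^{\mathbb{T}}$.
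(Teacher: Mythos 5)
Your proposal is correct and follows essentially the same route as the paper: reduce via skew-symmetry to showing $(d^1\otimes\id)(P^{\mathbb{T}})_{\rho}=0$, dispose of the trivial relations using the $w(\delta^{-1},\cdot)$ identity, and verify the non-trivial relation $R=\gamma\star\beta\star\alpha$ by the explicit computation in the $\delta_2=\alpha$ column, with the cancellation coming from $G$-invariance of $\tau$. Your reduction of the remaining columns to $\delta_2=\alpha$ via the inverse relation and the cyclic symmetry of $P^{\mathbb{T}}$ is a tidy substitute for the paper's ``similar computations,'' and your sign $w(\gamma,\alpha)(\rho(\gamma)^{-1}\otimes 1)=+(1\otimes\rho(\alpha))\sigma(r_{v_2})$ is the one that actually makes the sum vanish.
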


\begin{proof} Since $(P^{\mathbb{T}})_{\rho}$ is skew-symmetric, it is sufficient to prove that $(d^1 \otimes \id) (P^{\mathbb{T}})_{\rho} = 0$. Decompose $d^1=D_{\rho}\mathcal{R} : \oplus_{\delta \in \mathbb{G}}T_{\rho(\delta)}G \rightarrow \mathfrak{g}^{\oplus \mathbb{RL}}$ as $d^1= (d^1_{R_1}, \ldots , d^1_{R_m})$ where $R_i\in \mathbb{RL}$. For $\delta \in \mathbb{G}$, we first consider the trivial relation $R_{\delta}:=\delta\star \delta^{-1}$ for which $d^1_{R_{\delta}}\left(\oplus_{\eta \in \mathbb{G}} X_{\eta} \right) = X_{\delta}\rho(\delta^{-1}) + \rho(\delta)X_{\delta^{-1}}$. One has 
$$ (d^1_{R_{\delta}} \otimes \id) \left(P^{\mathbb{T}}\right)_{\rho} = \oplus_{\eta \in \mathbb{G}} \left( w(\delta, \eta)(\rho(\delta^{-1})\otimes 1) + (\rho(\delta)\otimes 1)w(\delta^{-1}, \eta) \right) =0. $$
 Next consider the only non-trivial relation $R= \beta_1 \star \beta_2 \star \beta_3 \in \mathbb{RL}$. By definition, one has
$$d^1_R \left(\oplus_{\delta \in \mathbb{G}} X_{\delta} \right) = X_{\beta_1}\rho(\beta_2\beta_3) + \rho(\beta_1)X_{\beta_2}\rho(\beta_3) + \rho(\beta_1 \beta_2)X_{\beta_3}. $$
\par For $\eta\in \mathbb{G}$, denote by $\pi_{\eta} : \oplus_{\delta \in \mathbb{G}} T_{\rho(\delta)}G \rightarrow T_{\rho(\eta)}G$ the associated projection. To prove that $(d^1_R \otimes \id)\left(P^{\mathbb{T}} \right)_{\rho} \in \oplus_{\delta \in \mathbb{G}} \mathfrak{g} \otimes T_{\rho(\delta)}G$ vanishes, we need to show that for each generator $\eta \in \mathbb{G}$,  one has $(\id \otimes \pi_{\eta})(d^1_R \otimes \id) \left(P^{\mathbb{T}}\right)_{\rho} = 0$. 
We detail the computation for the generator $\beta_3$; the computations for the other generators are similar and left to the reader. 
Write $S:= (\id \otimes \pi_{\beta_3})(d^1_R \otimes \id) \left(P^{\mathbb{T}}\right)_{\rho} $ and let us prove that $S=0$. First using the above expression for $d^1_R$, we find:
\begin{equation}\label{eq_machinbidule}
S = w(\beta_1, \beta_3)(\rho(\beta_2\beta_3) \otimes 1) + (\rho(\beta_1)\otimes 1) w(\beta_2, \beta_3)(\rho(\beta_3)\otimes 1) + (\rho(\beta_1\beta_2)\otimes 1)w(\beta_3, \beta_3).
\end{equation}
Let us compute each summand in the right-hand-side of Equation \ref{eq_machinbidule}.
First using Equations \eqref{eq_a}, \eqref{eq_b} and \eqref{eq_d}, we find
$$ w(\beta_1, \beta_3) = -(\rho(\beta_1)\otimes 1) w(\beta_1^{-1}, \beta_3) (\rho(\beta_1)\otimes 1) = +(\rho(\beta_1)\otimes 1) \sigma(w(\beta_3, \beta_1^{-1})) (\rho(\beta_1)\otimes 1) = (1\otimes \rho(\beta_3)) \sigma(v_{v_1}) (\rho(\beta_1)\otimes 1).$$
Thus 
\begin{equation}\label{term1}
w(\beta_1, \beta_3) (\rho(\beta_1)^{-1} \otimes 1) = (1\otimes \rho(\beta_3)) \sigma(r_{v_1}).
\end{equation}
Using Equations \eqref{eq_a}, \eqref{eq_b} and \eqref{eq_d} again, we find
$$ w(\beta_2, \beta_3) = -\sigma(w(\beta_3,\beta_2))= - (1\otimes \rho(\beta_3)) w(\beta_2, \beta_3^{-1}) (1\otimes \rho(\beta_3)) =-(\rho(\beta_2)\otimes 1) r_{v_3} (1\otimes \rho(\beta_3)).$$
Thus 
\begin{equation}\label{term2}
(\rho(\beta_1)\otimes ) w(\beta_2, \beta_3) (\rho(\beta_3) \otimes 1) = -(\rho(\beta_1\beta_2) \otimes 1) r_{v_3}(\rho(\beta_3) \otimes \rho(\beta_3)).
\end{equation}
Using  Equation  \eqref{eq_c} and the fact that $\rho(\beta_1\beta_2\beta_3)=1$, we find
\begin{equation}\label{term3}
(\rho(\beta_1\beta_2)\otimes 1) w(\beta_3, \beta_3) = (\rho(\beta_3)^{-1}\otimes 1) \overline{r}_{v_3}(\rho(\beta_3) \otimes \rho(\beta_3)) +(1\otimes \rho(\beta_3))\overline{r}_{v_1}.
\end{equation}

Using Equations \eqref{term1}, \eqref{term2}, \eqref{term3}, Equation \eqref{eq_machinbidule} simplifies to
$$ S= (1\otimes \rho(\beta_3)) (\sigma(r_{v_1}) +\overline{r}_{v_1}) + (\rho(\beta_3)^{-1}\otimes 1) (\overline{r}_{v_3} - r_{v_3})(\rho(\beta_3) \otimes \rho(\beta_3)).$$
Remember that $r_{v_i} = \tau + \overline{r}_{v_i}$, where  the symmetric part $\tau$ is the dual of the invariant pairing, so does not depend on $i$ and $\overline{r}_{v_i}$ is the skew-symmetric part. We thus have $\sigma(r_{v_1})+\overline{r}_{v_1} = \tau$ and $\overline{r}_{v_3} - r_{v_3}= -\tau$ so
$$S= (1\otimes \rho(\beta_3)) \tau - (\rho(\beta_3)^{-1}\otimes 1) \tau (\rho(\beta_3)\otimes \rho(\beta_3)) = 0, $$
where we used the fact that $\tau$ is Ad-invariant. 
This concludes the proof.
\end{proof}

Lemma \ref{lemma_poisson_triangle} shows that $\mathcal{X}_G(\mathbb{T}, \mathbb{P}^{\mathbb{T}}) \subset G^{\mathbb{G}}$ is a sub Poisson variety, so we can state the

\begin{definition} We denote by $\{ \cdot , \cdot \}^{\mathfrak{o}}$ the Poisson bracket on  $\mathbb{C}[\mathcal{X}_G(\mathbb{T}, \mathbb{P}^{\mathbb{T}})]$ induced by $P^{\mathbb{T}}$.
\end{definition}
 
 \begin{remark}\label{remark_PoissonTriangle}
 Suppose that $G\subset \GL_N(\mathbb{C})$ and, using notations similar to Remark \ref{remark_PoissonBigon}, for $\delta \in \mathbb{G}$, denote by $M(\delta)$ the $N\times N$ matrix with coefficient in $\mathcal{X}_G(\mathbb{T}, \mathbb{P}^{\mathbb{T}})$ whose $(k,l)$ entry is the function sending a representation $\rho$ to the $(k,l)$ entry of $\rho(\delta)$. Then the Poisson bracket $\{\cdot , \cdot \}^{\mathfrak{o}}$ is described by the down-to-earth formulas, where $i\in \mathbb{Z}/3\mathbb{Z}$:
 \begin{align*}
& \left\{ M(\beta_i) \otimes M(\beta_i) \right\}^{\mathfrak{o}} =  \overline{r}^{\mathfrak{o}(s(\beta_i))} (M(\beta_i) \otimes M(\beta_i)) + (M(\beta_i)\otimes M(\beta_i))\overline{r}^{\mathfrak{o}(t(\beta_i))}, \\
&  \{ M(\beta_i) \otimes M(\beta_{i+1})^{-1} \}^{\mathfrak{o}} =  (M(\beta_i) \otimes M(\beta_{i+1}^{-1})) r_{v_{i+1}}.
 \end{align*}
 
 \end{remark}
 
 \subsubsection{The general case} Consider a punctured surface $\mathbf{\Sigma}$ equipped with topological triangulation $\Delta$ and an orientation $\mathfrak{o}_{\Delta}$ of the edges of $\Delta$.  In Example \ref{example_finite_presentations} we defined a finite presentation $\mathbb{P}^{\Delta}$ of the fundamental groupoid of $\Sigma$ made by gluing the canonical presentation of the triangle in each face of the triangulation. In particular the presentation has exactly one puncture $v_e$ in each edge $e\in \mathcal{E}(\Delta)$ of the triangulation. Consider the punctured surface $\mathbf{\Sigma}^{\Delta}= \bigsqcup_{\mathbb{T} \in F(\Delta)} \mathbb{T}$ which is the disjoint union of the triangles of the faces of the triangulation. Then $\mathbf{\Sigma}$ is obtained from $\mathbf{\Sigma}^{\Delta}$ by gluing the pair of faces corresponding to the edges of the triangulation. Note that the choice of an orientation $\mathfrak{o}_{\Delta}$ of each edge of the triangulation induces an orientation of the boundary arcs of $\mathbf{\Sigma}^{\Delta}$. Hence the algebra $\mathbb{C}[\mathcal{X}_G(\mathbf{\Sigma}^{\Delta})] = \otimes_{\mathbb{T} \in F(\Delta)} \mathbb{C}[\mathcal{X}_G(\mathbb{T}, \mathbb{P}^{\mathbb{T}})]$ inherits a Poisson bracket from this choice of orientation. 
\vspace{2mm}\par

By Proposition \ref{gluing_formula}, one has the exact sequence: 
\begin{multline}\label{suite_exacte}
 0 \rightarrow \mathbb{C}[\mathcal{X}_G(\mathbf{\Sigma}, \mathbb{P}^{\Delta})] \xrightarrow{i^{\Delta}} \otimes_{\mathbb{T} \in F(\Delta)} \mathbb{C}[\mathcal{X}_G(\mathbb{T}, \mathbb{P}^{\mathbb{T}})] 
 \xrightarrow{\Delta^L - \sigma \circ \Delta^R} \left( \otimes_{e\in \mathring{\mathcal{E}}(\Delta)} \mathbb{C}[G] \right) \otimes \left(  \otimes_{\mathbb{T} \in F(\Delta)} \mathbb{C}[\mathcal{X}_G(\mathbb{T}, \mathbb{P}^{\mathbb{T}})] \right). 
 \end{multline}
 \par Each inner edge $e\in \mathring{\mathcal{E}}(\Delta)$ corresponds to two edges  in the disjoint union $ \bigsqcup_{\mathbb{T} \in F(\Delta)} \mathbb{T}$, that is to two boundary arcs $e', e''$ of $\mathbf{\Sigma}^{\Delta}$. The co-modules maps $\Delta^L$ and $\Delta^R$ depend on the choice of which of these two boundary arcs we consider as being on the left or on the right, that is weather we consider the gluing $\mathbf{\Sigma}^{\Delta}_{|e' \# e''}$ or $\mathbf{\Sigma}^{\Delta}_{|e'' \#e'}$. Note also that the two signs $\mathfrak{o}_{\Delta}(e')$ and $\mathfrak{o}_{\Delta}(e'')$ are distinct. We will follow the convention that we choose the gluing $\mathbf{\Sigma}^{\Delta}_{|e' \# e''}$ for which $\mathfrak{o}_{\Delta}(e')= + $ and $\mathfrak{o}_{\Delta}(e'')= -$. Moreover, we equip the algebra  $\left( \otimes_{e\in \mathring{\mathcal{E}}(\Delta)} \mathbb{C}[G] \right)$ with the Poisson structure obtained by choosing the bracket $\left\{ \cdot, \cdot\right\}^{-, +}$ in each factor.
 
 \begin{lemma}\label{lemma_poisson_gluing}
 The comodules maps $\Delta^L$ and $\Delta^R$ in the exact sequence \eqref{suite_exacte} are Poisson morphisms.
 \end{lemma}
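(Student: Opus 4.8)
The plan is to reduce the claim to a pointwise statement about bi-vectors on tangent spaces, using the cohomological description of the tangent spaces from Theorem \ref{theorem2} and Lemma \ref{lemma_homo_discrete}. Concretely, a map $\phi: X \to Y$ of affine Poisson varieties is a Poisson morphism if and only if, for each point $x\in X$ with image $y=\phi(x)$, the induced linear map $(D_x\phi)^{\otimes 2}$ sends the bi-vector $P^X_x \in \Lambda^2 T_xX$ to $P^Y_y\in \Lambda^2 T_yY$ (after identifying $\mathfrak{X}^2(\mathbb{C}[X],\mathbb{C}_{\chi_x})$ with $\Lambda^2 T_xX$). So I first want to unwind what $\Delta^L$ and $\Delta^R$ are doing at the level of tangent spaces: both are built out of the elementary comodule maps $\Delta_{e'}^L$ and $\Delta_{e''}^R$ at the interior edges, and by the construction preceding Lemma \ref{lemma_decomp_paths} the dual of the derivative $D_{[\rho]}\Delta_{e}^{L/R}$ is controlled by the map $j_{e'\#e''}$ of Lemma \ref{lemma_decomp_paths}. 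The key point will therefore be to express $D_{[\rho]}\Delta^L$ and $D_{[\rho]}\Delta^R$ through the concatenation formula of Lemma \ref{lemma_decomp_paths}, so that the question becomes a statement about how the intersection form (which determines the Poisson bi-vector) pulls back along $j_{e'\#e''}$.

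The heart of the argument is then Lemma \ref{lemma_gluing_intersection}: it says precisely that $j_{a\#b}$ is compatible with the intersection forms $\bigcap{}^{\mathfrak{o}}$ on source and target. Since the Poisson bi-vector $w^{\mathfrak{o}}_\rho$ on the character variety is, by construction (and by Theorem \ref{theorem3} / the definition of $P^{\mathbb{T}}$ together with the triangular decomposition), the element dual to the intersection form under the non-degenerate pairing $\langle\cdot,\cdot\rangle$ of Theorem \ref{theorem2}, the compatibility of $j_{a\#b}$ with $\bigcap{}^{\mathfrak{o}}$ dualizes to the statement that the derivative of the gluing projection $\Psi_{a\#b}$ carries $w^{\mathfrak{o}}$ to $w^{\mathfrak{o}}$. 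I would first prove the analogous pointwise compatibility for a single interior edge $e$, i.e. that $\Delta_{e'}^L - \sigma\circ\Delta_{e''}^R$ is a Poisson morphism into $\mathbb{C}[G]^{-,+}\otimes \mathbb{C}[\mathcal{X}_G(\mathbf{\Sigma})]$, using the sign conventions $\mathfrak{o}_\Delta(e')=+$, $\mathfrak{o}_\Delta(e'')=-$ fixed above (these are exactly what makes the gluing map orientation-preserving, the hypothesis $\mathfrak{o}(a)=-\mathfrak{o}(b)$ of Lemma \ref{lemma_gluing_intersection}). Then the general case follows by composing over the interior edges $e\in\mathring{\mathcal{E}}(\Delta)$, which is legitimate since $\Delta^L$ and $\Delta^R$ are obtained precisely by iterating the one-edge comodule maps.

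More explicitly, for the one-edge step I would take two curve functions $f_{\mathcal{C}_1}, h_{\mathcal{C}_2}$ on $\mathbf{\Sigma}_{|a\#b}$, compute $\{i_{a\#b}(f_{\mathcal{C}_1}), i_{a\#b}(h_{\mathcal{C}_2})\}^{\mathfrak{o}}([\rho])$ using the generalized Goldman formula of Theorem \ref{theorem3} on $\mathbf{\Sigma}$, and compare it with $\{f_{\mathcal{C}_1}, h_{\mathcal{C}_2}\}^{\mathfrak{o}}([\rho_{|a\#b}])$ computed on $\mathbf{\Sigma}_{|a\#b}$. The difference is exactly the sum of the contributions coming from the new intersection points created on the glued arc $c$; Lemma \ref{lemma_gluing_intersection} shows that these contributions cancel in pairs, since each intersection pair on $c$ lifts to two pairs on $a$ and $b$ with opposite signs $\varepsilon$ and equal $r$-matrix data $r^{\mathfrak{o}(a)}=r^{-\mathfrak{o}(b)}$. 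What remains is to identify the "leftover" boundary-arc contributions of $a$ and $b$ with the comodule maps $\Delta_a^L$ and $\Delta_b^R$ (and with the Poisson structure on $\mathbb{C}[G]^{-,+}$), which is a matter of matching definitions: the comodule map $\Delta_a^L$ records how a curve crosses $a$, and the $r$-matrix appearing in the Goldman formula at a crossing on $a$ with $\mathfrak{o}(a)=+$ is $r^+$, which is precisely the $r$-matrix defining $\{\cdot,\cdot\}^{-,+}$ on $\mathbb{C}[G]$ via $P^{R,\overline{r}^-}+P^{L,\overline{r}^+}$ — together with Lemma \ref{lemma_poissonlie} governing how the antipode-twist in $\Delta_b^R = \sigma\circ(S\otimes\id)\circ\Delta_b^L$ flips the relevant signs.

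The main obstacle I anticipate is bookkeeping: correctly tracking the orientation signs $\mathfrak{o}(a)$, $\mathfrak{o}(b)$, the intersection signs $\varepsilon(v_1,v_2)$, the left/right ($L$ versus $R$, equivalently source versus target) asymmetry of the comodule maps, and the antipode twist, so that all four pieces of data line up on the nose rather than up to a sign. The conceptual content is entirely contained in Lemma \ref{lemma_gluing_intersection} and the definition of $P^{\mathbb{T}}$; the work is in verifying that the chosen conventions ($\mathfrak{o}_\Delta(e')=+$, $\mathfrak{o}_\Delta(e'')=-$, and the bracket $\{\cdot,\cdot\}^{-,+}$ on the edge factors $\otimes_{e\in\mathring{\mathcal{E}}(\Delta)}\mathbb{C}[G]$) are the unique ones making the diagram commute, rather than its mirror. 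I would organize this by first doing the abelian case $G=\mathbb{C}^*$ as a sanity check (where $r^+=r^-=\tau$ and all sign subtleties collapse), then the general case.
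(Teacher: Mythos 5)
Your plan has a genuine circularity, and it also targets a different statement from the one asserted. The generalised Goldman formula (Theorem \ref{theorem3}, i.e.\ Proposition \ref{goldman_formula}) is proved \emph{after}, and \emph{using}, the Poisson bracket on $\mathbb{C}[\mathcal{X}_G(\mathbf{\Sigma}, \mathbb{P}^{\Delta})]$ — and that bracket only exists once Lemma \ref{lemma_poisson_gluing} has shown that $\ker(\Delta^L-\sigma\circ\Delta^R)$ is a Poisson subalgebra of $\otimes_{\mathbb{T}}\mathbb{C}[\mathcal{X}_G(\mathbb{T},\mathbb{P}^{\mathbb{T}})]$. So you cannot compare $\{i_{a\#b}(f_{\mathcal{C}_1}),i_{a\#b}(h_{\mathcal{C}_2})\}$ with a bracket ``computed on $\mathbf{\Sigma}_{|a\#b}$'': the latter is not yet defined at this stage. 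Relatedly, the map $j_{a\#b}$ of Lemma \ref{lemma_decomp_paths} and the compatibility statement of Lemma \ref{lemma_gluing_intersection} are dual to the gluing map $\Psi_{a\#b}$ (equivalently to $i_{a\#b}$), not to the comodule maps $\Delta^L,\Delta^R$; what your intersection-form computation would establish is that the \emph{induced} bracket on the glued surface matches the intersection form — which is exactly the final paragraph of the paper's proof of Proposition \ref{goldman_formula}, not this lemma. Two further slips: ``$\Delta^L_{e'}-\sigma\circ\Delta^R_{e''}$ is a Poisson morphism'' is not a meaningful intermediate goal, since a difference of algebra maps is not an algebra map (the lemma asserts $\Delta^L$ and $\Delta^R$ are \emph{separately} Poisson, from which the kernel of the difference is a Poisson subalgebra because $\sigma$ is a Poisson isomorphism of the product structure); and your argument never engages the $\mathbb{C}[G]$ factor of the target with its bracket $\{\cdot,\cdot\}^{-,+}$, which is part of what must be verified.

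The actual proof is the ingredient you mention only as an afterthought. In the discrete model, $\Delta^L_{e'}$ and $\Delta^R_{e''}$ are built from the Hopf coproduct and antipode applied to the tensor factors $\mathbb{C}[G]_{\delta}$ of the generators $\delta$ incident to the glued edge, and the components of $P^{\mathbb{T}}$ supported on those factors are exactly of the form $P^{R,\overline{r}^{\varepsilon_1}}+P^{L,\overline{r}^{\varepsilon_2}}$ (plus the cross terms $P_{r_v}^{\delta_1,\delta_2}$, which are checked directly). Lemma \ref{lemma_poissonlie} states precisely that $\Delta:\mathbb{C}[G]^{\varepsilon_1,\varepsilon_2}\to\mathbb{C}[G]^{\varepsilon_1,\varepsilon}\otimes\mathbb{C}[G]^{-\varepsilon,\varepsilon_2}$ and $S:\mathbb{C}[G]^{\varepsilon_1,\varepsilon_2}\to\mathbb{C}[G]^{-\varepsilon_1,-\varepsilon_2}$ are Poisson; combined with the conventions $\mathfrak{o}_{\Delta}(e')=+$, $\mathfrak{o}_{\Delta}(e'')=-$ and the choice of $\{\cdot,\cdot\}^{-,+}$ on each edge factor, this is the entire argument. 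No twisted cohomology, intersection forms, or curve functions are needed here — they enter only later, to show the resulting bracket is independent of the triangulation.
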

 \begin{proof} The proof is a straightforward consequence of Lemma \ref{lemma_poissonlie} and of the signs convention.
 \end{proof}
 
 \par It follows from Lemma \ref{lemma_poisson_gluing} and the exact sequence \eqref{suite_exacte}, that the algebra $ \mathbb{C}[\mathcal{X}_G(\mathbf{\Sigma}, \mathbb{P}^{\Delta})]$ is a Poisson sub-algebra of $\otimes_{\mathbb{T} \in F(\Delta)} \mathbb{C}[\mathcal{X}_G(\mathbb{T}, \mathbb{P}^{\mathbb{T}})] $, hence inherits a Poisson bracket. 
 
 \begin{definition} We denote by $\left\{ \cdot, \cdot \right\}^{\Delta, \mathfrak{o}_{\Delta}}$ the Poisson bracket on $ \mathbb{C}[\mathcal{X}_G(\mathbf{\Sigma})]$ induced by the isomorphism $ \mathbb{C}[\mathcal{X}_G(\mathbf{\Sigma})]\cong  \mathbb{C}[\mathcal{X}_G(\mathbf{\Sigma}, \mathbb{P}^{\Delta})]$ of Proposition \ref{proposition_discrete_model}.
 \end{definition}
  Note that at this stage, the Poisson structure seems to depend on both the choice of a triangulation and on the choice of an orientation of the edges.

\subsection{The generalized Goldman formula}

\par We first re-write Theorem \ref{theorem3} in a concise form. Suppose that $\mathbf{\Sigma}$ is either the bigon $\mathbb{B}$ equipped with an orientation $\mathfrak{o}$ of its boundary edges,  or that $\mathbf{\Sigma}$ is equipped with a topological triangulation $\Delta$ and an orientation $\mathfrak{o}_{\Delta}$ of the edges of $\Delta$. Note that in the latter case, the orientation of the edges induces an orientation  $\mathfrak{o}$ of the boundary arcs of $\mathbf{\Sigma}$.
Let $f_{\mathcal{C}_1}$ and $h_{\mathcal{C}_2}$ be two curve functions and $c_1, c_2$ be two geometric representatives of $\mathcal{C}_1$ and $\mathcal{C}_2$ respectively, in transverse position. Fix $\rho \in \mathcal{R}_G(\mathbf{\Sigma})$.

\begin{theorem}[Generalized Goldman formula]\label{goldman_formula}
 The following equality holds
$$
 \{ f_{\mathcal{C}_1}, h _{\mathcal{C}_2} \} ([\rho]) = [\mathcal{C}_1, X_{f, \mathcal{C}_1}] \bigcap{}^{\mathfrak{o}} [\mathcal{C}_2, X_{h, \mathcal{C}_2}].
$$

\end{theorem}

\par Theorem \ref{goldman_formula} is just a reformulation of Theorem \ref{theorem3}. Since the intersection form only depends on the choice of orientation of the boundary arcs and the holonomy functions generate the algebra of regular functions,Theorem \ref{goldman_formula} implies that the Poisson structure is independent of the triangulation and of the orientation of its inner edges.

\begin{proof} We first consider the case where $\mathbf{\Sigma}$ is the bigon $\mathbb{B}$ and $\mathcal{C}_1=\mathcal{C}_2=:\mathcal{C}$ is the curve represented by the path $\alpha$. Consider two geometric representative $c_1$ and $c_2$ which do not intersect and such that $c_1$ lies on the top of $c_2$. Write $S(a)= (v_1, v_2)$ and $S(b)=(v'_1, v'_2)$ where $v_i = c_i \cap a$ and $v'_i= c_i \cap b$. One has

\begin{align*}
&\left\{ f_{\mathcal{C}}, h_{\mathcal{C}} \right\}^{\varepsilon_1, \varepsilon_2} ([\rho]) = \left< D_{\rho(\alpha)}f \otimes D_{\rho(\alpha)}g, \overline{r}^{\varepsilon_1}(\rho(\alpha)\otimes \rho(\alpha)) +  (\rho(\alpha)\otimes \rho(\alpha))\overline{r}^{\varepsilon_2} \right> \\
& = \left< D_{\rho(\alpha)}f \otimes D_{\rho(\alpha)}g, r^{\mathfrak{o}(v_1,v_2)}(\rho(\alpha)\otimes \rho(\alpha)) -  (\rho(\alpha)\otimes \rho(\alpha))r^{\mathfrak{o}(v'_1,v'_2)} \right>\\
&=  \left(  (X_{\mathcal{C}, f}\otimes X_{\mathcal{C}, h} )(\rho(\alpha)^{-1} \otimes \rho(\alpha)^{-1}), r^{\mathfrak{o}(v_1,v_2)} \right)
 -\left( (\rho(\alpha)^{-1} \otimes \rho(\alpha)^{-1}) (X_{\mathcal{C}, f}\otimes X_{\mathcal{C}, h} ), r^{\mathfrak{o}(v'_1, v'_2)} \right) \\
&= \varepsilon(v_1, v_2) \left( X_{\mathcal{C}, f}(v_1) \otimes X_{\mathcal{C}, h}(v_2), r^{\mathfrak{o}(v_1,v_2)} \right) + \varepsilon(v'_1, v'_2) \left( X_{\mathcal{C}, f}(v'_1) \otimes X_{\mathcal{C}, h}(v'_2), r^{\mathfrak{o}(v'_1,v'_2)} \right) \\
 &= [\mathcal{C}, X_{\mathcal{C}, f} ] \bigcap{}^{\mathfrak{o}} [\mathcal{C}, X_{\mathcal{C}, h} ] 
 \end{align*}
\par To pass from the first to the second line, we used the facts that the symmetric parts of $r^{\pm}$ are equal to the $G$-invariant bi-vector $\tau$, and that $\overline{r}^{\epsilon}= -\overline{r}^{-\epsilon} $.  If $\mathcal{C}^{-1}$ is the curve represented by the path $\alpha^{-1}$, we have $f_{\mathcal{C}} = S(f)_{\mathcal{C}^{-1}}$ hence we obtain similar equalities for pairs of curves $(\mathcal{C}_1, \mathcal{C}_2)= (\mathcal{C}_1^{\pm}, \mathcal{C}_2^{\pm})$ and the proof for the bigon is completed.

\vspace{2mm}
\par Next consider the case where $\mathbf{\Sigma}$ is the triangle $\mathbb{T}$. For $\delta \in \{\beta_1^{\pm 1}, \beta_2^{\pm 1}, \beta_3^{\pm 1} \}$, denote by $\mathcal{C}_{\delta}$ the associated curve. The equality $\{ f_{\mathcal{C}_{\delta}} , h_{\mathcal{C}_{\delta}} \} ([\rho]) = [\mathcal{C}_{\delta}, X_{\mathcal{C}_{\delta}, f} ] \bigcap{}^{\mathfrak{o}} [\mathcal{C}_{\delta}, X_{\mathcal{C}_{\delta}, h} ] $ is proved by the same computation than in the case of the bigon. Next consider the case where $(\mathcal{C}_1, \mathcal{C}_2)=(\mathcal{C}_{\beta_i^{-1}}, \mathcal{C}_{\beta_{i+1}})$. Choose some geometric representatives $c_1$ and $c_2$ of $\mathcal{C}_{\beta_i^{-1}}$ and $\mathcal{C}_{\beta_{i+1}}$ respectively which do not intersect and denote by $(w_1, w_2)$ the intersection points $w_i := c_i \cap b$. We compute: 
\begin{align*}
& \left\{ f_{\mathcal{C}_{\beta_i^{-1}}}, h_{\mathcal{C}_{\beta_{i+1}}} \right\} ([\rho]) = \left< D_{\rho} f_{\mathcal{C}_{\beta_i^{-1}}} \otimes D_{\rho}h_{\mathcal{C}_{\beta_{i+1}}} , w(\beta_i^{-1}, \beta_{i+1}) \right>  \\
& = \left< D_{\rho(\beta_i^{-1})}f \otimes D_{\rho(\beta_{i+1})^{-1}}h , (\rho(\beta_i^{-1})\otimes \rho(\beta_{i+1}))r^{\mathfrak{o}(w_1,w_2)} \right> \\
& = \varepsilon(w_1, w_2) \left( X_{\mathcal{C}_{\beta_i^{-1}}, f}(w_1) \otimes X_{\mathcal{C}_{\beta_{i+1}}, h}(w_2) , r^{\mathfrak{o}(w_1,w_2)} \right)
=  [\mathcal{C}_{\beta_i^{-1}}, X_{\mathcal{C}_{\beta_i^{-1}}, f} ] \bigcap{}^{\mathfrak{o}} [\mathcal{C}_{\beta_{i+1}^{-1}}, X_{\mathcal{C}_{\beta_{i+1}^{-1}}, h} ] 
\end{align*}
\par The other cases are obtained from the above computation changing the orientations using $f_{\mathcal{C}}= S(f)_{\mathcal{C}^{-1}}$. Hence the proof in the case of the triangle is completed.
\vspace{2mm}
\par Eventually suppose that $\mathbf{\Sigma}$ is a punctured surface with topological triangulation $\Delta$, fix $\rho \in \mathcal{R}_G(\mathbf{\Sigma})$  and consider the Poisson embedding $i^{\Delta} : \mathbb{C}[\mathcal{X}_G(\mathbf{\Sigma})] \hookrightarrow \otimes_{\mathbb{T} \in F(\Delta)} \mathbb{C}[\mathcal{X}_G(\mathbb{T}, \mathbb{P}^{\mathbb{T}})]$. The Poisson bracket $\{ \cdot, \cdot \}^{\Delta, \mathfrak{o}_{\Delta}}$ is, by definition, the restriction of a Poisson bracket $\hat{P}=\otimes_{\mathbb{T}}P^{\mathbb{T}}$ on $\otimes_{\mathbb{T} \in F(\Delta)} \mathbb{C}[\mathcal{X}_G(\mathbb{T}, \mathbb{P}^{\mathbb{T}})]$, hence the bi-derivation $\chi_{\rho}\circ \{ \cdot, \cdot \}^{\Delta, \mathfrak{o}_{\Delta}}$ is the restriction of $\hat{P}_{\rho}$. The skew-symmetric bilinear form $\Theta$ on $\mathbb{C}[\mathcal{X}_G(\mathbf{\Sigma})]$ defined by $\Theta(f\otimes h)= \Lambda^*(D_{[\rho]}f) \bigcap{}^{\mathfrak{o}} \Lambda^*(D_{[\rho]}h)$ is, by Lemma \ref{lemma_gluing_intersection}, the restriction of a bilinear form $\hat{\Theta}$ on $\otimes_{\mathbb{T} \in F(\Delta)} \mathbb{C}[\mathcal{X}_G(\mathbb{T}, \mathbb{P}^{\mathbb{T}})]$. By the above proof for the triangle, the two forms $\hat{\Theta}$ and $\hat{P}_{\rho}$ are equal, hence their restrictions to $\mathbb{C}[\mathcal{X}_G(\mathbf{\Sigma})] $ agree. This concludes the proof.
\end{proof}

\section{The case $G=\mathbb{C}^*$}\label{sec_abelian}

\par When $G=\mathbb{C}^*$, the character varieties have a simple description and are closely related to the quantum Teichm\"uller spaces (see \cite{KojuQuesneyQNonAb}).  Let $c_1, c_2$ be two geometric curves in $\Sigma$ in transverse position  and denote by $\sigma_1, \sigma_2$ the cycles in  $\mathrm{Z}_1(\Sigma, \mathcal{A}; \mathbb{Z})$ represented by $c_1$ and $c_2$.
\begin{definition}
 We define the skew-symmetric pairing $\left( \cdot, \cdot \right) : \mathrm{H}_1\left( \Sigma, \mathcal{A}; \mathbb{Z} \right)^{\otimes 2} \rightarrow \frac{1}{2}\mathbb{Z}$ by the formula
$$ \left( [\sigma_1], [\sigma_2] \right) := \sum_a \sum_{(v_1, v_2) \in S(a) } \frac{1}{2} \varepsilon(v_1, v_2) + \sum_{v \in c_1 \cap c_2} \varepsilon(v). $$
\end{definition}

The classes $[\sigma_i]$ associated to such cycles $\sigma_i$ span the module  $\mathrm{Z}_1(\Sigma, \mathcal{A}; \mathbb{Z})$ and an argument similar to the proof of Lemma \ref{lemma_intersection} shows that the pairing $ \left( [\sigma_1], [\sigma_2] \right)$ only depends on the homology classes $[\sigma_i]$, hence the pairing is well-defined in homology. Note that when $\partial \Sigma = \emptyset$, this pairing is the classical intersection pairing. Define the Poisson bracket $\left\{ \cdot, \cdot \right\}$ on the group algebra $\mathbb{C}[\mathrm{H}_1\left( \Sigma, \mathcal{A}; \mathbb{Z}\right) ]$ by the formula 
$$ \{ [\sigma_1], [\sigma_2] \} := ([\sigma_1], [\sigma_2]) [\sigma_1 + \sigma_2]. $$
\par Equip the Lie algebra $\mathbb{C}$ of $\mathbb{C}^*$ with the invariant bi-vector and the $r$-matrices $\tau= r^+ = r^- = \frac{1}{2} 1\otimes 1 \in \mathbb{C}^{\otimes 2}$. The algebra of regular functions $\mathbb{C}[\mathbb{C}^*]= \mathbb{C}[X^{\pm 1}]$ is generated by the elements $X$ and $X^{-1}$. Let $\mathcal{C}$ be a curve in $\Sigma_{\mathcal{P}}$,   $c$  a geometric representative of $\mathcal{C}$ and $\sigma_c \in \mathrm{Z}_1(\Sigma, \mathcal{A} ; \mathbb{Z})$ the induced cycle. The homology class $[\sigma_c] \in \mathrm{H}_1(\Sigma, \mathcal{A}; \mathbb{Z})$ does not depend on the choice of the geometric representative $c$ and will be denoted by $[\mathcal{C}]$. 

\begin{proposition}\label{prop_abelian}
There exists a Poisson isomorphism of algebra $\Psi : \mathbb{C}[\mathcal{X}_{\mathbb{C}^*}(\mathbf{\Sigma}) ]\xrightarrow{\cong} \mathbb{C}[\mathrm{H}_1(\Sigma, \mathcal{A}; \mathbb{Z})]$ characterized by the formula $\Psi( X_{\mathcal{C}}) = [\mathcal{C}]$.
\end{proposition}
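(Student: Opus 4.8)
The strategy is to build the isomorphism $\Psi$ explicitly on generators and then check it is well-defined, bijective, and Poisson, exploiting the abelian nature of $\mathbb{C}^*$ which makes all the gauge-theoretic machinery collapse to linear algebra over $\mathbb{Z}$. First I would observe that since $G=\mathbb{C}^*$ is abelian, the gauge group action $g\cdot\rho(\alpha)=g(s(\alpha))\rho(\alpha)g(t(\alpha))^{-1}$ only depends on $\rho$ through the endpoints, and a functor $\rho\in\mathcal{R}_{\mathbb{C}^*}(\mathbf{\Sigma})$ sending trivial paths to $1$ descends to a homomorphism on relative homology: concretely, $\rho$ factors through $\mathrm{H}_1(\Sigma_{\mathcal P},\partial\Sigma_{\mathcal P};\mathbb{Z})$ because $\mathbb{C}^*$ is abelian (so $\rho$ kills commutators, i.e. factors through $\mathrm{H}_1$ of the groupoid) and because $\rho$ is trivial on boundary arcs (so it factors through the relative version). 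The gauge-invariant functions are then exactly the functions on $\mathrm{Hom}(\mathrm{H}_1(\Sigma_{\mathcal P},\partial\Sigma_{\mathcal P};\mathbb{Z}),\mathbb{C}^*)$, whose coordinate ring is the group algebra $\mathbb{C}[\mathrm{H}_1(\Sigma_{\mathcal P},\partial\Sigma_{\mathcal P};\mathbb{Z})]$. This identifies $\mathbb{C}[\mathcal{X}_{\mathbb{C}^*}(\mathbf{\Sigma})]$ with $\mathbb{C}[\mathrm{H}_1(\Sigma_{\mathcal P},\partial\Sigma_{\mathcal P};\mathbb{Z})]$ as algebras, and under this identification the curve function $X_{\mathcal C}$ (where $X$ is the standard coordinate on $\mathbb{C}^*$) corresponds to the group-algebra element $[\sigma_c]=[\mathcal C]$, since $X_{\mathcal C}(\rho)=\rho(\alpha_{\mathcal C})$ reads off the $[\mathcal C]$-component of the homomorphism. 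Cleanly, one can set up $\Psi$ via the dual of the surjection $\Pi_1(\Sigma_{\mathcal P})\to\mathrm{H}_1(\Sigma_{\mathcal P},\partial\Sigma_{\mathcal P};\mathbb{Z})$ and invoke Proposition \ref{prop_holonomy_functions} (the curve functions generate) to get surjectivity of $\Psi$, and the discrete model of Proposition \ref{proposition_discrete_model} together with a dimension/rank count to get injectivity.

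Next I would verify that $\Psi$ intertwines the two Poisson brackets. The right-hand side carries the bracket $\{[\sigma_1],[\sigma_2]\}=([\sigma_1],[\sigma_2])[\sigma_1+\sigma_2]$, and the left-hand side carries $\{\cdot,\cdot\}^{\mathfrak o}$, which by Proposition \ref{goldman_formula} (the generalised Goldman formula) satisfies $\{X_{\mathcal C_1},X_{\mathcal C_2}\}([\rho])=[\mathcal C_1,X_{X,\mathcal C_1}]\bigcap^{\mathfrak o}[\mathcal C_2,X_{X,\mathcal C_2}]$. For $G=\mathbb{C}^*$ with the chosen pairing $\tau=r^+=r^-=\tfrac12\,1\otimes 1$, the groupoid-homology vectors $X_{X,\mathcal C}(v)$ are all the same scalar (the left-invariant coordinate of the derivative of $X$), so the intersection form $\bigcap^{\mathfrak o}$ on $\mathrm{H}_1$ collapses exactly to the combinatorial pairing $(\cdot,\cdot)$ of the section: the $r^{\mathfrak o(v_1,v_2)}$-contributions at boundary arcs become $\tfrac12\varepsilon(v_1,v_2)$ (independent of the orientation $\mathfrak o$, as promised in Theorem \ref{theorem1}(1)) and the interior crossings contribute $\varepsilon(v)$. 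Thus $\{X_{\mathcal C_1},X_{\mathcal C_2}\}([\rho])=([\mathcal C_1],[\mathcal C_2])\cdot X_{\mathcal C_1}([\rho])X_{\mathcal C_2}([\rho])=([\mathcal C_1],[\mathcal C_2])X_{\mathcal C_1\cdot\mathcal C_2}([\rho])$, which is precisely $\Psi^{-1}$ of $\{[\mathcal C_1],[\mathcal C_2]\}$ once one notes $X_{\mathcal C_1}X_{\mathcal C_2}=X_{\mathcal C_1\mathcal C_2}$ corresponds to $[\sigma_1+\sigma_2]$. Since the curve functions generate the algebra and both brackets are biderivations, agreement on generators gives agreement everywhere, so $\Psi$ is a Poisson isomorphism.

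The step I expect to require the most care is the well-definedness and bijectivity of $\Psi$ at the algebra level, that is, pinning down precisely that $\mathbb{C}[\mathcal{X}_{\mathbb{C}^*}(\mathbf{\Sigma})]\cong\mathbb{C}[\mathrm{H}_1(\Sigma_{\mathcal P},\partial\Sigma_{\mathcal P};\mathbb{Z})]$ and not some quotient or subring of it. One must show that every gauge-invariant regular function really is a linear combination of the $[\mathcal C]$'s (surjectivity, from Proposition \ref{prop_holonomy_functions}) and that there are no relations among them beyond those in homology (injectivity). For injectivity I would argue on a discrete model $\mathbb{P}=\mathbb{P}^\Delta$ coming from a triangulation: there $\mathcal{R}_{\mathbb{C}^*}(\mathbf{\Sigma},\mathbb{P})$ is the kernel of the homomorphism $(\mathbb{C}^*)^{\mathbb{G}}\to(\mathbb{C}^*)^{\mathbb{RL}}$ defined by the relations, which is a subtorus, and the discrete gauge group $(\mathbb{C}^*)^{\mathring{\mathbb{V}}}$ acts by a linear (multiplicative) character, so the GIT quotient is again a torus whose character lattice is exactly $\mathrm{H}_1(\Sigma_{\mathcal P},\partial\Sigma_{\mathcal P};\mathbb{Z})$ — here I would use the dimension count of Proposition \ref{prop_dimension} (which for abelian $G$ gives $\dim = \dim\mathrm{H}_1(\Sigma_{\mathcal P},\partial\Sigma_{\mathcal P};\mathbb{C})$) to confirm the rank matches, and the fact that a torus is determined by its character lattice. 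Once the algebra isomorphism is nailed down the Poisson compatibility is essentially a formality given Proposition \ref{goldman_formula}, so the bookkeeping in identifying the coordinate ring of the quotient torus is the genuine content.
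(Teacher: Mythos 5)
Your proof is correct in substance but reaches the algebra isomorphism by a genuinely different route than the paper. The paper does not use a rank count at all: it constructs \emph{both} maps explicitly at the chain level, defining $\phi_1:\mathbb{C}[\mathrm{C}_1(\Sigma_{\mathcal{P}};\mathbb{Z})]\to\otimes^{\vee}_{\Pi_1(\Sigma_{\mathcal{P}})}\mathbb{C}[X^{\pm1}]$ by $\sigma\mapsto X_{\alpha_\sigma}$, checking successively that boundary chains land in $\mathcal{I}_\epsilon$, that cycles land in the invariant subalgebra (abelianness kills the conjugation), and that boundaries of embedded subsurfaces die (using $\Delta^{(n-1)}(X)=X^{\otimes n}$), and then defining $\psi_1(X^{\pm1}_\alpha):=\pm[\sigma_\alpha]$ in the other direction and verifying the two induced maps are mutually inverse. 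This avoids any appeal to the discrete model, to Proposition \ref{prop_dimension}, or to connectedness of a representation torus, and it covers closed surfaces uniformly. Your route (surjectivity of $\Psi$ from curves generating homology, injectivity from identifying $\mathbb{C}[\mathcal{X}_{\mathbb{C}^*}(\mathbf{\Sigma},\mathbb{P}^\Delta)]$ as the character lattice of a quotient torus of rank $\dim\mathrm{H}_1$) does work, and your Poisson computation is exactly right — with $\tau=\tfrac12\,1\otimes1$ one gets $X_{X,\mathcal{C}}(v)=\rho(\alpha)/2$, interior crossings contribute $\varepsilon(v)X_{\mathcal{C}_1}X_{\mathcal{C}_2}$ and boundary pairs $\tfrac12\varepsilon(v_1,v_2)X_{\mathcal{C}_1}X_{\mathcal{C}_2}$, matching the combinatorial pairing — but two points in your injectivity step need to be made explicit. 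First, the rank argument requires $\mathcal{R}_{\mathbb{C}^*}(\mathbf{\Sigma},\mathbb{P})=\mathcal{R}^{-1}(e,\dots,e)$ to be \emph{connected} (a torus, not merely a diagonalizable group): otherwise its character group has torsion and a surjection onto $\mathrm{H}_1$ of equal rank need not be injective. This holds for $\mathbb{P}^\Delta$ because each relation $\gamma\star\beta\star\alpha$ and $\beta\star\beta^{-1}$ is a primitive character of $(\mathbb{C}^*)^{\mathbb{G}}$, but it must be said. Second, Proposition \ref{prop_dimension}(3) assumes $\partial\Sigma\neq\emptyset$, so the closed case (where the character variety is the Culler-Shalen quotient $\Hom(\pi_1,\mathbb{C}^*)\sslash\mathbb{C}^*=\Hom(\mathrm{H}_1(\Sigma_{\mathcal{P}};\mathbb{Z}),\mathbb{C}^*)$ and the statement is classical) needs a separate sentence. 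With those two additions your argument is complete; what the paper's approach buys is uniformity and the avoidance of these case distinctions, at the cost of several routine ideal-membership verifications.
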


\begin{proof} If $\sigma\in \mathrm{C}_1(\Sigma; \mathbb{Z})$ is a  singular $1$-chain, denote by $\alpha_{\sigma}\in \Pi_1(\Sigma)$ its homotopy class. Define a morphism $\phi_1 : \mathbb{C}[\mathrm{C}_1(\Sigma ; \mathbb{Z})] \rightarrow \mathbb{C}[\Map(\Pi_1(\Sigma), \mathbb{C}^*)]$ by the formula $\phi_1(\sigma):= X_{\alpha_{\sigma}}$. By definition of the boundary arcs, we have the inclusion $\phi_1\left( \mathrm{C}_1(\mathcal{A}; \mathbb{Z}) \right) \subset \mathcal{I}_{\epsilon}$, hence $\phi_1$ induces a morphism $\phi_2: \mathbb{C}[\mathrm{C}_1(\Sigma, \mathcal{A}; \mathbb{Z})] \rightarrow \mathbb{C}[\mathcal{R}_{\mathbb{C}^*}(\mathbf{\Sigma})]$. Since $\mathbb{C}^*$ is abelian, the action of $\mathbb{C}^*$ on itself by conjugacy is trivial, and we have the inclusion $\phi_2\left( \mathrm{Z}_1(\Sigma, \mathcal{A}; \mathbb{Z} ) \right) \subset \mathbb{C}[\mathcal{X}_{\mathbb{C}^*}(\mathbf{\Sigma})]$. Denote by $\phi_3 : \mathbb{C}[ \mathrm{Z}_1(\Sigma, \mathcal{A}; \mathbb{Z} )] \rightarrow  \mathbb{C}[\mathcal{X}_{\mathbb{C}^*}(\mathbf{\Sigma})]$ the induced morphism. Since we work in dimension $2$, the space $\mathrm{B}_1(\Sigma, \mathcal{A}; \mathbb{Z})$ is spanned by elements of the form $\partial S$ where $S\subset \Sigma_{\mathcal{P}}$ is an embedded surface. Given such a surface $S$, decompose $\partial S = c_1 \ldots c_n$ into geometric arcs. Since $\alpha:= \alpha_{c_1} \ldots \alpha_{c_n}$ is a trivial path, then $[X_{\alpha}]=0 \in  \mathbb{C}[\mathcal{X}_{\mathbb{C}^*}(\mathbf{\Sigma})]$. Moreover since $\Delta^{(n-1)}(X)=X^{\otimes n}$, one has the equalities $\phi_3(\partial S) = \sum_i X_{\alpha_{c_i}} = X_{\alpha} = 0$. Thus $\phi_3$ induces a morphism $\phi : \mathbb{C}[\mathrm{H}_1(\Sigma, \mathcal{A}; \mathbb{Z} )] \rightarrow  \mathbb{C}[\mathcal{X}_{\mathbb{C}^*}(\mathbf{\Sigma})]$. Note that $\phi( [\mathcal{C}]) = X_{\mathcal{C}}$.
\vspace{2mm}
\par Next define a morphism $\psi_1 : \mathbb{C}[\Map(\Pi_1(\Sigma), \mathbb{C}^*)] \rightarrow  \mathbb{C}[\mathrm{H}_1(\Sigma, \mathcal{A}; \mathbb{Z} )]$ by the formula $\psi_1( X^{\pm 1}_{\alpha}) := \pm [\sigma_{\alpha}]$, where $[\sigma_{\alpha}]$ is the homology class of the singular $1$-chain associated to an arbitrary geometric representative of $\alpha$. It follows from the definitions that we have  $\psi_1 \left( \mathcal{I}_{\varepsilon} + \mathcal{I}_{\Delta} \right) =0$, hence $\psi_1$ induces a morphism $\Psi :  \mathbb{C}[\mathcal{X}_{\mathbb{C}^*}(\mathbf{\Sigma}) ]\rightarrow \mathbb{C}[\mathrm{H}_1(\Sigma, \mathcal{A}; \mathbb{Z})]$. Since $\Psi(X_{\mathcal{C}}) = [\mathcal{C}]$, the morphisms $\Psi$ and $\phi$ are inverse to each other, thus are isomorphisms.  The fact that $\Psi $ preserves the Poisson brackets results from Proposition \ref{goldman_formula}.

\end{proof}

\section{Alekseev-Malkin's fusion operation}\label{sec_fusion}

\begin{definition} Let $\mathbf{\Sigma}=(\Sigma, \mathcal{A})$ be a marked surface and $a,b\in \mathcal{A}$ two boundary arcs. Recall that the triangle $\mathbb{T}$ is a disc with three boundary arcs, say $i, j, k$. The \textit{fusion} of  $\mathbf{\Sigma}$ along $a,b$ is the marked surface 
   $$ \mathbf{\Sigma}_{a \circledast b}:= \left( \mathbf{\Sigma}\bigsqcup \mathbb{T} \right) _{a\#i, b\#j}.$$
obtained by gluing a triangle to $\mathbf{\Sigma}$.
\end{definition}

The \stated character varieties $\mathcal{X}_G(\mathbf{\Sigma})$ and $\mathcal{X}_G(\mathbf{\Sigma}_{a\circledast b})$ are related as follows. 
 A $G$-\textit{Poisson affine variety} is a complex affine variety $X$ with an algebraic Poisson action $G\times X \to X$. 
  
  \begin{definition}\label{def_fusion}
  Let $G$ be an algebraic Poisson Lie group with classical $r$-matrix $r^+$.
  Let $X$ be a $G^2$-Poisson affine variety and denote by $\Delta_{G\times G} : \mathbb{C}[X]\to \mathbb{C}[G]^{\otimes 2}\otimes \mathbb{C}[X]$ it comodule map. Wite $\Delta^1:= (\id\otimes \epsilon\times \id)\circ \Delta_{G\times G}$ and $\Delta^2:= (\epsilon \otimes \id \otimes \id)$.
  The \textit{fusion} of $X$ is the $G$-Poisson affine variety $X^{\circledast}$ defined by:
  \begin{enumerate}
  \item As a $\mathbb{C}$-algebra, $\mathbb{C}[X^{\circledast}]=\mathbb{C}[X]$.
  \item  For $x\in \mathbb{C}[X]$ and $i=1,2$, write $\Delta^i (x) = \sum x_{(i)}'\otimes x_{(i)}''$. The Poisson bracket is defined by
  $$\{ x, y \}^{\circledast} := \{x,y\} + \sum r^+ (y'_{(2)} \otimes x'_{(1)}) x_{(1)}''y_{(2)}'' - \sum r^+ (x'_{(2)}\otimes y'_{(1)})y''_{(1)}x''_{(2)}.$$
   \item The $G$ action is given by the comodule map $\Delta_G:= (\mu_G \otimes \id) \circ \Delta_{G\times G} $.
  \end{enumerate}
  \end{definition}
  In the above formula, we have considered $r^+ \in \mathfrak{g}^{\otimes 2}$ as a derivation $r^+\in \mathrm{Der}(\mathbb{C}[G]^{\otimes 2}, \mathbb{C})$. 
  In the particular case where $X$ is smooth, consider $X$ as a smooth manifold and denote by $\pi_X $ the Poisson bivector field defining the Poisson structure (i.e. $\{f,g\}(x)=\left< D_xf \otimes D_xg, \pi_{X,x}\right>$). Let $r^- := \sigma(r^+)$, where $\sigma(x\otimes y)=y\otimes x$. Let $a_{G\times G}: \mathfrak{g}\otimes \mathfrak{g} \to \Gamma(X, T_X)$ the infinitesimal action induced by the action of $G^2$ on $X$. Then the fusion $X^{\circledast}$ is the manifold $X$ with the Poisson bivector field
  $$ \pi_{X^{\circledast}} = \pi_X + a_{G\times G} (r^- - r^+).$$
 This is using this formula that the concept of fusion was introduced in the work of Alekseev-Malkin \cite{AlekseevMalkin_PoissonCharVar}.  Fix $\mathfrak{o}$ such that $\mathfrak{o}(a)=\mathfrak{o}(b)=+$. The comodule maps $\Delta^L_a$ and $\Delta^L_b$ induce a structure of $G^2$-Poisson variety on $\mathcal{X}_G(\mathbf{\Sigma})$, where $G$ is equipped with the bracket $\{\cdot, \cdot\}_{-,+}$, through $\Delta_{G\times G}:= (\id \otimes \Delta_b^L)\circ \Delta_a^L$.

\begin{figure}[!h] 
\centerline{\includegraphics[width=8cm]{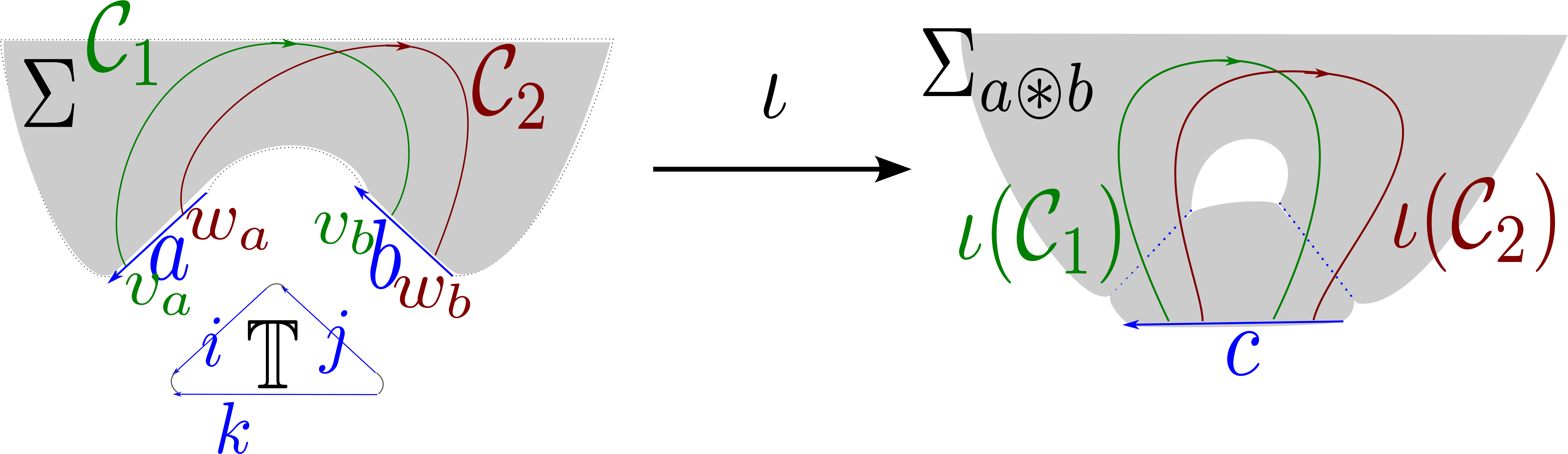} }
\caption{The marked surface $\mathbf{\Sigma}_{a\circledast b}$ is obtained from $\mathbf{\Sigma}$ by gluing a triangle $\mathbb{T}$. The figure illustrates how the embedding $\iota: \Sigma \hookrightarrow \Sigma_{a \circledast b}$ acts on curves.  } 
\label{fig_fusion} 
\end{figure} 
 
 \begin{theorem}\label{theorem_fusion}
 One has an isomorphism $\Psi: \mathcal{X}_G(\mathbf{\Sigma}_{a\circledast b})\cong \mathcal{X}_G(\mathbf{\Sigma})^{a\circledast b}$ of Poisson varieties.
 \end{theorem}

\begin{proof}
Let $c$ be the boundary arc of $\Sigma_{a\circledast b}$ which corresponds to the edge $k$ of $\mathbb{T}$.
Let $\iota: \Sigma \hookrightarrow \Sigma_{a\circledast b}$ be the embedding which is the identity outside disjoint collar  neighborhoods $N(a)$ and $N(b)$ of $a$ and $b$, sends both $a$ and $b$ to $c$ and which sends $N(a)$ and $N(b)$ to disjoint strips inside $\mathbb{T}$ as illustrated in Figure \ref{fig_fusion}. The convention is such that $v_a\in a$, $v_b \in b$ implies $\iota(v_b) <_{\mathfrak{o}_c} \iota(v_a)$. $\iota$ induces an equivalence $\iota_*: \Pi_1(\Sigma) \to \Pi_1(\Sigma_{a \circledast b})$ and thus an isomorphism of varieties $\Psi: \mathcal{X}_G(\mathbf{\Sigma}_{a\circledast b}) \xrightarrow{\cong} \mathcal{X}_G(\mathbf{\Sigma})=\mathcal{X}_G(\mathbf{\Sigma})^{a \circledast b}$ such that $\Psi^*$ sends a curve function $f_{\mathcal{C}}$ to $f_{\iota(\mathcal{C})}$. To prove that $\Psi: \mathcal{X}_G(\mathbf{\Sigma}_{a\circledast b}) \to \mathcal{X}_G(\mathbf{\Sigma})^{a \circledast b}$ is equivariant, it suffices to note the following equality
$$ \Delta_c^L = (\mu_G \otimes \id) (\id \otimes \Delta_b^L)\circ \Delta_a^L \circ( \Psi^* \otimes \Psi^*).$$
To prove that $\Psi$ is Poisson, let $x=f_{\mathcal{C}_1}$ and $y=h_{\mathcal{C}_2}$ be two curve functions in $\mathbb{C}[\mathcal{X}_G(\mathbf{\Sigma})]$, $\rho: \Pi_1(\Sigma)\to G$ and $\underline{\rho}:\Pi_1(\Sigma_{a\circledast b})\to G$ such that $\underline{\rho}\circ \iota_*=\rho$ (so $\Psi([\underline{\rho}])=[\rho]$). We need to prove that  
\begin{equation*}
\{f_{\iota(\mathcal{C}_1)}, h_{\iota(\mathcal{C}_2)} \}([\underline{\rho}]) = \{f_{\mathcal{C}_1}, h_{\mathcal{C}_2}\}^{\circledast}([\rho]).
\end{equation*}
The computation of both sides depends on the cardinality of the set $\mathcal{C}_i \cap a$ and $\mathcal{C}_i \cap b$. For instance, if $\mathcal{C}_1$ (or $\mathcal{C}_2$) does not intersect $a\cup b$, then by the generalized Goldman formula, both sides of the above equality are equal to $\{f_{\mathcal{C}_1}, h_{\mathcal{C}_2}\}(\rho)$. Let us suppose that $\mathcal{C}_1$ is oriented from an endpoint $v_a \in a$ to $v_b \in b$ and that $\mathcal{C}_2$ is oriented from an endpoint $w_a \in a$ to $w_b\in b$ and that $v_a>_a w_a$ and $v_b>_b w_b$. The other cases are handled similarly and left to the reader. On the one hand, the generalized Goldman formula tells us that 
\begin{multline*}
 \{f_{\iota(\mathcal{C}_1)}, h_{\iota(\mathcal{C}_2)} \}([\underline{\rho}]) - \{f_{\mathcal{C}_1}, h_{\mathcal{C}_2}\}([\rho])= \left( X_{f, \mathcal{C}_1}(v_2)\otimes X_{h, \mathcal{C}_2}(w_1), r^-\right) - \left(X_{f, \mathcal{C}_1}(v_1)\otimes X_{h, \mathcal{C}_2}(w_2), r^+\right) \\
 = \left( \rho(\mathcal{C}_1)^{-1}X_{f, \mathcal{C}_1}\otimes X_{h, \mathcal{C}_2}\rho(\mathcal{C}_2)^{-1}, r^-\right) - \left(X_{f, \mathcal{C}_1}\rho(\mathcal{C}_1)^{-1}\otimes \rho(\mathcal{C}_2)^{-1}X_{h, \mathcal{C}_2}, r^+\right) \\
  = \left( X_{f, \mathcal{C}_1} \rho(\mathcal{C}_1)\otimes \rho(\mathcal{C}_2)X_{h, \mathcal{C}_2}, r^-\right) - \left(\rho(\mathcal{C}_2)X_{f, \mathcal{C}_1}\otimes X_{h, \mathcal{C}_2}\rho(\mathcal{C}_1), r^+\right). 
 \end{multline*}
On the other hand, using that $\Delta^L_a(f_{\mathcal{C}_1})= \sum f' \otimes f''_{\mathcal{C}_1}$ and $\Delta^L_b(f_{\mathcal{C}_1})= \sum f'' \otimes f'_{\mathcal{C}}$ (with similar formulas for $h_{\mathcal{C}_2}$),
by Definition \ref{def_fusion}, one has: 
\begin{multline*}  \{f_{\mathcal{C}_1}, h_{\mathcal{C}_2}\}^{\circledast}([\rho]) - \{f_{\mathcal{C}_1}, h_{\mathcal{C}_2}\}([\rho])=
\left(\{ x, y \}^{\circledast} - \{x,y\}\right)([\rho]) \\
=  \left(\sum r^+ (y'_{(2)} \otimes x'_{(1)}) x_{(1)}''y_{(2)}'' - \sum r^+ (x'_{(2)}\otimes y'_{(1)})y''_{(1)}x''_{(2)}\right)(\rho) \\
= \left( \sum r^+(h'' \otimes f') f''_{\mathcal{C}_1}h''_{\mathcal{C}_2} - \sum r^+(f'' \otimes h')f'_{\mathcal{C}_1}h''_{\mathcal{C}_2} \right)(\rho) \\
= \left( \sum h'_{\mathcal{C}_2} r^{-}(f'\otimes h'') f''_{\mathcal{C}_1} - \sum f'_{\mathcal{C}_1} r^+(f'' \otimes h') h''_{\mathcal{C}_2} \right)(\rho) \\
= \left( X_{f, \mathcal{C}_1} \rho(\mathcal{C}_1)\otimes \rho(\mathcal{C}_2)X_{h, \mathcal{C}_2}, r^-\right) - \left(\rho(\mathcal{C}_2)X_{f, \mathcal{C}_1}\otimes X_{h, \mathcal{C}_2}\rho(\mathcal{C}_1), r^+\right).
\end{multline*}
We thus have proved that $\{f_{\iota(\mathcal{C}_1)}, h_{\iota(\mathcal{C}_2)} \}([\underline{\rho}]) = \{f_{\mathcal{C}_1}, h_{\mathcal{C}_2}\}^{\circledast}([\rho])$ so $\Psi$ is Poisson.

\end{proof}

\appendix

\section{Character varieties of graphs and proof of Proposition \ref{prop_holonomy_functions}}\label{appendix_graph}

In order to prove Proposition \ref{prop_holonomy_functions}, we introduce the notion of character varieties associated to a graph. 

\begin{definition}
A \textit{graph} $\Gamma$ is a $4$-tuple $(V(\Gamma), \mathcal{E}(\Gamma), t, \sigma)$, where $V(\Gamma)$ (the vertices) and $\mathcal{E}(\Gamma)$ (the oriented edges) are finite sets, $t:\mathcal{E}(\Gamma)\rightarrow V(\Gamma)$ is a surjective map and $\sigma: \mathcal{E}(\Gamma) \rightarrow \mathcal{E}(\Gamma)$ is a free involution. 
\end{definition}

We define the map $s:=t\circ \sigma$ and write $\alpha^{-1}$ the element $\sigma(\alpha)$. The geometric realization of $\Gamma$ is the CW-complex with set of $0$-cells $V(\Gamma)$ obtained by attaching of copy $I_{\alpha}$ of $[0,1]$ by gluing $\{1\}$ to $t(\alpha)$ and by identifying $I_{\alpha}$ with $I_{\alpha^{-1}}$ by the map sending $t$ to $1-t$. A graph is connected if its geometric realization is connected. The groupoid $\Pi_1(\Gamma)$ is the sub-category of the fundamental groupoid of the geometric realization of $\Gamma$ whose objects are the elements of $V(\Gamma)$ and morphisms are paths $\alpha$ which decompose as $\alpha=\alpha_1\dots \alpha_n$ where $\alpha_i \in \mathcal{E}(\Gamma)$. The set $\mathcal{E}(\Gamma)$ is naturally identified with a subset of the set of morphisms of $\Pi_1(\Gamma)$. We denote by $V^{\partial}(\Gamma) \subset V(\gamma)$ the sub-set of vertices with valence one and denote by $\mathring{V}(\Gamma)$ its complementary. An orientation of the edges of $\Gamma$ is a sub-set $\mathcal{E}_o(\Gamma)\subset \mathcal{E}(\Gamma)$ such that for each edge $e\in \mathcal{E}(\Gamma)$  the intersection $\mathcal{E}_o(\Gamma) \cap \{ e, e^{-1} \}$ contains exactly one element.

\begin{definition}\label{def_RepVarGraph}
The algebra $\mathbb{C}[\mathcal{R}_G(\Gamma)]$ is the quotient of the algebra $\mathbb{C}[G]^{\otimes \mathcal{E}}$ by the ideal generated by elements $x_e - S(x)_{e^{-1}}$ for $x\in \mathbb{C}[G]$ and $e\in \mathcal{E}(\Gamma)$. 
\end{definition}

Remark that if $\mathcal{E}_o$ is an orientation of $\Gamma$, there is a canonical isomorphism $\mathbb{C}[\mathcal{R}_G(\Gamma)] \cong \mathbb{C}[G]^{\otimes \mathcal{E}_o}$. Define the Hopf algebra $\mathbb{C}[\mathcal{G}_{\Gamma}]:= \mathbb{C}[G]^{\otimes \mathring{V}}$ and the co-module map $\Delta_{\Gamma} : \mathbb{C}[\mathcal{R}_G(\Gamma)] \rightarrow \mathbb{C}[\mathcal{G}_{\Gamma}]\otimes \mathbb{C}[\mathcal{R}_G(\Gamma)]$ by the formulas
$$\Delta_{\Gamma}(x_{\alpha}) := \left\{ 
\begin{array}{ll}
\sum x^{(1)}_{s(\alpha)}\cdot S(x^{(3)})_{t(\alpha)} \otimes x^{(2)}_{\alpha} & \mbox{, if }s(\alpha), t(\alpha) \in \mathring{V}; \\
\sum x^{(1)}_{s(\alpha)} \otimes x^{(2)}_{\alpha} & \mbox{, if }s(\alpha) \in \mathring{V}, t(\alpha) \in V^{\partial}; \\
\sum S(x^{(2)})_{t(\alpha)} \otimes x^{(1)}_{\alpha} & \mbox{, if } s(\alpha) \in V^{\partial} , t(\alpha) \in \mathring{V}; \\
1 \otimes x_{\alpha} &\mbox{, if } s(\alpha), t(\alpha) \in V^{\partial}. 
\end{array}\right. $$
\begin{definition}\label{def_CharVarGraph}
The algebra $\mathbb{C}[\mathcal{X}_G(\Gamma)]$ is the sub-algebra of co-invariant vectors of $\mathbb{C}[\mathcal{R}_G(\Gamma)] $, that is as the kernel of $\Delta_{\Gamma} - \eta\otimes \id$. The character variety $\mathcal{X}_G(\Gamma)$ is the maximal spectrum of $\mathbb{C}[\mathcal{X}_G(\Gamma)]$. 
\end{definition}
Remark that if $\mathbf{\Sigma}$ is a punctured surface with a finite presentation $\mathbb{P}$ without non-trivial relations, the character variety $\mathcal{X}_G(\mathbf{\Sigma}, \mathbb{P})$ is canonically isomorphic to the character variety of its associated presenting graph.

\vspace{2mm}
\par A \textit{curve} is an element $\mathcal{C}$ of $\Pi_1(\Gamma)$ such that either $s(\mathcal{C})=t(\mathcal{C})$ or $s(\mathcal{C}), t(\mathcal{C}) \in V^{\partial}$. Given a curve $\mathcal{C}$ which decomposes as $\mathcal{C}= \alpha_1 \ldots \alpha_n$ with $\alpha_i \in \mathcal{E}(\Gamma)$ and a regular function $f\in \mathbb{C}[G]$, which is further assumed to be $G$ invariant if $s(\mathcal{C})\neq t(\mathcal{C})$, we define the curve function $f_{\mathcal{C}}\in \mathbb{C}[\mathcal{X}_G(\Gamma)]$ as the class of the element $\sum (f^{(1)})_{\alpha_1} \ldots (f^{(n)})_{\alpha_n}$. Proposition \ref{prop_holonomy_functions} will be deduced from the following:

\begin{proposition}\label{prop_graph}
Let $\Gamma$ be a connected graph and $G$ a standard group. The following assertions hold:
\begin{enumerate}
\item The algebra $\mathbb{C}[\mathcal{X}_G(\Gamma)]$ is generated by its curve functions.
\item If $V^{\partial}(\Gamma)\neq \emptyset$, there exists an integer $d\geq 1$ such that $\mathbb{C}[\mathcal{X}_G(\Gamma)]\cong \mathbb{C}[G]^{\otimes d}$.
\item If $V^{\partial}(\Gamma)$ is empty, the character variety $\mathcal{X}_G(\Gamma)$ is isomorphic to the (Culler-Shalen) character variety of a free group $\mathbb{F}_m$ for some $m\geq 1$.
\end{enumerate}
\end{proposition}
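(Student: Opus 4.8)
The three assertions concern a connected graph $\Gamma$, and my strategy is to reduce everything to a maximal spanning tree. First I would fix a maximal tree $T\subset \Gamma$ and an orientation $\mathcal{E}_o(\Gamma)$ of the edges, so that $\mathbb{C}[\mathcal{R}_G(\Gamma)]\cong \mathbb{C}[G]^{\otimes \mathcal{E}_o}$. The key geometric observation is that the gauge group $\mathcal{G}_\Gamma = G^{\mathring V}$ acts freely (in the appropriate algebraic sense) on the factors corresponding to edges of $T$: using the tree one can gauge-fix the holonomy of every tree edge to be the neutral element. More precisely, I would show that the composite of the inclusion $\mathbb{C}[\mathcal{X}_G(\Gamma)]\hookrightarrow \mathbb{C}[\mathcal{R}_G(\Gamma)]$ with the projection onto the tensor factors indexed by $\mathcal{E}_o(\Gamma)\setminus\mathcal{E}(T)$ (the non-tree edges) is an isomorphism. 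This is the heart of (2): each non-tree edge $e$ determines a loop $\ell_e$ in $\Gamma$ (the edge $e$ together with the unique path in $T$ connecting its endpoints), and the curve function $(f)_{\ell_e}$ for $f\in \mathbb{C}[G]$ is a gauge-invariant function; these generate, and they are algebraically independent, so $\mathbb{C}[\mathcal{X}_G(\Gamma)]\cong \mathbb{C}[G]^{\otimes d}$ with $d = |\mathcal{E}_o(\Gamma)\setminus \mathcal{E}(T)| = |\mathcal{E}_o(\Gamma)| - |V(\Gamma)| + 1$, the first Betti number. The hypothesis $V^\partial(\Gamma)\neq\emptyset$ guarantees $\mathring V\subsetneq V$, which is exactly what is needed to gauge-fix all of $T$ (one vertex per connected component would otherwise be unreachable); and connectedness with $V^\partial\neq\emptyset$ forces $d\geq 1$ unless $\Gamma$ is a single edge, in which case $d=0$ but then $\mathbb{C}[\mathcal{X}_G(\Gamma)]\cong\mathbb{C}$, so strictly I should either allow $d=0$ or note that a graph with a univalent vertex and an edge always has $|\mathcal{E}_o|\geq|V|$ — I would double-check this edge case and state the bound accordingly.

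\textbf{Assertion (1).} For the general statement that $\mathbb{C}[\mathcal{X}_G(\Gamma)]$ is generated by curve functions, I would argue as follows. Under the isomorphism $\mathbb{C}[\mathcal{X}_G(\Gamma)]\cong \mathbb{C}[G]^{\otimes d}$ of (2) (in the case $V^\partial\neq\emptyset$), the generators of each tensor factor $\mathbb{C}[G]$ are precisely the functions $(f)_{\ell_e}$, which are curve functions by definition (the loop $\ell_e$ is a curve since $s(\ell_e)=t(\ell_e)$, and here $f$ need not be conjugation-invariant because the image of $\mathbb{C}[\mathcal{X}_G(\Gamma)]$ in the $e$-factor is all of $\mathbb{C}[G]$). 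In the case $V^\partial(\Gamma)=\emptyset$ (assertion (3)), a connected graph with no univalent vertices has $\mathring V = V$, and then $\mathcal{G}_\Gamma = G^{V}$ acts on $\mathbb{C}[G]^{\otimes \mathcal{E}_o}$; gauge-fixing the tree edges identifies $\mathbb{C}[\mathcal{R}_G(\Gamma)]^{\text{tree-fixed}}$ with $\mathbb{C}[G]^{\otimes d}$ carrying a residual action of $G=G^V/G^{\mathcal{E}(T)\text{-used part}}$ — i.e. of a single copy of $G$ acting by simultaneous conjugation — so that $\mathbb{C}[\mathcal{X}_G(\Gamma)] \cong (\mathbb{C}[G]^{\otimes d})^{G} \cong \mathbb{C}[\Hom(\mathbb{F}_d, G)]^G = \mathbb{C}[\mathcal{X}_G(\mathbb{F}_d)]$, which is assertion (3) with $m=d = $ first Betti number of $\Gamma$, and which is generated by trace/curve functions by the classical fact cited in the introduction (the loops $\ell_e$ and their products are exactly the conjugacy classes in $\mathbb{F}_d$). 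So in both cases curve functions generate.

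\textbf{Where the work is.} The routine part is the bookkeeping with the tree: writing down explicitly the algebra automorphism of $\mathbb{C}[G]^{\otimes\mathcal{E}_o}$ implementing the gauge-fixing and checking it intertwines the $\mathcal{G}_\Gamma$-coaction with one supported only on the non-tree edges. The main obstacle I anticipate is making the ``gauge-fixing'' rigorous at the level of Hopf-algebraic coactions rather than set-theoretic group actions: I cannot simply say ``set $\rho(e)=e$ for $e\in T$'' but must exhibit the corresponding algebra isomorphism and verify equivariance using the coproduct/antipode identities, in the spirit of the computations in Lemma \ref{lemma_injectivity1} and Lemma \ref{lemma_cohomology2}. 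Concretely, I would induct on the number of edges of $T$, peeling off a leaf edge of the tree at a time: removing a leaf edge $e$ with free endpoint $v\in\mathring V$ lets one use the $v$-component of the gauge group to absorb $\mathbb{C}[G]_e$ entirely, and $\mathbb{C}[\mathcal{X}_G(\Gamma)]\cong\mathbb{C}[\mathcal{X}_G(\Gamma\setminus e)]$ where $\Gamma\setminus e$ contracts to a smaller graph; the base case is a single loop or a wedge of loops at one vertex. Finally I would record that $d$ equals $\dim \mathrm{H}_1$ of the geometric realization, matching the dimension count in Proposition \ref{prop_dimension} and thereby pinning down $d$ in Proposition \ref{proposition_trivialCharVar}. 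Deducing Proposition \ref{prop_holonomy_functions} is then immediate: any finite presentation's presenting graph, after deleting one generator per triangle as in the proof of Proposition \ref{proposition_trivialCharVar}, has no non-trivial relations, so its character variety is a graph character variety, and the isomorphism $\phi^{\mathbb{P}}$ carries curve functions to curve functions.
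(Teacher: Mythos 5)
Your overall strategy (collapse a spanning tree to reduce to a wedge-of-loops/star graph, then invoke the classical generation result for $\mathcal{X}_G(\mathbb{F}_m)$) is essentially the same as the paper's, which contracts one interior-to-interior edge at a time (Lemma \ref{lemma_appendix2}) until a single interior vertex remains. Your treatment of assertion (3), where $V^{\partial}=\emptyset$, is correct: there the gauge group is $G^{V}$, a spanning tree has $|V|-1$ edges, and after gauge-fixing them a single residual copy of $G$ acts by simultaneous conjugation on the non-tree edges.

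However, there is a genuine gap in your argument for assertions (1) and (2). The gauge group is $G^{\mathring{V}}$, i.e.\ it acts only at the vertices of valence $\geq 2$, so you have only $|V|-|V^{\partial}|$ gauge parameters against the $|V|-1$ edges of a spanning tree $T$. When $|V^{\partial}|\geq 2$ you therefore cannot gauge-fix all of $T$, and your central claim --- that the projection onto the non-tree tensor factors restricts to an isomorphism on $\mathbb{C}[\mathcal{X}_G(\Gamma)]$, so that $d$ equals the first Betti number $|\mathcal{E}_o|-|V|+1$ --- is false. The smallest counterexample is the graph $\Gamma(2,0)$ with one interior vertex $v$ and two edges $\beta_1:v\to v_1$, $\beta_2:v\to v_2$ ending at univalent vertices: it is a tree, so your recipe would give $d=0$ and $\mathbb{C}[\mathcal{X}_G(\Gamma)]\cong\mathbb{C}$, whereas the open-curve function $x\mapsto\sum (S(x^{(1)}))_{\beta_1}(x^{(2)})_{\beta_2}$ (the holonomy of $\beta_1^{-1}\beta_2$) exhibits $\mathbb{C}[\mathcal{X}_G(\Gamma)]\cong\mathbb{C}[G]$. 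The correct count is $d=|\mathcal{E}_o|-|\mathring{V}|$, which exceeds the Betti number by $|V^{\partial}|-1$; the missing generators are precisely the \emph{open} curve functions joining pairs of univalent vertices, which your assertion (1) argument (built only from the loops $\ell_e$) never produces. This is exactly what the paper's Lemma \ref{lemma_appendix1} supplies: for the star graph $\Gamma(n,m)$ the generating set $\mathcal{D}$ consists of the $m$ loops $\beta_1^{-1}\gamma_j\beta_1$ \emph{together with} the $n-1$ open paths $\beta_1^{-1}\beta_i$, and the explicit section $\psi$ there is the rigorous substitute for your gauge-fixing. The edge case you flagged (``I would double-check this'') is not an isolated anomaly but the symptom of this systematic undercount; to repair the proof you must restrict the gauge-fixing to a rooted subforest of $T$ meeting each univalent vertex in a leaf, and add the resulting open-curve generators.
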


\par When the set $\mathring{V}$ is empty, the above proposition is trivial. We first consider the case where $\mathring{V}$ has a single element. Denote by $\Gamma(n,m)$ the  graph defined by $\mathring{V}=\{v\}$, $V^{\partial}=\{ v_1, \ldots, v_n\}, \mathcal{E}=\{ \beta_1^{\pm 1}, \ldots, \beta_n^{\pm 1}, \gamma_1^{\pm 1}, \ldots, \gamma_m^{\pm 1} \}$, $\sigma(\beta_i)= \beta_i^{-1}, \sigma(\gamma_i)=\gamma_i^{-1}$ and $t(\beta_i)=v_i$, $s(\beta_i)= s(\gamma_j)=t(\gamma_j) = v$. 
\vspace{2mm}
\par Let $\mathbb{F}_m$ represents the free group generated by elements $\gamma_1, \ldots, \gamma_m$. By definition, the variety $\mathcal{X}_G(\Gamma(0, m))$ is canonically isomorphic to the character variety $\mathcal{X}_G(\mathbb{F}_m) :=  {\Hom (\mathbb{F}_m, G )} \sslash G $ and we called the group $G$ standard if the algebra of $\mathcal{X}_G(\mathbb{F}_m)$ is generated by curve functions for any $m\geq 1$.

\vspace{2mm}
\par Given $m\geq 0$ and $n\geq 1$, consider the subset $\mathcal{D}=\{ \alpha_2, \ldots, \alpha_n, \theta_1, \ldots, \theta_m \} \subset \Pi_1(\Gamma(n,m))$ of curves defined by $\alpha_i:= \beta_1^{-1}\beta_i$ and $\theta_j:= \beta_1^{-1}\gamma_j \beta_1$. Define a morphism $\phi : \mathbb{C}[G]^{\otimes \mathcal{D}} \rightarrow \mathbb{C}[\mathcal{X}_G(\Gamma(n,m))]$ by the formulas $\phi(x_{\alpha_i}):= \sum (S(x^{(1)}))_{\beta_1}x^{(2)}_{\beta_i}$ and $\phi(x_{\theta_j}):= \sum (S(x^{(1)})x^{(3)})_{\beta_1} x^{(2)}_{\gamma_j}$.

\begin{lemma}\label{lemma_appendix1}
The morphism $\phi : \mathbb{C}[G]^{\otimes \mathcal{D}} \rightarrow \mathbb{C}[\mathcal{X}_G(\Gamma(n,m)) ]$  is an isomorphism. In particular the graph $\Gamma(n,m)$ satisfies the conclusion of Proposition \ref{prop_graph}.
\end{lemma}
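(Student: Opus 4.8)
The plan is to exhibit an explicit inverse to $\phi$. First I would fix the orientation $\mathcal{E}_o(\Gamma(n,m)) = \{\beta_1, \ldots, \beta_n, \gamma_1, \ldots, \gamma_m\}$, so that $\mathbb{C}[\mathcal{R}_G(\Gamma(n,m))] \cong \mathbb{C}[G]^{\otimes \mathcal{E}_o}$ with generators $x_{\beta_i}$ and $x_{\gamma_j}$. The discrete gauge group here is $\mathcal{G}_\Gamma = G_v$, acting by $\rho(\beta_i) \mapsto \rho(\beta_i) g_v^{-1}$ (since $t(\beta_i) \in V^\partial$, $s(\beta_i) = v$) and $\rho(\gamma_j) \mapsto g_v\rho(\gamma_j)g_v^{-1}$. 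The key observation is that the map sending $\rho$ to the tuple $\big(\rho(\beta_1), (\rho(\alpha_i))_i, (\rho(\theta_j))_j\big) = \big(\rho(\beta_1), (\rho(\beta_1)^{-1}\rho(\beta_i))_i, (\rho(\beta_1)^{-1}\rho(\gamma_j)\rho(\beta_1))_j\big)$ is an algebraic isomorphism $\mathcal{R}_G(\Gamma(n,m)) \xrightarrow{\cong} G \times G^{\mathcal{D}}$ which is $\mathcal{G}_\Gamma$-equivariant, where $\mathcal{G}_\Gamma = G$ acts only on the first factor $G$ by right translation $g \mapsto \rho(\beta_1) g_v^{-1}$ and trivially on $G^{\mathcal{D}}$. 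Indeed $\rho(\alpha_i)$ and $\rho(\theta_j)$ are manifestly gauge-invariant.

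Once this equivariant isomorphism is in place, taking GIT quotients gives $\mathcal{X}_G(\Gamma(n,m)) = \big(G \times G^{\mathcal{D}}\big)\sslash G \cong (G \sslash G) \times G^{\mathcal{D}}$. But $G \sslash G$ with $G$ acting by right translation is a single point (the action is transitive), so $\mathcal{X}_G(\Gamma(n,m)) \cong G^{\mathcal{D}}$. Translating back to algebras, the composite $\mathbb{C}[G]^{\otimes \mathcal{D}} \to \mathbb{C}[\mathcal{X}_G(\Gamma(n,m))]$ is exactly the curve-function map: $x_{\alpha_i} \mapsto (x)_{\alpha_i}$ which, expanding $\alpha_i = \beta_1^{-1}\beta_i$ using the definition of a curve function (i.e. $\sum (x^{(1)})_{\beta_1^{-1}}(x^{(2)})_{\beta_i} = \sum S(x^{(1)})_{\beta_1}(x^{(2)})_{\beta_i}$ by Lemma \ref{lemma_injectivity1}), equals $\phi(x_{\alpha_i})$; similarly $x_{\theta_j} \mapsto \sum (S(x^{(1)})x^{(3)})_{\beta_1}(x^{(2)})_{\gamma_j} = \phi(x_{\theta_j})$. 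So $\phi$ is precisely this isomorphism, proving the first sentence of the lemma. Then $\mathbb{C}[\mathcal{X}_G(\Gamma(n,m))] \cong \mathbb{C}[G]^{\otimes \mathcal{D}}$ is free on $|\mathcal{D}| = (n-1) + m$ generators; since $n \geq 1$ this is $\geq 0$, and it is generated by curve functions by construction — so all three conclusions of Proposition \ref{prop_graph} hold for $\Gamma(n,m)$ (with $d = n - 1 + m$ when $V^\partial \neq \emptyset$, i.e. $n \geq 1$; and when $n = 0$, which is the excluded case, one falls back on Lemma \ref{lemma_Sikora} for the free group $\mathbb{F}_m$).

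Concretely, to write the inverse of $\phi$ I would define $\psi_1 : \mathbb{C}[G]^{\otimes \mathcal{E}_o} \to \mathbb{C}[G]^{\otimes \mathcal{D}}$ on generators by $x_{\beta_1} \mapsto \epsilon(x)$, and $x_{\beta_i} \mapsto$ the element of $\mathbb{C}[G]^{\otimes\mathcal{D}}$ corresponding to $\rho(\beta_i) = \rho(\beta_1)\rho(\alpha_i) \rightsquigarrow \rho(\alpha_i)$ once $\rho(\beta_1)$ is set to $e$, i.e. $x_{\beta_i} \mapsto x_{\alpha_i}$, and $x_{\gamma_j} \mapsto$ the element corresponding to $\rho(\gamma_j) = \rho(\beta_1)\rho(\theta_j)\rho(\beta_1)^{-1} \rightsquigarrow \rho(\theta_j)$, i.e. $x_{\gamma_j} \mapsto x_{\theta_j}$. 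One checks $\psi_1$ kills the defining ideal relations $x_e - S(x)_{e^{-1}}$ trivially (they are already quotiented in $\mathbb{C}[\mathcal{R}_G]$) and descends through the gauge-invariant subalgebra, giving $\Psi : \mathbb{C}[\mathcal{X}_G(\Gamma(n,m))] \to \mathbb{C}[G]^{\otimes \mathcal{D}}$; then $\Psi \circ \phi = \id$ and $\phi \circ \Psi = \id$ by direct computation on generators using the Hopf axioms (antipode, counit, coassociativity).

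The main obstacle I anticipate is purely bookkeeping: verifying that $\psi_1$ (equivalently the "set $\rho(\beta_1) = e$" substitution) is well-defined as an algebra map into the gauge-\emph{invariant} subalgebra and that it is a genuine two-sided inverse requires careful Sweedler-notation manipulation — in particular checking $\phi(\psi_1(x_{\gamma_j})) = x_{\gamma_j}$ modulo the gauge action, which unwinds to an identity like $\sum S(x^{(1)})x^{(5)} \cdot \big(\text{gauge term}\big) \otimes x^{(2)} \cdots = x_{\gamma_j}$ after applying counit and antipode axioms. There is no conceptual difficulty, but the index-juggling is where an error would hide. A cleaner route, which I would use if the direct computation gets unwieldy, is to argue entirely on the level of varieties via the equivariant isomorphism $\mathcal{R}_G(\Gamma(n,m)) \cong G \times G^{\mathcal{D}}$ with $G$ acting freely and transitively on the first factor, so that $\mathcal{R}_G(\Gamma(n,m)) \sslash G \cong G^{\mathcal{D}}$ is immediate and one only needs to identify the isomorphism with $\phi$ on the distinguished curve-function generators, which is the short computation above using Lemma \ref{lemma_injectivity1}.
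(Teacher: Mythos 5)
Your argument is correct, and at its core it is the same as the paper's: the inverse you call $\psi_1$ (kill $x_{\beta_1}$ by the counit, send $x_{\beta_i}\mapsto x_{\alpha_i}$ and $x_{\gamma_j}\mapsto x_{\theta_j}$) is exactly the map $\psi$ the paper writes down, and $\psi\circ\phi=\id$ is checked the same way. Where you differ is in how surjectivity is established: the paper stays algebraic, introducing an auxiliary map $f$ with $\phi\circ\psi=f\circ\Delta_{\Gamma(n,m)}$ and using that co-invariant elements satisfy $\Delta_{\Gamma(n,m)}(X)=1\otimes X$, whereas you pass to the geometric side, observing that $\rho\mapsto\bigl(\rho(\beta_1),(\rho(\alpha_i))_i,(\rho(\theta_j))_j\bigr)$ is a gauge-equivariant isomorphism $\mathcal{R}_G(\Gamma(n,m))\cong G\times G^{\mathcal{D}}$ with $G_v$ acting freely and transitively by translation on the first factor and trivially on the second, so that the invariant subalgebra is visibly $\mathbb{C}\otimes\mathbb{C}[G]^{\otimes\mathcal{D}}$. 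That geometric route is cleaner conceptually and makes the answer obvious; the paper's version has the advantage of never leaving the Hopf-algebraic formalism, which is the language the rest of the appendix (in particular Lemma \ref{lemma_appendix2}, where the same exact sequence is tensored with $\mathbb{C}[G]^{\otimes\mathcal{E}'_o}$) is written in. Two small slips, neither of which affects anything: with the paper's conventions the gauge group acts on $\rho(\beta_i)$ by \emph{left} translation $g_v\rho(\beta_i)$ (since $s(\beta_i)=v$ is the interior vertex), not by $\rho(\beta_i)g_v^{-1}$ --- the invariance of $\rho(\beta_1)^{-1}\rho(\beta_i)$ and $\rho(\beta_1)^{-1}\rho(\gamma_j)\rho(\beta_1)$ holds either way; and the identity $x_{\beta_1^{-1}}=S(x)_{\beta_1}$ is built into the definition of $\mathbb{C}[\mathcal{R}_G(\Gamma)]$ for graphs, so you do not need to invoke Lemma \ref{lemma_injectivity1} there.
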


\begin{proof}
Define a morphism $\psi : \mathbb{C}[\mathcal{R}_G(\Gamma(n,m))] \rightarrow \mathbb{C}[G]^{\otimes \mathcal{D}}$ by the formulas $\psi(x_{\beta_1}):= \eta\circ \epsilon(x)$, $\psi(x_{\beta_i}):= x_{\alpha_i}$ for $2\leq i \leq n$ and $\psi(x_{\gamma_j}):= x_{\theta_j}$ for $1\leq j \leq m$. Define a map $f : \mathbb{C}[G] \otimes \mathbb{C}[\mathcal{R}_G(\Gamma(n,m))] \rightarrow \mathbb{C}[\mathcal{R}_G(\Gamma(n,m))]$ by the formula 
\begin{equation*} f\left( (x_0)_v \otimes (x_1)_{\beta_1} \ldots (x_n)_{\beta_n} (y_1)_{\gamma_1}\ldots (y_m)_{\gamma_m}\right) := \\
\left( S(x_0) x_1 \right)_{\beta_1} (x_2)_{\beta_2}  \ldots (x_n)_{\beta_n} (y_1)_{\gamma_1}\ldots (y_m)_{\gamma_m}.  \end{equation*}
\par Straightforward computations show that $\psi \circ \phi = \id$, hence $\phi$ is injective, and that $\phi \circ \psi= f \circ \Delta_{\Gamma(n,m)}$. If $X \in \mathbb{C}[\mathcal{X}_G(\Gamma(n,m))]$, we have $\Delta_{\Gamma(n,m)} (X) =1\otimes X$ by definition. We deduce from the equality $f\circ \Delta_{\Gamma(n,m)}(X) = f(1\otimes X) =X$ that $\phi(\psi(X))=X$, hence $X$ belongs to the image of $\phi$. This proves the surjectivity of $\phi$ and concludes the proof.
\end{proof}

\par The strategy to prove Proposition \ref{prop_graph} is to show that the character variety of any connected graph is isomorphic to the character variety of a graph $\Gamma(n,m)$, through an isomorphism that preserves the set of curve functions and the cardinal of $V^{\partial}$. Let $\Gamma=(V(\Gamma), \mathcal{E}(\Gamma), t, \sigma)$ be a connected graph such that $\mathring{V}(\Gamma)$ has at least cardinal two, and fix $v\in \mathring{V}(\Gamma)$. Fix an edge $\beta_1$ such that $s(\beta_1)=v$ and $t(\beta_1)\in \mathring{V}(\Gamma)\setminus \{v\}$.  We define a graph $\Gamma(v)$, obtained from $\Gamma$ by contracting the edge $\beta_1$,  as follows. Partition the set of edges of $\Gamma$ as $\mathcal{E}(\Gamma)= \mathcal{E}' \bigsqcup \mathcal{E}''$ where $\mathcal{E}'$ is the set of edges $e$ such that $v \in \{ s(e), t(e) \}$. Denote by $\{ \beta_1, \ldots, \beta_n \}$ the set of elements of $\mathcal{E}'$ such that $s(\beta_i)=v$ and $t(\beta_i)\neq v$. Since $\Gamma$ is connected and $\mathring{V}(\Gamma)$ has cardinal at least two, we can suppose that $t(\beta_1)\in \mathring{V}(\Gamma)$. Denote by $\{ \gamma_1^{\pm 1}, \ldots, \gamma_m^{\pm 1} \} $ the set of elements of $\mathcal{E}'$ such that $v=s(\gamma_j^{\pm 1})= t(\gamma_j^{\pm 1})$.

 \begin{definition}
  The graph $\Gamma(v)$ is defined by the combinatorial data $(V(\Gamma(v)), \mathcal{E}(\Gamma(v)), \widetilde{t}, \widetilde{\sigma})$ where $V(\Gamma(v)):= V(\Gamma)\setminus \{v\}$, $\mathcal{E}(\Gamma(v)) := \mathcal{E}'' \bigsqcup \{ \alpha_2^{\pm 1}, \ldots, \alpha_n^{\pm 1}, \theta_1^{\pm 1}, \ldots, \theta_m^{\pm 1} \}$. The restrictions of $t$ and $\widetilde{t}$ to $\mathcal{E}''$ coincide and we set $\widetilde{t} (\alpha_i):= t(\beta_i)$ and $\widetilde{t}(\alpha_i^{-1})=\widetilde{t}(\theta_j)= \widetilde{t}(\theta_j^{-1}) := t(\beta_1)$. The free involution $\widetilde{\sigma}$ coincides with $\sigma$ on $\mathcal{E}''$ and satisfies $\widetilde{\sigma}(\alpha_i):= \alpha_i^{-1}$ and $\widetilde{\sigma}(\theta_j):= \theta_j^{-1}$. 
 \end{definition}
 
 In short, the contracting operation sending $\Gamma$ to $\Gamma(v)$ consists in removing the sub-graph of $\Gamma$ adjacent to $v$, seen as an embedding of $\Gamma(n,m)$,  and replacing it by a graph whose edges are the paths $\alpha_i^{\pm 1}$ and $\gamma_j^{\pm 1}$ of Lemma \ref{lemma_appendix1}. Figure \ref{fig_graphs} illustrates this operation. 

\begin{figure}[!h] 
\centerline{\includegraphics[width=8cm]{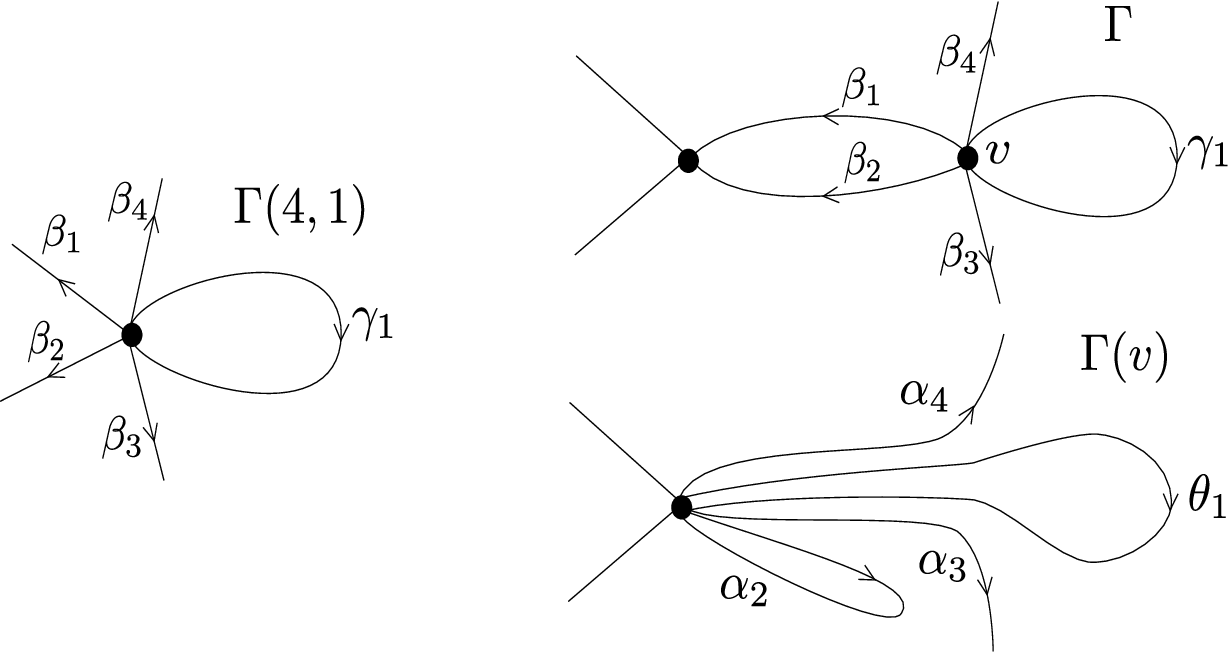} }
\caption{On the left, the graph $\Gamma(4,1)$. On the right, a graph $\Gamma$ and a contracted graph $\Gamma(v)$.} 
\label{fig_graphs} 
\end{figure}

\vspace{2mm}
\par Define a morphism $\Phi : \mathbb{C}[\mathcal{R}_G(\Gamma(v))] \rightarrow \mathbb{C}[\mathcal{R}_G(\Gamma)]$ by the formulas $\Phi(x_e)= x_e$ for $e\in \mathcal{E}''$, $\Phi(x_{\alpha_i}):= \sum (S(x^{(1)}))_{\beta_1}x^{(2)}_{\beta_i}$ and $\Phi(x_{\theta_j}):= \sum (S(x^{(1)}x^{(3)}))_{\beta_1} x^{(2)}_{\gamma_j}$.

\begin{lemma}\label{lemma_appendix2}
The morphism $\Phi$ induces an isomorphism $\mathbb{C}[\mathcal{X}_G(\Gamma(v))] \xrightarrow{\cong} \mathbb{C}[\mathcal{X}_G(\Gamma)]$ which preserves the sets of curve functions. 

\end{lemma}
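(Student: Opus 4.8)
The plan is to imitate closely the proof of Lemma~\ref{lemma_appendix1}, only now the contracted edge $\beta_1$ connects two distinct interior vertices rather than folding a one--vertex star into itself, so most of the work is already done by the local model $\Gamma(n,m)$ hidden inside $\Gamma$ near $v$.

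First I would construct an explicit inverse to $\Phi$ at the level of representation algebras. Mimicking the map $\psi$ of Lemma~\ref{lemma_appendix1}, define $\Psi : \mathbb{C}[\mathcal{R}_G(\Gamma)] \rightarrow \mathbb{C}[\mathcal{R}_G(\Gamma(v))]$ by setting $\Psi(x_e) = x_e$ for $e \in \mathcal{E}''$, $\Psi(x_{\beta_1}) := \eta\circ\epsilon(x)$, $\Psi(x_{\beta_i}) := x_{\alpha_i}$ for $2 \le i \le n$, and $\Psi(x_{\gamma_j}) := x_{\theta_j}$ for $1 \le j \le m$. One checks that $\Psi$ respects the defining relations $x_e - S(x)_{e^{-1}}$ (this uses the Hopf identities exactly as in Lemma~\ref{lemma_appendix1}), hence is a well-defined algebra morphism, and that $\Psi \circ \Phi = \mathrm{id}$; so $\Phi$ is injective on $\mathbb{C}[\mathcal{R}_G(\Gamma(v))]$. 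Next, in analogy with the map $f$ from the proof of Lemma~\ref{lemma_appendix1}, define $F : \mathbb{C}[\mathcal{G}_{\Gamma}]\otimes\mathbb{C}[\mathcal{R}_G(\Gamma)] \rightarrow \mathbb{C}[\mathcal{R}_G(\Gamma)]$ which acts as the gauge transformation that ``moves'' the value at $v$ along $\beta_1$; restricting to the gauge factor $\mathbb{C}[G]_v$ one gets a map $\mathbb{C}[G]_v \otimes \mathbb{C}[\mathcal{R}_G(\Gamma)] \to \mathbb{C}[\mathcal{R}_G(\Gamma)]$. A straightforward (but slightly longer, because $\mathcal{E}''$-edges also touch $v$ and $t(\beta_1)$) computation then shows $\Phi\circ\Psi = F\circ(\mathrm{ev}_v\otimes\mathrm{id})\circ\Delta_{\Gamma}$, where $\mathrm{ev}_v$ projects $\mathbb{C}[\mathcal{G}_\Gamma]=\mathbb{C}[G]^{\otimes\mathring V}$ onto the factor at $v$ composed appropriately. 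For $X \in \mathbb{C}[\mathcal{X}_G(\Gamma)]$ we have $\Delta_\Gamma(X) = 1\otimes X$, so the contribution of the $v$-factor is trivial and $\Phi(\Psi(X)) = F(1\otimes X) = X$, giving surjectivity onto $\mathbb{C}[\mathcal{X}_G(\Gamma)]$. This proves $\Phi$ restricts to an isomorphism $\mathbb{C}[\mathcal{X}_G(\Gamma(v))] \xrightarrow{\cong} \mathbb{C}[\mathcal{X}_G(\Gamma)]$; along the way one verifies $\Phi$ and $\Psi$ are equivariant for the surjection $\mathcal{G}_\Gamma \twoheadrightarrow \mathcal{G}_{\Gamma(v)}$ induced by $\mathring V(\Gamma(v)) = \mathring V(\Gamma)\setminus\{v\} \hookrightarrow \mathring V(\Gamma)$, which is what justifies passing to the invariants.

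The second assertion, that $\Phi$ preserves curve functions, I would handle geometrically. A curve $\mathcal{C}$ in $\Gamma(v)$ decomposes into edges from $\mathcal{E}''\sqcup\{\alpha_i^{\pm1},\theta_j^{\pm1}\}$; substituting the definitions $\alpha_i = \beta_1^{-1}\beta_i$ and $\theta_j = \beta_1^{-1}\gamma_j\beta_1$ turns any such word into a word in the edges of $\Gamma$ that represents the ``same'' curve in the geometric realisation — indeed contracting $\beta_1$ is a homotopy equivalence of CW-complexes, and the paths $\alpha_i,\theta_j$ are precisely the images of $\beta_i,\gamma_j$ under a homotopy inverse. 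Hence $\Phi(f_{\mathcal{C}})$ equals the curve function $f_{\mathcal{C}'}$ for the corresponding curve $\mathcal{C}'$ in $\Gamma$, using the coassociativity of $\Delta$ to match the Sweedler bookkeeping; conversely every curve function of $\Gamma$ arises this way because the $\mathcal{E}'$-edges $\beta_i,\gamma_j$ can be re-expressed via $\alpha_i,\theta_j,\beta_1$ and any occurrence of $\beta_1$ cancels in a closed or $V^\partial$-to-$V^\partial$ curve (or is absorbed, since $\beta_1$ has both endpoints interior, none on $V^\partial$). I would then conclude Proposition~\ref{prop_graph} by induction on $|\mathring V(\Gamma)|$: the base cases $|\mathring V| = 0$ (trivial) and $|\mathring V| = 1$ (Lemma~\ref{lemma_appendix1}, with $\mathbb{C}[\mathcal{X}_G(\Gamma(n,m))]\cong\mathbb{C}[G]^{\otimes\mathcal D}$ when $n\ge1$, and $\cong\mathbb{C}[\mathcal{X}_G(\mathbb{F}_m)]$ generated by curve functions by Lemma~\ref{lemma_Sikora} when $n=0$) are in hand, and the contraction lemma reduces $|\mathring V|$ by one while preserving both $|V^\partial|$ and the property of being generated by curve functions.

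The main obstacle I anticipate is purely computational: verifying the identity $\Phi\circ\Psi = F\circ(\mathrm{ev}_v\otimes\mathrm{id})\circ\Delta_\Gamma$ in full generality, because unlike in Lemma~\ref{lemma_appendix1} the graph $\Gamma$ has extra edges in $\mathcal{E}''$ that may be incident to $v$ or to $t(\beta_1)$, and one must track the antipodes carefully through all four cases of the definition of $\Delta_\Gamma$ (both endpoints interior, one on the boundary, etc.) as well as through $\widetilde t$. This is bookkeeping with Sweedler notation and the Hopf axioms rather than anything conceptually new, and it is structurally identical to the computation already carried out for $\Gamma(n,m)$, so I would organise it by isolating the local star at $v$ (an embedded copy of $\Gamma(n,m)$) and invoking Lemma~\ref{lemma_appendix1} for that part, leaving only the interaction with $\mathcal{E}''$ to check by hand.
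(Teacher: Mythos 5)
Your proposal is correct and follows essentially the same route as the paper: the paper packages your computation as a tensor decomposition $\mathbb{C}[\mathcal{R}_G(\Gamma)]\cong\mathbb{C}[\mathcal{R}_G(\Gamma(n,m))]\otimes(\text{rest})$ isolating the star at $v$, then transports the exact sequence of Lemma \ref{lemma_appendix1} across a commutative diagram to identify the image of $\Phi$ with the $\Delta_v^{\Gamma}$-coinvariants, and finishes exactly as you do by noting $\Phi$ intertwines the coactions at the remaining vertices. One small point in your favour that you missed: by definition $\mathcal{E}'$ consists of \emph{all} edges incident to $v$, so $\mathcal{E}''$-edges never touch $v$ and the coaction at $v$ is trivial on that tensor factor — the ``interaction with $\mathcal{E}''$'' you anticipate checking by hand is vacuous, which is precisely why the reduction to Lemma \ref{lemma_appendix1} is clean.
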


\par We first introduce a notation. For $w \in \mathring{V}(\Gamma)$, define a Hopf co-module map $\Delta_w^{\Gamma} : \mathbb{C}[\mathcal{R}_G(\Gamma)] \rightarrow \mathbb{C}[G] \otimes \mathbb{C}[\mathcal{R}_G(\Gamma)]$ by the formulas
$$\Delta_w^{\Gamma}(x_{\alpha}) := \left\{ 
\begin{array}{ll}
\sum x^{(1)}_{s(\alpha)}\cdot S(x^{(3)})_{t(\alpha)} \otimes x^{(2)}_{\alpha} & \mbox{, if }s(\alpha)=t(\alpha) =w; \\
\sum x^{(1)}_{s(\alpha)} \otimes x^{(2)}_{\alpha} & \mbox{, if }s(\alpha) =w, t(\alpha) \neq w; \\
\sum S(x^{(2)})_{t(\alpha)} \otimes x^{(1)}_{\alpha} & \mbox{, if } s(\alpha) \neq w , t(\alpha) =w; \\
1 \otimes x_{\alpha} &\mbox{, if } s(\alpha), t(\alpha) \neq w. 
\end{array}\right. $$
By definition, the space of co-invariant vectors for $\Delta_{\Gamma}$ is the intersection over $w\in \mathring{V}$ of the spaces of co-invariant vectors for $\Delta_w^{\Gamma}$.

\begin{proof}
By Lemma \ref{lemma_appendix1}, the following sequence is exact
$$ 0 \rightarrow \mathbb{C}[G]^{\otimes \mathcal{D}} \xrightarrow{\phi} \mathbb{C}[\mathcal{R}_G(\Gamma(n,m))] \xrightarrow{\Delta_{\Gamma(n,m)} -\eta\otimes \id} \mathbb{C}[G] \otimes  \mathbb{C}[\mathcal{R}_G(\Gamma(n,m))]. $$
Fix a subset $\mathcal{E}'_o \subset \mathcal{E}'$ which intersects once each set $\{e, e^{-1} \}$ for any $e\in \mathcal{E}'$. We have natural isomorphisms $\varphi_1: \mathbb{C}[\mathcal{R}_G(\Gamma)] \cong \mathbb{C}[\mathcal{R}_G(\Gamma(n,m))] \otimes \mathbb{C}[G]^{\otimes \mathcal{E}'_o}$ and $\varphi_2 : \mathbb{C}[\mathcal{R}_G(\Gamma(v))] \cong  \mathbb{C}[G]^{\otimes \mathcal{D}} \otimes \mathbb{C}[G]^{\otimes \mathcal{E}'_o}$ making the following diagram commuting: 
$$
\begin{tikzcd}
0 \arrow[r,""] &\mathbb{C}[\mathcal{R}_G(\Gamma(v))] \arrow[r,"\Phi"] \arrow[d,"\cong", "\varphi_2"']& \mathbb{C}[\mathcal{R}_G(\Gamma)] \arrow[r, "\Delta_v^{\Gamma} - \eta\otimes \id"] \arrow[d,"\cong", "\varphi_1"'] & \mathbb{C}[G]\otimes \mathbb{C}[\mathcal{R}_G(\Gamma)] \arrow[d,"\cong", "\id \otimes \varphi_1"'] \\
0 \arrow[r,""] &  \mathbb{C}[G]^{\otimes \mathcal{D}} \otimes \mathbb{C}[G]^{\otimes \mathcal{E}'_o} \arrow[r,"\phi\otimes \id"] & \mathbb{C}[\mathcal{R}_G(\Gamma(n,m))] \otimes \mathbb{C}[G]^{\otimes \mathcal{E}'_o} \arrow[r,"(\Delta_{\Gamma(n,m)} -\eta\otimes \id) \otimes \id"] & \mathbb{C}[G] \otimes  \mathbb{C}[\mathcal{R}_G(\Gamma(n,m))] \otimes \mathbb{C}[G]^{\otimes \mathcal{E}'_o}
\end{tikzcd}
$$
The exactness of the second line implies the exactness of the first line, hence $\Phi$ sends injectively $\mathbb{C}[\mathcal{R}_G(\Gamma(v))]$ to the sub-algebra of co-invariant vectors of $\mathbb{C}[\mathcal{R}_G(\Gamma)]$ for the co-action $\Delta_v^{\Gamma}$. Moreover for any $w\in \mathring{V}(\Gamma)\setminus \{v\}$, the morphism $\phi$ intertwines the Hopf co-actions of $\Delta_w^{\Gamma(v)}$ and $\Delta_w^{\Gamma}$, hence  induces an isomorphism between the character varieties. The fact that $\phi$ sends curve functions to curve functions follows from the definitions.

\end{proof}

\begin{proof}[Proof of Proposition \ref{prop_graph}] Let $\Gamma$ be a connected graph and write $\mathring{V}(\Gamma) = \{v_1, \ldots, v_k\}$. If $k=0$ the proposition is trivial. If $k=1$, it follows from the fact that $G$ is standard and \ref{lemma_appendix1}. Suppose $k\geq 2$ and let $\Gamma':= \Gamma(v_2)(v_3)\ldots (v_k)$ be the graph obtained from $\Gamma$ by performing the contracting operation repeatedly on the vertices $v_2, \ldots, v_k$. By definition, $\mathring{V}(\Gamma')$ has  one element, hence $\Gamma'$ is isomorphic to a graph $\Gamma(n,m)$, and $V^{\partial}(\Gamma)$ has the same cardinal than $V^{\partial}(\Gamma')$. By Lemma \ref{lemma_appendix2}, there exists an isomorphism $\mathbb{C}[\mathcal{X}_G(\Gamma)] \cong \mathbb{C}[\mathcal{X}_G(\Gamma')] $ preserving the set of curve functions. We conclude using  \ref{lemma_appendix1}.
\end{proof}

\begin{proof}[Proof of Proposition \ref{prop_holonomy_functions}] Let $x=\sum_i (x_{i_1})_{\alpha_{i_1}} \ldots (x_{i_{k_i}})_{\alpha_{i_{k_i}}} \in \otimes^{\vee}_{\Pi_1(\Sigma)} \mathbb{C}[G]$ be an element such that its class $[x]$ belongs to $\mathbb{C}[\mathcal{X}_G(\mathbf{\Sigma})]$. Let $\mathcal{E}$ be the set of paths $\alpha_{i_j}^{\pm 1}$ appearing in the expression of $x$, together with their inverse. Let $V:= \{ s(\alpha), t(\alpha), \alpha \in \mathcal{E} \}$ be the set of endpoints and define $V^{\partial} := V \cap \mathcal{A}$ and $\mathring{V}:= V \cap (\Sigma \setminus \mathcal{A})$. Define the graph $\Gamma= (V, \mathcal{E}, t_{| \mathcal{E}}, \sigma)$ where $\sigma(\alpha_{i_j}):= \alpha_{i_j}^{-1}$. If necessary, we modify the polynomial expression of $x$ without changing the class $[x]$, such that the elements of $V^{\partial}$ have valency one and such that the elements of $\mathring{V}$ have valency bigger than one. There is a well defined morphism $\mathbb{C}[\mathcal{R}_G(\Gamma)] \rightarrow \mathbb{C}[\mathcal{R}_G(\mathbf{\Sigma})]$, sending a generator $x_{\alpha}$ to the generator denoted by the same symbol, which induces a morphism $\phi : \mathbb{C}[\mathcal{X}_G(\Gamma)] \rightarrow \mathbb{C}[\mathcal{X}_G(\mathbf{\Sigma})]$. By definition, the morphism $\phi$ sends curve functions to curve functions and $[x]$ belongs to its image. Hence, by Proposition \ref{prop_graph}, the element $[x]$ belongs to the algebra generated by the curve functions. This concludes the proof.

\end{proof}

\section{Comparison with Fock-Rosly constructions}\label{sec_FR}

The constructions of Fock and Rosly in \cite{FockRosly} are based on ciliated graphs. As we now explain, to a ciliated graph $(\Gamma,c)$ one can  associate a marked surface $\mathbf{\Sigma}^0$ together with a finite presentation $\mathbb{P}$ of its associated groupoid. 

\begin{definition}
\begin{enumerate}
\item A \textit{ribbon graph} $\Gamma$ is a finite graph together with the data, for each vertex, of a cyclic ordering of its adjacent half-edges. An \textit{orientation} for a ribbon graph is the choice of an orientation for each of its edges. 
\item  A \textit{ciliated ribbon graph} $(\Gamma, c)$ is a ribbon graph $\Gamma$ together with a lift, for each vertex, of the cyclic ordering of the adjacent half-edges, to a linear ordering. In pictures, if the half-edges adjacent to a vertex have the cyclic ordering $e_1<e_2<\ldots <e_n <e_1$ that we lift to the linear ordering $e_1<e_2<\ldots <e_n$, we draw a \textit{cilium} between $e_n$ and $e_1$. 
\item We associate surfaces to ribbon graphs as follows. 
\begin{itemize}
\item[(i)]
Place a  a disc $D_v$ on top of  each vertex $v$ and a band $B_e$ on top of each edge $e$,  then glue the discs to the band using the cyclic ordering: we thus get a surface $\Sigma(\Gamma)$ named the \textit{ fattening of }$\Gamma$. The \textit{unmarked surface associated to }$\Gamma$ is  $\mathbf{\Sigma}(\Gamma)=(\Sigma(\Gamma), \emptyset)$. 
 \item[(ii)]
The \textit{marked surface} $\mathbf{\Sigma}^0(\Gamma, c)=(\Sigma(\Gamma), \mathcal{A}(c))$ \textit{associated to }$(\Gamma,c)$ has the same underlying surface $\Sigma(\Gamma)$ and for each vertex $v$ adjacent to half-edges ordered as $e_1<e_2<\ldots <e_n$ place one boundary arc $a_v$ on the boundary of $D_v$ between $e_n$ and $e_1$ and set $\mathcal{A}(c)=\{a_v\}_{v\in V(\Gamma)}$. By isotoping each vertex $v$ of $\Gamma \subset \Sigma(\Gamma)$ to $a_v$, we get the generating graph of a set of generators $\mathbb{G}= \mathcal{E}(\Gamma)$ (the oriented edges) of $\Pi_1(\Sigma(\Gamma))$ relatively to $\mathcal{A}(c)$ such that $\mathbb{P}(\Gamma, c):=(V(\Gamma), \mathcal{E}(\Gamma), \emptyset)$ is a finite presentation of $\Pi_1(\Sigma(\Gamma))$ relatively to $\mathcal{A}(c)$ with no non-trivial relation.

\end{itemize}
\item Fix an arbitrary classical $r$-matrix $r(v)$ for each vertex $v\in V(\Gamma)$ and write $r(v)=\sum_{i,j} r^{i,j}(v)X_i \wedge X_j$ in some basis $(X_i)_i$ of $\mathfrak{g}$.
 For two oriented edges $\alpha, \beta \in \mathcal{E}(\Gamma)$ with common target endpoint $t(\alpha)=t(\beta)=v$, we write $\alpha <_c \beta$ if the target half edge of $\alpha$ is smaller than the target half edge of $\beta$ in the total ordering given by the cilium $c$. Consider the representation variety $\mathcal{R}_G(\Gamma)$ as defined in Definition \ref{def_RepVarGraph}. By definition, it is the (smooth) subvariety of $G^{\mathcal{E}(\Gamma)}$ of elements $g=(g_e)_{e\in \mathcal{E}(\Gamma)}$ such that $g_e^{-1}=g_{e^{-1}}$, so it is a smooth manifold as well that we denote by $\mathcal{M}_G(\Gamma)$. In \cite{FockRosly}, Fock and Rosly endowed the smooth manifold $\mathcal{M}_G(\Gamma)$ with a Poisson structure, depending on the cilium $c$, by defining a Poisson bivector field on $G^{\mathcal{E}(\Gamma)}$:
 $$ B := \sum_{v\in V(\Gamma)} \left( \sum_{t(\alpha)=t(\beta)=v, \alpha<_c \beta} r^{i,j}(v) X_i^{\alpha} \wedge X_j^{\beta} + \frac{1}{2}\sum_{t(\alpha)=v} r^{i,j}(v) X_i^{\alpha} \wedge X_j^{\alpha} \right).$$
 This defines a Poisson bracket $\{\cdot, \cdot\}^{FR}$ on $C^{\infty}(\mathcal{M}_G(\Gamma))$ which depends on the cilium and the $r$-matrices $r(v)$. 
 \item
 Now consider the discrete gauge group $\mathcal{G}:= G^{V(\Gamma)}$. It acts on $\mathcal{M}_G(\Gamma)$ by the classical formula
 \begin{equation}\label{eq_GaugeActionGraph}
  g\cdot \rho (\alpha) = g(s(\alpha)) \rho(\alpha) g(t(\alpha))^{-1} \quad, \mbox{ for all }g\in \mathcal{G}, \alpha\in \mathcal{E}(\Gamma), \rho \in \mathcal{X}_G(\Gamma), 
  \end{equation}
 and the quotient $\quotient{\mathcal{M}_G(\Gamma)}{\mathcal{G}}$ identifies with the (singular) moduli space $\mathcal{M}_G(\Sigma(\Gamma))$. Let $p: \mathcal{M}_G(\Gamma) \to \mathcal{M}_G(\Sigma(\Gamma))$ denote the projection map and $\mathcal{M}^0_G(\Sigma(\Gamma))$ the smooth locus and write
 $\mathcal{M}^0_G(\Gamma):= p^{-1}(\mathcal{M}^0_G(\Sigma(\Gamma)))$ and $p^* : C^{\infty}(\mathcal{M}^0_G(\Gamma)) \hookrightarrow C^{\infty}(\mathcal{M}_G(\Gamma) )$ the injective morphism induced by $p$. Fock and Rosly proved in \cite{FockRosly} that the image of $p^*$ is a Poisson subalgebra of $C^{\infty}(\mathcal{M}_G(\Gamma) )$, so $\mathcal{M}^0_G(\Sigma(\Gamma))$ inherits a structure of Poisson manifold. Moreover the authors proved that this Poisson structure is independent on the choice of the ciliated graph $(\Gamma,c)$ but only depends on $\Sigma(\Gamma)$.

\end{enumerate}
\end{definition}

Since $\Gamma$ is the generating graph of the presentation $\mathbb{P}(\Gamma)$ of $\Pi_1(\Sigma^0(\Gamma,c))$ which has no non-trivial relation, we have $\mathcal{X}_G(\mathbf{\Sigma}^0(\Gamma,c)) \cong \mathcal{X}_G(\mathbf{\Sigma}^0(\Gamma,c), \mathbb{P}(\Gamma)) \cong \mathcal{R}_G(\Gamma)$ so  $\mathbb{C}[\mathcal{X}_G(\mathbf{\Sigma}^0(\Gamma,c))]$ identifies with the subalgebra of $C^{\infty}(\mathcal{M}_G(\Gamma))$ of regular functions. We denote by $\iota : \mathbb{C}[\mathcal{X}_G(\mathbf{\Sigma}^0(\Gamma,c))] \hookrightarrow C^{\infty}(\mathcal{M}^0_G(\Gamma))$ the induced embedding.

The \textit{monogon} $\mathbb{D}$ is the marked surface made of a disc with one boundary arc. Its character variety has only one point and $\mathbb{C}[\mathcal{X}_G(\mathbb{D})]\cong\mathbb{C}$.  By gluing a monogon $\mathbb{D}_v$ to each boundary arc $a_v$ of the marked surface $\mathbf{\Sigma}^{0}(\Gamma, c)$, we obtain the unmarked surface $\mathbf{\Sigma}(\Gamma)$.  Still writing $\mathcal{G}= G^{V(\Gamma)}$, Proposition \ref{gluing_formula} implies that we have an exact sequence
$$ 0 \to \mathbb{C}[\mathcal{X}_G(\mathbf{\Sigma}(\Gamma))] \to \mathbb{C}[\mathcal{X}_G(\mathbf{\Sigma}^0(\Gamma, c))]\xrightarrow{\Delta^L -\sigma \circ \Delta^R} \mathbb{C}[\mathcal{G}] \otimes \mathbb{C}[\mathcal{X}_G(\mathbf{\Sigma}^0(\Gamma, c))].$$
The left comodule map $\Delta^L$ identifies with the left group action defined by Equation \eqref{eq_GaugeActionGraph} whereas, since the character variety of the monogon is trivial, the map $\sigma \circ \Delta^R$ identifies with the counit $\epsilon \times \id$ so 
$$ \mathcal{X}_G(\mathbf{\Sigma}(\Gamma)) = \mathcal{X}_G(\mathbf{\Sigma}^0(\Gamma,c)) \sslash \mathcal{G}.$$
We denote by $j : \mathbb{C}[\mathcal{X}_G(\mathbf{\Sigma})] \hookrightarrow C^{\infty}(\mathcal{M}^0_G(\Sigma(\Gamma)))$ the inclusion morphism.

\begin{proposition}\label{prop_FR}
Let $(\Gamma, c)$ be a ciliated graph. For each vertex $v\in V(\Gamma)$, choose an orientation $\mathfrak{o}(v)$ of the corresponding boundary arc $a_v$ of $\mathbf{\Sigma}^0(\Gamma,c)$ and consider the Fock-Rosly Poisson structures on $\mathcal{M}_G(\Gamma)$ and $\mathcal{M}^0_G(\Sigma(\Gamma,c))$ induced by the classical r-matrices $r(v):= r^{\mathfrak{o}(a_v)}$. Then both morphisms 
$\iota : \mathbb{C}[\mathcal{X}_G(\mathbf{\Sigma}^0(\Gamma,c))] \hookrightarrow C^{\infty}(\mathcal{M}^0_G(\Gamma))$ and $j : \mathbb{C}[\mathcal{X}_G(\mathbf{\Sigma})] \hookrightarrow C^{\infty}(\mathcal{M}^0_G(\Sigma(\Gamma)))$ are Poisson.
\end{proposition}

\begin{proof}
We first prove that $\iota$ is Poisson. As in Remarks \ref{remark_PoissonBigon} and \ref{remark_PoissonTriangle}, we consider an embedding $G\subset \GL_N(\mathbb{C})$. For $\alpha \in \mathcal{E}(\Gamma)$, we consider the $N\times N$ matrix $M(\alpha)$ with coefficients in  $\mathbb{C}[\mathcal{X}_G(\mathbf{\Sigma}^0(\Gamma,c))]$ whose $(i,j)$ entry is the regular function sending a representation $\rho$ to the $(i,j)$ entry of $\rho(\alpha)$. 
Consider $\alpha, \beta \in \mathcal{E}(\Gamma)$ two generating paths. Replacing $\alpha$ and or $\beta$ by $\alpha^{-1}, \beta^{-1}$ if necessary, we have $10$ possible configuration for the pair $(\alpha, \beta)$ illustrated in Figure \ref{fig_configurations}, depending on which pairs of element of $\{ s(\alpha), t(\alpha), s(\beta), t(\beta) \}$ are equal or not. For each configuration, we need to show that the formula for $\{ M(\alpha) \otimes M(\beta) \}$ obtained by the Fock-Rosly Poisson bracket is the same as the one obtained by the generalized Goldman formula. For instance, in case $(i)$ where  $\{ s(\alpha), t(\alpha), s(\beta), t(\beta) \}$ has cardinal four, we obtain $\{ M(\alpha) \otimes M(\beta) \} = 0$ for both Poisson structures. In case $(ii)$ where $t(\alpha)=t(\beta)$ with $\alpha <_c \beta$ and $\{ s(\alpha), t(\alpha), s(\beta), t(\beta) \}$ has cardinal $3$, we obtain 
$$ \{M(\alpha) \otimes M(\beta) \} = (M(\alpha)\otimes M(\beta)) r(t(\alpha)) $$
 in both cases. In case $(iii)$, where $s(\alpha)=s(\beta)=v_1$ and $t(\alpha)=t(\beta)=v_2\neq v_1$ and $\alpha >_c \beta$ at $v_1$ and $\alpha<_c \beta$ at $v_2$ (this case covers also the case where $\alpha = \beta$), one finds
 $$ \{M(\alpha)\otimes M(\beta) \} = (M(\alpha) \otimes M(\beta))r(v_1) - r(v_2) (M(\alpha)\otimes M(\beta))$$
 for both Poisson brackets.
 The remaining $7$ cases are done by a similar case-by-case analysis left to the reader.
 
\begin{figure}[!h] 
\centerline{\includegraphics[width=10cm]{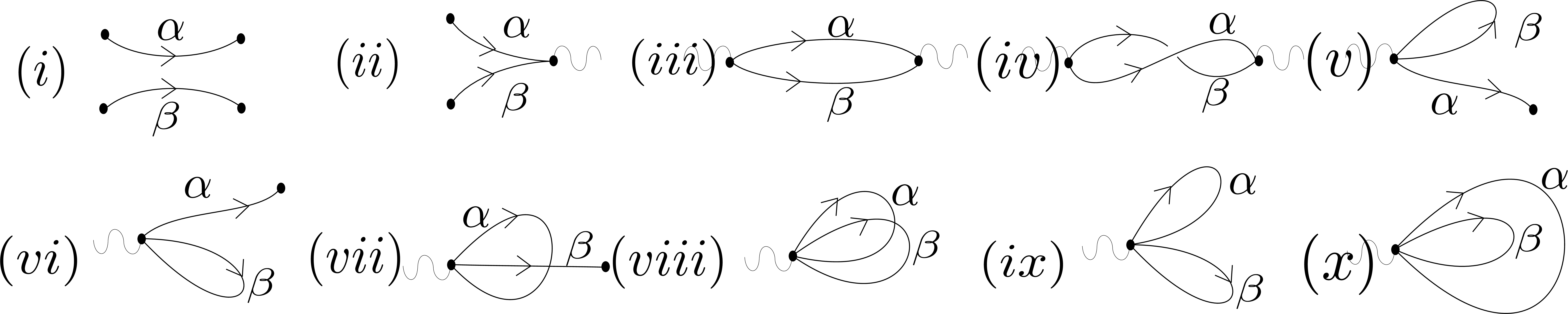} }
\caption{Ten different configurations for a pair of oriented edges in some ciliated graph.} 
\label{fig_configurations} 
\end{figure} 
 
 To prove that $j$ is Poisson, we simply remark that it fits in the commutative diagram
 
 $$ \begin{tikzcd}
\mathbb{C}[\mathcal{X}_G(\mathbf{\Sigma}(\Gamma))] \arrow[r, hook,  "i"] \arrow[d, hook, "j"] &  \mathbb{C}[\mathcal{X}_G(\mathbf{\Sigma}^0(\Gamma, c))] \arrow[d, hook, "\iota"] \\C^{\infty}(\mathcal{M}^0_G(\Sigma(\Gamma))) \arrow[r, hook, "p^*"] & C^{\infty}(\mathcal{M}^0_G(\Gamma))
\end{tikzcd} $$

Since the maps $i, \iota$ and $p^*$ are Poisson, so is $j$.

\end{proof}

Proposition \ref{prop_FR} shows that our definition of relative character varieties is essentially a reformulation of the construction of Fock-Rosly. Let us stress some advantages of our construction.

\begin{enumerate}
\item The Fock-Rosly Poisson varieties are essentially the same than our discrete models $\mathcal{X}_G(\mathbf{\Sigma}^0(\Gamma, c), \mathbb{P})$. The main novelty of our approach is the fact that we also consider some continuous model $\mathcal{X}_G(\mathbf{\Sigma})$, which are independent on the choice of finite presentation of the fundamental groupoid, or equivalently on the choice of ciliated graph. In \cite[Proposition $4$]{FockRosly}, in order to prove that the Poisson structure on $\mathcal{X}_G(\mathbf{\Sigma}(\Gamma))$ does only depend on the surface $\Sigma(\Gamma)$ and not on the ciliated graph $(\Gamma,c)$, the authors need to prove the invariance of the Poisson bracket on a set of elementary moves on ciliated graphs that preserve the underlying surface (the annoying proof is actually left to the reader). In our approach, this is done by identifying  each discrete model with a canonical continuous model which only depends on the marked surface by definition.
\item Consider a compact oriented connected Riemann surface $\Sigma$ with non trivial boundary and equip the Banach space $\Omega^1_F(\Sigma)$ of flat connections (with suitable Sobolev regularity) with the Atiyah-Bott Poisson structure defined for two smooth functions $F,G$ on $\Omega^1_F(\Sigma)$, and seeing the differentials $D_AF$ and $D_AG$ as elements of $\mathrm{Z}^1_A(\Sigma, \mathfrak{g})\cong T_{A}\Omega^1_F(\Sigma) $ (as explained in the introduction), by the formula
$$ \{ F,G\}(A) = \int_{\Sigma} \left( D_AF \wedge D_AG\right) .$$
Restricting this Poisson bracket to functions invariant under the gauge group $\mathcal{G}=\{g: \Sigma \to G\}$, we get a Poisson structure on the smooth locus of $\mathcal{M}_G(\Sigma)=\quotient{ \Omega^1_F(\Sigma) }{\mathcal{G}}$. In \cite[Proposition $5$]{FockRosly} Fock and Rosly proved that this Poisson structure coincides, through the holonomy map, with the one they defined on $\mathcal{M}^0_G(\Sigma(\Gamma))$ using a ciliated graph whose thickening is $\Sigma$  (this does not give an alternative proof of its independence with respect to the choice of $(\Gamma,c)$ since the proof strongly uses this latter fact).

 It was commonly admitted in the community that Goldman's arguments in \cite{Goldman_Symplectic} generalize to non closed surfaces in order to prove that the above bracket on the moduli space of flat connections taken on curve functions is given by the same expression than Goldman's formula in \cite{Goldman86}.  This fact is proved by Roche-Szenes in \cite{RocheSzenes} and also, apparently independently,  by Lawton
  in  \cite[Theorem $15$]{Lawton_PoissonGeomSL3} when  $G=\SL_N$ though the author explained in Comment $18$ how to generalize his proof for general $G$.  Together with \cite[Proposition $5$]{FockRosly}, this gives a gauge theoretic proof that the Fock-Rosly Poisson bracket is described by Goldman's formula for standard groups $G$.

 In this paper, putting together Proposition \ref{prop_FR} and Theorem \ref{goldman_formula}, we obtain an alternative algebraic proof of this fact, which does not rely on gauge theory.

\end{enumerate}

\begin{remark}The Fock-Rosly Poisson varieties admit quantization deformations named \textit{quantum moduli algebras} defined independently by Alekseev-Grosse-Schomerus in \cite{AlekseevGrosseSchomerus_LatticeCS1,AlekseevGrosseSchomerus_LatticeCS2, AlekseevSchomerus_RepCS} and Buffenoir-Roche in \cite{BuffenoirRoche, BuffenoirRoche2} based on \cite{FockRosly} and indexed by a ciliated graph. The relative character varieties defined in the present paper are designed to admit the Bonahon-Wong-L\^e stated skein algebras (indexed by marked surfaces) as deformation quantizations (see \cite{KojuQuesneyClassicalShadows}). Since we proved that the relative character varieties are isomorphic to the Fock-Rosly moduli space, it is natural to expect that stated skein algebras are isomorphic to the quantum moduli spaces using the same correspondence ciliated graphs vs marked surfaces with finite presentations. This was proved in the particular case of marked surfaces with exactly one boundary arc by Faitg in \cite{Faitg_LGFT_SSkein} and can be alternatively and independently derived from the works of Ben-Zvi, Brochier, Jordan \cite{BenzviBrochierJordan_FactAlg1} and  Gunningham, Jordan, Safronov \cite{GunninghamJordanSafranov_FinitenessConjecture} (see the end of \cite{KojuPresentationSSkein}). The general case was proved in \cite{KojuPresentationSSkein}. The quantum moduli spaces can be seen as discrete models for the stated skein algebras in the same way that for relative character varieties.

\end{remark}

\section{Comparison with the constructions of Alekseev-Kosmann-Malkin-Meinreken}\label{sec_Alekseev}

We now compare \stated character varieties with the moduli spaces $\mathcal{M}_{g,n}$ which appear in \cite{AlekseevMalkin_PoissonCharVar, AlekseevMalkin_PoissonLie, AlekseevKosmannMeinrenken}. For $\mathbf{\Sigma}$ and $\mathbf{\Sigma}'$ two marked surfaces, each having exactly one boundary arc, say $a$ and $a'$, we denote by $\mathbf{\Sigma}\circledast \mathbf{\Sigma}'$ the marked surface obtained from $\mathbf{\Sigma}\sqcup \mathbf{\Sigma}'$ by fusioning $a$ and $a'$. 

Let $\mathbf{\Sigma}_{g,n}^*=(\Sigma_{g,n+1}, \{a\})$ be a genus $g$ surface with $n+1$ boundary components and a single boundary arc $a$. Then $\mathbf{\Sigma}_{g,n}^* \circledast \mathbf{\Sigma}_{g',n'}^* \cong \mathbf{\Sigma}_{g+g', n+n'}^*$, so 
$$ \mathbf{\Sigma}_{g,n}^*\cong (\mathbf{\Sigma}_{1,0}^*)^{\circledast g} \circledast (\mathbf{\Sigma}_{0,1}^*)^{\circledast n}.$$
Theorem \ref{theorem_fusion} implies 
$$
\mathcal{X}_G(\mathbf{\Sigma}_{g,n}^*) \cong \mathcal{X}_G(\mathbf{\Sigma}_{1,0}^*)^{\circledast g} \circledast \mathcal{X}_G(\mathbf{\Sigma}_{0,1}^*)^{\circledast n}.
$$
Here the Poisson bracket is chosen by the orientation $\mathfrak{o}$ such that $\mathfrak{o}(a)=+$. Note that $\mathbf{\Sigma}_{0,1}^*$ is obtained from the bigon $\mathbb{B}$ by fusioning its two boundary arcs together. So, as a variety, $\mathcal{X}_G(\mathbf{\Sigma}_{0,1}^*)=G$ and the Poisson bracket $\{\cdot, \cdot \}^{STS}$ is given (using the generalized Goldman formula), in matrix notations, by 
$$ \{N\otimes N\}^{STS} = - (\mathds{1}_1 \odot N)r^+ (N\odot \mathds{1}_2) +\tau (N\odot N) \tau r^+ - r^- (N\odot N) + (N\odot \mathds{1}_2)r^- (\mathds{1}_2 \odot N),$$
where, as before, we use an embedding $G\subset \GL_n$ and denote by $N$ the $n\times n$ matrix whose $(i,j)$ entry is the function $x_{i,j}: G \to \mathbb{C}$ sending the $i,j$ matrix coefficient of an element $g\in G \subset \GL_n$. 
The Poisson variety $G^{STS}= (G, \{\cdot, \cdot\}^{STS}) \cong \mathcal{X}_G(\mathbf{\Sigma}_{0,1}^*)$ was studied in great details in \cite[Section $4$]{AlekseevMalkin_PoissonCharVar} where its symplectic leaves were computed (they are the intersection of the conjugacy classes of $G$ with the so-called dressing orbits). 
\par Let $\mathbb{D}_1^+$ be an annulus with one boundary arc in each of its boundary component. So $\mathbb{D}_1^+$ has two boundary arcs an by fusioning these two boundary arcs, we get $\mathbf{\Sigma}_{1,0}^*$. 
Let $\alpha$, $\beta$ be the two arcs in $\mathbb{D}_1^+$ of Figure \ref{fig_D1+}, so $\mathbb{G}={\alpha^{\pm 1}, \beta^{\pm 1}}$ forms a generating set of $\Pi_1(\mathbb{D}_1^+)$ with no non trivial relations.

\begin{figure}[!h] 
\centerline{\includegraphics[width=3cm]{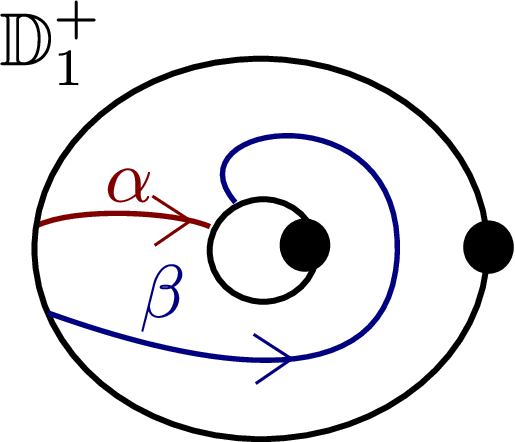} }
\caption{Two arcs in $\mathbb{D}_1^+$.} 
\label{fig_D1+} 
\end{figure}

 So $\mathcal{X}_G(\mathbb{D}_1^+)\cong G\times G$ through the map sending $\rho$ to $(\rho(\alpha), \rho(\beta))$ and, by the generalized Goldman formula, the Poisson bracket is given in matrix notations by 
$$ \{N(\alpha) \otimes N(\beta) \}^+ = r_+ (N(\alpha)\odot N(\beta)) + (N(\alpha)\odot N(\beta))r_+, \quad \{N(\delta)\otimes N(\delta)\}^+ = r^+ (N(\delta) \odot N(\delta)) - (N(\delta)\odot N(\delta))r^+,$$
for $\delta=\alpha, \beta$.
The Poisson variety $D_+(G):=(G\times G, \{\cdot, \cdot\}^+)$ was studied by Alekseev-Malkin in  \cite{AlekseevMalkin_PoissonLie} inspired by the work of Semenov-Tian-Shansky, and is called the \textit{twisted Heisenberg double}. In particular the authors computed its symplectic leaves. More precisely, they consider the bracket $\{\cdot, \cdot\}^{AM}:= - \{\cdot, \cdot\}^+$ for which, using the notations $r:= r^+$ and $r^*:= -r^-$, one has (compare with \cite[Equation $(80$)]{AlekseevMalkin_PoissonLie}):
$$ \{N(\alpha)\otimes N(\beta) \}^{AM}= - \left( r ( N(\alpha) \odot N(\beta)) + (N(\alpha)\odot N(\beta))r^* \right).$$
Therefore $\mathcal{X}_G(\mathbf{\Sigma}_{1,0}^*)$ is isomorphic to the fusion $(D_+G)_{1 \circledast 2}$ of $D_+G$ (seen as a $G\times G$ variety). We thus have proved that 
$$\mathcal{X}_G(\mathbf{\Sigma}_{g,n}^*) \cong ( (D_+G)_{1 \circledast 2})^{\circledast g} \circledast (G^{STS})^{\circledast n}.$$
This is precisely the moduli space studied in \cite{AlekseevKosmannMeinrenken}. In particular, we have proved that this moduli space is a particular case of Fock-Rosly moduli spaces; this fact is part of the folklore on the subject though, at the author's knowledge, no proof had been written yet. 

\section{Comparison with the constructions of Lie Bland-Severa and Nie}\label{sec_QPoisson}

Recall from \cite{AlekseevKosmannMeinrenken} that a \textit{quasi Poisson manifold} $(X,P,\rho)$  is a manifold $X$ equipped with a Lie group action $\mathcal{G} \curvearrowright X$ where the Lie algebra $\mathfrak{g}=Lie(\mathcal{G})$ is equipped with an invariant pairing,  and a bivector field $P$ satisfying the quasi-Poisson condition $[P, P ]= \nu(\Phi)$ where $\nu$ is the infinitesimal action of $\mathfrak{g}$ and $\Phi\in \Lambda^3\mathfrak{g}$ is $\Phi= \frac{1}{12}\sum_{ijk} (e_i, [e_j, e_k]) e_i \wedge e_j \wedge e_k$ for a basis $(e_i)_i$ of $\mathfrak{g}$. Let $r\in \mathfrak{g}^{\otimes 2}$ be a classical $r$-matrix whose symmetric part is the dual of the invariant pairing of $\mathfrak{g}$ and denote by $\overline{r}$ its skew-symmetric part. It is proved in \cite[Theorem $7.1$]{AlekseevKosmannMeinrenken} that the bivector field $\pi = P + \nu(\overline{r})$ satisfies $[\pi, \pi]=0$ so defines a structure of $G$-Poisson variety on $X$ which we call a \textit{twist} of the quasi Poisson structure $(X,P,\rho)$. 
\vspace{2mm}
\par Let $\mathbf{\Sigma}=(\Sigma, \mathcal{A})$ be a connected marked surface with $\mathcal{A}\neq \emptyset$. Li Bland-Severa \cite{LiBlandSevera_ModuliSpacesQuiltedSurfaces} and Nie  \cite{Nie_QPoissonGoldmanFormula} independently equipped the $\mathcal{G}= G^{\mathcal{A}}$ manifold $\mathcal{X}_G(\mathbf{\Sigma})$ with a quasi-Poisson structure $P^{LBSN}$. Their construction generalizes  the Massuyeau-Turaev construction in \cite{MassuyeauTuraev_QuasiPoisson} when $\mathbf{\Sigma}$ has a single boundary arc.
It is related to our construction as follows.

\begin{theorem}\label{theorem_LBSN} The Poisson variety $(\mathcal{X}_G(\mathbf{\Sigma}), \{\cdot, \cdot\}^{\mathfrak{o}})$ is a twist of the Li Bland-Severa-Nie quasi Poisson structure in the sense that if $\pi^{\mathfrak{o}}$ denotes the bivector field associated to $\{\cdot, \cdot\}^{\mathfrak{o}}$, then 
$$ \pi^{\mathfrak{o}} = P^{LBSN} + \nu\left( \sum_{a \in \mathcal{A}} \overline{r}^{\mathfrak{o}(a)}\right).$$
\end{theorem}

In particular, this re-proves the well-known fact (see e.g. \cite{Mouquin})  that the Fock-Rosly Poisson structure is a twist of the Li Bland-Severa-Nie quasi Poisson structure. In the particular case where $\mathbf{\Sigma}=\mathbb{B}$ and $\mathfrak{o}=(-,+)$, then $\mathcal{X}_G(\mathbb{B})$ is $G$ equipped with its Poisson-Lie structure and $G$ action given by conjugacy. The bivector field $P^{LBSN}$  in this case vanishes so we recover the observation, made by Drinfeld, that the Lie-Poisson structure of $G$ is a twist of the $0$ quasi Poisson structure, i.e. that at the infinitesimal level, that the skew-symmetric part $\overline{r}$ can be thought as a twist between the Lie bialgebra defining the Lie group $G$ and a quasi Lie bialgebra with vanishing cobracket (see \cite[Section $2.2$]{ChariPressley} for details). This observation is at the very origin of the notion of quasi-Poisson manifolds.

\begin{proof} Let $\{\cdot, \cdot\}^{LBSN}$ denote the bracket defined by $\{f, g\}^{LBSN} ([\rho]) := \left< D_{[\rho]}f \otimes D_{[\rho]}g, P^{LBSN}_{[\rho]}\right>$ (it does not satisfies Jacobi). An explicit formula for the bracket $\{f_{\mathcal{C}_1}, h_{\mathcal{C}_2}\}^{LBSN}$ was computed in \cite[Theorem $3$]{LiBlandSevera_ModuliSpacesQuiltedSurfaces} and \cite[Theorem $2.5$]{Nie_QPoissonGoldmanFormula}. This formula is very similar to the generalized Goldman formula we found in Theorem \ref{goldman_formula} with one difference: the $r$ matrices which appear in the sums are replaced by their symmetric part. So 

\begin{multline*} \{f_{\mathcal{C}_1}, h_{\mathcal{C}_2}\}^{\mathfrak{o}} ([\rho]) - \{f_{\mathcal{C}_1}, h_{\mathcal{C}_2}\}^{LBSN} ([\rho]) = \sum_{a\in \mathcal{A}} \sum_{(v_1, v_2) \in S(a)} \left<X_{f, \mathcal{C}_1}(v_1)\otimes X_{h, \mathcal{C}_2}(v_2), \overline{r}^{\mathfrak{o}(v_1, v_2)}\right> \\
= \left< D_{[\rho]}f_{\mathcal{C}_1} \otimes D_{[\rho]}h_{\mathcal{C}_2}, \nu\left(\sum_a \overline{r}^{\mathfrak{o}(a)}\right)\right>.
\end{multline*}
This proves the equality $\pi^{\mathfrak{o}} - P^{LBSN} = \nu\left( \sum_{a \in \mathcal{A}} \overline{r}^{\mathfrak{o}(a)}\right)$ as required.

\end{proof}

\bibliographystyle{amsalpha}
\bibliography{biblio}

\def\cprime{$'$}
\providecommand{\bysame}{\leavevmode\hbox to3em{\hrulefill}\thinspace}
\providecommand{\MR}{\relax\ifhmode\unskip\space\fi MR }
\providecommand{\MRhref}[2]{%
  \href{http://www.ams.org/mathscinet-getitem?mr=#1}{#2}
}
\providecommand{\href}[2]{#2}
\begin{thebibliography}{ADPW91}

\bibitem[AB83]{AB}
M.~F. Atiyah and R.~Bott, \emph{The {Y}ang-{M}ills equations over {R}iemann
  surfaces}, Philos. Trans. Roy. Soc. London Ser. A \textbf{308} (1983),
  no.~1505, 523--615. \MR{702806 (85k:14006)}

\bibitem[ADPW91]{ASW}
S.~Axelrod, S.~Della~Pietra, and E.~Witten, \emph{Geometric quantization of
  {C}hern-{S}imons gauge theory}, J. Differential Geom. \textbf{33} (1991),
  no.~3, 787--902. \MR{1100212 (92i:58064)}

\bibitem[AGS95]{AlekseevGrosseSchomerus_LatticeCS1}
A.Y. Alekseev, H.~Grosse, and V.~Schomerus, \emph{Combinatorial quantization of
  the {H}amiltonian {C}hern-{S}imons theory {I}}, Communications in
  Mathematical Physics \textbf{172} (1995), no.~2, 317--358.

\bibitem[AGS96]{AlekseevGrosseSchomerus_LatticeCS2}
\bysame, \emph{Combinatorial quantization of the {H}amiltonian {C}hern-{S}imons
  theory {II}}, Communications in Mathematical Physics \textbf{174} (1996),
  no.~3, 561--604.

\bibitem[AKSM02]{AlekseevKosmannMeinrenken}
A.Y. Alekseev, Y.~Kosmann-Schwarzbach, and E.~Meinrenken, \emph{Quasi-{P}oisson
  manifolds}, Canad. J. Math. \textbf{54} (2002), 3--29.

\bibitem[AM94]{AlekseevMalkin_PoissonLie}
A.Y. Alekseev and A.Z. Malkin, \emph{Symplectic structures associated to
  {L}ie-{P}oisson groups}, Communications in Mathematical Physics \textbf{162}
  (1994), no.~1, 147--173.

\bibitem[AM95]{AlekseevMalkin_PoissonCharVar}
\bysame, \emph{Symplectic structure of the moduli space of flat connection on a
  {R}iemann surface}, Communications in Mathematical Physics \textbf{169}
  (1995), 99--119.

\bibitem[AS96]{AlekseevSchomerus_RepCS}
A.Y. Alekseev and V.~Schomerus, \emph{Representation theory of {C}hern-{S}imons
  observables}, Duke Math. J. \textbf{85} (1996), no.~2, 447--510.

\bibitem[Aud97]{Audin_Survey_FR}
M.~Audin, \emph{Lectures on gauge theory and integrable systems}, Gauge Theory
  and Symplectic Geometry, NATO ASI Series, vol. 488, Springer, 1997.

\bibitem[BH95]{BrumfieldHilden}
G.W. Brumfield and H.M. Hilden, \emph{{S}{L}(2) representations of finitely
  presented groups}, vol. 187, Amer. Math. Soc., 1995.

\bibitem[Boa01]{Boalch_SymplecticWildCharVar}
P.P. Boalch, \emph{Symplectic manifolds and isomonodromic deformations}, Adv.
  in Math. \textbf{163} (2001), 137--205.

\bibitem[BR95]{BuffenoirRoche}
E.~Buffenoir and Ph. Roche, \emph{Two-dimensional lattice gauge theory based on
  a quantum group}, Comm. Math. Phys. \textbf{170} (1995), no.~3, 669--698.

\bibitem[BR96]{BuffenoirRoche2}
\bysame, \emph{Link invariants and combinatorial quantization of {H}amiltonian
  {C}hern-{S}imons theory}, Comm. Math. Phys. \textbf{181} (1996), no.~2,
  331--365.

\bibitem[BR22]{BaseilhacRoche_LGFT1}
S.~Baseilhac and P.~Roche, \emph{Unrestricted quantum moduli algebras, {I}: the
  case of punctured spheres}, SIGMA \textbf{18} (2022), no.~25, 78.

\bibitem[Bul97]{Bullock}
D.~Bullock, \emph{Rings of $sl_2(\mathbb{C})$-characters and the {K}auffman
  bracket skein module}, Comentarii Math. Helv. \textbf{72} (1997), no.~4,
  521--542.

\bibitem[BW11]{BonahonWongqTrace}
F.~Bonahon and H.~Wong, \emph{Quantum traces for representations of surface
  groups in $\mathrm{SL}_2(\mathbb{C})$}, Geom. Topol. \textbf{15} (2011),
  1569--1615.

\bibitem[BZBJ18]{BenzviBrochierJordan_FactAlg1}
D.~Ben-Zvi, A.~Brochier, and D.~Jordan, \emph{Integrating quantum groups over
  surfaces}, J.Topology \textbf{11} (2018), no.~4, 873--916.

\bibitem[CL22]{CostantinoLe19}
F.~Costantino and T.T.Q. L\^e, \emph{{Stated skein algebras of surfaces}}, J.
  Eur. Math. Soc. \textbf{24} (2022), 4063--4142.

\bibitem[CM09]{ChaMa}
L.~Charles and J.~March\'e, \emph{Multicurves and regular functions on the
  representation variety of a surface in {SU}(2)}, Comentarii Math. Helv.
  \textbf{87} (2009), 409--431.

\bibitem[CP95]{ChariPressley}
V.~{Chari} and A.~N. {Pressley}, \emph{{A Guide to Quantum Groups}}, Cambridge
  University Press, October 1995.

\bibitem[CS83]{CullerShalenCharVar}
M.~Culler and P.B. Shalen, \emph{Varieties of group representations and
  splittings of $3$-manifolds.}, Ann. of Math. \textbf{117} (1983), no.~2,
  109--146.

\bibitem[Dr{\'e}04]{Drezet_LunaSlice}
J.M. Dr{\'e}zet, \emph{{Luna's slice theorem and applications}}, {Algebraic
  group actions and quotients} (Jaroslaw~A. Wisniewski, ed.), {Hindawi
  Publishing Corporation}, 2004, pp.~39--90.

\bibitem[{Dri}83]{DrinfeldrMatrix}
V.G. {Drinfel'd}, \emph{{On constant quasiclassical solutions of the
  {Y}ang-{B}axter quantum equation }}, Soviet. Math. Dokl \textbf{28} (1983),
  667--71.

\bibitem[Fai20a]{Faitg_LGFT_SSkein}
M.~Faitg, \emph{Holonomy and (stated) skein algebras in combinatorial
  quantization}, arXiv:2003.08992, 2020.

\bibitem[Fai20b]{Faitg_LGFT_MCG}
\bysame, \emph{Projective representations of mapping class groups in
  combinatorial quantization}, Comm. Math. Phys. \textbf{377} (2020), no.~2,
  161--198.

\bibitem[FK18]{FriedlKitayama_RepVarEssentialSurfaces}
S.~Friedl and T.~Kitayama, \emph{Representation varieties detect essential
  surfaces}, Math. Res. Lett. \textbf{25} (2018), no.~3, 803--817.

\bibitem[FL09]{FlorentinoLawton}
C.~Florentino and S.~Lawton, \emph{{Singularities of free group character
  varieties}}, ArXiv:0907.4720[math.AG], 2009.

\bibitem[FR99]{FockRosly}
V.V. Fock and A.A. Rosly, \emph{Poisson structure on moduli of flat connections
  on {R}iemann surfaces and the r-matrix}, Moscow Seminar in Mathematical
  Physics, Amer. Math. Soc. Transl. Ser. 2 \textbf{191} (1999), 67--86.

\bibitem[GG06]{GanGinzburg_AlmostComVar_Dmodules_CherednikAlg}
W.L. Gan and V.~Ginzburg, \emph{Almost-commuting variety, d-modules, and
  cherednik algebras}, International Mathematics Research Papers \textbf{26439}
  (2006), 1--54.

\bibitem[GHJW97]{GHJW_ModSpacesParBd}
K.~Guruprasad, J.~Huebschmann, L.~Jeffrey, and A.~Weinstein, \emph{Group
  systems, groupoids, and moduli spaces of parabolic bundles}, , Duke Math. J.
  \textbf{89} (1997), no.~2, 377--412.

\bibitem[GJS23]{GunninghamJordanSafranov_FinitenessConjecture}
S.~Gunningham, D.~Jordan, and P.~Safronov, \emph{The finiteness conjecture for
  skein modules}, Invent. Math. \textbf{232} (2023), 301--363.

\bibitem[Gol84]{Goldman_Symplectic}
W.M. Goldman, \emph{The symplectic nature of fundamental groups of surfaces},
  Advances in Mathematics \textbf{54} (1984), no.~2, 200 -- 225.

\bibitem[Gol86]{Goldman86}
\bysame, \emph{Invariant functions on {L}ie groups and {H}amiltonian flows of
  surface groups representations}, Invent. math. \textbf{85} (1986), 263--302.

\bibitem[Har77]{Hart}
R.~Hartshorne, \emph{Algebraic geometry}, Springer-Verlag, New York, 1977,
  Graduate Texts in Mathematics, No. 52. \MR{0463157 (57 \#3116)}

\bibitem[Hat00]{Hatcher_Book}
A.~Hatcher, \emph{{Algebraic topology}}, Cambridge Univ. Press, Cambridge,
  2000.

\bibitem[KK]{KojuKaruo_RepRSSkein}
H.~Karuo and J.~{Korinman}, \emph{{Classification of semi-weight
  representations of reduced stated skein algebras}}, arXiv:2303.09433.

\bibitem[{Kor}22]{KojuMCGRepQT}
J.~{Korinman}, \emph{{Mapping class group representations derived from stated
  skein algebras}}, SIGMA \textbf{18} (2022), no.~64, 35, arXiv:2201.07649.

\bibitem[{Kor}23]{KojuPresentationSSkein}
\bysame, \emph{{Finite presentations for stated skein algebras and lattice
  gauge field theory}}, Algebraic \& Geometric Topology \textbf{23} (2023),
  no.~3, 1249--1302.

\bibitem[KQ22]{KojuQuesneyQNonAb}
J.~{Korinman} and A.~Quesney, \emph{{The quantum trace map as a quantum
  non-abelianization}}, J. Knot theory \& its ramifications \textbf{31} (2022),
  no.~06, 2250032.

\bibitem[KQ24]{KojuQuesneyClassicalShadows}
\bysame, \emph{{Classical shadows of stated skein representations at roots of
  unity}}, Algebraic \& Geometric Topology \textbf{24} (2024), no.~4,
  2091--2148, arXiv:1905.03441.

\bibitem[Lab14]{LabourieCharVar}
F.~Labourie, \emph{Lectures on representations of surface groups}, Zurich
  Lectures in Advanced Mathematics, European Mathematic Society, 2014.

\bibitem[Law09]{Lawton_PoissonGeomSL3}
S.~Lawton, \emph{Poisson geometry of {SL}(3,{C}) character varieties relative
  to a surface with boundary}, Trans. Amer. Math. Soc. \textbf{361} (2009),
  no.~5, 2397--2429.

\bibitem[LBS15]{LiBlandSevera_ModuliSpacesQuiltedSurfaces}
D.~Li-Bland and P.~Severa, \emph{Moduli spaces for quilted surfaces and
  {P}oisson structures}, Documenta Matematica \textbf{20} (2015), 1071--1135.

\bibitem[{Le}18]{LeStatedSkein}
T.T.Q. {Le}, \emph{{Triangular decomposition of skein algebras}}, Quantum
  Topology \textbf{9} (2018), 591--632.

\bibitem[LGPV13]{GPVanhaecke}
C.~Laurent-Gengoux, A.~Pichereau, and P.~Vanhaecke, \emph{{Poisson
  structures}}, Grundlehren der Mathematischen Wissenschaften, {Springer},
  2013.

\bibitem[LS17]{LawtonSikora_VarChar}
S.~Lawton and A.S. Sikora, \emph{Varieties of characters}, Algebras and
  Representation Theory \textbf{20} (2017), 1133--1141.

\bibitem[Lun73]{Luna_SliceTheorem}
D.~Luna, \emph{Slices \'etale}, Bull. Soc. Math. Fr. \textbf{33} (1973),
  81--105.

\bibitem[Mar09]{MarcheCours09}
J.~March{\'e}, \emph{Geometry of representation spaces in {SU}$(2)$}, To appear
  in Strasbourg Master-class of geometry, ArXiv:1001.2408[math.GT], 2009.

\bibitem[Moo95]{Moore_2DYM}
G.~Moore, \emph{Two-dimensional {Y}ang-{M}ills theory and topological field
  theory}, Proceedings of the International Congress of Mathematicians (Basel)
  (Chatterji S.D., ed.), Birkhauser, 1995, pp.~1292--1303.

\bibitem[Mou17]{Mouquin}
V.~Mouquin, \emph{The {F}ock-{R}osly {P}oisson structure as defined by a
  quasi-triangular r-matrix}, SIGMA \textbf{13} (2017), no.~63, 13.

\bibitem[MT14]{MassuyeauTuraev_QuasiPoisson}
G.~Massuyeau and V.G. Turaev, \emph{Quasi-{P}oisson structures on
  representation spaces of surfaces}, International Mathematics Research
  Notices \textbf{1} (2014), no.~1, 1--64.

\bibitem[Nie13]{Nie_QPoissonGoldmanFormula}
X.~Nie, \emph{The quasi-{P}oisson {G}oldman formula}, JJournal of Geometry and
  Physics \textbf{74} (2013), 1--17.

\bibitem[Pro87]{Procesi87}
C.~Procesi, \emph{A formal inverse to the {C}ayley-{H}amilton {T}heorem.},
  Journal of algebra \textbf{107} (1987), 63--74.

\bibitem[PS00]{PS00}
J.H Przytycki and S.~Sikora, \emph{On skein algebras and
  $\mathrm{SL}_2(\mathbb{C})$-character varieties}, Topology \textbf{39}
  (2000), no.~1, 115--148.

\bibitem[RS02]{RocheSzenes}
P.~Roche and A.~Szenes, \emph{Trace functionals on non-commutative deformations
  of moduli spaces of flat connections}, Adv. in Math. \textbf{168} (2002),
  133--192.

\bibitem[{Sik}12]{Sikora}
A.S. {Sikora}, \emph{{Character Varieties}}, Trans. of AMS \textbf{364} (2012),
  5173--5208.

\bibitem[{Sik}13a]{Sikora_CharVarSOn}
\bysame, \emph{{G-{C}haracter varieties for $G=SO(n,C)$ and other not simply
  connected groups}}, Journal of Algebra \textbf{429} (2013), no.~2.

\bibitem[{Sik}13b]{Sikora_generating_set}
\bysame, \emph{{Generating sets for coordinate rings of character varieties}},
  J. of pure and applied algebra \textbf{211} (2013), 2076--2087.

\bibitem[Sim94]{Simpson_RepVarProjVar2}
C.~T. Simpson, \emph{Moduli of representations of the fundamental group of a
  smooth projective variety {II}}, Publications Math\'ematiques de l'IHES
  \textbf{80} (1994), 5--79.

\bibitem[Toe14]{Toen_DerivedGeomQuantization}
B.~Toen, \emph{Derived algebraic geometry and quantization deformation}, 2014,
  ICM Lectures. arXiv:1404.4838[math.AG].

\bibitem[Tur91]{Turaev91}
V.~G. Turaev, \emph{Skein quantization of {P}oisson algebras of loops on
  surfaces}, Ann. Sci. Ecole norm. \textbf{24} (1991), no.~4, 635--704.

\bibitem[Wha20]{Whang_RelCharVar}
J.~Whang, \emph{Global geometry on moduli of local systems for surfaces with
  boundary}, Compositio Mathematica \textbf{156} (2020), no.~8, 1517--1559.

\bibitem[Wit88]{Wi1}
E.~Witten, \emph{Topological quantum field theory}, Comm. Math. Phys.
  \textbf{117} (1988), no.~3, 353--386. \MR{953828 (89m:57037)}

\bibitem[Wit89]{Wi2}
\bysame, \emph{Quantum field theory and the {J}ones polynomial}, Comm. Math.
  Phys. \textbf{121} (1989), no.~3, 351--399. \MR{990772 (90h:57009)}

\end{thebibliography}

\end{document}